\let\SF@@footnote\footnote
\def\footnote{\ifx\protect\@typeset@protect
    \expandafter\SF@@footnote
  \else
    \expandafter\SF@gobble@opt
  \fi
}
\def\csname SF@gobble@opt \endcsname{\@ifnextchar[
  \SF@gobble@twobracket
  \@gobble
}
\edef\SF@gobble@opt{\noexpand\protect
  \expandafter\noexpand\csname SF@gobble@opt \endcsname}
\def\SF@gobble@twobracket[#1]#2{}
\providecommand{\tabularnewline}{\\}
\numberwithin{equation}{section}
\numberwithin{figure}{section}
\numberwithin{table}{section}
\theoremstyle{plain}
\newtheorem{thm}{\protect\theoremname}[section]
\theoremstyle{definition}
\newtheorem{defn}[thm]{\protect\definitionname}
\theoremstyle{definition}
\newtheorem{example}[thm]{\protect\examplename}
\theoremstyle{remark}
\newtheorem{rem}[thm]{\protect\remarkname}
\theoremstyle{plain}
\newtheorem{prop}[thm]{\protect\propositionname}
\theoremstyle{plain}
\newtheorem{lem}[thm]{\protect\lemmaname}
\theoremstyle{plain}
\newtheorem{cor}[thm]{\protect\corollaryname}
\theoremstyle{remark}
\newtheorem{notation}[thm]{\protect\notationname}
\theoremstyle{remark}
\newtheorem{note}[thm]{\protect\notename}
\theoremstyle{plain}
\newtheorem{lyxalgorithm}[thm]{\protect\algorithmname}
\newtheorem{mainthm}{Theorem}
\newtheorem{propdef}[thm]{Proposition-Definition}
\newcommand{\osubset}{\mathbin{\mathpalette\make@circleda\subset}}
\newcommand{\make@circleda}[2]{%
    \mathrel{\ooalign{\hss$\m@th#1\smallbigcirc{#1}$\hss\cr
\kern0.3ex\raise0.154ex\hbox{\scalebox{0.75}
{$\m@th#1#2$}}}}
}
\newcommand{\osubeq}{\mathbin{\mathpalette\make@circledc\subseteq}}
\newcommand{\make@circledc}[2]{%
    \mathrel{\ooalign{\hss$\m@th#1\smallbigcirk{#1}$\hss\cr
\kern0.265ex\raise0.07ex\hbox{\scalebox{0.85}
{$\m@th#1#2$}}}}
}
\newcommand{\oleq}{\mathbin{\mathpalette\make@circledd\leqslant}}
\newcommand{\make@circledd}[2]{%
    \mathrel{\ooalign{\hss$\m@th#1\smallbigcirk{#1}$\hss\cr
\kern0ex\raise0.025ex\hbox{\scalebox{0.94}
{$\m@th#1#2$}}}}
}
\newcommand{\oless}{\mathbin{\mathpalette\make@circlede\le}}
\newcommand{\make@circlede}[2]{%
    \mathrel{\ooalign{\hss$\m@th#1\smallbigcirk{#1}$\hss\cr
\kern0.275ex\raise0.07ex\hbox{\scalebox{0.818}
{$\m@th#1#2$}}}}
}
\newcommand{\ocap}{\mathbin{\mathpalette\make@circledb\cap}}
\newcommand{\make@circledb}[2]{%
    \mathrel{\ooalign{\hss$\m@th#1\smallbigcirc{#1}$\hss\cr
\kern0.32ex\raise-0.04ex\hbox{\scalebox{0.75}
{$\m@th#1#2$}}}}
}
\newcommand{\smallbigcirk}[1]{%
  \vcenter{\hbox{\scalebox{0.9}{$\m@th#1\bigcirc$}}}%
}
\newcommand{\smallbigcirc}[1]{%
  \vcenter{\hbox{\scalebox{0.77778}{$\m@th#1\bigcirc$}}}%
}
\renewcommand\ell{l}
\subjclass[2020]{Primary 05B35; Secondary 52B40, 52C35, 52B20, 52C22, 06A12, 14N20, 14M15, 14M25, 14T15}
\providecommand{\algorithmname}{Algorithm}
\providecommand{\corollaryname}{Corollary}
\providecommand{\definitionname}{Definition}
\providecommand{\examplename}{Example}
\providecommand{\lemmaname}{Lemma}
\providecommand{\notationname}{Notation}
\providecommand{\notename}{Note}
\providecommand{\propositionname}{Proposition}
\providecommand{\remarkname}{Remark}
\providecommand{\theoremname}{Theorem}
\begin{document}
\title[Birational Matroid Geometry]{Birational Geometry of Matroids and Abstract Hyperplane Arrangements}
\author{Jaeho Shin}
\address{Department of Mathematical Sciences, Seoul National University, Gwanak-ro
1, Gwanak-gu, Seoul 08826, South Korea}
\email{j.shin@snu.ac.kr}
\keywords{matroid, tiling, puzzle, extension, Mnëv's universality theorem, hyperplane
arrangement, Grassmannian, birational geometry, lattice geometry}
\begin{abstract}
A matroid is a machine capturing linearity of mathematical objects
and producing combinatorial structures. Matroid structure arises everywhere
since linearity is a ubiquitous concept. One natural way to obtain
matroids is by considering hyperplane arrangements, which give rise
to convex polytopes called matroid polytopes. Much research has been
conducted on these three areas: matroids, matroid polytopes, and hyperplane
arrangements. However, substantial gaps in our knowledge remain, and
the correspondence diagram between those areas needs to be more extensive.
For instance, currently, there is no matroid counterpart of a matroid
subdivision, and only some matroid subdivisions are associated with
stable hyperplane arrangements. Moreover, we need a deeper understanding
of the face structure of a matroid polytope and how to glue or subdivide
base polytopes; the latter requires overcoming Mnëv's universality
theorem. Another interesting question is whether the birational geometry
of hyperplane arrangements can be implemented over matroids. In this
paper, we develop a theory that integrates the three areas into a
trinity relationship and provide solutions to the aforementioned questions
while answering as many as possible.
\end{abstract}

\maketitle
\begin{quote}
\begin{center}
``Rem tene, verba sequentur.'' \textemdash{} Cato the Elder
\par\end{center}

\end{quote}
\tableofcontents{}

\section*{Introduction}

A matroid is a machine capturing linearity of mathematical objects
and producing combinatorial structures. Matroid structure arises everywhere
since linearity is a ubiquitous concept. One of the most natural appearances
of matroids is in the Grassmannians: every generic point of them is
identified with an arrangement of hyperplanes whose linear functionals
form a matroid, while the Chow polytope of the point is the base polytope
of the matroid. In general, matroids are in a one-to-one correspondence
with matroid polytopes by which we mean base, independence, spanning-set,
and any other convex polytopes associated with a matroid. A matroid
subdivision, which is a subdivision of a base polytope into smaller
base polytopes, corresponds to a stable hyperplane arrangement \cite{Kapranov,HKT,Ale08},
which is a certain kind of limit of hyperplane arrangements.

Although much research has been conducted on these three areas: matroids,
matroid polytopes, and hyperplane arrangements, there remain substantial
gaps in our knowledge, and the correspondence diagram between those
areas needs to be more extensive. For instance, currently, there is
no matroid counterpart of a matroid subdivision, and only some matroid
subdivisions are associated with stable hyperplane arrangements. Moreover,
we need a deeper understanding of the face structure of a matroid
polytope and how to glue or subdivide base polytopes. The latter requires
overcoming \emph{Mnëv's universality theorem} in any way, which warns
that it can be arbitrarily complicated. Another interesting question
is whether the birational geometry of hyperplane arrangements can
be implemented over matroids.

In this paper, we develop a theory that integrates the three areas
into a trinity relationship, which we call the \emph{birational matroid
geometry}, and provide solutions to the previously mentioned questions
while answering as many as possible. We adopt a direct approach rather
than relying on traditional methods, such as the use of ``dual''
structures including cones, fans, vertex figures, secondary polytopes,
etc.\footnote{One can read off cohomology-related information from our theory and
translate it back and forth.} Our direct approach places emphasis on the flats of a matroid and
rank functions, while giving less attention to circuits, in contrast
to existing references such as \cite{Kapranov}, \cite[Chapter 7]{GKZ94},
and others. This approach is what sets our theory apart, making it
concise and elegant. The following are two main results presented
at the end of the final section, where Theorem \ref{thm:mainB} is
derived from Theorem \ref{thm:mainA} as an application to the realizable
case, which implies that Mnëv's universality theorem has been successfully
overcome.

\begin{mainthm}[Lemma \ref{lem:wt<adm} and Theorems \ref{thm:complete-ext} and \ref{thm:Counterexample}] \label{thm:mainA}
For $\scalebox{0.95}{\ensuremath{n\le9}}$, every rank-$\scalebox{0.95}{\ensuremath{3}}$
and $\scalebox{0.95}{\ensuremath{(n-1)}}$-dimensional weighted or
admissible matroid tiling in $\scalebox{0.95}{\ensuremath{\mathbb{R}^{n}}}$
extends to a matroid subdivision of hypersimplex $\scalebox{0.95}{\ensuremath{\Delta(3,n)}}$.
The bound $\scalebox{0.95}{\ensuremath{n=9}}$ is tight.\end{mainthm}

\begin{mainthm}[Theorem \ref{thm:Surjectivity}] \label{thm:mainB}
For $\scalebox{0.95}{\ensuremath{n\le9}}$ and any two weight vectors
$\scalebox{0.95}{\ensuremath{\mathbf{w},\mathbf{v}\in\mathbb{R}^{n}}}$
with $\scalebox{0.95}{\ensuremath{\mathbf{w}>\mathbf{v}}}$, the reduction
morphism $\scalebox{0.95}{\ensuremath{\rho_{\mathbf{w},\mathbf{v}}:\overline{\mathrm{M}}_{\mathbf{w}}(3,n)\rightarrow\overline{\mathrm{M}}_{\mathbf{v}}(3,n)}}$
between the moduli  spaces of weighted stable $n$-line arrangements
is surjective. The bound $\scalebox{0.95}{\ensuremath{n=9}}$ is sharp.
\end{mainthm}

We begin our journey by observing that an \emph{abstract hyperplane
arrangement} can be constructed from a matroid, making a usual hyperplane
arrangement a realization of an abstract one. The details are described
in Subsections \ref{subsec:HA-1} and \ref{subsec:HA}. It follows
that a matroid is realizable if and only if its abstract hyperplane
arrangement is. We study the\emph{ degeneration of abstract hyperplanes}
as a continuation of the earlier discussion of \emph{matroid degeneration}
by Gelfand, Goresky, MacPherson, and Serganova \cite[Section 5.2]{GGMS}.
It turns out that the hyperplane degeneration is equivalent to cutting
off corners of a base polytope to obtain a smaller base polytope,
as explained in Subsection \ref{subsec:Degeneration}.\medskip{}

\noindent \textbf{Polyhedral geometry of matroids.} To analyze the
face of a matroid polytope, we first view a matroid as a discrete
metric space equipped with a grading. Using this simple structure,
we provide an alternative proof of a theorem of Borovik, Gelfand,
and White \cite[Theorem 1.12.8]{Coxeter}, which classifies the 2-dimensional
faces of a base polytope through representation theory; see Proposition
\ref{prop:2-dim faces}. The 2-dimensional faces of an independence
polytope are classified in the same manner.

Also, regarding matroids as combinatorial structures on sets, we define
the \emph{pullback} and \emph{pushforward} of a matroid (Subsection
\ref{subsec:PBPF}). By choosing a suitable function between sets,
we demonstrate that cutting any base polytope with a hyperplane of
the form $\scalebox{0.95}{\ensuremath{\left\{ \sum_{i\in A}x_{i}=1\right\} }}$
always produces a matroid subdivision (Lemma \ref{lem:Cut-w-1}).
However, this is not the case for hyperplanes of the form $\scalebox{0.95}{\ensuremath{\left\{ \sum_{i\in A}x_{i}=\rho\right\} }}$
for an integer $\scalebox{0.95}{\ensuremath{\rho>1}}$, except when
cutting a hypersimplex (the base polytope of a uniform matroid), which
does result in a matroid subdivision (Lemma \ref{lem:Cutting-1}). 

For a matroid $\scalebox{0.95}{\ensuremath{M}}$ and a subset $\scalebox{0.95}{\ensuremath{A\subseteq E\left(M\right)}}$,
we define the \emph{attaché operation on }$\scalebox{0.95}{\ensuremath{M}}$\emph{
by $\scalebox{0.95}{\ensuremath{A}}$} as obtaining a matroid $\scalebox{0.95}{\ensuremath{M\left(A\right):=M|A\oplus M/A}}$
(Subsection \ref{subsec:Attache-1}). For the base or independence
polytope $\scalebox{0.95}{\ensuremath{\mathrm{P}}}$, we define the
\emph{attaché operation on }$\scalebox{0.95}{\ensuremath{\mathrm{P}}}$\emph{
by $\scalebox{0.95}{\ensuremath{A}}$} as obtaining the intersection
$\scalebox{0.95}{\ensuremath{\mathrm{P}\cap\left\{ \sum_{i\in A}x_{i}=r_{M}(A)\right\} }}$,
a nonempty face of $\scalebox{0.95}{\ensuremath{\mathrm{P}}}$, where
$\scalebox{0.95}{\ensuremath{r_{M}}}$ denotes the rank function of
$\scalebox{0.95}{\ensuremath{M}}$. It turns out that attaché operations
can be recursively applied and every nonempty face of $\scalebox{0.95}{\ensuremath{\mathrm{P}}}$
is obtained this way. Hence, we call the matroid of a face of $\scalebox{0.95}{\ensuremath{\mathrm{P}}}$
a \emph{face matroid} of $\scalebox{0.95}{\ensuremath{\mathrm{P}}}$.
This correspondence from base polytopes (or independence polytopes
and their faces) to matroids forms a \emph{functor}, with attaché
operations being morphisms in both categories (Subsection \ref{subsec:Attache-2}).\medskip{}

\noindent \textbf{Lattice geometry of matroids.} For multiple faces
of a base or independence polytope $\scalebox{0.95}{\ensuremath{\mathrm{P}}}$
with a nonempty intersection, the intersection is obtained by applying
to $\scalebox{0.95}{\ensuremath{\mathrm{P}}}$ the associated attaché
operations at once, regardless of the order in which they are applied,
although attaché operations do not commute in general (Theorem \ref{thm:face-matroids}).
This makes the collection of face matroids of $\scalebox{0.95}{\ensuremath{\mathrm{P}}}$
a lattice, which can be managed through the use of matroidal expressions.
A matroidal expression is a direct sum of minor expressions, where
a minor expression is a sequence of restrictions and contractions
(as described in Subsections \ref{subsec:Minor-expressions} and \ref{subsec:Matroidal-expressions}).
Theorem \ref{thm:face-matroids} further extends to Theorem \ref{thm:face-intersection}.
In Lemma \ref{lem:Ridges of BP}, we investigate the codimension-$\scalebox{0.95}{\ensuremath{2}}$
face of a base polytope.

A base polytope is a representative matroid polytope. We pay particular
attention to loopless base polytopes and their loopless faces, where
``loopless'' for polytopes means not contained within a coordinate
hyperplane. A base polytope is loopless if and only if its matroid
is loopless. For a rank-$\scalebox{0.95}{\ensuremath{k}}$ loopless
matroid, it is connected, or equivalently, its base polytope is full-dimensional
if its abstract hyperplane arrangement has $\scalebox{0.95}{\ensuremath{k+1}}$
hyperplanes in general position, and the converse holds only when
$\scalebox{0.95}{\ensuremath{k=2,3}}$; see Subsection \ref{subsec:gen-pos}.

For a loopless base or independence polytope $\scalebox{0.95}{\ensuremath{\mathrm{P}}}$,
any loopless face of it can be obtained by applying to $\scalebox{0.95}{\ensuremath{\mathrm{P}}}$
a sequence of attaché operations determined by a \emph{flag} (as explained
in Proposition \ref{prop:flaces-flags}). The matroids of loopless
faces of $\scalebox{0.95}{\ensuremath{\mathrm{P}}}$ form a lattice,
which is coatomistic,  and we  call the collection of its coatoms
a \emph{puzzle-piece} of \emph{dimension} $k-\kappa$ where $k$ is
the rank and $\kappa$ is the number of connected components of the
given matroid. A puzzle-piece can be obtained from an abstract hyperplane
arrangement by lattice operations of \emph{matroidal blowups} and
\emph{collapsings}, a realization of which, if it exists, is obtained
from a realization of the abstract hyperplane arrangement by the corresponding
algebro-geometric operations, see Subsections \ref{subsec:matoidal-blowup},
\ref{subsec:Collapsing-operation}, \ref{subsec:minimal-model}, and
\ref{subsec:Grassmann}.

\smallskip{}

\noindent \textbf{Grassmann Geometry. }Consider the Grassmannian $\scalebox{0.95}{\ensuremath{\mathrm{G}\left(k,n\right)}}$
with an action of the algebraic torus $\scalebox{0.95}{\ensuremath{H=\mathbb{G}_{m}^{n}}}$.
Gelfand and Serganova \cite[Theorems 1.1 and 2.2]{GS87} established
a one-to-one correspondence between the torus orbits in $\scalebox{0.95}{\ensuremath{\overline{H\cdot\left[X\right]}}}$
for any $\scalebox{0.95}{\ensuremath{\left[X\right]\in\mathrm{G}\left(k,n\right)}}$
and the faces of the base polytope of the matroid $\scalebox{0.95}{\ensuremath{M}}$
determined by $\scalebox{0.95}{\ensuremath{X}}$, which preserves
the dimension. For $\scalebox{0.95}{\ensuremath{\left[X\right]\in\mathrm{G}\left(k,n\right)}}$
with $\scalebox{0.95}{\ensuremath{X\subset\mathbb{P}^{n-1}}}$ that
is not contained in the coordinate hyperplanes, the correspondence
implies a one-to-one correspondence between those torus orbits that
are not contained in the coordinate hyperplanes and the face matroids
of the puzzle-piece of $\scalebox{0.95}{\ensuremath{M}}$, preserving
the codimension (Proposition \ref{prop:GS-thm-2}). Moreover, gluing
torus orbits is equivalent to gluing base polytopes. This has a matroidal
shadow: gluing puzzle-pieces.

For birational algebraic geometers, we remark that a realization of
a puzzle-piece is the log canonical model of a realization of its
associated abstract hyperplane arrangement. Thus, our lattice geometry
of matroids tells us that the log canonical model of a hyperplane
arrangement exists and how to obtain it.\smallskip{}

\noindent \textbf{Extension theory for matroids.} A \emph{matroid
tiling} is the collection of inclusionwise maximal members of a polyhedral
complex, consisting of base polytopes whose union is connected in
codimension $\scalebox{0.95}{\ensuremath{1}}$. Our primary focus
is on extending a matroid tiling by gluing base polytopes, despite
the warning of Mnëv's universality theorem. One of the major challenges
is that extending a matroid tiling is a global process and can have
a huge computational complexity. So, we introduce the notion of a
\emph{matroid semitiling}, an intermediate collection of base polytopes
connected in codimension $\scalebox{0.95}{\ensuremath{1}}$ that arises
during tiling extension. According to our theorem, extending a matroid
tiling to a matroid semitiling in a ``locally convex'' manner results
in the semitiling being a tiling (Theorem \ref{thm:locally-convex}),
and the support of the extension is a base polytope, making it a matroid
subdivision (Lemma \ref{lem:locally-convex} and Corollary \ref{cor:locally-convex}).

Moreover, base polytopes glue through faces not contained in the boundary
of hypersimplex, allowing for a switch from extending a tiling to
a puzzle. Here, a semipuzzle is the collection of the puzzle-pieces
corresponding to the base polytopes of a semitiling, and a puzzle
is a semipuzzle whose semitiling is a tiling. Switching provides the
advantage of dimension drop by $\scalebox{0.95}{\ensuremath{n-k}}$
where $\scalebox{0.95}{\ensuremath{n}}$ is the size of the ground
set and $\scalebox{0.95}{\ensuremath{k}}$ is the rank. Consequently,
the extension process converts to a local operation of less dimension.

Using our extension theory, we describe how to extend an arbitrary
rank-$\scalebox{0.95}{\ensuremath{2}}$ tiling in Subsection \ref{subsec:Rank-2-tilings}.
We further discuss the rank-$\scalebox{0.95}{\ensuremath{3}}$ case
in Section \ref{sec:Tiling Extension}, the final section. When $\scalebox{0.95}{\ensuremath{k=3}}$,
puzzles become $\scalebox{0.95}{\ensuremath{2}}$-dimensional objects,
enabling manual computations and turning the task of extending a matroid
tiling into a fun jigsaw puzzle game. However, not all rank-$\scalebox{0.95}{\ensuremath{3}}$
tilings are extendable, and we  limit our discussion to a specific
class of rank-$\scalebox{0.95}{\ensuremath{3}}$ tilings, which we
call \emph{admissible}. We show that every admissible tiling without
any \emph{forked} facets extends to a complete tiling, a matroid subdivision
of a hypersimplex. The details can be found in Algorithm \ref{alg:extend_tilings}
and Theorem \ref{thm:unforked}.

Furthermore, we prove that every rank-$\scalebox{0.95}{\ensuremath{3}}$
weighted or admissible tiling in $\scalebox{0.95}{\ensuremath{\Delta\left(3,n\right)}}$
with $\scalebox{0.95}{\ensuremath{n\le9}}$ extends to a complete
tiling (Lemma \ref{lem:wt<adm} and Theorem \ref{thm:complete-ext}),
while for $\scalebox{0.95}{\ensuremath{n=10}}$, we present a counterexample
(Theorem \ref{thm:Counterexample}). Combining these two produces
Theorem \ref{thm:mainA}, which oversolves an open problem from an
algebro-geometric context \cite[Question 2.10]{Ale08}. We derive
Theorem \ref{thm:realizable-complete} from Theorem \ref{thm:mainA}
as an application to the realizable case by addressing issues related
to gluing automorphisms over the given field. This leads to a thorough
answer to the question of whether reduction morphisms are surjective,
which is Theorem \ref{thm:mainB} (Theorem \ref{thm:Surjectivity}).\medskip{}

Weighted objects are discussed at the end of Sections \ref{sec:Lattice Geometry of Matroids},
\ref{sec:Matroid Tilings}, and \ref{sec:Puzzle}, as well as in Section
\ref{sec:Tiling Extension}, with the latter mostly dedicated to the
topic. All computations are performed manually with pen and paper
using our theory, without resorting to computers. See, for example,
Remarks \ref{rem:diff of isos}, \ref{rem:why-pp}, \ref{rem:general position},
\ref{rem:degenerate-flat}, and \ref{rem:puzzle}, Examples \ref{exa:GA for D(2,4)},
\ref{exa:GA for D(3,5)}, \ref{exa:GA for D(3,6)}, and \ref{exa:GA for D(3,6)-1},
Algorithm \ref{alg:extend_tilings}, and Theorems \ref{thm:Uniform-finiteness},
\ref{thm:complete-ext}, \ref{thm:Counterexample}, \ref{thm:realizable-complete},
and \ref{thm:Surjectivity}. \medskip{}

To present a comprehensive theory, we look into the basics of matroid
theory from a geometric viewpoint and introduce new notions in Section
\ref{sec:Essentials of Matroids}; asterisks {*} mark the subsections
containing new notions so readers familiar with the usual matroid
theory can focus on them. We use \textbf{boldface} to indicate defined
terms and \emph{italics} for emphasis throughout the paper.

\subsection*{Acknowledgements}

The author began this research to provide a complete answer to an
algebro-geometric question, as a follow-up to his Ph.D. dissertation
years ago (Theorem \ref{thm:Surjectivity} now fully answers the question),
which has grown into the theory of ``birational matroid geometry.''
He is truly grateful to Thomas Zaslavsky for cordial advice and invaluable
conversations. He would also like to thank Alexandre Borovik, Sergey
Fomin, June Huh, James Oxley, Bernd Sturmfels, and many others not
listed here for their attention.

\section{\label{sec:Essentials of Matroids}Essentials of Matroid Theory and
New Notions}

The purpose of this section is to provide background knowledge to
readers for understanding the matroid theoretic content of this paper.

\subsection{Basic terms}

Let $\scalebox{0.95}{\ensuremath{S}}$ be any finite set. Its power
set $\scalebox{0.95}{\ensuremath{2^{S}:=\left\{ A:A\subseteq S\right\} }}$
has the natural inclusion relation $\subseteq$. We consider an extra
structure on $\scalebox{0.95}{\ensuremath{2^{S}}}$:
\begin{defn}[Matroid rank axioms]
 A\textbf{ submodular rank function} $r$ on $\scalebox{0.95}{\ensuremath{2^{S}}}$
is a $\scalebox{0.95}{\ensuremath{\mathbb{Z}_{\ge0}}}$-valued function
with the following properties:
\begin{enumerate}[label=(R\arabic*)]
\item \label{enu:(R1)}$\scalebox{0.95}{\ensuremath{0\le r\left(A\right)\le\left|A\right|\quad\text{for }A\in2^{S}}}$.
\item \label{enu:(R2)}$\scalebox{0.95}{\ensuremath{r\left(A\right)\le r\left(B\right)\quad\text{for }A,B\in2^{S}\text{ with }A\subseteq B}}$.
\item \label{enu:(R3)}(Submodularity)\enspace{}$\scalebox{0.95}{\ensuremath{r\left(A\cup B\right)+r\left(A\cap B\right)\le r\left(A\right)+r\left(B\right)\quad\text{for }A,B\in2^{S}}}$.
\end{enumerate}
\end{defn}

\begin{defn}
A (finite) \textbf{matroid} $\scalebox{0.95}{\ensuremath{M}}$ on
$\scalebox{0.95}{\ensuremath{S}}$ is a poset $\scalebox{0.95}{\ensuremath{2^{S}}}$
equipped with a submodular rank function $r$ on $\scalebox{0.95}{\ensuremath{2^{S}}}$,
which we denote by a pair $\scalebox{0.95}{\ensuremath{\left(S,r\right)}}$.
We call $\scalebox{0.95}{\ensuremath{S}}$ the \textbf{ground set}
of $\scalebox{0.95}{\ensuremath{M}}$ and denote it by $\scalebox{0.95}{\ensuremath{E(M)}}$.
The function $\scalebox{0.95}{\ensuremath{r=r_{M}}}$ is called the
\textbf{rank function }of $\scalebox{0.95}{\ensuremath{M}}$. The
number $\scalebox{0.95}{\ensuremath{r(S)}}$ is called the \textbf{rank
}of $\scalebox{0.95}{\ensuremath{M}}$ and also denoted by $\scalebox{0.95}{\ensuremath{r(M)}}$.
\end{defn}

We refer to the empty set $\scalebox{0.95}{\ensuremath{\emptyset}}$
as the \textbf{empty matroid} with empty ground set and rank function
$\scalebox{0.95}{\ensuremath{\emptyset\rightarrow0}}$. Two matroids
$\scalebox{0.95}{\ensuremath{\bigl(\tilde{S},\tilde{r}\bigr)}}$ and
$\scalebox{0.95}{\ensuremath{\bigl(S,r\bigr)}}$ are called \textbf{isomorphic}
if there is a bijection $\scalebox{0.95}{\ensuremath{f:\tilde{S}\rightarrow S}}$
with $\tilde{r}=r\circ f$. A \textbf{$\scalebox{0.95}{\ensuremath{\left(k,S\right)}}$-matroid}
refers to a rank-$k$ matroid on $\scalebox{0.95}{\ensuremath{S}}$.
We often omit ``$\scalebox{0.95}{\ensuremath{\left(k,S\right)}}$-''
if the context is clear. A \textbf{$\scalebox{0.95}{\ensuremath{\left(k,n\right)}}$-matroid}
refers to a $\scalebox{0.95}{\ensuremath{\left(k,\left[n\right]\right)}}$-matroid
for $\scalebox{0.95}{\ensuremath{\left[n\right]:=\left\{ 1,\dots,n\right\} }}$.

A rank-$k$ matroid $\scalebox{0.95}{\ensuremath{M}}$ on $\scalebox{0.95}{\ensuremath{S}}$
is called \textbf{realizable}\footnote{It is also called \emph{representable}, but we reserve the word \emph{represent}
for later use in Subsections \ref{subsec:Minor-expressions} and \ref{subsec:Matroidal-expressions},
 and we  prefer \emph{realizable}.}\textbf{ over a field} or simply \textbf{realizable} if there is a
pair of a $k$-dimensional vector space $\scalebox{0.95}{\ensuremath{V}}$
over a field and a spanning set $\scalebox{0.95}{\ensuremath{\left\{ \mathbf{v}_{i}:i\in S\right\} }}$
such  that for all $\scalebox{0.95}{\ensuremath{A\in2^{S}}}$:
\begin{equation}
r\left(A\right)=\dim\mathrm{span}\left\{ \mathbf{v}_{i}:i\in A\right\} .\label{eq:realizable}
\end{equation}
This spanning set is called a \textbf{realization} of $\scalebox{0.95}{\ensuremath{M}}$
(over the given field). Conversely, for any vector space $\scalebox{0.95}{\ensuremath{V}}$
with a spanning set $\scalebox{0.95}{\ensuremath{\left\{ \mathbf{v}_{i}:i\in S\right\} }}$,
let $r$ be the function on $\scalebox{0.95}{\ensuremath{2^{S}}}$
defined by \eqref{eq:realizable}, then $\scalebox{0.95}{\ensuremath{\left(S,r\right)}}$
is a matroid of rank $\scalebox{0.95}{\ensuremath{\dim V}}$. A \textbf{realizable}
matroid is a matroid that is realizable over some field. A \textbf{regular}
matroid is a matroid that is  realizable  over any field.

A $\scalebox{0.95}{\ensuremath{\left(k,S\right)}}$\textbf{-uniform
}matroid or a rank-$k$ \textbf{uniform }matroid on\textbf{ }$\scalebox{0.95}{\ensuremath{S}}$
is the matroid on $\scalebox{0.95}{\ensuremath{S}}$ defined by a
rank function $\scalebox{0.95}{\ensuremath{A\mapsto\min\left(k,\left|A\right|\right)}}$,
which we denote by $\scalebox{0.95}{\ensuremath{U_{S}^{k}}}$ or $\scalebox{0.95}{\ensuremath{U_{k,S}}}$
(we prefer the former). When $\scalebox{0.95}{\ensuremath{S=\left[n\right]}}$,
we will use the notation $\scalebox{0.95}{\ensuremath{U_{n}^{k}}}$
instead of $\scalebox{0.95}{\ensuremath{U_{\left[n\right]}^{k}}}$.
A uniform matroid is a realizable matroid.

Let $\scalebox{0.95}{\ensuremath{G}}$ be a graph with the edge set
$\scalebox{0.95}{\ensuremath{\left\{ e_{i}:i\in S\right\} }}$. For
any $\scalebox{0.95}{\ensuremath{A\in2^{S}}}$, every maximal subset
$\scalebox{0.95}{\ensuremath{I\subseteq A}}$ with $\left\{ e_{i}:i\in I\right\} $
being a forest has the same size, and let $\scalebox{0.95}{\ensuremath{r\left(A\right)}}$
be this number. Then, $r$ is a submodular rank function on $\scalebox{0.95}{\ensuremath{2^{S}}}$,
and $\scalebox{0.95}{\ensuremath{\left(S,r\right)}}$ is a matroid
called a \textbf{graphic }matroid. A graphic matroid is regular.

\subsection{Flats}

Let $\scalebox{0.95}{\ensuremath{M=\left(S,r\right)}}$ be a matroid.
For $\scalebox{0.95}{\ensuremath{i=0,1,\dots,r\left(M\right)}}$,
the $i$\textbf{-th graded piece} $\scalebox{0.95}{\ensuremath{M^{\left(i\right)}}}$
of $2^{S}$ is $\scalebox{0.95}{\ensuremath{\left\{ A\in2^{S}:r\left(A\right)=i\right\} }}$.
For any $\scalebox{0.95}{\ensuremath{A\in M^{\left(i\right)}}}$:
\[
\scalebox{0.92}{\ensuremath{\overline{A}:=\left\{ s\in S:r\left(A\cup\left\{ s\right\} \right)=r\left(A\right)\right\} \subseteq S}}
\]
 is the maximum member of $\scalebox{0.95}{\ensuremath{M^{\left(i\right)}}}$
containing $\scalebox{0.95}{\ensuremath{A}}$, called a \textbf{flat}
of rank $\scalebox{0.95}{\ensuremath{i}}$. Then, $\scalebox{0.95}{\ensuremath{S}}$
is a flat. Any flat different from $\scalebox{0.95}{\ensuremath{S}}$
is called \textbf{proper}. A rank-$0$ singleton is called a \textbf{loop}.
The collection of loops of $\scalebox{0.95}{\ensuremath{M}}$, denoted
by $\scalebox{0.95}{\ensuremath{\overline{\emptyset}_{M}}}$ or $\scalebox{0.95}{\ensuremath{\overline{\emptyset}}}$,
is the unique rank-$0$ flat. Let $\scalebox{0.95}{\ensuremath{\mathcal{L}^{\left(i\right)}}}$
denote the collection of flats in $\scalebox{0.95}{\ensuremath{M^{\left(i\right)}}}$.
Then, the collection $\scalebox{0.95}{\ensuremath{\bigcup_{i=0}^{r\left(M\right)}\mathcal{L}^{\left(i\right)}}}$
is called the \textbf{flat} \textbf{lattice} of $\scalebox{0.95}{\ensuremath{M}}$
and denoted as $\scalebox{0.95}{\ensuremath{\mathcal{L}\left(M\right)}}$.

\begin{propdef}[Matroid flat axioms]

A collection $\scalebox{0.95}{\ensuremath{\mathcal{A}\subseteq2^{S}}}$
with $\scalebox{0.95}{\ensuremath{S\in\mathcal{A}}}$ is the flat
lattice of a matroid on $\scalebox{0.95}{\ensuremath{S}}$ if it satisfies
the following:
\begin{enumerate}[label=(F\arabic*)]
\item \label{enu:(F1)} For any $\scalebox{0.95}{\ensuremath{F,L\in\mathcal{A}}}$,
one has $\scalebox{0.95}{\ensuremath{F\cap L\in\mathcal{A}}}$.
\item \label{enu:(F2)} For any $\scalebox{0.95}{\ensuremath{F\in\mathcal{A}}}$
and $\scalebox{0.95}{\ensuremath{s\in S-F}}$, by \ref{enu:(F1)},
there exists the smallest member $\scalebox{0.95}{\ensuremath{L}}$
of $\scalebox{0.95}{\ensuremath{\mathcal{A}}}$ containing $\scalebox{0.95}{\ensuremath{F\cup\left\{ s\right\} }}$.
There are no members of $\scalebox{0.95}{\ensuremath{\mathcal{A}}}$
between $\scalebox{0.95}{\ensuremath{F}}$ and $\scalebox{0.95}{\ensuremath{L}}$.
\end{enumerate}
\end{propdef}

For the flat lattice $\scalebox{0.95}{\ensuremath{\mathcal{A}}}$
of a matroid on $\scalebox{0.95}{\ensuremath{S}}$, the rank of a
subset $\scalebox{0.95}{\ensuremath{A\subseteq S}}$ is the \emph{lattice
rank} of the smallest member of $\scalebox{0.95}{\ensuremath{\mathcal{A}}}$
containing $\scalebox{0.95}{\ensuremath{A}}$. One can switch back
and forth between rank axioms and flat axioms.

\subsection{Restriction, deletion, and contraction}

The following are three basic matroid operations. Note that we occasionally
and intentionally confuse a matroid operation with the matroid that
the operation constructs.

Let $\scalebox{0.95}{\ensuremath{M=\left(S,r\right)}}$ be a matroid
with a subset $\scalebox{0.95}{\ensuremath{A\subseteq E\left(M\right)}}$.
The \textbf{restriction} of $\scalebox{0.95}{\ensuremath{M}}$ \textbf{to}
$\scalebox{0.95}{\ensuremath{A}}$ is the matroid on $\scalebox{0.95}{\ensuremath{A}}$,
defined by a submodular rank function on $\scalebox{0.95}{\ensuremath{2^{A}}}$
mapping $\scalebox{0.95}{\ensuremath{I\mapsto r\left(I\right)}}$.
We denote it by $\scalebox{0.95}{\ensuremath{M|_{A}}}$ or $\scalebox{0.95}{\ensuremath{M|A}}$.
We prefer the former notation for a few reasons: it is more compact
than the latter, it stresses in its form that a restriction matroid
is nothing new, and it is consistent with notations used in other
areas of mathematics. We also call $\scalebox{0.95}{\ensuremath{M|_{A}}}$
the \textbf{submatroid}\footnote{Welsh used the term \emph{submatroid} to denote any matroid obtained
from a given matroid by matroid operations, \cite[Chapter 4]{Welsh}.
However, we will only use the term for a restriction matroid as if
it denotes a submatrix of a matrix.} of $\scalebox{0.95}{\ensuremath{M}}$ \textbf{on} $\scalebox{0.95}{\ensuremath{A}}$.
The \textbf{deletion} $\scalebox{0.95}{\ensuremath{M\backslash A}}$
is the restriction $\scalebox{0.95}{\ensuremath{M|_{S-A}}}$.

The \textbf{contraction} \textbf{of} $\scalebox{0.95}{\ensuremath{A}}$
\textbf{in} $\scalebox{0.95}{\ensuremath{M}}$ or \textbf{contraction
of $\scalebox{0.95}{\ensuremath{M}}$ over $\scalebox{0.95}{\ensuremath{A}}$}
is a matroid on $\scalebox{0.95}{\ensuremath{A^{c}}}$, whose rank
function is $\scalebox{0.95}{\ensuremath{I\mapsto r\left(I\cup A\right)-r\left(A\right)}}$,
which is denoted by $\scalebox{0.95}{\ensuremath{M/A}}$.

Deletions and contractions commute.

\subsection{\label{subsec:HA-1}Abstract hyperplanes and subspaces{*}\protect\footnote{Asterisks {*} mark the subsections that introduce new notions.}}

An \textbf{abstract hyperplane} of a matroid $\scalebox{0.95}{\ensuremath{M}}$
is a contraction matroid $\scalebox{0.95}{\ensuremath{M/F}}$ over
a rank-$1$ flat $\scalebox{0.95}{\ensuremath{F}}$ of $\scalebox{0.95}{\ensuremath{M}}$.
We call $\scalebox{0.95}{\ensuremath{\left|F\right|}}$ the \textbf{multiplicity}
of the hyperplane $\scalebox{0.95}{\ensuremath{M/F}}$. This definition
is a natural abstraction of the fact that in a vector space $\scalebox{0.95}{\ensuremath{V}}$
with a spanning set $\scalebox{0.95}{\ensuremath{\left\{ \mathbf{v}_{i}:i\in S\right\} }}$,
the zero set of a linear functional $\scalebox{0.95}{\ensuremath{\left\langle \mathbf{v}_{i},\underline{\hspace{0.7em}}\,\right\rangle }}$
for each nonzero vector $\mathbf{v}_{i}$ is a hyperplane. An \textbf{abstract}
\textbf{subspace} of $\scalebox{0.95}{\ensuremath{M}}$ is a contraction
matroid $\scalebox{0.95}{\ensuremath{M/L}}$ over a flat $\scalebox{0.95}{\ensuremath{L}}$.
Let $\scalebox{0.95}{\ensuremath{\mathcal{S}\left(M\right)}}$ be
the collection of abstract subspaces of $\scalebox{0.95}{\ensuremath{M}}$.
This is a lattice in which the \emph{lattice rank} of a member $\scalebox{0.95}{\ensuremath{M/L}}$
is its matroid rank $\scalebox{0.95}{\ensuremath{r\left(M/L\right)}}$.
Thus, as a lattice, $\scalebox{0.95}{\ensuremath{\mathcal{S}\left(M\right)}}$
is isomorphic to the dual lattice of the flat lattice of $\scalebox{0.95}{\ensuremath{M}}$.
We call it the \textbf{subspace lattice} of $\scalebox{0.95}{\ensuremath{M}}$.
The \textbf{subspace dimension} of $\scalebox{0.95}{\ensuremath{M/F}}$
is defined as
\[
\scalebox{0.95}{\ensuremath{\mathrm{sdim}\,M/F:=r\left(M/F\right)-1}}.
\]
 In particular, $\scalebox{0.95}{\ensuremath{\mathrm{sdim}\,\overline{\emptyset}=-1}}$
and $\scalebox{0.95}{\ensuremath{\mathrm{sdim}\,M/\overline{\emptyset}=r\left(M\right)-1}}$
where $\scalebox{0.95}{\ensuremath{M/\overline{\emptyset}=M\backslash\overline{\emptyset}}}$.
We call an abstract subspace of dimension $\scalebox{0.95}{\ensuremath{0}}$
or $\scalebox{0.95}{\ensuremath{1}}$ a \textbf{point} or a \textbf{line},
respectively.

It is not necessarily true that two points are connected by a line.
A matroid in which every two points are connected by a line is called
\textbf{hypermodular}. However, at most one line passes through two
points:
\begin{example}
\label{exa:2-Lines}Let $\scalebox{0.95}{\ensuremath{M}}$ be a matroid
whose rank is at least $\scalebox{0.95}{\ensuremath{3}}$. Suppose
that two lines $\scalebox{0.95}{\ensuremath{M/F}}$ and $\scalebox{0.95}{\ensuremath{M/L}}$
pass through two distinct points $\scalebox{0.95}{\ensuremath{M/A}}$
and $\scalebox{0.95}{\ensuremath{M/T}}$. Then, $\scalebox{0.95}{\ensuremath{F}}$
and $\scalebox{0.95}{\ensuremath{L}}$ are rank-$\scalebox{0.95}{\ensuremath{\left(k-2\right)}}$
flats, and $\scalebox{0.95}{\ensuremath{A}}$ and $\scalebox{0.95}{\ensuremath{T}}$
are distinct rank-$\scalebox{0.95}{\ensuremath{\left(k-1\right)}}$
flats with $\scalebox{0.95}{\ensuremath{F\cup L\subseteq A\cap T}}$.
It follows $\scalebox{0.95}{\ensuremath{k-2\le r\left(F\cup L\right)\le r\left(A\cap T\right)\le k-2}}$
and $\scalebox{0.95}{\ensuremath{k-2=r\left(F\cup L\right)}}$.  Therefore,
$\scalebox{0.95}{\ensuremath{F=L}}$, and the two lines are the same
line.
\end{example}

\subsection{\label{subsec:HA}Abstract hyperplane arrangement{*}}

For a matroid $\scalebox{0.95}{\ensuremath{M}}$, loopless or not,
the \textbf{abstract} \textbf{hyperplane arrangement} of a matroid
$\scalebox{0.95}{\ensuremath{M}}$ is the collection of its abstract
hyperplanes, whose \textbf{intersection lattice} is defined as the
subspace lattice $\scalebox{0.95}{\ensuremath{\mathcal{S}(M)}}$ of
$\scalebox{0.95}{\ensuremath{M}}$. We say that the abstract hyperplane
arrangement of $\scalebox{0.95}{\ensuremath{M}}$ is \textbf{realizable}
if there is a pair of an $\scalebox{0.95}{\ensuremath{r(M)}}$-dimensional
vector space $\scalebox{0.95}{\ensuremath{V}}$ and a collection of
hyperplanes in $\scalebox{0.95}{\ensuremath{V}}$ containing the origin
whose intersection lattice is isomorphic to $\scalebox{0.95}{\ensuremath{\mathcal{S}(M)}}$.
The linear functionals of these hyperplanes form a matroid isomorphic
to $\scalebox{0.95}{\ensuremath{M\backslash\overline{\emptyset}}}$.
The projectivization of the pair, that is, the pair of $\scalebox{0.95}{\ensuremath{\mathbb{P}\left(V\right)}}$
and the projectivizations of the hyperplanes, is called a \textbf{realization}
of the abstract hyperplane arrangement. For a subspace of $\scalebox{0.95}{\ensuremath{M}}$,
we call its corresponding subspace in $\scalebox{0.95}{\ensuremath{\mathbb{P}\left(V\right)}}$
a \textbf{realization} of it. Both a subspace of $\scalebox{0.95}{\ensuremath{M}}$
and a realization of it have the same dimension. Thus, the abstract
hyperplane arrangement of $\scalebox{0.95}{\ensuremath{M}}$ is realizable
if and only if $\scalebox{0.95}{\ensuremath{M}}$ is realizable.

The characteristic polynomial of a loopless matroid $\scalebox{0.95}{\ensuremath{M}}$
in $x$ is written in terms of restrictions and contractions as follows
($\mu$ is the Möbius invariant): 
\[
\scalebox{0.95}{\ensuremath{{\displaystyle p_{M}\left(x\right)=\sum_{A\in\mathcal{L}\left(M\right)}\mu\left(M|_{A}\right)x^{r\left(M/A\right)}}}}.
\]
 This is an invariant of the abstract hyperplane arrangement of $\scalebox{0.95}{\ensuremath{M}}$.
\begin{defn}
The \textbf{characteristic polynomial} of the abstract hyperplane
arrangement of a matroid $\scalebox{0.95}{\ensuremath{M}}$ is the
characteristic polynomial of $\scalebox{0.95}{\ensuremath{M\backslash\overline{\emptyset}}}$.
\end{defn}

This definition is well-defined because an abstract hyperplane arrangement
and any realization of it have the same characteristic polynomial,
cf. \cite[Proposition 3.11.3]{StanleyEC} and \cite[Proposition 7.2.1]{Zaslavsky}. 

Since every flat is a union of rank-$\scalebox{0.95}{\ensuremath{1}}$
flats, any abstract subspace $\scalebox{0.95}{\ensuremath{M/F}}$
of $\scalebox{0.95}{\ensuremath{M}}$ is an intersection of abstract
hyperplanes. The following is \textbf{Bézout's theorem} for abstract
hyperplane arrangements:

\begin{propdef}

Let $\scalebox{0.95}{\ensuremath{F_{1},\dots,F_{m}}}$ be flats of
a matroid $\scalebox{0.95}{\ensuremath{M}}$. Then, $\scalebox{0.95}{\ensuremath{M/F_{1},\dots,M/F_{m}}}$
meet at a point if and only if $\scalebox{0.95}{\ensuremath{r\left(F_{1}\cup\cdots\cup F_{m}\right)=r\left(M\right)-1}}.$
\end{propdef}
\begin{rem}
In this paper, the term ``line arrangement'' will refer to a hyperplane
arrangement of rank $\scalebox{0.95}{\ensuremath{3}}$. Also, ``hyperplanes''
and ``subspaces'' will be used to refer to abstract hyperplanes
and subspaces, respectively, without explicitly using the term ``abstract,''
unless such terminology causes confusion.
\end{rem}

\subsection{\label{subsec:Labelled}Labelled hyperplane arrangements and weight
structures{*}}

For a matroid $\scalebox{0.95}{\ensuremath{M}}$, a \textbf{labelled
hyperplane} is a pair $\scalebox{0.95}{\ensuremath{\bigl(i,M/\overline{\left\{ i\right\} }\bigr)}}$
of a nonloop $\scalebox{0.95}{\ensuremath{i}}$ and the hyperplane
$\scalebox{0.95}{\ensuremath{M/\overline{\left\{ i\right\} }}}$,
where $i$ is called a \textbf{label}. Then, $\scalebox{0.95}{\ensuremath{M}}$
has $\scalebox{0.95}{\ensuremath{\bigl|E(M)-\overline{\emptyset}\bigr|}}$
labelled hyperplanes. The \textbf{labelled hyperplane arrangement}
$\scalebox{0.95}{\ensuremath{\mathrm{HA}_{M}}}$ of $\scalebox{0.95}{\ensuremath{M}}$
is the collection of its labelled hyperplanes: 
\[
\scalebox{0.95}{\ensuremath{\mathrm{HA}_{M}=\left\{ \bigl(i,M/\overline{\left\{ i\right\} }\bigr):i\in M\backslash\overline{\emptyset}\right\} }}.
\]
 Henceforth, we will always consider a hyperplane arrangement with
its label structure. A \textbf{hyperplane locus} refers to a hyperplane
without a label.

For a labelled hyperplane arrangement $\scalebox{0.95}{\ensuremath{\mathrm{HA}_{M}}}$,
we assign to each label $i$ a real number $\scalebox{0.95}{\ensuremath{0\le w_{i}\le1}}$,
which we call a \textbf{weight},  and we  say that the labels are
\textbf{weighted}. A \textbf{weighted hyperplane arrangement} is a
labelled hyperplane arrangement with weighted labels. An unweighted
hyperplane arrangement is regarded as a weighted hyperplane arrangement
whose labels all have weights $1$.

For a  weighted hyperplane arrangement $\scalebox{0.95}{\ensuremath{\mathrm{HA}_{M}}}$,
by assigning 0 to each loop of $\scalebox{0.95}{\ensuremath{M}}$,
we regard a loop as a label with weight 0 and call the vector $\scalebox{0.95}{\ensuremath{\mathbf{w}=\left(w_{i}\right)_{i\in E(M)}\in\left[0,1\right]^{E(M)}}}$
a \textbf{weight vector}. We will frequently assume that our matroids
under consideration are loopless, without explicitly stating so.

\subsection{Independent sets and bases}

Fix a matroid $\scalebox{0.95}{\ensuremath{M}}$. If $\scalebox{0.95}{\ensuremath{r\left(A\right)=\left|A\right|}}$
in \ref{enu:(R1)}, then $A$ is called an \textbf{independent} \textbf{set}
of $\scalebox{0.95}{\ensuremath{M}}$. All maximal independent sets
of $\scalebox{0.95}{\ensuremath{M}}$ have the same size, $\scalebox{0.95}{\ensuremath{r\left(M\right)}}$,
and are called the \textbf{bases} of $\scalebox{0.95}{\ensuremath{M}}$.

If $\scalebox{0.95}{\ensuremath{r\left(A\right)<\left|A\right|}}$,
then $A$ is called a \textbf{dependent} \textbf{set}, and a minimal
dependent set is called a \textbf{circuit}.

We use $\scalebox{0.95}{\ensuremath{\mathcal{I}\left(M\right)}}$,
$\scalebox{0.95}{\ensuremath{\mathcal{B}\left(M\right)}}$, and $\scalebox{0.95}{\ensuremath{\mathcal{C}\left(M\right)}}$
to denote the collections of independent sets, bases, and circuits
of $\scalebox{0.95}{\ensuremath{M}}$, respectively. Below are the
matroid axiom systems for independent sets and bases, respectively.
The matroid axiom system for circuits is not included here. \begin{propdef}[Matroid independence axioms]

\noindent A nonempty collection $\scalebox{0.95}{\ensuremath{\mathcal{A}\subseteq2^{S}}}$
is the independent-set collection of a matroid on $\scalebox{0.95}{\ensuremath{S}}$
if
\begin{enumerate}[label=(I\arabic*),itemsep=1pt]
\item \label{enu:(I1)}$\scalebox{0.95}{\ensuremath{A\in\mathcal{A}}}$
and $\scalebox{0.95}{\ensuremath{B\subseteq A}}$ implies $\scalebox{0.95}{\ensuremath{B\in\mathcal{A}}}$,
and 
\item \label{enu:(I2)}for $\scalebox{0.95}{\ensuremath{A,B\in\mathcal{A}}}$
with $\scalebox{0.95}{\ensuremath{\left|A\right|<\left|B\right|}}$,
there exists $\scalebox{0.95}{\ensuremath{b\in B-A}}$ with $\scalebox{0.95}{\ensuremath{A\cup\left\{ b\right\} \in\mathcal{A}}}$.
\end{enumerate}
\noindent The condition \ref{enu:(I1)} implies $\scalebox{0.95}{\ensuremath{\emptyset\in\mathcal{A}}}$,
while \ref{enu:(I2)} is called the \emph{exchange property}.\end{propdef}

\begin{propdef}[Matroid base axioms]

A nonempty collection $\scalebox{0.95}{\ensuremath{\mathcal{A}\subseteq2^{S}}}$
is the base collection of a matroid on $\scalebox{0.95}{\ensuremath{S}}$
if it satisfies the following: 
\noindent \begin{center}
For $\scalebox{0.95}{\ensuremath{A,B\in\mathcal{A}}}$, if $\scalebox{0.95}{\ensuremath{a\in A-B}}$,
then $\scalebox{0.95}{\ensuremath{\left(A-\left\{ a\right\} \right)\cup\left\{ b\right\} \in\mathcal{A}}}$
for some $\scalebox{0.95}{\ensuremath{b\in B-A}}$.
\par\end{center}

\noindent This condition is called the \emph{base exchange property}.\end{propdef}

All $r_{M}$, $\scalebox{0.95}{\ensuremath{\mathcal{L}\left(M\right)}}$,
$\scalebox{0.95}{\ensuremath{\mathcal{I}\left(M\right)}}$, $\scalebox{0.95}{\ensuremath{\mathcal{B}\left(M\right)}}$,
and $\scalebox{0.95}{\ensuremath{\mathcal{C}\left(M\right)}}$ are
recovered from one another,  and we  use the \emph{pair} of $S$ and
any of those to denote $\scalebox{0.95}{\ensuremath{M}}$. Here is
a cheap relation between $\scalebox{0.95}{\ensuremath{\mathcal{L}\left(M\right)}}$,
$\scalebox{0.95}{\ensuremath{\mathcal{I}\left(M\right)}}$, and $\scalebox{0.95}{\ensuremath{\mathcal{B}\left(M\right)}}$:
\[
\scalebox{0.95}{\ensuremath{E\left(M\right)=\bigcup\left.\mathcal{L}\left(M\right)\right.\supseteq\bigcup\left.\mathcal{I}\left(M\right)\right.=\bigcup\left.\mathcal{B}\left(M\right)\right.}.}
\]

\subsection{Modular pairs, dual matroid, and direct sum}

An (unordered) pair $\scalebox{0.95}{\ensuremath{\left\{ A,B\right\} }}$
of subsets of $\scalebox{0.95}{\ensuremath{E\left(M\right)}}$ is
called a \textbf{modular pair} of $\scalebox{0.95}{\ensuremath{M}}$
if equality holds in \ref{enu:(R3)}: $\scalebox{0.95}{\ensuremath{r\left(A\cup B\right)+r\left(A\cap B\right)=r\left(A\right)+r\left(B\right)}}$. 

The \textbf{dual matroid} $\scalebox{0.95}{\ensuremath{M^{\ast}}}$
of $\scalebox{0.95}{\ensuremath{M}}$ is a matroid on $\scalebox{0.95}{\ensuremath{E\left(M\right)}}$
defined by a submodular rank function $r^{\ast}$ given by $\scalebox{0.95}{\ensuremath{A\mapsto\left|A\right|-r\left(S\right)+r\left(S-A\right)}}$.

Let $\scalebox{0.95}{\ensuremath{M_{1}=(S_{1},r_{1})}}$ and $\scalebox{0.95}{\ensuremath{M_{2}=(S_{2},r_{2})}}$
be two matroids. For any members $\scalebox{0.95}{\ensuremath{A_{1}\in2^{S_{1}}}}$
and $\scalebox{0.95}{\ensuremath{A_{2}\in2^{S_{2}}}}$, denote by
$\scalebox{0.95}{\ensuremath{A_{1}\oplus A_{2}}}$ the disjoint union
of $\scalebox{0.95}{\ensuremath{A_{1}}}$ and $\scalebox{0.95}{\ensuremath{A_{2}}}$,
and by $r_{1}\oplus r_{2}$ the submodular rank function on $\scalebox{0.95}{\ensuremath{2^{S_{1}\oplus S_{2}}}}$
defined by $\scalebox{0.95}{\ensuremath{A_{1}\oplus A_{2}\mapsto r_{1}\left(A_{1}\right)+r_{2}\left(A_{2}\right)}}$.
Then, $\scalebox{0.95}{\ensuremath{M_{1}\oplus M_{2}:=\left(S_{1}\oplus S_{2},r_{1}\oplus r_{2}\right)}}$
is a matroid called the \textbf{direct sum} of $\scalebox{0.95}{\ensuremath{M_{1}}}$
and $\scalebox{0.95}{\ensuremath{M_{2}}}$.

\subsection{\label{subsec:PBPF}Pullback and pushforward{*}}

Regarding matroids as combinatorial structures on sets, we define
the pullback and pushforward of a matroid.

\begin{propdef}[Pullback]

Let $\scalebox{0.95}{\ensuremath{M=(S,r)}}$ be a matroid with a map
$\scalebox{0.95}{\ensuremath{f:\tilde{S}\rightarrow S}}$ between
sets. We define the set $\scalebox{0.95}{\ensuremath{f^{\ast}\bigl(\mathcal{I}\bigl(M\bigr)\bigr)}}$
as: 
\[
\scalebox{0.95}{\ensuremath{f^{\ast}\bigl(\mathcal{I}\bigl(M\bigr)\bigr):=\left\{ A\subseteq\tilde{S}:f\left(A\right)\in\mathcal{I}\left(M\right),\left|A\right|=\left|f(A)\right|\right\} }}.
\]
 The pair $\scalebox{0.95}{\ensuremath{\bigl(\tilde{S},f^{\ast}\bigl(\mathcal{I}\bigl(M\bigr)\bigr)\bigr)}}$
is a matroid, which we call the \textbf{pullback} of $\scalebox{0.95}{\ensuremath{M}}$
under the map $\scalebox{0.95}{\ensuremath{f}}$ and denote as $\scalebox{0.95}{\ensuremath{f^{\ast}(M)}}$
where $\scalebox{0.95}{\ensuremath{\mathcal{I}\bigl(f^{\ast}(M)\bigr)=f^{\ast}\bigl(\mathcal{I}\bigl(M\bigr)\bigr)}}$.\end{propdef}

The proof is straightforward. The pullback has the following properties:
\[
\scalebox{0.95}{\ensuremath{r_{f^{\ast}(M)}}}=\scalebox{0.95}{\ensuremath{r_{M}\circ f}}\quad\text{and}\quad\scalebox{0.95}{\ensuremath{\mathcal{L}\left(f^{\ast}\left(M\right)\right)}}=\scalebox{0.95}{\ensuremath{f^{-1}\left(\mathcal{L}\left(M\right)\right)}}.
\]
 The latter formula implies that $f$ is a \emph{strong map} from
$\scalebox{0.95}{\ensuremath{f^{\ast}\left(M\right)}}$ to $\scalebox{0.95}{\ensuremath{M}}$.

\begin{propdef}[Pushforward]

Let $\scalebox{0.95}{\ensuremath{\tilde{M}=\bigl(\tilde{S},\tilde{r}\bigr)}}$
be a matroid with a map $\scalebox{0.95}{\ensuremath{f:\tilde{S}\rightarrow S}}$
between sets. We define the set $\scalebox{0.95}{\ensuremath{f_{\ast}\bigl(\mathcal{I}\bigl(\tilde{M}\bigr)\bigr)}}$
as: 
\[
\scalebox{0.95}{\ensuremath{f_{\ast}\bigl(\mathcal{I}\bigl(\tilde{M}\bigr)\bigr):=\left\{ f\left(I\right)\subseteq S:I\in\mathcal{I}\bigl(\tilde{M}\bigr)\right\} }}.
\]
 The pair $\scalebox{0.95}{\ensuremath{\bigl(S,f_{\ast}\bigl(\mathcal{I}\bigl(\tilde{M}\bigr)\bigr)\bigr)}}$
is a matroid, which we call the \textbf{pushforward} of $\scalebox{0.95}{\ensuremath{\tilde{M}}}$
under $f$ and denote as $\scalebox{0.95}{\ensuremath{f_{\ast}\bigl(\tilde{M}\bigr)}}$
where $\scalebox{0.95}{\ensuremath{\mathcal{I}\bigl(f_{\ast}(\tilde{M})\bigr)=f_{\ast}\bigl(\mathcal{I}\bigl(\tilde{M}\bigr)\bigr)}}$.\end{propdef}

The proof can be found in \cite[Theorem 42.1]{Schrijver}. Here, $f$
is a \emph{weak map} from $\scalebox{0.95}{\ensuremath{\tilde{M}}}$
to $\scalebox{0.95}{\ensuremath{f_{\ast}\bigl(\tilde{M}\bigr)}}$. 
\begin{rem}
Let $\scalebox{0.95}{\ensuremath{M}}$ and $\scalebox{0.95}{\ensuremath{\tilde{M}}}$
be the matroids mentioned above with $\scalebox{0.95}{\ensuremath{f:\tilde{S}\rightarrow S}}$.
\begin{enumerate}
\item If $f$ is injective, then $\scalebox{0.95}{\ensuremath{\tilde{M}=f^{\ast}\bigl(f_{\ast}\bigl(\tilde{M}\bigr)\bigr)}}$.
\item If $f$ is surjective, then $\scalebox{0.95}{\ensuremath{M=f_{\ast}\left(f^{\ast}\left(M\right)\right)}}$.
\end{enumerate}
\end{rem}

Later in Section \ref{sec:Hyperplanes}, by choosing a suitable function
for $f$, we show that cutting any base polytope with a hyperplane
of the form $\scalebox{0.95}{\ensuremath{\left\{ \sum_{i\in A}x_{i}=1\right\} }}$
always produces a matroid subdivision (Lemma \ref{lem:Cut-w-1}).\vspace{2pt}

One can take the following examples as definitions of restrictions,
matroid unions, transversal matroids, and simple matroids, respectively.
\begin{example}
Let $\scalebox{0.95}{\ensuremath{M}}$ be a matroid. For an inclusion
map $\scalebox{0.95}{\ensuremath{\iota:A\hookrightarrow E\left(M\right)}}$,
we have $\scalebox{0.95}{\ensuremath{M|_{A}=\iota^{\ast}\left(M\right)}}$;
hence, any restriction is a pullback.
\end{example}

\begin{example}
A \emph{matroid union} is a pushforward. For any matroids $\scalebox{0.95}{\ensuremath{M_{1}=(S_{1},\mathcal{I}_{1})}}$
and $\scalebox{0.95}{\ensuremath{M_{2}=(S_{2},\mathcal{I}_{2})}}$,
let $\scalebox{0.95}{\ensuremath{f:S_{1}\oplus S_{2}\rightarrow S_{1}\cup S_{2}}}$
be the map with $\scalebox{0.95}{\ensuremath{f|_{S_{i}}=\mathrm{id}_{S_{i}}}}$
for $\scalebox{0.95}{\ensuremath{i=1,2}}$, which we call a \emph{natural
surjection}. The matroid union $\scalebox{0.95}{\ensuremath{M_{1}\vee M_{2}}}$
is the pushforward $\scalebox{0.95}{\ensuremath{f_{\ast}\left(M_{1}\oplus M_{2}\right)}}$.
This is a \emph{common upper bound} of $\scalebox{0.95}{\ensuremath{M_{1}}}$
and $\scalebox{0.95}{\ensuremath{M_{2}}}$ in the sense that $\scalebox{0.95}{\ensuremath{\mathcal{I}_{1}\cup\mathcal{I}_{2}\subseteq\mathcal{I}\left(M_{1}\vee M_{2}\right)}}.$
However, it is not the smallest nor a minimal such.
\end{example}

\begin{example}
A \emph{transversal matroid} is a pushforward. For a collection $\scalebox{0.95}{\ensuremath{\left\{ S_{1},\dots,S_{m}\right\} }}$
of finite sets, its induced transversal matroid is the pushforward
$\scalebox{0.95}{\ensuremath{f_{\ast}\bigl(\bigoplus_{i\in\left[m\right]}U_{S_{i}}^{1}\bigr)}}$
where $f$ is the natural surjection.
\end{example}

\begin{example}
Let $f$ be a map on $\scalebox{0.95}{\ensuremath{S-\bar{\emptyset}}}$
defined by $i\mapsto\overline{\left\{ i\right\} }$. Then 
\[
\scalebox{0.95}{\ensuremath{M\backslash\bar{\emptyset}=f^{\ast}\left(f_{\ast}\left(M\backslash\bar{\emptyset}\right)\right)}}.
\]
 We call $\scalebox{0.95}{\ensuremath{f_{\ast}\left(M\backslash\bar{\emptyset}\right)}}$
the \textbf{simplification} of $\scalebox{0.95}{\ensuremath{M}}$,
and $f$ the \textbf{simplification map} for $\scalebox{0.95}{\ensuremath{M}}$.
Then, $\scalebox{0.95}{\ensuremath{M}}$ is \emph{simple} if and only
if it is isomorphic to its simplification.
\end{example}

\subsection{\label{subsec:Matroid-union}Matroid intersection, base union, and
base intersection{*}}

For any two matroids $\scalebox{0.95}{\ensuremath{M_{1}}}$ and $\scalebox{0.95}{\ensuremath{M_{2}}}$,
their \textbf{matroid intersection} $\scalebox{0.95}{\ensuremath{M_{1}\wedge M_{2}}}$
is defined as a pair as follows, which is \emph{not} a matroid in
general: 
\[
\scalebox{0.95}{\ensuremath{M_{1}\wedge M_{2}:=\left(S_{1}\cap S_{2},\mathcal{I}\left(M_{1}\right)\cap\mathcal{I}\left(M_{2}\right)\right)}}.
\]
 Similarly, we define the \textbf{base union} $\scalebox{0.95}{\ensuremath{M_{1}\cup M_{2}}}$,
and the \textbf{base intersection} $\scalebox{0.95}{\ensuremath{M_{1}\cap M_{2}}}$,
of $\scalebox{0.95}{\ensuremath{M_{1}}}$ and $\scalebox{0.95}{\ensuremath{M_{2}}}$,
as follows, which are \emph{not} matroids in general either: 
\noindent \begin{center}
\begin{tabular}{c}
\noalign{\vskip\doublerulesep}
$\scalebox{0.95}{\ensuremath{M_{1}\cup M_{2}:=\left(S_{1}\cup S_{2},\mathcal{B}\left(M_{1}\right)\cup\mathcal{B}\left(M_{2}\right)\right)}}$,\tabularnewline[\doublerulesep]
\noalign{\vskip\doublerulesep}
$\scalebox{0.95}{\ensuremath{M_{1}\cap M_{2}:=\left(S_{1}\cap S_{2},\mathcal{B}\left(M_{1}\right)\cap\mathcal{B}\left(M_{2}\right)\right)}}$.\tabularnewline[\doublerulesep]
\end{tabular}
\par\end{center}

\noindent By abuse of notation, we mean by $\scalebox{0.95}{\ensuremath{M_{1}\wedge M_{2}}}$,
$\scalebox{0.95}{\ensuremath{M_{1}\cup M_{2}}}$, and $\scalebox{0.95}{\ensuremath{M_{1}\cap M_{2}}}$
the intersection of independent-set collections, the union of base
collections, and the intersection of base collections, of $\scalebox{0.95}{\ensuremath{M_{1}}}$
and $\scalebox{0.95}{\ensuremath{M_{2}}}$, respectively.

The operation $\wedge$ is not compatible with $\oplus$. Indeed,
for a matroid $\scalebox{0.95}{\ensuremath{N}}$: 
\[
\scalebox{0.95}{\ensuremath{\left(M_{1}\oplus M_{2}\right)\wedge N\subset\left(M_{1}\wedge N\right)\oplus\left(M_{2}\wedge N\right)}}
\]
 but not the other way around. Note that, as collections of subsets:
\[
\scalebox{0.95}{\ensuremath{M_{1}\cap M_{2}\subseteq M_{1}\wedge M_{2}}}.
\]
 If $\scalebox{0.95}{\ensuremath{M_{1}\cap M_{2}}}$ is nonempty,
then $\scalebox{0.95}{\ensuremath{M_{1}}}$ and $\scalebox{0.95}{\ensuremath{M_{2}}}$
have the same rank. If $\scalebox{0.95}{\ensuremath{M_{1}\cap M_{2}}}$
is a nonempty matroid, so is $\scalebox{0.95}{\ensuremath{M_{1}\wedge M_{2}}}$,
and they are the same matroid.

\subsection{Connectivity}

For a matroid $\scalebox{0.95}{\ensuremath{M=(S,r)}}$, the \textbf{connectivity
}function $c_{M}$ of $\scalebox{0.95}{\ensuremath{M}}$ is the following
$\scalebox{0.95}{\ensuremath{\mathbb{Z}_{\ge0}}}$-valued function
on $\scalebox{0.95}{\ensuremath{2^{S}}}$: $\scalebox{0.95}{\ensuremath{A\mapsto r\left(A\right)+r^{\ast}\left(A\right)-\left|A\right|}}$.

A member $\scalebox{0.95}{\ensuremath{A\in2^{S}}}$ is called a \textbf{separator}
of $\scalebox{0.95}{\ensuremath{M}}$ if $\scalebox{0.95}{\ensuremath{c_{M}\left(A\right)=0}}$.
Then, $\emptyset$ and $\scalebox{0.95}{\ensuremath{S}}$ are separators
called \textbf{trivial separators}. A size-$1$ separator of rank
$1$ is called a \textbf{coloop}, while a loop is a size-$1$ separator
of rank $0$. A coloop of $\scalebox{0.95}{\ensuremath{M}}$ is a
loop of $\scalebox{0.95}{\ensuremath{M^{\ast}}}$,  and we  denote
by $\scalebox{0.95}{\ensuremath{\bar{\emptyset}^{\ast}}}$ the collection
of coloops of $\scalebox{0.95}{\ensuremath{M}}$. Then, $\scalebox{0.93}{\ensuremath{\bar{\emptyset}^{\ast}}}=\scalebox{0.93}{\ensuremath{\bigcap\left.\mathcal{B}\right.}}$
and $\scalebox{0.93}{\ensuremath{\bar{\emptyset}}}=\scalebox{0.93}{\ensuremath{\ensuremath{S-\bigcup\left.\mathcal{B}\right.}}}$,
and $\scalebox{0.95}{\ensuremath{M|_{\bar{\emptyset}^{\ast}}}}$ and
$\scalebox{0.95}{\ensuremath{M|_{\bar{\emptyset}}}}$ are uniform
matroids of rank $\scalebox{0.9}{\ensuremath{\left|\bar{\emptyset}^{\ast}\right|}}$
and $0$, respectively.

We call $\scalebox{0.95}{\ensuremath{M}}$ \textbf{loopless} if $\bar{\emptyset}=\emptyset$
and \textbf{relevant} if $\bar{\emptyset}=\emptyset=\bar{\emptyset}^{\ast}$.
We also call $\scalebox{0.95}{\ensuremath{M}}$ \textbf{connected}
if it has no nontrivial separators and \textbf{disconnected} otherwise.
We call a subset $\scalebox{0.95}{\ensuremath{A\subseteq S}}$ \textbf{connected}
if $\scalebox{0.95}{\ensuremath{M|_{A}}}$ is connected and \textbf{disconnected}
otherwise. Let $\scalebox{0.95}{\ensuremath{A_{1},\dots,A_{\kappa\left(M\right)}}}$
be the nonempty minimal separators of $\scalebox{0.95}{\ensuremath{M}}$,
then $\scalebox{0.95}{\ensuremath{M}}$ is written as $\scalebox{0.95}{\ensuremath{M=M|_{A_{1}}\oplus\cdots\oplus M|_{A_{\kappa\left(M\right)}}}}$
with  $\scalebox{0.95}{\ensuremath{\kappa\left(M\right)\le r\left(M\right)+\left|\bar{\emptyset}\right|}}$.
Each summand $\scalebox{0.95}{\ensuremath{M|_{A_{i}}}}$ is a connected
matroid called a \textbf{connected component} of $\scalebox{0.95}{\ensuremath{M}}$.
We denote the number of connected components of $\scalebox{0.95}{\ensuremath{M}}$
as $\scalebox{0.95}{\ensuremath{\kappa\left(M\right)}}$.
\begin{rem}[]
\begin{enumerate}
\item A matroid with a loop is connected if and only if it is a rank-$0$
matroid whose ground set is a singleton.
\item The collection of separators is closed under set complement, union,
and intersection. Further, $\scalebox{0.95}{\ensuremath{M}}$ and
$\scalebox{0.95}{\ensuremath{M^{\ast}}}$ have the same separators.
\end{enumerate}
\end{rem}

\subsection{Non-degenerate subsets{*}}
\begin{defn}[{\cite[2.5]{GS87}}]
 For a connected matroid $\scalebox{0.95}{\ensuremath{M}}$, a nonempty
proper subset $\scalebox{0.95}{\ensuremath{A\subset E\left(M\right)}}$
is called \textbf{non-degenerate}\footnote{The name comes from the torus action on Grassmannians $\mathrm{G}\left(k,n\right)$,
see \cite{GS87}.}\textbf{ }or a \textbf{non-degenerate subset of $\scalebox{0.95}{\ensuremath{M}}$}
if both $\scalebox{0.95}{\ensuremath{M|_{A}}}$ and $\scalebox{0.95}{\ensuremath{M/A}}$
are connected, and \textbf{degenerate} or a \textbf{degenerate subset
of $\scalebox{0.95}{\ensuremath{M}}$} otherwise.
\end{defn}

We classify below the non-degenerate flats of a connected matroid
$\scalebox{0.95}{\ensuremath{M}}$ of rank $\scalebox{0.95}{\ensuremath{\le3}}$.
Let $\lambda$ denote the number of rank-$1$ flats.
\begin{enumerate}
\item If $\scalebox{0.95}{\ensuremath{r\left(M\right)\le1}}$, no non-degenerate
flats exist.
\item If $\scalebox{0.95}{\ensuremath{r\left(M\right)=2}}$, the non-degenerate
flats are exactly the rank-$1$ flats, and $\scalebox{0.95}{\ensuremath{E\left(M\right)}}$
is written as the disjoint union of the non-degenerate flats.
\item If $\scalebox{0.95}{\ensuremath{r\left(M\right)=3}}$, a nonempty
proper flat $A$ of $\scalebox{0.95}{\ensuremath{M}}$ is non-degenerate
if and only if $\scalebox{0.95}{\ensuremath{\lambda\left(M/A\right)\ge3}}$
or $\scalebox{0.95}{\ensuremath{\lambda\left(M|_{A}\right)\ge3}}$.
\end{enumerate}
We generalize the above definition:
\begin{defn}
\label{def:nondeg flat}Let $\scalebox{0.95}{\ensuremath{M}}$ be
a matroid, connected or not. Then, a nonempty proper subset $\scalebox{0.95}{\ensuremath{A\subset E\left(M\right)}}$
is called \textbf{non-degenerate} or a \textbf{non-degenerate subset
of} $\scalebox{0.95}{\ensuremath{M}}$ if 
\[
\scalebox{0.95}{\ensuremath{\kappa\left(M|_{A}\oplus M/A\right)=\kappa\left(M\right)+1}}
\]
 and \textbf{degenerate} or a \textbf{degenerate subset of} $\scalebox{0.95}{\ensuremath{M}}$
otherwise.
\end{defn}

Suppose that $\scalebox{0.95}{\ensuremath{A_{1},\dots,A_{m}\subset E\left(M\right)}}$
are different non-degenerate subsets with 
\[
\scalebox{0.95}{\ensuremath{M|_{A_{1}}\oplus M/A_{1}=\cdots=M|_{A_{m}}\oplus M/A_{m}}}.
\]
 Then, there exists the smallest subset among them. When $\scalebox{0.95}{\ensuremath{M}}$
is connected, $\scalebox{0.95}{\ensuremath{m=1}}$.\vspace{2pt}

Nondegeneracy is preserved under dualizing, i.e., if $\scalebox{0.95}{\ensuremath{A}}$
is a non-degenerate subset of a matroid $\scalebox{0.95}{\ensuremath{M}}$,
its complement $\scalebox{0.95}{\ensuremath{E\left(M\right)-A}}$
is a non-degenerate subset of $\scalebox{0.95}{\ensuremath{M^{\ast}}}$.\vspace{2pt}

Later in Section \ref{sec:Polyhedral Geometry of Matroids}, we show
how to recover the flat lattice of a matroid from its collection of
non-degenerate flats  (Proposition~\ref{prop:nondegen-flats}).

\subsection{\label{subsec:Attache-1}Attaché operation over matroids{*}}

For a subset $\scalebox{0.95}{\ensuremath{A}}$ of $\scalebox{0.95}{\ensuremath{E\left(M\right)}}$,
we define 
\[
\scalebox{0.95}{\ensuremath{M\left(A\right):=M|_{A}\oplus M/A=M/A\oplus M|_{A}}}.
\]
The operation of obtaining $\scalebox{0.95}{\ensuremath{M\left(A\right)}}$
from $\scalebox{0.95}{\ensuremath{M}}$ is said to be the \textbf{attaché
operation by }$\scalebox{0.95}{\ensuremath{A}}$. If $\scalebox{0.95}{\ensuremath{A_{1},\dots,A_{m}}}$
are subsets of $\scalebox{0.95}{\ensuremath{E\left(M\right)}}$, then
$\scalebox{0.95}{\ensuremath{\left(\cdots\left(\left(M\left(A_{1}\right)\right)\left(A_{2}\right)\right)\cdots\right)\left(A_{m}\right)}}$
is well-defined,  and we  write $\scalebox{0.95}{\ensuremath{M\left(A_{1}\right)\left(A_{2}\right)\cdots\left(A_{m}\right)}}$
to denote it.\smallskip{}

Attaché operations do not commute, and they do not affect the rank,
ground set, or decrease the $\kappa$ value. So, $\scalebox{0.95}{\ensuremath{\kappa\left(M(A)\right)=\kappa\left(M\right)}}$
if and only if $\scalebox{0.95}{\ensuremath{A}}$ is a separator of
$\scalebox{0.95}{\ensuremath{M}}$.\smallskip{}

See Subsection \ref{subsec:Attache-2} for the attaché operation over
matroid polytopes.

\subsection{\label{subsec:Minor-expressions}Minor expression and minor{*}}

For a matroid $\scalebox{0.95}{\ensuremath{M}}$, a finite sequence
of restrictions and contractions is called a \textbf{minor expression}
of $\scalebox{0.95}{\ensuremath{M}}$. It is denoted by concatenating
$\scalebox{0.95}{\ensuremath{M}}$ and those operations in order from
left to right. We say that a minor expression \textbf{represents}
the matroid it constructs. A \textbf{minor} of $\scalebox{0.95}{\ensuremath{M}}$
is the matroid a minor expression of $\scalebox{0.95}{\ensuremath{M}}$
represents. A minor expression is said to be \textbf{empty} and denoted
$\emptyset$ if it represents the empty matroid, i.e., if its ground
set is empty. Two minor expressions are said to be \textbf{disjoint}
if the minors they represent have disjoint ground sets.

Since deletions and contractions commute, every minor expression can
be reduced to $\scalebox{0.95}{\ensuremath{M|_{A}/B}}$ and $\scalebox{0.95}{\ensuremath{M/C|_{D}}}$
for some $\scalebox{0.95}{\ensuremath{A,B,C,D\subseteq E\left(M\right)}}$
with $\scalebox{0.95}{\ensuremath{B\subseteq A}}$ and $\scalebox{0.95}{\ensuremath{C\cap D=\emptyset}}$,
which we call \textbf{simple} minor expressions. The former expression
is said to be of the \emph{restriction-contraction form} and the latter
the \emph{contraction-restriction form}.

We call two minor expressions \textbf{equivalent} if they can be reduced
to the same simple minor expression.

\subsection{\label{subsec:Matroidal-expressions}Matroidal expression{*}}

A \textbf{matroidal expression} or simply an \textbf{expression} is
a finite direct sum of minor expressions. The matroid that a matroidal
expression \textbf{represents} is  the direct sum of the matroids
its summands represent. An \textbf{empty expression}, denoted $\emptyset$,
is an\textbf{ }expression that represents the empty matroid.

For two expressions $\scalebox{0.95}{\ensuremath{X}}$ and $\scalebox{0.95}{\ensuremath{Y}}$,
we say that $\scalebox{0.95}{\ensuremath{X}}$ is a \textbf{sub-expression}
of $\scalebox{0.95}{\ensuremath{Y}}$ if there is an injection from
the nonempty summands of $\scalebox{0.95}{\ensuremath{X}}$ to those
of $\scalebox{0.95}{\ensuremath{Y}}$. We say that $\scalebox{0.95}{\ensuremath{X}}$
and $\scalebox{0.95}{\ensuremath{Y}}$ are \textbf{equivalent} and
denote $\scalebox{0.95}{\ensuremath{X\equiv Y}}$ if they are sub-expressions
of each other. Here, ``$\equiv$'' is an equivalence relation on
the collection of matroidal expressions. Note that two non-equivalent
expressions can represent the same matroid.

Let $\phi$ be the map sending every matroidal expression to the matroid
it represents and every matroid to itself. We call this map $\phi$
the \textbf{forgetful map}. For two expressions $\scalebox{0.95}{\ensuremath{X}}$
and $\scalebox{0.95}{\ensuremath{Y}}$ with $\scalebox{0.95}{\ensuremath{\phi\left(X\right)=\phi\left(Y\right)}}$,
we write $\scalebox{0.95}{\ensuremath{X=Y}}$ for simplicity. Note
that $\phi$ composed with itself is equal to $\phi$, that is, $\phi\circ\phi=\phi$.
We omit the symbol $\phi$ unless confusion could arise.

\subsection{Flats, independent sets, bases, and matroid intersections of minors}

Flats of a matroid behave like closed sets in a topological space.
Restriction to and contraction over a flat behave like restriction
and quotient morphisms, respectively. We describe below flats of minors
as well as their independent sets, bases, and matroid intersections.
Let $\scalebox{0.95}{\ensuremath{M}}$ be a matroid with a subset
$\scalebox{0.95}{\ensuremath{A\subseteq E\left(M\right)}}$.
\begin{enumerate}
\item If $\scalebox{0.95}{\ensuremath{F}}$ is a flat of $\scalebox{0.95}{\ensuremath{M}}$,
then $\scalebox{0.95}{\ensuremath{F\cap A}}$ is a flat of $\scalebox{0.95}{\ensuremath{M|_{A}}}$
for any $\scalebox{0.95}{\ensuremath{A\in2^{E\left(M\right)}}}$.
Conversely, if $\scalebox{0.95}{\ensuremath{F\in2^{E\left(M\right)}}}$
is a flat of $\scalebox{0.95}{\ensuremath{M|_{A}}}$, then $\scalebox{0.95}{\ensuremath{F=\overline{F}\cap A}}$.
\item A member $\scalebox{0.95}{\ensuremath{A\in2^{E\left(M\right)}}}$
is a flat of $\scalebox{0.95}{\ensuremath{M}}$ if and only if $\scalebox{0.95}{\ensuremath{M/A}}$
is loopless. A member $\scalebox{0.95}{\ensuremath{F\in2^{E(M)}}}$
is a flat of $\scalebox{0.95}{\ensuremath{M/A}}$ if and only if $\scalebox{0.95}{\ensuremath{F\cup A}}$
is a flat of $\scalebox{0.95}{\ensuremath{M}}$.
\item The independent sets of $\scalebox{0.95}{\ensuremath{M|_{A}}}$ are
the independent sets of $\scalebox{0.95}{\ensuremath{M}}$ that are
contained in $\scalebox{0.95}{\ensuremath{A}}$.
\item The independent sets of $\scalebox{0.95}{\ensuremath{M/A}}$ are the
independent sets $\scalebox{0.95}{\ensuremath{I}}$ of $\scalebox{0.95}{\ensuremath{M\backslash A}}$
with $\scalebox{0.95}{\ensuremath{M|_{I\cup A}=M|_{I}\oplus M|_{A}}}$,
i.e., $\scalebox{0.95}{\ensuremath{\left\{ I,A\right\} }}$ being
a modular pair of $\scalebox{0.95}{\ensuremath{M}}$.
\item For a base $\scalebox{0.95}{\ensuremath{B}}$ of $\scalebox{0.95}{\ensuremath{M}}$,
if $\scalebox{0.95}{\ensuremath{B\cap A}}$ is a base of $\scalebox{0.95}{\ensuremath{M|_{A}}}$,
then $\scalebox{0.95}{\ensuremath{B-A}}$ is a base of $\scalebox{0.95}{\ensuremath{M/A}}$
and vice versa. Every base of $\scalebox{0.95}{\ensuremath{M|_{A}\oplus M/A}}$
is a base of $\scalebox{0.95}{\ensuremath{M}}$.
\item For $\scalebox{0.95}{\ensuremath{C,D\in2^{E\left(M\right)}}}$, one
has $\scalebox{0.95}{\ensuremath{M|_{C}\wedge M|_{D}}}\hspace{2.7bp}=\scalebox{0.95}{\ensuremath{M|_{C\cap D}}}$,
$\scalebox{0.95}{\ensuremath{M|_{C}\wedge M/D}}=\scalebox{0.95}{\ensuremath{M|_{C\cup D}/D}}$,
and $\scalebox{0.95}{\ensuremath{M/C\wedge M/D}}\supseteq\scalebox{0.95}{\ensuremath{\mathcal{I}\left(M/\left(C\cup D\right)\right)}}$
where $\scalebox{0.95}{\ensuremath{\supseteq}}$ in the last formula
can be strict.
\end{enumerate}

\subsection{\label{subsec:Free-product}Free product of matroids}

Let $\scalebox{0.95}{\ensuremath{M}}$ and $\scalebox{0.95}{\ensuremath{N}}$
be matroids. If $\scalebox{0.95}{\ensuremath{X}}$ is a matroid on
$\scalebox{0.95}{\ensuremath{E\left(M\right)\oplus E\left(N\right)}}$
with $\scalebox{0.95}{\ensuremath{X|_{E\left(M\right)}=M}}$ and $\scalebox{0.95}{\ensuremath{X/E(M)=N}}$,
then a base of $\scalebox{0.95}{\ensuremath{X}}$ is a disjoint union
of an independent set of $\scalebox{0.95}{\ensuremath{M}}$ and a
spanning set of $\scalebox{0.95}{\ensuremath{N}}$. Conversely, let
$\scalebox{0.95}{\ensuremath{\mathcal{B}_{M,N}}}$ denote the collection
of $\scalebox{0.95}{\ensuremath{\left(r(M)+r(N)\right)}}$-element
subsets $\scalebox{0.95}{\ensuremath{I\oplus J}}$ of $\scalebox{0.95}{\ensuremath{E\left(M\right)\oplus E\left(N\right)}}$
with $\scalebox{0.95}{\ensuremath{I}}$ being an independent set of
$\scalebox{0.95}{\ensuremath{M}}$ and $\scalebox{0.95}{\ensuremath{J}}$
being a spanning set of $\scalebox{0.95}{\ensuremath{N}}$. Then,
$\scalebox{0.95}{\ensuremath{M\boxempty N:=\left(E\left(M\right)\oplus E\left(N\right),\mathcal{B}_{M,N}\right)}}$
is a matroid with $\scalebox{0.95}{\ensuremath{\mathcal{B}_{M,N}=\mathcal{B}\left(M\boxempty N\right)}}$,
called the \textbf{free product} of $\scalebox{0.95}{\ensuremath{M}}$
and $\scalebox{0.95}{\ensuremath{N}}$. In particular: 
\[
\scalebox{0.95}{\ensuremath{\left(M\boxempty N\right)\left(E(M)\right)=M\oplus N}}.
\]
 In other words, $\scalebox{0.95}{\ensuremath{M\oplus N}}$ is a face
matroid of $\scalebox{0.95}{\ensuremath{M\boxempty N}}$, obtained
by applying the attaché operation by $\scalebox{0.95}{\ensuremath{E(M)}}$
to $\scalebox{0.95}{\ensuremath{M\boxempty N}}$. So, by construction,
$\scalebox{0.95}{\ensuremath{M\boxempty N}}$ is the (inclusionwise)
\emph{largest} matroid on $\scalebox{0.95}{\ensuremath{E\left(M\right)\oplus E\left(N\right)}}$
with that property. Thus, $\scalebox{0.95}{\ensuremath{M\boxempty N}}$
is a universal object. This can be better understood with the notion
of attaché operation over matroid polytopes, cf. Subsection \ref{subsec:Attache-2}.
Also, it can be seen easily that $\scalebox{0.95}{\ensuremath{E\left(M\right)\oplus\overline{\emptyset}_{N}}}$
is a flat of $\scalebox{0.95}{\ensuremath{M\boxempty N}}$. The binary
operation of free product is \emph{not} commutative. But, it is associative,
that is, for matroids $\scalebox{0.95}{\ensuremath{M_{1}}}$, $\scalebox{0.95}{\ensuremath{M_{2}}}$,
and $\scalebox{0.95}{\ensuremath{M_{3}}}$: 
\[
\scalebox{0.95}{\ensuremath{\left(M_{1}\boxempty M_{2}\right)\boxempty M_{3}=M_{1}\boxempty\left(M_{2}\boxempty M_{3}\right)}}.
\]
 One can also deduce the associativity from the universal property
with Lemma \ref{lem:commuting}. Later in Section \ref{sec:Hyperplanes},
we will use the free product of matroids to show that one obtains
a matroid subdivision of a hypersimplex by cutting it with a hyperplane
of the form $\scalebox{0.95}{\ensuremath{\left\{ \sum_{i\in A}x_{i}=\rho\right\} }}$
for a fixed integer $\scalebox{0.95}{\ensuremath{\rho>1}}$ (Lemma
\ref{lem:Cutting-1}).

\section{\label{sec:Polyhedral Geometry of Matroids}Polyhedral Geometry of
Matroids}

A matroid polytope is a convex hull of indicator vectors of sets.
Here, we mean by matroid polytopes base, independence, spanning-set,
and any other polytopes associated with a matroid. A spanning-set
polytope is expressed as an involution of an independence polytope,
 and we  pay particular attention to independence and base polytopes.
The objective of this section is to investigate the face structure
of a matroid polytope and its face lattice.

\subsection{Basic terminology}

For any nonempty finite set $\scalebox{0.95}{\ensuremath{S}}$, we
denote by $\scalebox{0.95}{\ensuremath{\mathbb{R}^{S}}}$ the product
of $\scalebox{0.95}{\ensuremath{\left|S\right|}}$ copies of $\scalebox{0.95}{\ensuremath{\mathbb{R}}}$,
labelled by the elements of $\scalebox{0.95}{\ensuremath{S}}$, one
for each, where $\scalebox{0.95}{\ensuremath{\mathbb{R}^{S}}}$ can
also be understood as the space of all functions $\scalebox{0.95}{\ensuremath{S\rightarrow\mathbb{R}}}$.
A convex polytope in $\scalebox{0.95}{\ensuremath{\mathbb{R}^{S}}}$
is a convex hull of a finite number of points in $\scalebox{0.95}{\ensuremath{\mathbb{R}^{S}}}$,
denoted by $\scalebox{0.95}{\ensuremath{\mathrm{conv}\,\mathcal{A}}}$,
where $\scalebox{0.95}{\ensuremath{\mathcal{A}}}$ is the collection
of those points.
\begin{defn}
\label{def:indicating vector}Fix a finite set $\scalebox{0.95}{\ensuremath{S}}$.
For $\scalebox{0.95}{\ensuremath{A\in2^{S}}}$, its \textbf{indicator
vector} is defined as a vector $\scalebox{0.95}{\ensuremath{1^{A}\in\mathbb{R}^{S}}}$
whose $i$-th entry is $1$ if $\scalebox{0.95}{\ensuremath{i\in A}}$
and $0$ otherwise. In particular, we denote $\scalebox{0.95}{\ensuremath{\mathbbm{1}:=1^{S}}}$.
The \textbf{0/1-polytope} $\scalebox{0.95}{\ensuremath{\mathrm{P}_{\mathcal{A}}}}$
of a nonempty subcollection $\scalebox{0.95}{\ensuremath{\mathcal{A}\subseteq2^{S}}}$
is defined as $\scalebox{0.95}{\ensuremath{\mathrm{P}_{\mathcal{A}}:=\mathrm{conv}\left(1^{A}:A\in\mathcal{A}\right)\subset\mathbb{R}^{S}}}$.
Denote $\scalebox{0.95}{\ensuremath{\mathbbm{1}-\mathrm{P}_{\mathcal{A}}:=\left\{ \mathbbm{1}-x:x\in\mathrm{P}_{\mathcal{A}}\right\} }}$,
then 
\[
\scalebox{0.95}{\ensuremath{\mathbbm{1}-\mathrm{P}_{\mathcal{A}}=\mathrm{conv}\left(\mathbbm{1}-1^{A}:A\in\mathcal{A}\right)}}.
\]
 We call this involution the \textbf{dual polytope} of $\scalebox{0.95}{\ensuremath{\mathrm{P}_{\mathcal{A}}}}$.
\end{defn}

\begin{rem}
\label{rem:identify}No vertex of a 0/1-polytope $\scalebox{0.95}{\ensuremath{\mathrm{P}_{\mathcal{A}}}}$
is a convex combination of the other vertices; hence the number of
vertices of $\scalebox{0.95}{\ensuremath{\mathrm{P}_{\mathcal{A}}}}$
is $\scalebox{0.95}{\ensuremath{\left|\mathcal{A}\right|}}$. Further,
$\scalebox{0.95}{\ensuremath{\mathrm{P}_{\mathcal{A}}=\mathrm{P}_{\mathcal{A}'}}}$
if and only if $\scalebox{0.95}{\ensuremath{\mathcal{A}=\mathcal{A}'}}$,
 and we  identify a subcollection of $\scalebox{0.95}{\ensuremath{2^{S}}}$
with its 0/1-polytope.
\end{rem}

\begin{defn}
For $\scalebox{0.95}{\ensuremath{A,B\in2^{S}}}$, the \textbf{distance}
between them is defined as
\begin{equation}
\scalebox{0.95}{\ensuremath{d\left(A,B\right):=\tfrac{1}{2}\left|A\,\triangle\,B\right|}}\label{eq:metric on U(n,n)}
\end{equation}
 where $\scalebox{0.95}{\ensuremath{A\,\triangle\,B}}$ denotes the
symmetric difference $\scalebox{0.95}{\ensuremath{A\cup B-A\cap B}}$.
We write $\scalebox{0.95}{\ensuremath{d\left(1^{A},1^{B}\right)}}$
for $\scalebox{0.95}{\ensuremath{d(A,B)}}$ under the identification
of Remark \ref{rem:identify}. If $\scalebox{0.95}{\ensuremath{A\neq B}}$,
then $\scalebox{0.95}{\ensuremath{d\left(A,B\right)}}$ is called
the \textbf{length} of the line segment $\scalebox{0.95}{\ensuremath{\overline{1^{A}1^{B}}:=\mathrm{conv}\left(1^{A},1^{B}\right)}}$.
\end{defn}

For the uniform matroid $\scalebox{0.95}{\ensuremath{U_{S}^{k}}}$,
the 0/1-polytope of its base collection is called the $\scalebox{0.95}{\ensuremath{\left(k,S\right)}}$-\textbf{hypersimplex}\footnote{Gelfand and Serganova named the 0/1-polytope of the base collection
of a $\scalebox{0.95}{\ensuremath{\left(k,S\right)}}$-matroid an
\emph{$\scalebox{0.95}{\ensuremath{\left(S,k\right)}}$-hypersimplex}
\cite[Section 2.2]{GS87}, which we call a base polytope; see Definition
\ref{def:matroid-poly}.} denoted by $\scalebox{0.95}{\ensuremath{\Delta_{S}^{k}}}$ or $\scalebox{0.95}{\ensuremath{\Delta\left(k,S\right)}}$,
where we prefer the former notation. We call a 0/1-polytope in $\scalebox{0.95}{\ensuremath{\Delta_{S}^{k}}}$
a $\scalebox{0.95}{\ensuremath{\left(k,S\right)}}$\textbf{-polytope}.

Let $\scalebox{0.95}{\ensuremath{\mathrm{P}_{\mathcal{A}}}}$ with
$\scalebox{0.95}{\ensuremath{\mathcal{A}\subseteq2^{S}}}$ be a 0/1-polytope.
We call $\scalebox{0.95}{\ensuremath{\mathrm{P}_{\mathcal{A}}}}$
\textbf{loopless} if it is not contained in a coordinate hyperplane
of $\scalebox{0.95}{\ensuremath{\mathbb{R}^{S}}}$ and call it \textbf{relevant}
if it is loopless and its dual polytope is also loopless. If $\scalebox{0.95}{\ensuremath{\mathcal{A}=\mathcal{B}\left(M\right)}}$
for some matroid $\scalebox{0.95}{\ensuremath{M}}$, one has $\scalebox{0.95}{\ensuremath{\mathbbm{1}-\mathrm{P}_{\mathcal{A}}=\mathrm{P}_{\mathcal{A}^{\ast}}}}$
with $\scalebox{0.95}{\ensuremath{\mathcal{A}^{\ast}=\mathcal{B}\left(M^{\ast}\right)}}$.
Hence, $\scalebox{0.95}{\ensuremath{\mathrm{P}_{\mathcal{A}}}}$ is
loopless or relevant if and only if $\scalebox{0.95}{\ensuremath{M}}$
is loopless or relevant, respectively.

If every edge of a 0/1-polytope has length $\scalebox{0.95}{\ensuremath{\le1}}$,
we say that it has the \textbf{edge length property}. The edge length
property with \ref{enu:(I1)} is equivalent to the exchange property
\ref{enu:(I2)}. For a $\scalebox{0.95}{\ensuremath{\left(k,S\right)}}$-polytope,
the edge length property alone is equivalent to the base exchange
property. These facts lead us to the following definitions:
\begin{defn}
\label{def:matroid-poly}Suppose that $\scalebox{0.95}{\ensuremath{\mathrm{P}_{\mathcal{A}}}}$
with $\scalebox{0.95}{\ensuremath{\mathcal{A}\subseteq2^{S}}}$ has
the edge length property.
\begin{enumerate}
\item $\scalebox{0.95}{\ensuremath{\mathrm{P}_{\mathcal{A}}}}$ is called
an \textbf{independence polytope} if it satisfies \ref{enu:(I1)}.
The pair $\scalebox{0.95}{\ensuremath{M=\left(S,\mathcal{A}\right)}}$
is a matroid with $\scalebox{0.95}{\ensuremath{\scalebox{0.95}{\ensuremath{\mathcal{I}\left(M\right)=\mathcal{A}}}}}$,
 and we   denote $\scalebox{0.95}{\ensuremath{\mathrm{IP}_{M}=\mathrm{P}_{\mathcal{A}}}}$.
\item $\scalebox{0.95}{\ensuremath{\mathrm{P}_{\mathcal{A}}}}$ is called
a \textbf{base polytope} if it is a $\scalebox{0.95}{\ensuremath{\left(k,S\right)}}$-polytope.
The pair $\scalebox{0.95}{\ensuremath{M=\left(S,\mathcal{A}\right)}}$
is a matroid with $\scalebox{0.95}{\ensuremath{\scalebox{0.95}{\ensuremath{\mathcal{B}\left(M\right)=\mathcal{A}}}}}$,
 and we   denote $\scalebox{0.95}{\ensuremath{\mathrm{BP}_{M}=\mathrm{P}_{\mathcal{A}}}}$.
\end{enumerate}
We call $\scalebox{0.95}{\ensuremath{\mathrm{IP}_{M}}}$ or $\scalebox{0.95}{\ensuremath{\mathrm{BP}_{M}}}$
\emph{the} independence or base polytope of $\scalebox{0.95}{\ensuremath{M}}$,
respectively, and for $\scalebox{0.95}{\ensuremath{\mathrm{P}=\mathrm{IP}_{M}}}$
or $\scalebox{0.95}{\ensuremath{\mathrm{P}=\mathrm{BP}_{M}}}$, we
denote by $\scalebox{0.95}{\ensuremath{\mathrm{MA}_{\mathrm{P}}}}$
the matroid $\scalebox{0.95}{\ensuremath{M}}$.
\end{defn}

The following proposition is immediate.
\begin{prop}
\label{prop:base-poly-cpx}Every face of a base polytope is again
a base polytope.
\end{prop}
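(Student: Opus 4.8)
The plan is to describe the faces of a base polytope $\mathrm{BP}_M$ by linear functionals and then identify each such face with the base polytope of a minor of $M$ obtained by restriction and contraction along a flat. First I would recall that every proper face of a polytope $\mathrm{P}=\mathrm{conv}(1^A:A\in\mathcal{A})$ is cut out by a supporting hyperplane, i.e.\ by a linear functional $w\in(\mathbb{R}^S)^*$: the face is $\mathrm{conv}(1^A:A\in\mathcal{A},\ w(1^A)=\max_{B\in\mathcal{A}}w(1^B))$. For $\mathrm{BP}_M$ with $\mathcal{A}=\mathcal{B}(M)$, writing $w=(w_i)_{i\in S}$ and $w(1^B)=\sum_{i\in B}w_i$, the face determined by $w$ consists exactly of the bases $B$ of $M$ maximizing $\sum_{i\in B}w_i$. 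This is the classical ``matroid greedy'' setup, so the maximizing bases are themselves the bases of a matroid: the one whose ground set is partitioned by the level sets $S_t=\{i:w_i=t\}$ of $w$ (ordered $t_1>t_2>\cdots>t_m$), and whose bases are the sets $B$ that are ``greedy'' for this ordering. Concretely, this maximizing matroid is the direct sum, over the chain of flats $F_0\subsetneq F_1\subsetneq\cdots$ with $F_j=\overline{S_1\cup\cdots\cup S_j}$, of the minors $M|_{F_j\cap(S_1\cup\cdots\cup S_j)}/F_{j-1}$ — or, packaged more cleanly, a minor of $M$ of the form $M|_F/F'$ for flats $F'\subseteq F$ when the face has codimension small, and in general an iterated such construction, which is again a base polytope by the direct-sum and minor formalism of Subsection~\ref{subsec:Minors-expressions}.

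The key steps, in order, are: (1) reduce to proper faces, since $\mathrm{BP}_M$ itself is trivially a base polytope; (2) fix a face, pick a supporting functional $w$, and describe the face as the convex hull of $w$-maximizing bases; (3) show this collection of bases satisfies the base exchange property \ref{enu:(BEP)} directly — given two $w$-maximal bases $A,B$ and $x\in A-B$, the ordinary exchange in $M$ produces $A-x+y\in\mathcal{B}(M)$ for some $y\in B-A$, and because $A$ and $B$ are both $w$-maximal one checks $w_x=w_y$, so $A-x+y$ is again $w$-maximal; (4) conclude from the matroid base axioms (the proposition on \ref{enu:(BEP)} quoted in the excerpt) that the face is $\mathrm{BP}_N$ for a matroid $N$ on $S$. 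Step (4) already suffices for the statement; if one wants the finer information, one additionally identifies $N$ as the minor expression $\bigoplus_j M|_{F_j\cap(S_1\cup\cdots\cup S_j)}/F_{j-1}$, using the basic equations of minor expressions to see this is well-defined.

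The main obstacle is step (3): verifying that the $w$-maximizing bases are closed under base exchange, and more precisely that the exchanged element $y$ can be chosen with $w_x=w_y$. The clean way is the standard greedy-algorithm argument — if $w_y<w_x$ one could strictly increase $w$ on $B$, contradicting $w$-maximality of $B$; if $w_y>w_x$, symmetric reasoning applied to $A$ gives a contradiction — but making this rigorous requires a short lemma that a base is $w$-maximal iff it is ``greedy'' with respect to the weak order induced by $w$, and that greedy bases are exactly the bases of the associated direct sum of minors along the flag of flats $F_j$. I expect this lemma to be where the actual work lies; everything else is bookkeeping with the restriction/contraction identities already set up. An alternative that avoids the greedy machinery is to induct on the number of distinct values of $w$: if $w$ takes a single value the face is all of $\mathrm{BP}_M$; otherwise split off the top level set $S_1$, write the face as $\mathrm{BP}_{M|_{\overline{S_1}\cap(\cdots)}}\times(\text{face of }\mathrm{BP}_{M/F_1})$ via the identity $(M|_C)\wedge(M/D)$-type formulas and the fact that $w$-maximal bases $B$ must have $B\cap S_1$ a basis of $M|_{\overline{S_1}}$ — but this essentially reproves the greedy lemma, so I would present the greedy-lemma route as primary.
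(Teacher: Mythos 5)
The paper does not actually supply a proof of this proposition: it is stated as a known fact, and the intended justification is the one\nobreakdash-line argument from the discussion immediately preceding it. Every face of a moment polytope is the convex hull of a subset of its vertices, and every edge of a face is an edge of the ambient polytope, so the edge length property is inherited by all faces; since for a $\left(k,S\right)$-polytope the edge length property alone is equivalent to the base exchange property \ref{enu:(BEP)}, every face of $\mathrm{BP}_{M}$ is again a base polytope. Your route via a supporting functional $w$ and the greedy characterization of $w$-maximal bases is a legitimate classical alternative (it is essentially the Gelfand--Goresky--MacPherson--Serganova argument), and it buys strictly more -- an explicit description of the face matroid as a direct sum of minors along the flag of flats determined by the level sets of $w$ -- but at the cost of substantially more machinery than the statement requires.

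There is, however, a genuine soft spot in your step (3) as written. The ordinary exchange property \ref{enu:(BEP)} produces \emph{some} $y\in B-A$ with $A-x+y\in\mathcal{B}(M)$, and $w$-maximality of $A$ then gives only $w_{y}\le w_{x}$; you cannot conclude $w_{x}=w_{y}$ for that particular $y$. Concretely, take the rank-$2$ matroid on $\left\{ 1,2,3,4\right\} $ in which $3$ and $4$ are parallel, let $w=1^{\left\{ 3,4\right\} }$, $A=\left\{ 1,3\right\} $, $B=\left\{ 2,4\right\} $, $x=3$: both $\left\{ 1,2\right\} $ and $\left\{ 1,4\right\} $ are bases, so the exchange may return $y=2$ with $w_{y}=0<1=w_{x}$ and $A-x+y$ not $w$-maximal. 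Your proposed fix -- that $w_{y}<w_{x}$ would let one ``strictly increase $w$ on $B$'' -- implicitly requires $B-y+x$ to be a base, which is the \emph{symmetric} exchange property, a genuine strengthening of \ref{enu:(BEP)} that you would need to invoke or prove; alternatively one falls back on the greedy lemma you defer. Either repair works, but as the argument stands the closure of the $w$-maximal bases under exchange is asserted rather than established, and this is precisely the step carrying all the content of your approach.
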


\subsection{Descriptions of independence and base polytopes }

A convex polytope can be expressed as a bounded intersection of finitely
many half spaces. Let $\scalebox{0.95}{\ensuremath{M}}$ be a matroid
on $\scalebox{0.95}{\ensuremath{S}}$. Its independence polytope $\scalebox{0.95}{\ensuremath{\mathrm{IP}_{M}}}$
is the intersection of $\scalebox{0.95}{\ensuremath{\left\{ x\in\mathbb{R}^{S}:x_{i}\ge0\right\} }}$
for all $\scalebox{0.95}{\ensuremath{i\in S}}$ and the following
half spaces:
\begin{equation}
\scalebox{0.95}{\ensuremath{\left\{ x\in\mathbb{R}^{S}:x\left(A\right)\le r\left(A\right)\right\} }}\quad\text{for all }\scalebox{0.95}{\ensuremath{A\in2^{S}}}\label{eq:defining-ineq}
\end{equation}
 where $\scalebox{0.95}{\ensuremath{x\left(A\right)=\sum_{i\in A}x_{i}}}$.
The formula \eqref{eq:defining-ineq} can be replaced by: 
\begin{equation}
\scalebox{0.95}{\ensuremath{\left\{ x\in\mathbb{R}^{S}:x\left(F\right)\le r\left(F\right)\right\} }}\quad\text{for all flats }\scalebox{0.95}{\ensuremath{F\in2^{S}}}.\label{eq:def-ineq-flat}
\end{equation}
 Later in Lemma~\ref{lem:Ineq improved}, \eqref{eq:def-ineq-flat}
further reduces to a minimal system of half spaces. The base polytope
$\scalebox{0.95}{\ensuremath{\mathrm{BP}_{M}}}$ is the intersection
of $\scalebox{0.95}{\ensuremath{\mathrm{IP}_{M}}}$ and $\scalebox{0.95}{\ensuremath{\left\{ x\in\mathbb{R}^{n}:x\left(S\right)=r\left(M\right)\right\} }}$,
which is a face of $\scalebox{0.95}{\ensuremath{\mathrm{IP}_{M}}}$.
Meanwhile, the \emph{dimensions} of $\scalebox{0.95}{\ensuremath{\mathrm{IP}_{M}}}$
and $\scalebox{0.95}{\ensuremath{\mathrm{BP}_{M}}}$ are $\scalebox{0.95}{\ensuremath{\left|S\right|-\left|\bar{\emptyset}\right|}}$
and $\scalebox{0.95}{\ensuremath{\left|S\right|-\kappa\left(M\right)}}$,
respectively. So, a loopless matroid $\scalebox{0.95}{\ensuremath{M}}$
is connected if and only if $\scalebox{0.95}{\ensuremath{\mathrm{BP}_{M}}}$
is full-dimensional, if and only if $\scalebox{0.95}{\ensuremath{\mathrm{BP}_{M}}}$
is a facet of $\scalebox{0.95}{\ensuremath{\mathrm{IP}_{M}}}$.

For a vector $\scalebox{0.95}{\ensuremath{\mathbf{v}=\left(v_{i}\right)_{i\in S}\in\mathbb{R}^{S}}}$,
we denote $\scalebox{0.95}{\ensuremath{\mathbf{v}\left(A\right):={\textstyle \sum_{i\in A}v_{i}}}}$
for $\scalebox{0.95}{\ensuremath{A\in2^{S}}}$.
\begin{defn}
Let $\scalebox{0.95}{\ensuremath{Q}}$ be a face of $\scalebox{0.95}{\ensuremath{\mathrm{P}_{\mathcal{A}}\subset\mathbb{R}^{S}}}$
with $\scalebox{0.95}{\ensuremath{\mathcal{A}\subseteq2^{S}}}$. We
say that $\scalebox{0.95}{\ensuremath{\mathbf{v}\in\mathbb{R}^{S}}}$
is a \textbf{characterizer vector} \textbf{of }$\scalebox{0.95}{\ensuremath{Q}}$\textbf{
in }$\scalebox{0.95}{\ensuremath{\mathrm{P}_{\mathcal{A}}}}$ if,
for a number $\scalebox{0.95}{\ensuremath{c\in\mathbb{R}}}$: 
\[
\scalebox{0.95}{\ensuremath{\mathbf{v}(A)=\left\langle \mathbf{v},1^{A}\right\rangle \le c\quad\mbox{for all }A\in\mathcal{A}}}
\]
 and equality holds precisely when $\scalebox{0.95}{\ensuremath{1^{A}}}$
is a vertex of $\scalebox{0.95}{\ensuremath{Q}}$. Here, $\left\langle \,,\,\right\rangle $
denotes the usual inner product of $\scalebox{0.95}{\ensuremath{\mathbb{R}^{S}}}$.
\end{defn}

\begin{example}
For $\scalebox{0.95}{\ensuremath{\mathrm{P}=\mathrm{IP}_{M}}}$ or
$\scalebox{0.95}{\ensuremath{\mathrm{P}=\mathrm{BP}_{M}}}$, every
indicator vector $\scalebox{0.95}{\ensuremath{1^{A}}}$ of $\scalebox{0.95}{\ensuremath{A\subseteq E\left(M\right)}}$
is a characterizer vector, cf.~\eqref{eq:defining-ineq}. Moreover,
any characterizer vector is \emph{outward-pointing normal} to $\scalebox{0.95}{\ensuremath{\mathrm{P}}}$.
\end{example}

The intersection of two independence polytopes or two base polytopes
is not an independence polytope or a base polytope in general. However,
the following proposition shows that there is a meaningful relationship.
\begin{prop}[{\cite[Corollaries 41.12b,d]{Schrijver}}]
\label{prop:common_polytopes} For any matroids $\scalebox{0.95}{\ensuremath{M}}$
and $\scalebox{0.95}{\ensuremath{N}}$ with $\scalebox{0.95}{\ensuremath{E\left(M\right)=E\left(N\right)}}$,
one has $\scalebox{0.95}{\ensuremath{\mathrm{IP}_{M}\cap\mathrm{IP}_{N}=\mathrm{P}_{M\wedge N}}}$
and $\scalebox{0.95}{\ensuremath{\mathrm{BP}_{M}\cap\mathrm{BP}_{N}=\mathrm{P}_{M\cap N}}}$.
\end{prop}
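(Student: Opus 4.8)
The plan is to reduce each identity to a statement about the vertex sets, using the fact that a $0/1$-polytope is completely determined by its set of $\{0,1\}$-vertices. For the independence-polytope identity, I would argue as follows. Write $P = \mathrm{IP}_M$, $Q = \mathrm{IP}_N$, $R = \mathrm{P}_{M\wedge N} = \mathrm{P}_{\mathcal{I}(M)\cap\mathcal{I}(N)}$, all living in $\mathbb{R}^{E}$ with $E = E(M) = E(N)$. The inclusion $R \subseteq P \cap Q$ is immediate: every $A \in \mathcal{I}(M)\cap\mathcal{I}(N)$ satisfies $1^A \in P$ and $1^A \in Q$, so $\mathrm{conv}(1^A : A\in\mathcal{I}(M)\cap\mathcal{I}(N)) \subseteq P\cap Q$. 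For the reverse inclusion, the key point is that $P \cap Q$ is again a $0/1$-polytope — i.e.\ all of its vertices lie in $\{0,1\}^{E}$ — because then any vertex $v$ of $P\cap Q$ is a vertex both of $P$ (hence $v = 1^A$ for some $A \in \mathcal{I}(M)$, since the vertices of $\mathrm{IP}_M$ are exactly the incidence vectors of independent sets of $M$) and of $Q$ (hence $A \in \mathcal{I}(N)$), so $A \in \mathcal{I}(M)\cap\mathcal{I}(N)$ and $v \in R$; taking convex hulls gives $P\cap Q \subseteq R$. The integrality of $P\cap Q$ is exactly the content of the matroid-intersection polytope theorem: the polytope $\{x \ge 0 : x(A) \le r_M(A),\ x(A)\le r_N(A)\ \forall A\}$ defined by the two rank-function systems has only integral vertices (Edmonds), which is precisely the cited \cite[Corollary 41.12b]{Schrijver}. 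So the one substantive ingredient is that $\mathrm{IP}_M \cap \mathrm{IP}_N$ has integral vertices, and this is imported.

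For the base-polytope identity, write $P = \mathrm{BP}_M$, $Q = \mathrm{BP}_N$, $R = \mathrm{P}_{M\cap N} = \mathrm{P}_{\mathcal{B}(M)\cap\mathcal{B}(N)}$. Again $R \subseteq P\cap Q$ is trivial from the definitions. If $\mathcal{B}(M)\cap\mathcal{B}(N) = \emptyset$ there is nothing on the left, and one must check $P\cap Q = \emptyset$ too; I would handle this together with the main case. For the reverse inclusion, note $P \subseteq \{x : x(E) = r(M)\}$ and $Q \subseteq \{x : x(E) = r(N)\}$, so if $r(M)\ne r(N)$ then $P\cap Q = \emptyset = R$ and we are done; assume $r(M) = r(N) = k$. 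Then $P = \mathrm{BP}_M$ is the face of $\mathrm{IP}_M$ cut out by $x(E) = k$, and likewise for $Q$, so $P\cap Q = (\mathrm{IP}_M \cap \mathrm{IP}_N) \cap \{x(E)=k\}$, which is a face of $\mathrm{IP}_M\cap\mathrm{IP}_N = \mathrm{P}_{\mathcal{I}(M)\cap\mathcal{I}(N)}$ by the first part. A face of a $0/1$-polytope is again a $0/1$-polytope, and its vertices are those vertices $1^A$ of the ambient polytope lying on the supporting hyperplane, i.e.\ those $A \in \mathcal{I}(M)\cap\mathcal{I}(N)$ with $|A| = k$; since $k = r(M)$ such an $A$ is a base of $M$, and $k = r(N)$ makes it a base of $N$, so $A \in \mathcal{B}(M)\cap\mathcal{B}(N)$. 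Hence $P\cap Q = \mathrm{conv}(1^A : A \in \mathcal{B}(M)\cap\mathcal{B}(N)) = R$, and in particular $P\cap Q = \emptyset$ exactly when $\mathcal{B}(M)\cap\mathcal{B}(N) = \emptyset$, settling the degenerate case simultaneously. Alternatively one can cite \cite[Corollary 41.12d]{Schrijver} directly for the integrality of $\mathrm{BP}_M\cap\mathrm{BP}_N$ and run the vertex argument verbatim.

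The main obstacle is not any calculation but making sure the "$0/1$" bookkeeping is airtight: one must know that the vertices of $\mathrm{IP}_M$ are \emph{exactly} $\{1^A : A\in\mathcal{I}(M)\}$ with no spurious extra vertices (true since every such point is a vertex of $[0,1]^E$ restricted appropriately, and conversely the polytope is their convex hull by definition), and that intersecting two integral polytopes need not stay integral in general, so the appeal to Edmonds's matroid-intersection theorem is doing real work — it is what makes $\wedge$ and $\cap$ behave well polytopally here even though $M\wedge N$ and $M\cap N$ need not be matroids as remarked earlier. Once that theorem is in hand, both identities follow by the uniform "intersection of $0/1$-polytopes, read off the common vertices" argument, with the base case obtained from the independence case by slicing with the hyperplane $\{x(E) = k\}$.
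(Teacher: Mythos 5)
Your proposal is correct, and it matches the paper's treatment: the paper gives no proof of this proposition at all, importing it verbatim from \cite[Corollaries 41.12b,d]{Schrijver}, and your reconstruction isolates exactly the one substantive ingredient those corollaries rest on (Edmonds's integrality of the common independent-set polytope) and then carries out the routine $0/1$-vertex bookkeeping, obtaining the base-polytope identity by slicing with the hyperplane $x(E)=k$. The only cosmetic caveat is the phrase ``a vertex of $P\cap Q$ is a vertex of $P$,'' which is true here only because such a vertex is a $0/1$ point and hence an extreme point of the cube lying in $P$; your surrounding argument already supplies that justification.
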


\subsection{Two-dimensional faces}

The edge length property describes the $\scalebox{0.95}{\ensuremath{1}}$-dimensional
face of $\scalebox{0.95}{\ensuremath{\mathrm{IP}_{M}}}$ and $\scalebox{0.95}{\ensuremath{\mathrm{BP}_{M}}}$
for a matroid $\scalebox{0.95}{\ensuremath{M}}$. In their book \cite{Coxeter},
Borovik, Gelfand, and White classified the two-dimensional faces of
$\scalebox{0.95}{\ensuremath{\mathrm{BP}_{M}}}$ using representation
theory, which played a significant role in their original proof. However,
we present another proof without using representation theory (Proposition
\ref{prop:2-dim faces}). Similarly, the two-dimensional faces of
$\scalebox{0.95}{\ensuremath{\mathrm{IP}_{M}}}$ are classified (Proposition
\ref{prop:2-dim faces-1}) using the same method.
\begin{defn}
Fix $\scalebox{0.95}{\ensuremath{\mathcal{A}\subseteq2^{S}}}$ with
$\scalebox{0.95}{\ensuremath{A,B\in\mathcal{A}}}$. If $\scalebox{0.95}{\ensuremath{A\,\triangle\,B=\left(A\,\triangle\,C\right)\cup\left(B\,\triangle\,C\right)}}$
for some $\scalebox{0.95}{\ensuremath{C\in\mathcal{A}}}$, we say
that $\scalebox{0.95}{\ensuremath{\left(A\,\triangle\,C\right)\cup\left(B\,\triangle\,C\right)}}$
is a \textbf{$\delta$-decomposition }of\textbf{ }$\scalebox{0.95}{\ensuremath{A\,\triangle\,B}}$
in $\scalebox{0.95}{\ensuremath{\mathcal{A}}}$.
\end{defn}

\begin{rem}
\label{rem:Delta-decomp}We have two useful equivalences:
\begin{enumerate}
\item $\scalebox{0.95}{\ensuremath{d\left(A,B\right)\lneq d\left(A,C\right)+d\left(B,C\right)}}$
if and only if $\scalebox{0.95}{\ensuremath{A\,\triangle\,B\subsetneq\left(A\,\triangle\,C\right)\cup\left(B\,\triangle\,C\right)}}$.
\item $\scalebox{0.95}{\ensuremath{A\,\triangle\,B=\left(A\,\triangle\,C\right)\cup\left(B\,\triangle\,C\right)}}$
if and only if $\scalebox{0.95}{\ensuremath{A\cap B\subseteq C\subseteq A\cup B}}$.
\end{enumerate}
\end{rem}

\begin{prop}
\label{prop:Delta-decomp}For $\scalebox{0.95}{\ensuremath{\mathcal{A}\subseteq2^{S}}}$
with $\scalebox{0.95}{\ensuremath{A,B\in\mathcal{A}}}$, if $\scalebox{0.95}{\ensuremath{A\,\triangle\,B}}$
has no nontrivial $\delta$-decomposition in $\scalebox{0.95}{\ensuremath{\mathcal{A}}}$,
then the line segment $\scalebox{0.95}{\ensuremath{\overline{1^{A}1^{B}}}}$
is an edge of $\scalebox{0.95}{\ensuremath{\mathrm{P}_{\mathcal{A}}}}$.
\end{prop}

\begin{proof}
Let $\scalebox{0.95}{\ensuremath{\mathbf{v}:=1^{A\cap B}-1^{\left(A\cup B\right)^{c}}}}$.
Then, $\scalebox{0.95}{\ensuremath{\left\langle \mathbf{v},1^{U}\right\rangle \le\left|A\cap B\right|}}$
for any vertex $\scalebox{0.95}{\ensuremath{1^{U}}}$ of $\scalebox{0.95}{\ensuremath{\mathrm{P}_{\mathcal{A}}}}$
in which equality holds exactly when $\scalebox{0.95}{\ensuremath{A\cap B\subseteq U\subseteq A\cup B}}$.
Let 
\[
\scalebox{0.95}{\ensuremath{Q:=\left\{ \mathbf{u}\in\mathrm{P}_{\mathcal{A}}:\left\langle \mathbf{v},\mathbf{u}\right\rangle =\left|A\cap B\right|\right\} }}.
\]
Then, $\scalebox{0.95}{\ensuremath{Q}}$ is a face of $\scalebox{0.95}{\ensuremath{\mathrm{P}_{\mathcal{A}}}}$
with $\scalebox{0.95}{\ensuremath{1^{A},1^{B}\in Q}}$, and $\scalebox{0.95}{\ensuremath{\mathbf{v}}}$
is a characterizer vector of $\scalebox{0.95}{\ensuremath{Q}}$. So,
if $\scalebox{0.95}{\ensuremath{1^{U}\in Q}}$, then $\scalebox{0.95}{\ensuremath{A\cap B\subseteq U\subseteq A\cup B}}$
and $\scalebox{0.95}{\ensuremath{\left(A\,\triangle\,U\right)\cup\left(B\,\triangle\,U\right)}}$
is a $\delta$-decomposition of $\scalebox{0.95}{\ensuremath{A\,\triangle\,B}}$;
hence, $\scalebox{0.95}{\ensuremath{U=A}}$ or $\scalebox{0.95}{\ensuremath{U=B}}$
by assumption. Thus, $\scalebox{0.95}{\ensuremath{Q=\mathrm{conv}(1^{A},1^{B})=\overline{1^{A}1^{B}}}}$.
\end{proof}
\begin{prop}
\label{prop:Delta-decomp-1}Let $\scalebox{0.95}{\ensuremath{1^{A}}}$
and $\scalebox{0.95}{\ensuremath{1^{B}}}$ be two distinct vertices
of $\scalebox{0.95}{\ensuremath{\mathrm{P}_{\mathcal{A}}}}$ with
$\scalebox{0.95}{\ensuremath{\mathcal{A}\subseteq2^{S}}}$. If $\scalebox{0.95}{\ensuremath{\overline{1^{A}1^{B}}}}$
has the smallest length among all edges, then it is an edge of $\scalebox{0.95}{\ensuremath{\mathrm{P}_{\mathcal{A}}}}$.
\end{prop}

\begin{proof}
Because $\scalebox{0.95}{\ensuremath{\overline{1^{A}1^{B}}}}$ has
the smallest length, $\scalebox{0.95}{\ensuremath{A\,\triangle\,B}}$
has no nontrivial $\delta$-decomposition in $\scalebox{0.95}{\ensuremath{\mathcal{A}}}$,
and $\scalebox{0.95}{\ensuremath{\overline{1^{A}1^{B}}}}$ is an edge
of $\scalebox{0.95}{\ensuremath{\mathrm{P}_{\mathcal{A}}}}$ by Proposition
\ref{prop:Delta-decomp}.
\end{proof}
Now, we prove the theorem of Borovik, Gelfand, and White.
\begin{prop}[{\cite[Theorem 1.12.8]{Coxeter}}]
\label{prop:2-dim faces} Every $2$-dimensional face of a base polytope
is either a regular triangle or a square.
\end{prop}

\begin{proof}
Let $\scalebox{0.95}{\ensuremath{Q}}$ be a $2$-dimensional face
of $\scalebox{0.95}{\ensuremath{\mathrm{BP}_{M}\subseteq\Delta_{S}^{k}}}$.
Let $\scalebox{0.95}{\ensuremath{\overline{1^{A}1^{B}}}}$ and $\scalebox{0.95}{\ensuremath{\overline{1^{B}1^{C}}}}$
be two distinct edges of $\scalebox{0.95}{\ensuremath{Q}}$. Then,
$\scalebox{0.95}{\ensuremath{d\left(A,C\right)=1,2}}$ is implied
by the following:
\[
\scalebox{0.95}{\ensuremath{d\left(A,C\right)\le d\left(A,B\right)+d\left(B,C\right)=1+1=2}}.
\]
If $\scalebox{0.95}{\ensuremath{d\left(A,C\right)=1}}$, then $\scalebox{0.95}{\ensuremath{\overline{1^{A}1^{C}}}}$
is an edge of $\scalebox{0.95}{\ensuremath{Q}}$ by Proposition \ref{prop:Delta-decomp-1}.
So, $\scalebox{0.95}{\ensuremath{\overline{1^{A}1^{B}}}}$, $\scalebox{0.95}{\ensuremath{\overline{1^{B}1^{C}}}}$,
and $\scalebox{0.95}{\ensuremath{\overline{1^{A}1^{C}}}}$ are three
edges of $\scalebox{0.95}{\ensuremath{Q}}$ with $\scalebox{0.95}{\ensuremath{\dim Q=2}}$;
hence, $\scalebox{0.95}{\ensuremath{Q=\mathrm{conv}\left(1^{A},1^{B},1^{C}\right)}}$.
This is a regular triangle. So, assume $\scalebox{0.95}{\ensuremath{Q}}$
is a non-triangle with $\scalebox{0.95}{\ensuremath{d\left(A,C\right)=2}}$.
Then, observe that $\scalebox{0.95}{\ensuremath{1^{A}+1^{C}}}$ is
a characterizer vector of $\scalebox{0.95}{\ensuremath{Q}}$, and
any vertex of $\scalebox{0.95}{\ensuremath{Q}}$ gives a $\delta$-decomposition
of $\scalebox{0.95}{\ensuremath{A\,\triangle\,C}}$. If $\scalebox{0.95}{\ensuremath{1^{D}}}$
is a new vertex of $\scalebox{0.95}{\ensuremath{Q}}$, then $\scalebox{0.95}{\ensuremath{d\left(A,C\right)=d\left(A,D\right)+d\left(C,D\right)}}$
and $\scalebox{0.95}{\ensuremath{d\left(A,D\right)=1}}$.  Therefore,
$\scalebox{0.95}{\ensuremath{\overline{1^{A}1^{D}}}}$ is an edge
of $\scalebox{0.95}{\ensuremath{Q}}$ and $\scalebox{0.95}{\ensuremath{Q=\mathrm{conv}\left(1^{A},1^{B},1^{C},1^{D}\right)}}$,
which is a square.
\end{proof}
\begin{defn}
For a nonnegative integer $c$, the \textbf{\emph{c}}\textbf{-truncation
matroid} $\scalebox{0.95}{\ensuremath{M^{\left(\le c\right)}}}$ of
$\scalebox{0.95}{\ensuremath{M}}$ or simply a \textbf{truncation}
of $M$ is the matroid defined by a submodular rank function $\scalebox{0.95}{\ensuremath{A\mapsto\min\left(c,r\left(A\right)\right)}}$.
Here, $\scalebox{0.95}{\ensuremath{M^{\left(\le k\right)}=M}}$ for
$\scalebox{0.95}{\ensuremath{k=r\left(M\right)}}$.
\end{defn}

\begin{prop}
\label{prop:2-dim faces-1}For any matroid $\scalebox{0.95}{\ensuremath{M}}$
of rank $k$, the $\scalebox{0.95}{\ensuremath{2}}$-dimensional faces
of its independence polytope $\scalebox{0.9}{\ensuremath{\mathrm{IP}_{M}}}$
are classified into the following four cases, as illustrated in Figure~\ref{fig:2-dim faces-1};
in this figure, the angles are Euclidean, and the numbers in parentheses
indicate the ranks of the vertices.
\begin{enumerate}
\item \label{enu:2dim-faces-1}A regular triangle with a side length of
$1$, contained in $\scalebox{0.9}{\ensuremath{\mathrm{BP}_{M}}}$.
\item \label{enu:2dim-faces-2}A square with a side length of $1$, contained
in $\scalebox{0.9}{\ensuremath{\mathrm{BP}_{M}}}$.
\item \label{enu:2dim-faces-3}An isosceles right triangle with side lengths
of $\frac{1}{2},\frac{1}{2},1$, connecting two base polytopes $\scalebox{0.9}{\ensuremath{\mathrm{BP}_{M^{(\le j)}}}}$
and $\scalebox{0.9}{\ensuremath{\mathrm{BP}_{M^{(\le j+1)}}}}$ for
some $\scalebox{0.9}{\ensuremath{j+1\in\left[k\right]}}$.
\item \label{enu:2dim-faces-4}A square with a side length of $\frac{1}{2}$,
connecting three base polytopes $\scalebox{0.9}{\ensuremath{\mathrm{BP}_{M^{(\le j-1)}}}}$,
$\scalebox{0.9}{\ensuremath{\mathrm{BP}_{M^{(\le j)}}}}$, and $\scalebox{0.9}{\ensuremath{\mathrm{BP}_{M^{(\le j+1)}}}}$,
with two vertices contained in the middle base polytope $\scalebox{0.9}{\ensuremath{\mathrm{BP}_{M^{(\le j)}}}}$.\vspace{-0.5cm}
\end{enumerate}
\begin{spacing}{0}
\noindent 
\begin{figure}[H]
\noindent \centering{}\noindent \begin{center}
\def\size{0.7}


\par\end{center}\vspace{-0.2cm}
\caption{\label{fig:2-dim faces-1}The 2-dimensional face of independence polytope.}
\end{figure}
\end{spacing}
\end{prop}

\begin{proof}
\eqref{enu:2dim-faces-1} and \eqref{enu:2dim-faces-2} are Proposition
\ref{prop:2-dim faces}, and \eqref{enu:2dim-faces-3} and \eqref{enu:2dim-faces-4}
are similarly proved.
\end{proof}

\subsection{\label{subsec:Attache-2}Attaché operation over matroid polytopes}

Let $\scalebox{0.95}{\ensuremath{\mathrm{P}=\mathrm{IP}_{M}}}$ or
$\scalebox{0.95}{\ensuremath{\mathrm{P}=\mathrm{BP}_{M}}}$. For a
subset $\scalebox{0.95}{\ensuremath{A\subseteq E\left(M\right)}}$,
define 
\[
\scalebox{0.95}{\ensuremath{{\rm P}\left(A\right)=\left\{ x\in{\rm P}:x\left(A\right)=r\left(A\right)\right\} }}.
\]
The formula \eqref{eq:defining-ineq} tells that $\scalebox{0.95}{\ensuremath{{\rm P}\left(A\right)}}$
is a face of $\scalebox{0.95}{\ensuremath{\mathrm{P}}}$, which is
\emph{nonempty} since $\scalebox{0.95}{\ensuremath{M|_{A}}}$ has
a base $\scalebox{0.95}{\ensuremath{B}}$ and $\scalebox{0.95}{\ensuremath{\mathrm{P}}}$
contains $\scalebox{0.95}{\ensuremath{1^{B}}}$. Obtaining the nonempty
face $\scalebox{0.95}{\ensuremath{{\rm P}\left(A\right)}}$ from $\scalebox{0.95}{\ensuremath{{\rm P}}}$
is said to be the \textbf{attaché operation by} $\scalebox{0.95}{\ensuremath{A}}$.
\begin{rem}
For technical reasons, one can have a factor of the form $\scalebox{0.95}{\ensuremath{\mathrm{P}_{\emptyset}}}$
in a Cartesian product of polytopes, which should be ignored and erased
in the product. For instance, $\scalebox{0.95}{\ensuremath{\mathrm{P}_{\emptyset}\times\mathrm{P}_{M}}}$
or $\scalebox{0.95}{\ensuremath{\mathrm{P}_{M}\times\mathrm{P}_{\emptyset}}}$
is understood as $\scalebox{0.95}{\ensuremath{\mathrm{P}_{M}}}$,
cf. $\scalebox{0.95}{\ensuremath{\emptyset\oplus M=M\oplus\emptyset=M}}$.
\end{rem}

\begin{lem}
\label{lem:Faces of BP}Let $\scalebox{0.95}{\ensuremath{M}}$ be
a loopless matroid. For any $\scalebox{0.95}{\ensuremath{A\subseteq E\left(M\right)}}$,
one has 
\[
\scalebox{0.95}{\ensuremath{{\rm BP}_{M}(A)={\rm BP}_{M/A}\times{\rm BP}_{M|_{A}}}}\quad\text{and}\quad\scalebox{0.95}{\ensuremath{{\rm IP}_{M}(A)={\rm IP}_{M/A}\times{\rm BP}_{M|_{A}}}}.
\]
 Here, $\scalebox{0.95}{\ensuremath{\mathrm{P}\left(A\right)}}$ for
$\scalebox{0.95}{\ensuremath{\mathrm{P}=\mathrm{IP}_{M}}}$ or $\scalebox{0.95}{\ensuremath{\mathrm{P}=\mathrm{BP}_{M}}}$
is loopless if and only if $\scalebox{0.95}{\ensuremath{A}}$ is a
flat.
\end{lem}

\begin{proof}
The following equivalent statements show $\scalebox{0.95}{\ensuremath{{\rm IP}_{M}(A)={\rm IP}_{M/A}\times{\rm BP}_{M|_{A}}}}$:
\begin{itemize}
\item The indicator vector $1^{J}$ of $\scalebox{0.95}{\ensuremath{J\subseteq E\left(M\right)}}$
is a point of $\scalebox{0.95}{\ensuremath{{\rm IP}_{M}(A)}}$.
\item $J$ is an independent set of $M$ and $\scalebox{0.95}{\ensuremath{\left|J\cap A\right|=1^{J}\left(A\right)=r\left(A\right)}}$.
\item $\scalebox{0.95}{\ensuremath{J\cap A}}$ is a base of $\scalebox{0.95}{\ensuremath{M|_{A}}}$,
and $\scalebox{0.95}{\ensuremath{J-A}}$ is an independent set of
$\scalebox{0.95}{\ensuremath{M/A}}$.
\item $1^{J}$ is a vertex of $\scalebox{0.95}{\ensuremath{{\rm BP}_{M|_{A}}\times\mathrm{IP}_{M/A}}}$.
\end{itemize}
Similarly, $\scalebox{0.95}{\ensuremath{{\rm BP}_{M}(A)={\rm BP}_{M/A}\times{\rm BP}_{M|_{A}}}}$.
Now that $\scalebox{0.95}{\ensuremath{M}}$ is loopless, so is $\scalebox{0.95}{\ensuremath{M|_{A}}}$,
and the following are equivalent.
\begin{itemize}
\item $\scalebox{0.95}{\ensuremath{\mathrm{BP}_{M}(A)}}$ and $\scalebox{0.95}{\ensuremath{\mathrm{IP}_{M}(A)}}$
are contained in a coordinate hyperplane.
\item $\scalebox{0.95}{\ensuremath{\mathrm{BP}_{M/A}}}$ and $\scalebox{0.95}{\ensuremath{\mathrm{IP}_{M/A}}}$
are contained in a coordinate hyperplane.
\item $\scalebox{0.95}{\ensuremath{M/A}}$ has a loop.
\item $\scalebox{0.95}{\ensuremath{A}}$ is a non-flat of $\scalebox{0.95}{\ensuremath{M}}$.
\end{itemize}
The proof is done.
\end{proof}
For $\scalebox{0.95}{\ensuremath{A_{1},\dots,A_{m}\subseteq E\left(M\right)}}$,
$\scalebox{0.95}{\ensuremath{\left(\cdots\left(\left(\mathrm{P}\left(A_{1}\right)\right)\left(A_{2}\right)\right)\cdots\right)\left(A_{m}\right)}}$
is well-defined due to Lemma \ref{lem:Faces of BP}, and we  write
$\scalebox{0.95}{\ensuremath{\mathrm{P}\left(A_{1}\right)\left(A_{2}\right)\cdots\left(A_{m}\right)}}$
for it. Here, $\scalebox{0.95}{\ensuremath{\mathrm{P}\left(A_{1}\right)\left(A_{2}\right)\cdots\left(A_{i}\right)}}$
for $\scalebox{0.95}{\ensuremath{i\in\left[m\right]}}$ is a nonempty
face of $\scalebox{0.95}{\ensuremath{\mathrm{P}\left(A_{1}\right)\left(A_{2}\right)\cdots\left(A_{i-1}\right)}}$
with $\scalebox{0.95}{\ensuremath{\mathrm{P}\left(A_{0}\right):=\mathrm{P}}}$.
In particular 
\[
\scalebox{0.95}{\ensuremath{\mathrm{BP}_{M}\left(A_{1}\right)\left(A_{2}\right)\cdots\left(A_{m}\right)=\mathrm{BP}_{M\left(A_{1}\right)\left(A_{2}\right)\cdots\left(A_{m}\right)}}}.
\]
 Moreover, write $\scalebox{0.95}{\ensuremath{{\displaystyle M\left(A_{1}\right)\left(A_{2}\right)\cdots\left(A_{m}\right)=M/\left(A_{1}\cup\cdots\cup A_{m}\right)\oplus M'}}}$,
then 
\begin{align}
\scalebox{0.95}{\ensuremath{\mathrm{IP}_{M}\left(A_{1}\right)\left(A_{2}\right)\cdots\left(A_{m}\right)}} & =\scalebox{0.95}{\ensuremath{\mathrm{IP}_{M/\left(A_{1}\cup\cdots\cup A_{m}\right)}\times\mathrm{BP}_{M'}}}\nonumber \\
 & \subseteq\scalebox{0.95}{\ensuremath{\mathrm{IP}_{M/\left(A_{1}\cup\cdots\cup A_{m}\right)}\times\mathrm{IP}_{M'}}}\label{eq:equality}\\
 & =\scalebox{0.95}{\ensuremath{\mathrm{IP}_{M\left(A_{1}\right)\left(A_{2}\right)\cdots\left(A_{m}\right)}}}.\nonumber 
\end{align}
Here, the inclusion of \eqref{eq:equality} is strict if and only
if $\scalebox{0.95}{\ensuremath{M'}}$ has nonzero rank. \smallskip{}

For a loopless matroid $\scalebox{0.95}{\ensuremath{M}}$, every loopless
face of $\scalebox{0.95}{\ensuremath{\mathrm{P}=\mathrm{IP}_{M}}}$
or $\scalebox{0.95}{\ensuremath{\mathrm{P}=\mathrm{BP}_{M}}}$ is
obtained by recursive attaché operations. If $\scalebox{0.95}{\ensuremath{\mathrm{P}'}}$
is a face of $\scalebox{0.95}{\ensuremath{\mathrm{P}}}$ and $\scalebox{0.95}{\ensuremath{i\in E\left(M\right)}}$
is \emph{not} a coloop, then $\scalebox{0.95}{\ensuremath{\mathrm{P}'\cap\left\{ x_{i}=0\right\} =\mathrm{P}'\left(E\left(M\right)-\left\{ i\right\} \right)}}$,
which is a nonempty face of $\scalebox{0.95}{\ensuremath{\mathrm{P}}}$.
Thus, for a general matroid $\scalebox{0.95}{\ensuremath{M}}$, every
face of $\scalebox{0.95}{\ensuremath{\mathrm{P}=\mathrm{IP}_{M}}}$
or $\scalebox{0.95}{\ensuremath{\mathrm{P}=\mathrm{BP}_{M}}}$ is
obtained from $\scalebox{0.95}{\ensuremath{\mathrm{P}}}$ by recursive
attaché operations, and the following notion is well-defined:
\begin{defn}
For $\scalebox{0.95}{\ensuremath{A_{1},\dots,A_{m}\subseteq E\left(M\right)}}$,
the matroid $\scalebox{0.95}{\ensuremath{{\displaystyle M\left(A_{1}\right)\cdots\left(A_{m}\right)}}}$
is called a \textbf{face matroid}.
\end{defn}

The following corollary and proposition are immediate.
\begin{cor}
\label{cor:face-IP}Any face of an independence polytope is a Cartesian
product of polytopes, each of which is a base polytope or an independence
polytope.
\end{cor}

\begin{prop}
For $\scalebox{0.95}{\ensuremath{A_{1},\dots,A_{m}\subseteq E\left(M\right)}}$,
the following are equivalent.
\begin{enumerate}
\item $\scalebox{0.95}{\ensuremath{{\displaystyle A_{1}\cup\cdots\cup A_{m}=E\left(M\right)}}}$.
\item $\scalebox{0.95}{\ensuremath{\mathrm{IP}_{M}\left(A_{1}\right)\left(A_{2}\right)\cdots\left(A_{m}\right)}}=\scalebox{0.95}{\ensuremath{\mathrm{BP}_{M\left(A_{1}\right)\left(A_{2}\right)\cdots\left(A_{m}\right)}}}.$
\end{enumerate}
\end{prop}

Thus, the correspondence from base polytopes (or independence polytopes
and their faces) to matroids is a \emph{functor}, with attaché operations
being morphisms in both categories, where attaché operations by $\scalebox{0.95}{\ensuremath{\emptyset}}$
are identity morphisms.

Attaché operations do \emph{not} commute in general, but they do when
``nonemptiness'' is ensured, see Lemma \ref{lem:commuting} and
Theorem \ref{thm:face-intersection}.

\subsection{\label{subsec:Intersection-1}Intersection of faces, I}
\begin{lem}
\label{lem:commuting}Let $\scalebox{0.95}{\ensuremath{F}}$ and $\scalebox{0.95}{\ensuremath{L}}$
be subsets of $\scalebox{0.95}{\ensuremath{E\left(M\right)}}$. The
following are equivalent.
\begin{enumerate}
\item \label{enu:mod-pair}$\scalebox{0.95}{\ensuremath{\left\{ F,L\right\} }}$
is a modular pair of $\scalebox{0.95}{\ensuremath{M}}$.
\item \label{enu:nonempty}$\scalebox{0.95}{\ensuremath{M\left(F\right)\cap M\left(L\right)\neq\emptyset}}$.
\item \label{enu:all-equal}$\scalebox{0.95}{\ensuremath{M\left(F\right)\cap M\left(L\right)=M\left(F\cup L\right)\left(F\cap L\right)=M\left(F\right)\left(L\right)=M\left(L\right)\left(F\right)}}$. 
\item \label{enu:two-equal}Two of $\scalebox{0.95}{\ensuremath{M(F)\cap M(L)}}$,
$\scalebox{0.95}{\ensuremath{M\left(F\cup L\right)\left(F\cap L\right)}}$,
$\scalebox{0.95}{\ensuremath{M\left(F\right)\left(L\right)}}$, and
$\scalebox{0.95}{\ensuremath{M\left(L\right)\left(F\right)}}$ are
the same matroid.
\end{enumerate}
\end{lem}

\begin{proof}
Obviously, \eqref{enu:all-equal}$\Rightarrow$\eqref{enu:two-equal}
is true, and \eqref{enu:all-equal}$\Rightarrow$\eqref{enu:nonempty}
is also clear because if \eqref{enu:all-equal} holds, then $\scalebox{0.95}{\ensuremath{r\left(M\left(F\right)\cap M\left(L\right)\right)=r\left(M\right)}}$,
and $\scalebox{0.95}{\ensuremath{M\left(F\right)\cap M\left(L\right)}}$
is nonempty. To prove \eqref{enu:nonempty}$\Rightarrow$\eqref{enu:mod-pair},
suppose $\scalebox{0.95}{\ensuremath{M\left(F\right)\cap M\left(L\right)\neq\emptyset}}$.
Then, for any $\scalebox{0.95}{\ensuremath{B\in M\left(F\right)\cap M\left(L\right)}}$:
\begin{align*}
\scalebox{0.95}{\ensuremath{r\left(F\right)+r\left(L\right)}} & =\scalebox{0.95}{\ensuremath{\left|B\cap F\right|+\left|B\cap L\right|}}\\
 & =\scalebox{0.95}{\ensuremath{\left|B\cap F\cap L\right|+\left|B\cap\left(F\cup L\right)\right|}}\\
 & \le\scalebox{0.95}{\ensuremath{r\left(F\cap L\right)+r\left(F\cup L\right)}}
\end{align*}
 where equality holds by submodularity of $r$, and $\scalebox{0.95}{\ensuremath{\left\{ F,L\right\} }}$
is a modular pair.  Thus, \eqref{enu:all-equal}$\Rightarrow$\eqref{enu:nonempty}$\Rightarrow$\eqref{enu:mod-pair}
holds. To prove \eqref{enu:mod-pair}$\Rightarrow$\eqref{enu:all-equal},
suppose that  $\scalebox{0.95}{\ensuremath{\left\{ F,L\right\} }}$
is a modular pair. Then, $\scalebox{0.95}{\ensuremath{r_{M\left(F\right)}\left(L\right)=r\left(L\right)}}$
and by Lemma~\ref{lem:Faces of BP}:
\begin{align*}
\scalebox{0.95}{\ensuremath{\mathrm{BP}_{M\left(F\right)}\cap\mathrm{BP}_{M\left(L\right)}}} & =\scalebox{0.95}{\ensuremath{\left\{ x\in\mathrm{BP}_{M}:x\left(F\right)=r\left(F\right),x\left(L\right)=r\left(L\right)\right\} }}\\
 & =\scalebox{0.95}{\ensuremath{\left\{ x\in\mathrm{BP}_{M\left(F\right)}:x\left(L\right)=r_{M\left(F\right)}\left(L\right)\right\} }}\\
 & =\scalebox{0.95}{\ensuremath{\mathrm{BP}_{M\left(F\right)\left(L\right)}}}.
\end{align*}
 So, $\scalebox{0.95}{\ensuremath{M\left(F\right)\cap M\left(L\right)=M\left(F\right)\left(L\right)}}$
and $\scalebox{0.95}{\ensuremath{M\left(F\right)\left(L\right)=M\left(L\right)\left(F\right)}}$
by symmetry where 
\[
\scalebox{0.95}{\ensuremath{M\left(F\right)\left(L\right)=\left(M|_{F\cap L}\right)\oplus\left(M|_{F}/\left(F\cap L\right)\right)\oplus\left(M|_{F\cup L}/F\right)\oplus\left(M/\left(F\cup L\right)\right)}}.
\]
 Then, $\scalebox{0.95}{\ensuremath{M|_{F}/\left(F\cap L\right)=M|_{F\cup L}/L}}$.
Further, by submodularity, $\scalebox{0.95}{\ensuremath{r\left(F\cup L\right)-r\left(F\cap L\right)}}$
equals $\scalebox{0.95}{\ensuremath{\left(r\left(L\right)-r\left(F\cap L\right)\right)+\left(r\left(F\right)-r\left(F\cap L\right)\right)}}$,
which tells that $\scalebox{0.95}{\ensuremath{L-F}}$ and $\scalebox{0.95}{\ensuremath{F-L}}$
are separators of $\scalebox{0.95}{\ensuremath{M|_{F\cup L}/\left(F\cap L\right)}}$,
and 
\begin{align*}
\scalebox{0.95}{\ensuremath{M|_{F\cup L}/\left(F\cap L\right)}} & =\scalebox{0.95}{\ensuremath{\left(M|_{F}/\left(F\cap L\right)\right)\oplus\left(M|_{L}/\left(F\cap L\right)\right)}}\\
 & =\scalebox{0.95}{\ensuremath{\left(M|_{F\cup L}/F\right)\oplus\left(M|_{F\cup L}/L\right)}}.
\end{align*}
 So, $\scalebox{0.95}{\ensuremath{M\left(F\cup L\right)\left(F\cap L\right)=M\left(F\right)\left(L\right)}}$
and \eqref{enu:all-equal} holds.  Thus, \eqref{enu:mod-pair}$\Leftrightarrow$\eqref{enu:nonempty}$\Leftrightarrow$\eqref{enu:all-equal}$\Rightarrow$\eqref{enu:two-equal}
holds. Finally, to prove \eqref{enu:two-equal}$\Rightarrow$\eqref{enu:mod-pair},
suppose \eqref{enu:two-equal}. If $\scalebox{0.95}{\ensuremath{M(F)\cap M(L)}}$
equals one of the other three, then it is a nonempty matroid of rank
$r(M)$, and \eqref{enu:mod-pair} holds by \eqref{enu:nonempty}$\Rightarrow$\eqref{enu:mod-pair}.
If two of $\scalebox{0.95}{\ensuremath{M\left(F\cup L\right)\left(F\cap L\right)}}$,
$\scalebox{0.95}{\ensuremath{M\left(F\right)\left(L\right)}}$, and
$\scalebox{0.95}{\ensuremath{M\left(L\right)\left(F\right)}}$ are
the same, then equating their ranks shows that $\scalebox{0.95}{\ensuremath{\left\{ F,L\right\} }}$
is a modular pair of $\scalebox{0.95}{\ensuremath{M}}$, and \eqref{enu:mod-pair}
holds.
\end{proof}
If $\scalebox{0.95}{\ensuremath{F}}$ and $\scalebox{0.95}{\ensuremath{L}}$
are subsets of $\scalebox{0.95}{\ensuremath{E(M)}}$ with $\scalebox{0.95}{\ensuremath{F\subseteq L}}$
or $\scalebox{0.95}{\ensuremath{F\supseteq L}}$, then $\left\{ F,L\right\} $
is a modular pair, and $\scalebox{0.95}{\ensuremath{M(F)\cap M(L)\neq\emptyset}}$
by the above lemma. Moreover, if $\scalebox{0.95}{\ensuremath{\left(F_{1},\dots,F_{m}\right)}}$
is an increasing sequence of subsets of $\scalebox{0.95}{\ensuremath{E(M)}}$,
then for any permutation $\scalebox{0.95}{\ensuremath{\pi\in\mathfrak{S}_{m}}}$:
\[
\scalebox{0.95}{\ensuremath{M\left(F_{1}\right)\cdots\left(F_{m}\right)}}=\scalebox{0.95}{\ensuremath{M\left(F_{\pi\left(1\right)}\right)\cdots\left(F_{\pi\left(m\right)}\right)}}
\]
 where $\mathfrak{S}_{m}$ denotes the group of permutations on $\scalebox{0.95}{\ensuremath{\left\{ 1,\dots,m\right\} }}$.
We can say further:
\begin{thm}
\label{thm:face-matroids}For $\scalebox{0.95}{\ensuremath{F_{1},\dots,F_{m}\subseteq E\left(M\right)}}$,
the following are equivalent.
\begin{enumerate}
\item \label{enu:intersect-1}$\scalebox{0.95}{\ensuremath{\bigcap_{i\in\left[m\right]}M\left(F_{i}\right)\neq\emptyset}}$.
\item \label{enu:commute-1}$\scalebox{0.95}{\ensuremath{M\left(F_{\pi\left(1\right)}\right)\cdots\left(F_{\pi\left(m\right)}\right)}}$
for all $\scalebox{0.95}{\ensuremath{\pi\in\mathfrak{S}_{m}}}$ are
the same matroid.
\item \label{enu:face-all}$\scalebox{0.95}{\ensuremath{M\left(F_{\pi\left(1\right)}\right)\cdots\left(F_{\pi\left(m\right)}\right)=\bigcap_{i\in\left[m\right]}M\left(F_{i}\right)}}$
for all $\scalebox{0.95}{\ensuremath{\pi\in\mathfrak{S}_{m}}}$.
\item \label{enu:face-some}$\scalebox{0.95}{\ensuremath{M\left(F_{\pi\left(1\right)}\right)\cdots\left(F_{\pi\left(m\right)}\right)=\bigcap_{i\in\left[m\right]}M\left(F_{i}\right)}}$
for some $\scalebox{0.95}{\ensuremath{\pi\in\mathfrak{S}_{m}}}$.
\end{enumerate}
\end{thm}

\begin{proof}
Obviously, \eqref{enu:face-all}$\Rightarrow$\eqref{enu:face-some}
and \eqref{enu:face-all}$\Rightarrow$\eqref{enu:commute-1} hold.
Suppose \eqref{enu:face-some}, then $\scalebox{0.95}{\ensuremath{\bigcap_{i\in\left[m\right]}M\left(F_{i}\right)}}$
is a nonempty matroid of rank $\scalebox{0.95}{\ensuremath{r(M)}}$,
and \eqref{enu:face-some}$\Rightarrow$\eqref{enu:intersect-1} holds.
 Thus, \eqref{enu:face-all}$\Rightarrow$\eqref{enu:face-some}$\Rightarrow$\eqref{enu:intersect-1}
holds. To show \eqref{enu:face-all}$\Rightarrow$\eqref{enu:commute-1}$\Rightarrow$\eqref{enu:intersect-1},
we prove \eqref{enu:commute-1}$\Rightarrow$\eqref{enu:intersect-1}
by induction on $m$. The base case $m=2$ is true by Lemma \ref{lem:commuting}.
Assume \eqref{enu:commute-1}$\Rightarrow$\eqref{enu:intersect-1}
for all $m\le l$ with $l\ge2$. Suppose \eqref{enu:commute-1} for
$m=l+1$. Take any $\pi\in\mathfrak{S}_{l+1}$, then 
\begin{align*}
\scalebox{0.95}{\ensuremath{{\displaystyle \bigcap_{i\in\left[l+1\right]-\left\{ 1\right\} }\left(M\left(F_{\pi\left(1\right)}\right)\right)\left(F_{\pi\left(i\right)}\right)}}} & =\scalebox{0.95}{\ensuremath{{\displaystyle \bigcap_{i\in\left[l+1\right]-\left\{ 1\right\} }M\left(F_{\pi\left(1\right)}\right)\cap M\left(F_{\pi\left(i\right)}\right)}}}\\
 & =\scalebox{0.95}{\ensuremath{{\displaystyle \bigcap_{i\in\left[l+1\right]}M\left(F_{i}\right)}}}
\end{align*}
 which is nonempty by the induction hypothesis, and hence \eqref{enu:intersect-1}.
 Thus, \eqref{enu:face-all}$\Rightarrow$\eqref{enu:commute-1}$\Rightarrow$\eqref{enu:intersect-1}
holds. Now, it suffices to show \eqref{enu:intersect-1}$\Rightarrow$\eqref{enu:face-all}.
We prove by induction on $m$. The base case $m=2$ holds by Lemma
\ref{lem:commuting}. Assume \eqref{enu:intersect-1}$\Rightarrow$\eqref{enu:face-all}
for all $m\le l$ with $l\ge2$, and suppose $\scalebox{0.95}{\ensuremath{\bigcap_{i=1}^{l+1}M(F_{i})\neq\emptyset}}$.
Then, for any $\pi\in\mathfrak{S}_{l+1}$:
\[
\scalebox{0.95}{\ensuremath{{\displaystyle \bigcap_{i\in\left[l+1\right]-\left\{ 1\right\} }\left(M\left(F_{\pi\left(1\right)}\right)\right)\left(F_{\pi\left(i\right)}\right)=\bigcap_{i\in\left[l+1\right]}M\left(F_{i}\right)\neq\emptyset}}}
\]
 and by the induction hypothesis:
\begin{align*}
\scalebox{0.95}{\ensuremath{{\displaystyle \bigcap_{i\in\left[l+1\right]-\left\{ 1\right\} }\left(M\left(F_{\pi\left(1\right)}\right)\right)\left(F_{\pi\left(i\right)}\right)}}} & =\scalebox{0.95}{\ensuremath{{\displaystyle \left(M\left(F_{\pi(1)}\right)\right)\left(F_{\pi\left(2\right)}\right)\cdots\left(F_{\pi\left(l+1\right)}\right)}}}\\
 & =\scalebox{0.95}{\ensuremath{{\displaystyle M\left(F_{\pi\left(1\right)}\right)\left(F_{\pi\left(2\right)}\right)\cdots\left(F_{\pi\left(l+1\right)}\right)}}}.
\end{align*}
  Thus, \eqref{enu:intersect-1}$\Rightarrow$\eqref{enu:face-all}
proves true. The proof is complete.
\end{proof}
\begin{prop}
\label{prop:Boolean}For $\scalebox{0.95}{\ensuremath{F_{1},\dots,F_{m}\subseteq E\left(M\right)}}$,
let $\scalebox{0.95}{\ensuremath{\mathcal{A}}}$ be the \emph{Boolean
algebra they} generate with unions and intersections where we assume
$\scalebox{0.95}{\ensuremath{E\left(M\right)\in\mathcal{A}}}$. If
$\scalebox{0.95}{\ensuremath{\bigcap_{i\in\left[m\right]}M\left(F_{i}\right)}}$
is nonempty, then $\scalebox{0.95}{\ensuremath{\bigcap_{i\in\left[m\right]}M\left(F_{i}\right)}}=\scalebox{0.95}{\ensuremath{\bigcap_{A\in\mathcal{A}}M\left(A\right)}}.$
If $\scalebox{0.95}{\ensuremath{\bigcap_{i\in\left[m\right]}M\left(F_{i}\right)}}$
is a nonempty loopless matroid, then every member of $\scalebox{0.95}{\ensuremath{\mathcal{A}}}$
is a flat of $\scalebox{0.95}{\ensuremath{M}}$.
\end{prop}

\begin{proof}
If $\scalebox{0.95}{\ensuremath{\bigcap_{i\in\left[m\right]}M\left(F_{i}\right)\neq\emptyset}}$,
the attaché operations by $\scalebox{0.95}{\ensuremath{F_{1},\dots,F_{m}}}$
commute and are idempotent by Theorem \ref{thm:face-matroids}. By
Lemma~\ref{lem:commuting}, the attaché operations by $\scalebox{0.95}{\ensuremath{A}}$
for all $\scalebox{0.95}{\ensuremath{A\in\mathcal{A}}}$ on $\scalebox{0.95}{\ensuremath{M\left(F_{1}\right)\cdots\left(F_{m}\right)}}$
do \emph{not} change the face matroid. Suppose that $\scalebox{0.95}{\ensuremath{\bigcap_{i\in\left[m\right]}M\left(F_{i}\right)}}$
is a nonempty loopless matroid. Then, $\scalebox{0.95}{\ensuremath{M}}$
is loopless in the first place because attaché operations never vanish
loops, if any. Also, $\scalebox{0.95}{\ensuremath{M\left(A\right)}}$
for any $\scalebox{0.95}{\ensuremath{A\in\mathcal{A}}}$ is loopless,
and $\scalebox{0.95}{\ensuremath{A}}$ is a flat of $\scalebox{0.95}{\ensuremath{M}}$
by Lemma \ref{lem:Faces of BP}.
\end{proof}
The following is a corollary to Lemma~\ref{lem:Faces of BP}, which
describes the facets of a full-dimensional base polytope. The lemma
extends to Corollary~\ref{cor:GS-thm-ext}.
\begin{lem}[{\cite[Theorem 2.2]{GS87}}]
\label{lem:GS-thm-1}For a connected matroid $\scalebox{0.95}{\ensuremath{M}}$,
any facet of $\scalebox{0.95}{\ensuremath{\mathrm{BP}_{M}}}$ is written
as $\scalebox{0.95}{\ensuremath{\mathrm{BP}_{M\left(F\right)}}}$
for a unique non-degenerate subset $\scalebox{0.95}{\ensuremath{F}}$
of $\scalebox{0.95}{\ensuremath{M}}$.
\end{lem}

\begin{cor}
\label{cor:GS-thm-ext}Let $\scalebox{0.95}{\ensuremath{M}}$ be a
general matroid. Then, every facet matroid of $\scalebox{0.95}{\ensuremath{M}}$
is written as $\scalebox{0.95}{\ensuremath{M\left(F\right)}}$ for
a non-degenerate subset $\scalebox{0.95}{\ensuremath{F}}$ of $\scalebox{0.95}{\ensuremath{M}}$.
\end{cor}

\begin{proof}
Let $\scalebox{0.95}{\ensuremath{M_{1}\oplus\cdots\oplus M_{\ell}}}$
be the decomposition of $\scalebox{0.95}{\ensuremath{M}}$ into its
connected components, then $\scalebox{0.95}{\ensuremath{\mathrm{BP}_{M}=\mathrm{BP}_{M_{1}}\times\cdots\times\mathrm{BP}_{M_{l}}}}$.
For a facet $\scalebox{0.95}{\ensuremath{Q}}$ of $\scalebox{0.95}{\ensuremath{\mathrm{BP}_{M}}}$,
there is $\scalebox{0.95}{\ensuremath{i_{0}\in\left[l\right]}}$ such
that 
\begin{align*}
\scalebox{0.95}{\ensuremath{Q=Q_{1}\times\cdots\times Q_{\ell}}} & \quad\text{with }\scalebox{0.95}{\ensuremath{Q_{i}}}=\begin{cases}
\scalebox{0.95}{\ensuremath{\mathrm{BP}_{M_{i}}}} & \text{if }i\neq i_{0},\\
\scalebox{0.95}{\ensuremath{\text{a facet of }\mathrm{BP}_{M_{i_{0}}}}} & \text{if }i=i_{0}.
\end{cases}
\end{align*}
By Lemma \ref{lem:GS-thm-1}, $\scalebox{0.95}{\ensuremath{Q_{i_{0}}=\mathrm{BP}_{M_{i_{0}}}(F)}}$
for a unique non-degenerate subset $\scalebox{0.95}{\ensuremath{F}}$
of $\scalebox{0.95}{\ensuremath{M_{i_{0}}}}$, which is a non-degenerate
subset of $\scalebox{0.95}{\ensuremath{M}}$ with $\scalebox{0.95}{\ensuremath{Q=\mathrm{BP}_{M\left(F\right)}}}$.
\end{proof}
This produces the following lemma, which shows that there is a minimal
system of describing inequalities of an independence polytope:
\begin{lem}
\label{lem:Ineq improved}Let $\scalebox{0.95}{\ensuremath{M=\bigoplus_{j\in\left[\kappa\right]}M_{j}}}$
be a rank-$k$ matroid with $\scalebox{0.95}{\ensuremath{\kappa}}$
connected components. Then, its independence polytope is determined
by $\left|\bar{\emptyset}\right|$ equations $\scalebox{0.95}{\ensuremath{x(l)=0}}$
with $\scalebox{0.95}{\ensuremath{l\in\bar{\emptyset}}}$ and a minimal
system of inequalities: 
\begin{equation}
\scalebox{0.95}{\ensuremath{\begin{cases}
x_{i}\ge0 & \mbox{ for all }i\in E\left(M\right)-\bar{\emptyset},\\
x\left(F\right)\le r\left(F\right) & \mbox{ for all minimal non-degenerate flats }F\mbox{ of }M.
\end{cases}}}\label{eq:Ineq improved}
\end{equation}
 Its base polytope is determined by $\kappa$ equations $\scalebox{0.95}{\ensuremath{x\left(E(M_{j})\right)=r\left(M_{j}\right)}}$
and \eqref{eq:Ineq improved}.
\end{lem}

When $\scalebox{0.95}{\ensuremath{M}}$ is connected, \eqref{eq:Ineq improved}
is the unique minimal system of describing inequalities of an independence
polytope.\vspace{2pt}

Given a flat collection of a matroid, one can find the collection
of non-degenerate flats. The converse also holds:
\begin{prop}
\label{prop:nondegen-flats} If the collection $\scalebox{0.95}{\ensuremath{\mathcal{N}}}$
of non-degenerate flats of a matroid $\scalebox{0.95}{\ensuremath{M}}$
is nonempty, the flat collection of $\scalebox{0.95}{\ensuremath{M}}$
is recovered from $\scalebox{0.95}{\ensuremath{\mathcal{N}}}$.
\end{prop}

\begin{proof}
We may assume $\scalebox{0.95}{\ensuremath{M}}$ is a connected matroid
of rank $k\ge2$. Let $\scalebox{0.95}{\ensuremath{\mathrm{BP}_{M_{1}},\dots,\mathrm{BP}_{M_{\alpha}}}}$
be all inclusionwise minimal loopless faces of $\scalebox{0.95}{\ensuremath{\mathrm{BP}_{M}}}$.
Each $\scalebox{0.95}{\ensuremath{\mathrm{BP}_{M_{i}}}}$ is an intersection
of loopless facets of $\scalebox{0.95}{\ensuremath{\mathrm{BP}_{M}}}$,
and $\scalebox{0.95}{\ensuremath{M_{i}}}$ is an intersection of $\scalebox{0.95}{\ensuremath{M(F)}}$
for some non-degenerate flats $\scalebox{0.95}{\ensuremath{F}}$ of
$\scalebox{0.95}{\ensuremath{M}}$. Let $\scalebox{0.95}{\ensuremath{\mathcal{A}_{i}}}$
be the Boolean algebra generated with unions and intersections by
all those non-degenerate flats. By Proposition \ref{prop:Boolean},
$\scalebox{0.95}{\ensuremath{\mathcal{A}_{i}}}$ is contained in $\scalebox{0.95}{\ensuremath{\mathcal{L}\left(M\right)}}$.
Conversely, for any nonempty proper flat $\scalebox{0.95}{\ensuremath{L}}$
of $\scalebox{0.95}{\ensuremath{M}}$, the base polytope $\scalebox{0.95}{\ensuremath{\mathrm{BP}_{M\left(L\right)}}}$
is a loopless face of $\scalebox{0.95}{\ensuremath{\mathrm{BP}_{M}}}$
by Lemma \ref{lem:Faces of BP} and contains some $\scalebox{0.95}{\ensuremath{\mathrm{BP}_{M_{i}}}}$,
so $\scalebox{0.95}{\ensuremath{L\in\mathcal{A}_{i}}}$. 
\end{proof}

\subsection{\label{subsec:Latt-operation}Binary operation \textquotedblleft$\odot$\textquotedblright{}
on matroidal expressions}

Fix a matroid $\scalebox{0.95}{\ensuremath{M}}$. We first reduce
all minor expressions to simple minor expressions of contraction-restriction
form. We then define a binary operation ``$\odot$'' on the collection
of all minor expressions of $\scalebox{0.95}{\ensuremath{M}}$ as
follows:
\[
\scalebox{0.95}{\ensuremath{\left(M/A|_{C}\right)\odot\left(M/B|_{D}\right):=M/\left(A\cup B\right)|_{C\cap D}}}.
\]
 This operation is both commutative and associative. Next, we extend
the operation to the collection of all finite direct sums of minor
expressions as follows: for minor expressions $\scalebox{0.95}{\ensuremath{X,Y_{1},\dots,Y_{m}}}$,
define:
\[
\scalebox{0.95}{\ensuremath{{\displaystyle X\odot\Biggl(\bigoplus_{i=1}^{m}Y_{i}\Biggr):=\bigoplus_{i=1}^{m}\left(X\odot Y_{i}\right)}}}.
\]
 Then, the operation $\odot$ is commutative and distributive over
direct sum $\oplus$, thus making it associative.
\begin{rem}
The operations $\wedge$ and $\cap$ do not necessarily produce matroids,
but $\odot$ does. However, the definitions of $\wedge$ and $\cap$
are expression-free while that of $\odot$ is not, and $\odot$ can
produce different matroids for different expressions.
\end{rem}

\subsection{\label{subsec:V=00003DV(M)}The lattice $\mathcal{V}(M)$ of direct
sums of disjoint minor expressions}

Define a partial order ``$\osubeq$'' on the collection of all minor
expressions: 
\[
\scalebox{0.95}{\ensuremath{\left(M/A|_{C}\right)\osubeq\left(M/B|_{D}\right)\quad\text{if }A\supseteq B\text{ and }C\subseteq D}}.
\]
 Consider a finite collection $\scalebox{0.95}{\ensuremath{\mathcal{V}=\mathcal{V}(M)}}$:
\[
\scalebox{0.95}{\ensuremath{\mathcal{V}:=\left\{ \text{all direct sums of disjoint minor expressions of }M\right\} }}.
\]
\emph{Extend} $\osubeq$ to $\scalebox{0.95}{\ensuremath{\mathcal{V}}}$
such that for $\scalebox{0.95}{\ensuremath{X=\bigoplus_{i=1}^{l}X_{i}}}$
and $\scalebox{0.95}{\ensuremath{Y=\bigoplus_{j=1}^{m}Y_{j}}}$ in
$\scalebox{0.95}{\ensuremath{\mathcal{V}}}$: 
\[
\scalebox{0.95}{\ensuremath{X\osubeq Y\quad\text{if for each }i\text{ there is }j\text{ with }X_{i}\osubeq Y_{j}}}.
\]
Then, $\scalebox{0.95}{\ensuremath{\left(\mathcal{V},\osubeq\right)}}$
is a \emph{poset}. The operation $\odot$ is idempotent over $\scalebox{0.95}{\ensuremath{\mathcal{V}}}$.
Thus, $\scalebox{0.95}{\ensuremath{\left(\mathcal{V},\osubeq\right)}}$
is a \emph{meet-semilattice} with meet $\odot$. Further, $\scalebox{0.95}{\ensuremath{\mathcal{V}}}$
is a finite semilattice, while $M=M/\emptyset|_{E\left(M\right)}$
is its largest member.  Therefore, $\scalebox{0.95}{\ensuremath{\mathcal{V}}}$
is a \emph{lattice}.
\begin{rem}[]
\begin{enumerate}
\item If $\scalebox{0.95}{\ensuremath{M/A|_{C}\osubeq M/B|_{D}}}$, then
we have $\scalebox{0.95}{\ensuremath{E\left(M/A|_{C}\right)\subseteq E\left(M/B|_{D}\right)}}$
and $\scalebox{0.95}{\ensuremath{\mathcal{I}\left(M/A|_{C}\right)\subseteq\mathcal{I}\left(M/B|_{D}\right)}}$. 
\item For any $\scalebox{0.95}{\ensuremath{X,Y\in\mathcal{V}}}$, one has
$\scalebox{0.95}{\ensuremath{X\odot Y\osubeq X}}$ and $\scalebox{0.95}{\ensuremath{X\odot Y\osubeq Y}}$.
Moreover: 
\begin{align}
\scalebox{0.95}{\ensuremath{E\left(X\odot Y\right)}} & =\scalebox{0.95}{\ensuremath{E\left(X\right)\cap E\left(Y\right)}},\nonumber \\
\scalebox{0.95}{\ensuremath{\mathcal{I}\left(X\odot Y\right)}} & \subseteq\scalebox{0.95}{\ensuremath{X\wedge Y}},\label{eq:odot-wedge}\\
\scalebox{0.95}{\ensuremath{r\left(X\odot Y\right)}} & \le\scalebox{0.95}{\ensuremath{\min\left\{ r\left(X\right),r\left(Y\right)\right\} }}.\nonumber 
\end{align}
 If $\scalebox{0.95}{\ensuremath{X\osubeq Y}}$, then $\scalebox{0.95}{\ensuremath{E\left(X\right)\subseteq E\left(Y\right)}}$,
$\scalebox{0.95}{\ensuremath{\mathcal{I}\left(X\right)\subseteq\mathcal{I}\left(Y\right)}}$,
and $\scalebox{0.95}{\ensuremath{\phi\left(X\right)\subseteq\phi\left(Y\right)}}$.
\end{enumerate}
\end{rem}

\subsection{\label{subsec:Intersection-2}Intersection of faces, II}

Let $\scalebox{0.95}{\ensuremath{M}}$ be a matroid and $\scalebox{0.95}{\ensuremath{F_{1},\dots,F_{m}}}$
subsets of $\scalebox{0.95}{\ensuremath{E(M)}}$. The collection $\scalebox{0.95}{\ensuremath{\bigwedge_{i\in\left[m\right]}M\left(F_{i}\right)}}$
has a lower bound:
\begin{prop}
\label{prop:Lower bound}For subsets $\scalebox{0.95}{\ensuremath{F_{1},\dots,F_{m}}}$
of $\scalebox{0.95}{\ensuremath{E\left(M\right)}}$: 
\[
\scalebox{0.95}{\ensuremath{{\displaystyle \mathcal{I}\Biggl(\bigodot_{i\in\left[m\right]}M\left(F_{i}\right)\Biggr)\subseteq\bigwedge_{i\in\left[m\right]}M\left(F_{i}\right)}}}.
\]
\end{prop}

\begin{proof}
Because $\scalebox{0.95}{\ensuremath{M(F)}}$ for all $\scalebox{0.95}{\ensuremath{F\subseteq E\left(M\right)}}$
are members of $\scalebox{0.95}{\ensuremath{\mathcal{V}}}$, we repeatedly
apply the formula \eqref{eq:odot-wedge} to $\scalebox{0.95}{\ensuremath{\bigodot_{i\in\left[m\right]}M\left(F_{i}\right)}}$
and obtain the desired inclusion.
\end{proof}
Next, we show that the collection $\scalebox{0.95}{\ensuremath{\bigcap_{i\in\left[m\right]}M(F_{i})}}$
has an upper bound (Proposition \ref{prop:Upper bound}) and extend
Theorem \ref{thm:face-matroids} to Theorem \ref{thm:face-intersection}.
We also express $\scalebox{0.95}{\ensuremath{\bigodot_{i\in\left[m\right]}M(F_{i})}}$
as a direct sum \eqref{eq:odot-oplus} and as a matroid intersection
\eqref{eq:odot-int}. This requires an a priori setup. For any subset
$\sigma$ of $\left[m\right]$, denote
\[
\scalebox{0.95}{\ensuremath{F_{\sigma}^{+}:=\bigcup_{i\notin\sigma}F_{i}}}\quad\text{and}\quad\scalebox{0.95}{\ensuremath{F_{\sigma}^{-}:=\begin{cases}
E\left(M\right) & \text{if }\sigma=\emptyset,\\
\bigcap_{j\in\sigma}F_{j} & \text{otherwise}.
\end{cases}}}
\]

\noindent Then, $\scalebox{0.95}{\ensuremath{F_{\sigma}^{-}-F_{\sigma}^{+}}}$
for all $\scalebox{0.95}{\ensuremath{\sigma\in2^{\left[m\right]}}}$
are pairwise disjoint, whose union is $\scalebox{0.95}{\ensuremath{E\left(M\right)}}$.
Denote 
\[
\scalebox{0.95}{\ensuremath{M_{\sigma}:=M/F_{\sigma}^{+}|_{\left(F_{\sigma}^{-}-F_{\sigma}^{+}\right)}}}.
\]
Then, a computation shows:
\begin{equation}
\scalebox{0.95}{\ensuremath{{\displaystyle \bigodot_{i\in\left[m\right]}M\left(F_{i}\right)=\bigoplus_{\sigma\in2^{\left[m\right]}}M_{\sigma}}}}.\label{eq:odot-oplus}
\end{equation}
This is a matroid on $\scalebox{0.95}{\ensuremath{E\left(M\right)}}$.
For each number $\scalebox{0.95}{\ensuremath{j\in\left[2^{m}\right]-1:=\left\{ l-1:l\in\left[2^{m}\right]\right\} }}$,
let $\scalebox{0.95}{\ensuremath{d_{1j}d_{2j}\cdots d_{mj}}}$ be
the binary number converted from $j$. Assign to each $\scalebox{0.95}{\ensuremath{\sigma\in2^{\left[m\right]}}}$
the decimal number $\scalebox{0.95}{\ensuremath{j=j\left(\sigma\right)}}$
with 
\[
\scalebox{0.95}{\ensuremath{d_{ij}=\begin{cases}
0 & \text{if }i\in\sigma,\\
1 & \text{otherwise}.
\end{cases}}}
\]
 This assignment is a bijection between $\scalebox{0.95}{\ensuremath{2^{\left[m\right]}}}$
and $\scalebox{0.95}{\ensuremath{\left[2^{m}\right]-1}}$; hence $\scalebox{0.95}{\ensuremath{\sigma=\sigma\left(j\right)}}$.
Let 
\[
\diamondsuit_{ij}=\begin{cases}
\cup & \text{if }\scalebox{0.95}{\ensuremath{d_{ij}}}=0,\\
\cap & \text{if }\scalebox{0.95}{\ensuremath{d_{ij}}}=1.
\end{cases}
\]
 For a permutation $\pi\in\mathfrak{S}_{m}$, denote 
\[
F_{\sigma}^{\pi+}=F_{j}^{\pi+}:=F_{\pi(1)}\,\diamondsuit_{1j}\,\left(\cdots\left(F_{\pi(m-1)}\,\diamondsuit_{(m-1)j}\left(F_{\pi(m)}\,\diamondsuit_{mj}\,\emptyset\right)\right)\cdots\right)\subseteq F_{\sigma}^{+}.
\]
 Then, $\scalebox{0.95}{\ensuremath{\left(F_{0}^{\pi+},\dots,F_{2^{m}-1}^{\pi+}\right)}}$
is a decreasing sequence of subsets of $\scalebox{0.95}{\ensuremath{E(M)}}$
with $\scalebox{0.95}{\ensuremath{F_{0}^{\pi+}=F_{\left[m\right]}^{\pi+}=}}$
$\scalebox{0.95}{\ensuremath{\bigcup_{i\in\left[m\right]}F_{i}}}$
and $\scalebox{0.95}{\ensuremath{F_{2^{m}-1}^{\pi+}=F_{\emptyset}^{\pi+}=\emptyset}}$.
Also, let 
\[
\diamondsuit_{ij}^{-}=\begin{cases}
- & \text{if }\scalebox{0.95}{\ensuremath{d_{ij}}}=0,\\
\cap & \text{if }\scalebox{0.95}{\ensuremath{d_{ij}}}=1.
\end{cases}
\]
 Then, $\scalebox{0.95}{\ensuremath{\diamondsuit_{ij}=\diamondsuit_{ij}^{-}=\cap}}$
only when $\scalebox{0.95}{\ensuremath{d_{ij}=1}}$. Denote: 
\[
F_{\sigma}^{\pi-}=F_{j}^{\pi-}:=\left(\cdots\left(\left(E(M)\:\diamondsuit_{1j}^{-}\:F_{\pi(1)}\right)\,\diamondsuit_{2j}^{-}\:F_{\pi(2)}\right)\cdots\right)\,\diamondsuit_{mj}^{-}\:F_{\pi(m)}.
\]

\noindent Then $\scalebox{0.95}{\ensuremath{F_{\sigma}^{\pi-}=F_{\sigma}^{-}-F_{\sigma}^{+}}}$.
Define a matroid $\scalebox{0.95}{\ensuremath{M_{\sigma}^{\pi}}}$
on $\scalebox{0.95}{\ensuremath{F_{\sigma}^{\pi-}}}$: 
\[
M_{\sigma}^{\pi}:=M/F_{\sigma}^{\pi}|_{F_{\sigma}^{\pi-}}.
\]
 Then
\[
\scalebox{0.95}{\ensuremath{{\displaystyle \bigoplus_{\sigma\in2^{\left[m\right]}}M_{\sigma}^{\pi}=M\left(F_{\pi\left(1\right)}\right)\cdots\left(F_{\pi\left(m\right)}\right)}}}.
\]
 Since $\scalebox{0.95}{\ensuremath{M_{\sigma}\osubset M_{\sigma}^{\pi}}}$
for all $\pi\in\mathfrak{S}_{m}$, we have $\ensuremath{\scalebox{0.95}{\ensuremath{\left(\bigoplus_{\sigma\in2^{\left[m\right]}}M_{\sigma}\right)}}\osubeq\scalebox{0.95}{\ensuremath{\left(\bigoplus_{\sigma\in2^{\left[m\right]}}M_{\sigma}^{\pi}\right)}}}$
and: 
\[
\scalebox{0.95}{\ensuremath{{\displaystyle \bigodot_{i\in\left[m\right]}M(F_{i})}}}\,\osubeq\,\scalebox{0.95}{\ensuremath{{\displaystyle M\left(F_{\pi\left(1\right)}\right)\cdots\left(F_{\pi\left(m\right)}\right)}}}.
\]
 For $\scalebox{0.95}{\ensuremath{\sigma=\left\{ i_{1},\cdots,i_{l}\right\} }}$
with $\scalebox{0.95}{\ensuremath{i_{1}<\cdots<i_{l}}}$, there is
a permutation $\pi\in\mathfrak{S}_{m}$ with $\scalebox{0.95}{\ensuremath{\pi\left(1\right)=i_{1}}}$,
$\scalebox{0.95}{\ensuremath{\dots}}$, $\scalebox{0.95}{\ensuremath{\pi\left(l\right)=i_{l}}}$
so that $\scalebox{0.95}{\ensuremath{M_{\sigma}=M_{\sigma}^{\pi}}}$.
Therefore:
\begin{equation}
\scalebox{0.95}{\ensuremath{{\displaystyle \bigodot_{i\in\left[m\right]}M\left(F_{i}\right)=\bigwedge_{\pi\in\mathfrak{S}_{m}}M\left(F_{\pi\left(1\right)}\right)\cdots\left(F_{\pi\left(m\right)}\right)}}}.\label{eq:odot-int}
\end{equation}
 This implies that the intersection of independence polytopes $\scalebox{0.95}{\ensuremath{{\displaystyle \mathrm{IP}_{M\left(F_{\pi\left(1\right)}\right)\cdots\left(F_{\pi\left(m\right)}\right)}}}}$
for all $\scalebox{0.95}{\ensuremath{{\displaystyle \pi\in\mathfrak{S}_{m}}}}$
is an independence polytope: 
\[
\scalebox{0.95}{\ensuremath{{\displaystyle \mathrm{IP}_{\bigodot_{i\in\left[m\right]}M\left(F_{i}\right)}=\bigcap_{\pi\in\mathfrak{S}_{m}}\mathrm{IP}_{M\left(F_{\pi\left(1\right)}\right)\cdots\left(F_{\pi\left(m\right)}\right)}}}}.
\]

Now, we prove that $\scalebox{0.95}{\ensuremath{\bigcap_{i\in\left[m\right]}M\left(F_{i}\right)}}$
has an upper bound:
\begin{prop}
\label{prop:Upper bound}For subsets $\scalebox{0.95}{\ensuremath{F_{1},\dots,F_{m}}}$
of $\scalebox{0.95}{\ensuremath{E\left(M\right)}}$: 
\[
\scalebox{0.95}{\ensuremath{{\displaystyle \bigcap_{i\in\left[m\right]}M\left(F_{i}\right)\subseteq\mathcal{B}\Biggl(\bigodot_{i\in\left[m\right]}M\left(F_{i}\right)\Biggr)}}}.
\]
\end{prop}

\begin{proof}
Let $\scalebox{0.95}{\ensuremath{\mathcal{A}}}$ be the Boolean algebra
that $\scalebox{0.95}{\ensuremath{F_{1},\dots,F_{m}}}$ generate with
intersections and unions. Let $\scalebox{0.95}{\ensuremath{B}}$ be
a common base of $\scalebox{0.95}{\ensuremath{M\left(F_{1}\right),\dots,M\left(F_{m}\right)}}$,
then $\scalebox{0.95}{\ensuremath{B}}$ is a base of $\scalebox{0.95}{\ensuremath{M\left(A\right)}}$
for any $\scalebox{0.95}{\ensuremath{A\in\mathcal{A}}}$ by Proposition
\ref{prop:Boolean}, and $\scalebox{0.95}{\ensuremath{r\left(B\cap A\right)=r\left(A\right)}}$.
For any $\sigma\in2^{\left[m\right]}$, let $\scalebox{0.95}{\ensuremath{B_{\sigma}:=B\cap\left(F_{\sigma}^{-}-F_{\sigma}^{+}\right)}}.$
Then, $\scalebox{0.95}{\ensuremath{r\left(\left(B\cap F_{\sigma}^{-}\right)\cup F_{\sigma}^{+}\right)=r\left(B_{\sigma}\cup F_{\sigma}^{+}\right)}}$,
and because $\scalebox{0.95}{\ensuremath{F_{\sigma}^{-}\cup F_{\sigma}^{+}\in\mathcal{A}}}$,
we have $\scalebox{0.95}{\ensuremath{r\left(B\cap\left(F_{\sigma}^{-}\cup F_{\sigma}^{+}\right)\right)=r\left(F_{\sigma}^{-}\cup F_{\sigma}^{+}\right)}}$.
Therefore: 
\begin{align*}
\scalebox{0.95}{\ensuremath{r_{M_{\sigma}}\left(B_{\sigma}\right)}} & \le\scalebox{0.95}{\ensuremath{r\left(M_{\sigma}\right)}}=\scalebox{0.95}{\ensuremath{r\left(F_{\sigma}^{-}\cup F_{\sigma}^{+}\right)-r\left(F_{\sigma}^{+}\right)}}\\
 & =\scalebox{0.95}{\ensuremath{r\left(B\cap\left(F_{\sigma}^{-}\cup F_{\sigma}^{+}\right)\right)-r\left(F_{\sigma}^{+}\right)}}\\
 & \le\scalebox{0.95}{\ensuremath{r\left(\left(B\cap F_{\sigma}^{-}\right)\cup F_{\sigma}^{+}\right)-r\left(F_{\sigma}^{+}\right)}}\\
 & =\scalebox{0.95}{\ensuremath{r\left(B_{\sigma}\cup F_{\sigma}^{+}\right)-r\left(F_{\sigma}^{+}\right)}}\\
 & =\scalebox{0.95}{\ensuremath{r_{M_{\sigma}}\left(B_{\sigma}\right)}}.
\end{align*}
 Thus, $\scalebox{0.95}{\ensuremath{B_{\sigma}}}$ is a base of $\scalebox{0.95}{\ensuremath{M_{\sigma}}}$.
So, $\scalebox{0.95}{\ensuremath{B=\bigcup_{\sigma\in2^{\left[m\right]}}B_{\sigma}}}$
is a base of $\scalebox{0.95}{\ensuremath{\bigoplus_{\sigma\in2^{\left[m\right]}}M_{\sigma}}}$
and $\scalebox{0.95}{\ensuremath{\bigodot_{i\in\left[m\right]}M\left(F_{i}\right)}}$
by \eqref{eq:odot-oplus}.  The desired inclusion is proved.
\end{proof}
We extend Theorem \ref{thm:face-matroids} to another equivalence
theorem:
\begin{thm}
\label{thm:face-intersection}For $\scalebox{0.95}{\ensuremath{F_{1},\dots,F_{m}\subseteq E\left(M\right)}}$,
the following  are equivalent.
\begin{enumerate}
\item \label{enu:nonempty-int}$\scalebox{0.95}{\ensuremath{\bigcap_{i\in\left[m\right]}M\left(F_{i}\right)\neq\emptyset}}$.
\item \label{enu:commute-1-1}$\scalebox{0.95}{\ensuremath{M\left(F_{\pi\left(1\right)}\right)\cdots\left(F_{\pi\left(m\right)}\right)}}$
for all $\pi\in\mathfrak{S}_{m}$ are the same.
\item \label{enu:face-all-1}$\scalebox{0.95}{\ensuremath{M\left(F_{\pi\left(1\right)}\right)\cdots\left(F_{\pi\left(m\right)}\right)=\bigcap_{i\in\left[m\right]}M\left(F_{i}\right)}}$
for all $\pi\in\mathfrak{S}_{m}$.
\item \label{enu:face-some-1}$\scalebox{0.95}{\ensuremath{M\left(F_{\pi\left(1\right)}\right)\cdots\left(F_{\pi\left(m\right)}\right)=\bigcap_{i\in\left[m\right]}M\left(F_{i}\right)}}$
for some $\pi\in\mathfrak{S}_{m}$.
\item \label{enu:odot-rank}$\scalebox{0.95}{\ensuremath{\bigodot_{i\in\left[m\right]}M\left(F_{i}\right)}}$
has rank $\scalebox{0.95}{\ensuremath{r\left(M\right)}}$.
\item \label{enu:odot-intersect}$\scalebox{0.95}{\ensuremath{\bigodot_{i\in\left[m\right]}M\left(F_{i}\right)=\bigcap_{i\in\left[m\right]}M\left(F_{i}\right)}}$.
\item \label{enu:odot-faces-all}$\scalebox{0.95}{\ensuremath{\bigodot_{i\in\left[m\right]}M\left(F_{i}\right)=M\left(F_{\pi\left(1\right)}\right)\cdots\left(F_{\pi\left(m\right)}\right)}}$
for all $\pi\in\mathfrak{S}_{m}$.
\item \label{enu:odot-faces-some}$\scalebox{0.95}{\ensuremath{\bigodot_{i\in\left[m\right]}M\left(F_{i}\right)=M\left(F_{\pi\left(1\right)}\right)\cdots\left(F_{\pi\left(m\right)}\right)}}$
for some $\pi\in\mathfrak{S}_{m}$.
\end{enumerate}
\end{thm}

\begin{proof}
If $\scalebox{0.95}{\ensuremath{\bigcap_{i\in\left[m\right]}M\left(F_{i}\right)\neq\emptyset}}$,
then $\scalebox{0.95}{\ensuremath{r\bigl(\bigodot_{i\in\left[m\right]}M\left(F_{i}\right)\bigr)=r\left(M\right)}}$
by Proposition \ref{prop:Upper bound}, and \eqref{enu:nonempty-int}$\Rightarrow$\eqref{enu:odot-rank}
holds. Conversely, \eqref{enu:odot-rank}$\Rightarrow$\eqref{enu:nonempty-int}
holds by Proposition \ref{prop:Lower bound}. Since \eqref{enu:nonempty-int}$\Leftrightarrow$\eqref{enu:commute-1-1}$\Leftrightarrow$\eqref{enu:face-all-1}$\Leftrightarrow$\eqref{enu:face-some-1}
is Theorem \ref{thm:face-matroids}, \eqref{enu:nonempty-int}$\Leftrightarrow$\eqref{enu:commute-1-1}$\Leftrightarrow$\eqref{enu:face-all-1}$\Leftrightarrow$\eqref{enu:face-some-1}$\Leftrightarrow$\eqref{enu:odot-rank}
holds.

Suppose \eqref{enu:nonempty-int}, then $\scalebox{0.95}{\ensuremath{\bigcap_{i\in\left[m\right]}M\left(F_{i}\right)}}$
is a matroid by Theorem \ref{thm:face-matroids}, and is the same
matroid as $\scalebox{0.95}{\ensuremath{\bigwedge_{i\in\left[m\right]}M\left(F_{i}\right)}}$,
which equals $\scalebox{0.95}{\ensuremath{\bigodot_{i\in\left[m\right]}M\left(F_{i}\right)}}$
by Propositions \ref{prop:Lower bound} and \ref{prop:Upper bound};
hence \eqref{enu:odot-intersect}. Suppose \eqref{enu:odot-intersect},
then \eqref{enu:nonempty-int} holds since $\scalebox{0.95}{\ensuremath{\bigodot_{i\in\left[m\right]}M\left(F_{i}\right)}}$
is a nonempty matroid, and \eqref{enu:odot-faces-all} holds by Theorem
\ref{thm:face-matroids}. Now, \eqref{enu:odot-faces-all}$\Rightarrow$\eqref{enu:odot-faces-some}
is obvious, and \eqref{enu:odot-faces-some}$\Rightarrow$\eqref{enu:odot-rank}
holds since $\scalebox{0.95}{\ensuremath{M\left(F_{\pi\left(1\right)}\right)\cdots\left(F_{\pi\left(m\right)}\right)}}$
has rank $\scalebox{0.95}{\ensuremath{r\left(M\right)}}$.  Thus,
\eqref{enu:nonempty-int}$\Leftrightarrow$\eqref{enu:odot-intersect}$\Rightarrow$\eqref{enu:odot-faces-all}$\Rightarrow$\eqref{enu:odot-faces-some}$\Rightarrow$\eqref{enu:odot-rank}
holds, and the equivalence is proved.
\end{proof}

\subsection{\label{subsec:Face-lattice}The lattice of loopless faces of a base
polytope}

Let $\scalebox{0.95}{\ensuremath{M}}$ be a loopless matroid, then
$\scalebox{0.95}{\ensuremath{\mathrm{BP}_{M}}}$ is a loopless base
polytope of codimension $\scalebox{0.95}{\ensuremath{\kappa\left(M\right)}}$.
The loopless faces of $\scalebox{0.95}{\ensuremath{\mathrm{BP}_{M}}}$
form a lattice whose meet operation is described as follows: the meet
of loopless faces $\scalebox{0.95}{\ensuremath{\mathrm{BP}_{\phi\left(X\right)}}}$
and $\scalebox{0.95}{\ensuremath{\mathrm{BP}_{\phi\left(Y\right)}}}$
for matroidal expressions $\scalebox{0.95}{\ensuremath{X}}$ and $\scalebox{0.95}{\ensuremath{Y}}$
is $\scalebox{0.95}{\ensuremath{\mathrm{BP}_{\phi\left(X\odot Y\right)}}}$
if $\scalebox{0.95}{\ensuremath{\phi\left(X\odot Y\right)}}$ is a
full-rank loopless matroid and $\scalebox{0.95}{\ensuremath{\emptyset}}$
otherwise, cf. Theorem \ref{thm:face-intersection} and Proposition
\ref{prop:Boolean}. The lattice is coatomistic, whose coatoms are
precisely the loopless facets of $\scalebox{0.95}{\ensuremath{\mathrm{BP}_{M}}}$.
Here, \emph{coatomistic} means that all members other than the largest
member are generated by coatoms with meets. Its lattice rank is $\scalebox{0.95}{\ensuremath{r\left(M\right)-\kappa\left(M\right)+1}}$.

\subsection{Codimension-two faces}

We investigate the codimension-$2$ loopless face of the base polytope
of a connected matroid.
\begin{lem}
\label{lem:Ridges of BP}Let $\scalebox{0.95}{\ensuremath{M}}$ be
a connected matroid of rank at least $\scalebox{0.95}{\ensuremath{3}}$,
and $\scalebox{0.95}{\ensuremath{F}}$ and $\scalebox{0.95}{\ensuremath{L}}$
be two non-degenerate subsets of $\scalebox{0.95}{\ensuremath{M}}$
with $\scalebox{0.95}{\ensuremath{X=M\left(F\right)\left(L\right)=M\left(L\right)\left(F\right)}}$
and $\scalebox{0.95}{\ensuremath{\kappa\left(X\right)=3}}$. Let $\scalebox{0.95}{\ensuremath{A}}$
and $\scalebox{0.95}{\ensuremath{T}}$ be the minimal non-degenerate
subsets of $\scalebox{0.95}{\ensuremath{M\left(F\right)}}$ and $\scalebox{0.95}{\ensuremath{M\left(L\right)}}$,
respectively, with $\scalebox{0.95}{\ensuremath{X=M\left(F\right)\left(A\right)=M\left(L\right)\left(T\right)}}.$
Then, precisely one of the following four cases happens for the quadruple
$\scalebox{0.95}{\ensuremath{\left(F,L,A,T\right)}}$.
\begin{table}[H]
\noindent \centering{}%
\begin{tabular}{c||c|c|c|c|c}
\hline 
\multirow{1}{*}{} & \scalebox{0.9}{$A$} & \scalebox{0.9}{$T$} & \scalebox{0.9}{$M/\left(F\cap L\right)$} & \scalebox{0.9}{$M|_{F\cup L}$} & \scalebox{0.9}{$M\left(F\right)\cap M\left(L\right)$}\tabularnewline
\hline 
\hline 
\scalebox{0.9}{$F\cup L=E(M)$} & \scalebox{0.9}{$F\cap L$} & \scalebox{0.9}{$F\cap L$} & \scalebox{0.9}{$M/F\oplus M/L$} & \scalebox{0.9}{$M$} & \scalebox{0.9}{$M\left(F\cap L\right)$}\tabularnewline
\hline 
\scalebox{0.9}{$F\cap L=\emptyset$} & \scalebox{0.9}{$L$} & \scalebox{0.9}{$F$} & \scalebox{0.9}{$M$} & \scalebox{0.9}{$M|_{F}\oplus M|_{L}$} & \scalebox{0.9}{$M\left(F\cup L\right)$}\tabularnewline
\hline 
\scalebox{0.9}{$F\subsetneq L$} & \scalebox{0.9}{$L-F$} & \scalebox{0.9}{$F$} & \scalebox{0.9}{$M/F$} & \scalebox{0.9}{$M|_L$} & \scalebox{0.8}{$M/L\oplus M|_L/F\oplus M|_F$}\tabularnewline
\hline 
\scalebox{0.9}{$F\supsetneq L$} & \scalebox{0.9}{$L$} & \scalebox{0.9}{$F-L$} & \scalebox{0.9}{$M/L$} & \scalebox{0.9}{$M|_F$} & \scalebox{0.8}{$M/F\oplus M|_F/L\oplus M|_L$}\tabularnewline
\hline 
\end{tabular}\caption{\label{tab:Faces of BP2}The loopless codimension-2 face matroid.}
\end{table}
\end{lem}

\begin{proof}
We know $\scalebox{0.95}{\ensuremath{X=M\left(F\right)\cap M\left(L\right)=M\left(F\right)\odot M\left(L\right)}}$
by Theorem \ref{thm:face-intersection} where 
\[
\scalebox{0.95}{\ensuremath{M\left(F\right)\odot M\left(L\right)}}=\scalebox{0.95}{\ensuremath{M/\left(F\cup L\right)\oplus M|_{F\cup L}/F\oplus M|_{F\cup L}/L\oplus M|_{F\cap L}}}.
\]
This has at least one vanishing summand since $\scalebox{0.95}{\ensuremath{\kappa\left(X\right)=3}}$,
i.e., at least one of the following four equalities holds: $\scalebox{0.95}{\ensuremath{F\cup L=E\left(M\right)}}$,
$\scalebox{0.95}{\ensuremath{F\cap L=\emptyset}}$, $\scalebox{0.95}{\ensuremath{F-L=\emptyset}}$
and $\scalebox{0.95}{\ensuremath{L-F=\emptyset}}$. We show that exactly
one of these four holds. Suppose $\scalebox{0.95}{\ensuremath{F-L=\emptyset}}$,
then $\scalebox{0.95}{\ensuremath{F\subset L}}$ and none of the other
three holds; hence, no pair of the four involving $\scalebox{0.95}{\ensuremath{F-L=\emptyset}}$
holds. Also, by symmetry, no pair involving $\scalebox{0.95}{\ensuremath{L-F=\emptyset}}$
holds. So, suppose that the first two hold, then $\scalebox{0.95}{\ensuremath{M\left(F\right)\odot M\left(L\right)}}=\scalebox{0.95}{\ensuremath{M/F\oplus M/L}}$
and $\scalebox{0.95}{\ensuremath{r\left(M\right)=r\left(F\right)+r\left(L\right)}}$,
which implies that $\scalebox{0.95}{\ensuremath{F}}$ and $\scalebox{0.95}{\ensuremath{L}}$
are separators, a contradiction. Thus, exactly one of the four summands
of $\scalebox{0.95}{\ensuremath{M\left(F\right)\odot M\left(L\right)}}$
vanishes, and the other three are connected.
\begin{enumerate}
\item If $\scalebox{0.95}{\ensuremath{F\cup L=E\left(M\right)}}$, then
$\scalebox{0.95}{\ensuremath{X=M\left(F\cap L\right)}}$ by Lemma
\ref{lem:commuting}\eqref{enu:all-equal}, and $\scalebox{0.95}{\ensuremath{M/\left(F\cap L\right)=}}$
$\scalebox{0.95}{\ensuremath{M/F\oplus M/L}}$. Then, $\scalebox{0.95}{\ensuremath{X=M\left(F\right)\left(F\cap L\right)}}$
by Proposition \ref{prop:Boolean}, and $\scalebox{0.95}{\ensuremath{A=F\cap L}}$
because $\scalebox{0.95}{\ensuremath{A}}$ is a minimal non-degenerate
subset. By symmetry, $\scalebox{0.95}{\ensuremath{T=F\cap L}}$.
\item If $\scalebox{0.95}{\ensuremath{F\cap L=\emptyset}}$, then $\scalebox{0.95}{\ensuremath{X=M\left(F\cup L\right)}}$
and $\scalebox{0.95}{\ensuremath{M|_{F\cup L}=M/L|_{F}\oplus M/F|_{L}}}$.
Thus, $\scalebox{0.95}{\ensuremath{M|_{F\cup L}=M|_{F}\oplus M|_{L}}}$
and $\scalebox{0.95}{\ensuremath{A=L}}$. By symmetry, $\scalebox{0.95}{\ensuremath{T=F}}$.
\item[(3)(4)] If $\scalebox{0.95}{\ensuremath{F-L=\emptyset}}$, then $\scalebox{0.95}{\ensuremath{X=M(L)(F)=M/L\oplus M|_{L}/F\oplus M|_{F}}}$
and $\scalebox{0.95}{\ensuremath{T=F}}$. Also, $\scalebox{0.95}{\ensuremath{M(F)(L)=M(F\cap L)(F-L)(L-F)=M(F)(L-F)}}$
and $\scalebox{0.95}{\ensuremath{A=L-F\neq\emptyset}}$. If $\scalebox{0.95}{\ensuremath{L-F=\emptyset}}$,
then $\scalebox{0.95}{\ensuremath{X=M/F\oplus M|_{F}/L\oplus M|_{L}}}$
with $\scalebox{0.95}{\ensuremath{A=L}}$ and $\scalebox{0.95}{\ensuremath{T=F-L\neq\emptyset}}$.
\end{enumerate}
The proof is complete.
\end{proof}

\section{\label{sec:Lattice Geometry of Matroids}Lattice Geometry of Matroids}

We develop the lattice geometry of matroids and show that the subspace
lattice of a matroid $\scalebox{0.95}{\ensuremath{M}}$ is transformed
into the lattice of loopless face matroids of $\scalebox{0.95}{\ensuremath{\mathrm{BP}_{M\backslash\overline{\emptyset}}}}$
through lattice operations of \emph{matroidal blowups} and \emph{collapsings}.

\subsection{Flag and flace}
\begin{defn}
A \textbf{flace}\footnote{The etymology of ``flace'' is flat $+$ face, mimicking ``flacet''
of \cite{Nested}. Our flace is not a flat but a sequence of flats.} of a matroid $\scalebox{0.95}{\ensuremath{M}}$ is a sequence $\scalebox{0.95}{\ensuremath{\left(F_{1},\dots,F_{m}\right)}}$
of subsets of $\scalebox{0.95}{\ensuremath{E\left(M\right)}}$ such
that each $\scalebox{0.95}{\ensuremath{F_{i}}}$ is a flat of $\scalebox{0.95}{\ensuremath{M_{i-1}=M\left(F_{0}\right)\cdots\left(F_{i-1}\right)}}$
with $\scalebox{0.95}{\ensuremath{\kappa\left(M_{i-1}\right)<\kappa\left(M_{i}\right)}}$
where $\scalebox{0.95}{\ensuremath{F_{0}=\bar{\emptyset}}}$ and $\scalebox{0.95}{\ensuremath{M=M\left(F_{0}\right)}}$.
We call $m$ the \textbf{length} of the flace. A flace is said to
be \textbf{full} if $\scalebox{0.95}{\ensuremath{m=\kappa\left(M_{m}\right)-\kappa\left(M_{1}\right)+1}}$,
i.e., if each $\scalebox{0.95}{\ensuremath{F_{i}}}$ is a non-degenerate
flat of $\scalebox{0.95}{\ensuremath{M_{i-1}}}$. A flace with length
$\scalebox{0.95}{\ensuremath{m=r\left(M\right)-1}}$ is said to be
\textbf{complete}.
\end{defn}

\begin{defn}
A \textbf{flag} of a matroid $\scalebox{0.95}{\ensuremath{M}}$ is
a strictly increasing sequence $\scalebox{0.95}{\ensuremath{\left(L_{1},\dots,L_{l}\right)}}$
of flats with $\scalebox{0.95}{\ensuremath{\bar{\emptyset}\subsetneq L_{1}}}$
and $\scalebox{0.95}{\ensuremath{L_{l}\subsetneq E\left(M\right)}}$.
We call $l$ its \textbf{length}. A \textbf{full} flag is a flag of
length $\scalebox{0.95}{\ensuremath{r\left(L_{l}\right)-r\left(L_{1}\right)+1}}$,
while a \textbf{complete} flag is a flag of length $\scalebox{0.95}{\ensuremath{r\left(M\right)-1}}$.
\end{defn}

Both a complete flace and a complete flag are full. A flace is not
a flag and vice versa, but a relationship exists between them:
\begin{prop}
\label{prop:flaces-flags}For a flace $\scalebox{0.95}{\ensuremath{\left(F_{1},\dots,F_{m}\right)}}$
of $\scalebox{0.95}{\ensuremath{M}}$, there is a same-length flag
$\scalebox{0.95}{\ensuremath{\left(L_{1},\dots,L_{m}\right)}}$ with
$\scalebox{0.95}{\ensuremath{M\left(F_{1}\right)\cdots\left(F_{m}\right)=M\left(L_{1}\right)\cdots\left(L_{m}\right)}}$.
\end{prop}

\begin{proof}
We may assume $\scalebox{0.95}{\ensuremath{\left(F_{1},\dots,F_{m}\right)}}$
is a full flace. With $\scalebox{0.95}{\ensuremath{M_{0}:=M}}$, let
$\scalebox{0.95}{\ensuremath{M_{i}:=M_{i-1}\left(F_{i}\right)}}$
for $\scalebox{0.95}{\ensuremath{i=1,\dots,m}}$, and take the smallest
subset $\scalebox{0.95}{\ensuremath{F_{i}'}}$ of $\scalebox{0.95}{\ensuremath{E(M)}}$
with $\scalebox{0.95}{\ensuremath{M_{i}=M_{i-1}\left(F_{i}'\right)}}$.
Then, either $\scalebox{0.95}{\ensuremath{F_{i}'\subset F_{i-1}'}}$
or $\scalebox{0.95}{\ensuremath{F_{i}'\cap F_{i-1}'=\emptyset}}$
with $\scalebox{0.95}{\ensuremath{F_{i}'\cup F_{i-1}'\neq E(M)}}$.
Let $\scalebox{0.95}{\ensuremath{L_{i}:=\bigcup_{j\in\left[i\right]}F_{m+1-j}'}}$,
then $\scalebox{0.95}{\ensuremath{L_{1}=F_{m}'\supsetneq\overline{\emptyset}}}$
and $\scalebox{0.95}{\ensuremath{L_{m}=F_{1}'\cup\cdots\cup F_{m}'\neq E(M)}}$.
The flag $\scalebox{0.95}{\ensuremath{\left(L_{1},\dots,L_{m}\right)}}$
satisfies $\scalebox{0.95}{\ensuremath{M\left(F_{1}\right)\cdots\left(F_{m}\right)=M\left(L_{1}\right)\cdots\left(L_{m}\right)}}$
due to Lemma \ref{lem:commuting} and Proposition \ref{prop:Boolean}.
\end{proof}

\subsection{\label{subsec:Poset-U(M)}A poset $\mathcal{U}\left(M\right)\subset\mathcal{V}\left(M\right)$}

Let $\scalebox{0.95}{\ensuremath{M}}$ be a matroid. We assume that
$\scalebox{0.95}{\ensuremath{M}}$ is loopless by deleting all loops.
Given a flag $\scalebox{0.95}{\ensuremath{\left(L_{l},\dots,L_{1}\right)}}$\footnote{The subscripts of flats are written in decreasing order for technical
reasons.} of $\scalebox{0.95}{\ensuremath{M}}$, we write: 
\[
\scalebox{0.95}{\ensuremath{M(L_{l},\dots,L_{1})=M/L_{1}\oplus M|_{L_{1}}/L_{2}\oplus\cdots\oplus M|_{L_{l-1}}/L_{l}\oplus M|_{L_{l}}}}.
\]
 We define a subcollection $\scalebox{0.95}{\ensuremath{\mathcal{U}}}$
of $\scalebox{0.95}{\ensuremath{\mathcal{V}}}$ as:
\[
\scalebox{0.95}{\ensuremath{\mathcal{U}=\left\{ \text{all sub-expressions of }M(L_{l},\dots,L_{1})\text{ for all flags }\left(L_{l},\dots,L_{1}\right)\right\} \cup\left\{ \emptyset,M\right\} }}.
\]
A member of a poset other than the largest member (if it exists) is
said to be \textbf{proper}. Take any nonempty proper member $\scalebox{0.95}{\ensuremath{X\in\mathcal{U}}}$,
and let $\scalebox{0.95}{\ensuremath{\left(L_{l},\dots,L_{1}\right)}}$
be an increasing sequence of the flats appearing in the \emph{restriction-contraction}
forms of summands of $X$. Denote 
\[
\scalebox{0.95}{\ensuremath{\mathrm{Fl}\left(X\right)=\left(L_{l},\dots,L_{1}\right)\quad\text{and}\quad\left|\mathrm{Fl}\left(X\right)\right|=L_{1}}}.
\]
Denote by $\scalebox{0.95}{\ensuremath{M\bigl(\mathrm{Fl}\left(X\right)\bigr)}}$
or $\scalebox{0.95}{\ensuremath{\epsilon\left(X\right)}}$ the expression
$\scalebox{0.95}{\ensuremath{M\left(L_{l},\dots,L_{1}\right)}}$.
Then, $\scalebox{0.95}{\ensuremath{X\equiv\epsilon\left(X\right)}}$
if and only if $\scalebox{0.95}{\ensuremath{r\left(X\right)=r\left(M\right)}}$.
Denote by $\scalebox{0.95}{\ensuremath{X^{\perp}}}$ the direct sum
of the summands of $\scalebox{0.95}{\ensuremath{\epsilon\left(X\right)}}$
that are not also summands of $\scalebox{0.95}{\ensuremath{X}}$,
then $\scalebox{0.95}{\ensuremath{\epsilon\left(X\right)\equiv X\oplus X^{\perp}}}$
with 
\[
\scalebox{0.95}{\ensuremath{X\odot X^{\perp}\equiv\emptyset}}.
\]

\noindent In particular, if $\scalebox{0.95}{\ensuremath{X\equiv\epsilon\left(X\right)}}$
or $\scalebox{0.95}{\ensuremath{X\equiv\emptyset}}$, then $\scalebox{0.95}{\ensuremath{X^{\perp}\equiv\emptyset}}$
or $\scalebox{0.95}{\ensuremath{X^{\perp}\equiv\epsilon\left(X\right)}}$,
respectively.\smallskip{}

We define a partial order ``$\oleq$'' on the collection of all
minor expressions by: 
\[
\scalebox{0.95}{\ensuremath{\left(M|_{A}/C\right)\oleq\left(M|_{B}/D\right)\quad\text{if }A\subseteq B\text{ and }C\supseteq D}}.
\]
We extend $\oleq$ to $\scalebox{0.95}{\ensuremath{\mathcal{U}}}$
as done for $\osubeq$ in Subsection \ref{subsec:V=00003DV(M)}, then
$\scalebox{0.95}{\ensuremath{\left(\mathcal{U},\oleq\right)}}$ is
a \emph{poset}. Here, for $\scalebox{0.95}{\ensuremath{X,Y\in\mathcal{U}}}$,
$\scalebox{0.95}{\ensuremath{X\oleq Y}}$ implies $\scalebox{0.95}{\ensuremath{X\osubeq Y}}$.
Moreover 
\[
\scalebox{0.95}{\ensuremath{X\oleq Y}}\Longrightarrow\scalebox{0.95}{\ensuremath{Y^{\perp}\oleq X^{\perp}}}.
\]

\subsection{\label{subsec:meet-U(M)}$\mathcal{U}\left(M\right)$ is a lattice}

In this subsection, we show that the finite poset $\scalebox{0.95}{\ensuremath{\left(\mathcal{U},\oleq\right)}}$
is a meet-semilattice and a lattice since it contains the largest
member, $\scalebox{0.95}{\ensuremath{M}}$. Let $\scalebox{0.95}{\ensuremath{X}}$
and $\scalebox{0.95}{\ensuremath{Y}}$ be any nonempty proper members
of $\scalebox{0.95}{\ensuremath{\mathcal{U}}}$. These can be uniquely
written as: 
\begin{align*}
\scalebox{0.95}{\ensuremath{X}} & =\scalebox{0.95}{\ensuremath{M|_{F_{0}}/F_{1}\oplus M|_{F_{2}}/F_{3}\oplus\cdots\oplus M|_{F_{2m}}/F_{2m+1}}}\\
\scalebox{0.95}{\ensuremath{Y}} & =\scalebox{0.95}{\ensuremath{M|_{L_{0}}/L_{1}\oplus M|_{L_{2}}/L_{3}\oplus\cdots\oplus M|_{L_{2l}}/L_{2l+1}}}
\end{align*}
for \emph{decreasing} sequences of flats $\scalebox{0.95}{\ensuremath{\left(F_{0},F_{1},\dots,F_{2m+1}\right)}}$
and $\scalebox{0.95}{\ensuremath{\left(L_{0},L_{1},\dots,L_{2l+1}\right)}}$
such that no terms are empty. Note that $\scalebox{0.95}{\ensuremath{F_{0}}}$
and $\scalebox{0.95}{\ensuremath{L_{0}}}$ do not necessarily equal
$\scalebox{0.95}{\ensuremath{E\left(M\right)}}$. We denote the \emph{greatest
lower bound} of $\scalebox{0.95}{\ensuremath{X}}$ and $\scalebox{0.95}{\ensuremath{Y}}$
by $\scalebox{0.95}{\ensuremath{X\circledast Y}}$ if it exists.\vspace{2pt}

For $\scalebox{0.95}{\ensuremath{V=M|_{A}/A'\in\mathcal{U}}}$, suppose
$\scalebox{0.95}{\ensuremath{V\oleq X}}$ and $\scalebox{0.95}{\ensuremath{V\oleq Y}}$,
then there is $\scalebox{0.95}{\ensuremath{i\in\left[m\right]\cup\left\{ 0\right\} }}$
and $\scalebox{0.95}{\ensuremath{j\in\left[l\right]\cup\left\{ 0\right\} }}$
with $\scalebox{0.95}{\ensuremath{A\subseteq F_{2i}\cap L_{2j}}}$
and $\scalebox{0.95}{\ensuremath{A'\supseteq\overline{F_{2i+1}\cup L_{2j+1}}}}$.
In this case:
\begin{equation}
\scalebox{0.95}{\ensuremath{F_{2i}\supseteq L_{2j+1}}}\text{ and \scalebox{0.95}{\ensuremath{L_{2j}\supseteq F_{2i+1}}}}.\label{eq:link-cond}
\end{equation}
 Let $\scalebox{0.95}{\ensuremath{U_{i,j}:=M|_{F_{2i}\cap L_{2j}}/\overline{F_{2i+1}\cup L_{2j+1}}}}$,
then $\scalebox{0.95}{\ensuremath{U_{i,j}\oleq X}}$ and $\scalebox{0.95}{\ensuremath{U_{i,j}\oleq Y}}$
with $\scalebox{0.95}{\ensuremath{V\oleq U_{i,j}}}$. Thus, if the
direct sum $\scalebox{0.95}{\ensuremath{U}}$ of all those $\scalebox{0.95}{\ensuremath{U_{i,j}}}$
for $\scalebox{0.95}{\ensuremath{i}}$ and $\scalebox{0.95}{\ensuremath{j}}$
with \eqref{eq:link-cond} is a member of $\scalebox{0.95}{\ensuremath{\mathcal{U}}}$,
then $\scalebox{0.95}{\ensuremath{U=X\circledast Y}}$. This is our
idea to show the existence of $\scalebox{0.95}{\ensuremath{X\circledast Y}}$.

Suppose that all four pairs of $\scalebox{0.95}{\ensuremath{\left\{ i,i'\right\} \times\left\{ j,j'\right\} \subseteq\left(\left[m\right]\cup\left\{ 0\right\} \right)\times\left(\left[l\right]\cup\left\{ 0\right\} \right)}}$
with $\scalebox{0.95}{\ensuremath{i<i'}}$ and $\scalebox{0.95}{\ensuremath{j<j'}}$
satisfy \eqref{eq:link-cond}, then 
\[
\scalebox{0.95}{\ensuremath{F_{2i+1}\supseteq F_{2i'}\supseteq L_{2j+1}\supseteq L_{2j'}\supseteq F_{2i+1}}}
\]
 and $\scalebox{0.95}{\ensuremath{F_{2i+1}=F_{2i+2}=\cdots=F_{2i'}=L_{2j+1}=L_{2j+2}=\cdots=L_{2j'}}}$.
Because $\scalebox{0.95}{\ensuremath{X}}$ and $\scalebox{0.95}{\ensuremath{Y}}$
have no empty summands, $\scalebox{0.95}{\ensuremath{2i+2=2i'}}$
and $\scalebox{0.95}{\ensuremath{2j+2=2j'}}$, that is, $\scalebox{0.95}{\ensuremath{i'=i+1}}$
and $\scalebox{0.95}{\ensuremath{j'=j+1}}$, with $\scalebox{0.95}{\ensuremath{F_{2i+1}=F_{2i'}=L_{2j+1}=L_{2j'}}}$. 

So, let $\scalebox{0.95}{\ensuremath{\mathfrak{i}\subseteq\left[m\right]\cup\left\{ 0\right\} }}$
and $\scalebox{0.95}{\ensuremath{\mathfrak{j}\subseteq\left[l\right]\cup\left\{ 0\right\} }}$
be maximal subsets such that any $\scalebox{0.95}{\ensuremath{i\in\mathfrak{i}}}$
and any $\scalebox{0.95}{\ensuremath{j\in\mathfrak{j}}}$ satisfy
\eqref{eq:link-cond}. Then, both $\scalebox{0.95}{\ensuremath{\mathfrak{i}}}$
and $\scalebox{0.95}{\ensuremath{\mathfrak{j}}}$ are collections
of consecutive numbers, and only one of the following four cases happens
for the pair $\scalebox{0.95}{\ensuremath{\left(\mathfrak{i},\mathfrak{j}\right)}}$:
\begin{enumerate}
\item $\scalebox{0.95}{\ensuremath{\left|\mathfrak{i}\right|=2}}$ and $\scalebox{0.95}{\ensuremath{\left|\mathfrak{j}\right|=2}}$.
\item $\scalebox{0.95}{\ensuremath{\left|\mathfrak{i}\right|=1}}$ and $\scalebox{0.95}{\ensuremath{\left|\mathfrak{j}\right|\ge2}}$.
\item $\scalebox{0.95}{\ensuremath{\left|\mathfrak{i}\right|\ge2}}$ and
$\scalebox{0.95}{\ensuremath{\left|\mathfrak{j}\right|=1}}$.
\item $\scalebox{0.95}{\ensuremath{\left|\mathfrak{i}\right|=1}}$ and $\scalebox{0.95}{\ensuremath{\left|\mathfrak{j}\right|=1}}$.
\end{enumerate}
Let $\scalebox{0.95}{\ensuremath{\left(\mathfrak{i}',\mathfrak{j}'\right)}}$
be another pair of maximal subsets of $\scalebox{0.95}{\ensuremath{\left[m\right]\cup\left\{ 0\right\} }}$
and $\scalebox{0.95}{\ensuremath{\left[l\right]\cup\left\{ 0\right\} }}$,
respectively, such that any $\scalebox{0.95}{\ensuremath{i\in\mathfrak{i}'}}$
and any $\scalebox{0.95}{\ensuremath{j\in\mathfrak{j}'}}$ satisfy
\eqref{eq:link-cond}. Then, there is no inclusion between $\scalebox{0.95}{\ensuremath{\mathfrak{i}}}$
and $\scalebox{0.95}{\ensuremath{\mathfrak{i}'}}$, nor between $\scalebox{0.95}{\ensuremath{\mathfrak{j}}}$
and $\scalebox{0.95}{\ensuremath{\mathfrak{j}'}}$. Moreover, $\scalebox{0.95}{\ensuremath{\left|\mathfrak{i}\cap\mathfrak{i}'\right|=\left|\mathfrak{j}\cap\mathfrak{j}'\right|\le1}}$,
where equality holds only if $\scalebox{0.95}{\ensuremath{\left|\mathfrak{i}\right|=\left|\mathfrak{i}'\right|=\left|\mathfrak{j}\right|=\left|\mathfrak{j}'\right|=2}}$,
and either:
\begin{itemize}
\item $\scalebox{0.95}{\ensuremath{\min\left(\mathfrak{i}\right)<\min\left(\mathfrak{i}'\right)}}$
and $\scalebox{0.95}{\ensuremath{\min\left(\mathfrak{j}\right)<\min\left(\mathfrak{j}'\right)}}$,
or
\item $\scalebox{0.95}{\ensuremath{\min\left(\mathfrak{i}\right)>\min\left(\mathfrak{i}'\right)}}$
and $\scalebox{0.95}{\ensuremath{\min\left(\mathfrak{j}\right)>\min\left(\mathfrak{j}'\right)}}$.
\end{itemize}
This enables us to define a partial order ``$<$'' on the collection
of all pairs of those maximal subsets, for temporary use: 
\begin{equation}
\scalebox{0.95}{\ensuremath{\left(\mathfrak{i},\mathfrak{j}\right)<\left(\mathfrak{i}',\mathfrak{j}'\right)}}\quad\text{if }\scalebox{0.95}{\ensuremath{\min\left(\mathfrak{i}\right)<\min\left(\mathfrak{i}'\right)}}\text{, or equivalently, }\scalebox{0.95}{\ensuremath{\min\left(\mathfrak{j}\right)<\min\left(\mathfrak{j}'\right)}}.\label{eq:partial-order}
\end{equation}
 Then, ``$\le$'' is a linear order. We assume $\scalebox{0.95}{\ensuremath{\left(\mathfrak{i},\mathfrak{j}\right)<\left(\mathfrak{i}',\mathfrak{j}'\right)}}$
in the following. We consider case (1) for $\scalebox{0.95}{\ensuremath{\left(\mathfrak{i},\mathfrak{j}\right)}}$
first, so $\scalebox{0.95}{\ensuremath{\mathfrak{i}=\left\{ i,i+1\right\} }}$
and $\scalebox{0.95}{\ensuremath{\mathfrak{j}=\left\{ j,j+1\right\} }}$
for some $\scalebox{0.95}{\ensuremath{i\in\left[m\right]\cup\left\{ 0\right\} }}$
and $\scalebox{0.95}{\ensuremath{j\in\left[l\right]\cup\left\{ 0\right\} }}$.
Then, $\scalebox{0.95}{\ensuremath{F_{2i+1}=F_{2i+2}=L_{2j+1}=L_{2j+2}}}$
and 
\[
\left\{ \begin{array}{rl}
\scalebox{0.95}{\ensuremath{U_{i,j}}} & =\scalebox{0.95}{\ensuremath{M|_{F_{2i}\cap L_{2j}}/\overline{F_{2i+1}\cup L_{2j+1}}}}\\
\scalebox{0.95}{\ensuremath{U_{i,j+1}}} & =\scalebox{0.95}{\ensuremath{M|_{L_{2j+2}}/F_{2i+1}\equiv\emptyset}}\\
\scalebox{0.95}{\ensuremath{U_{i+1,j}}} & =\scalebox{0.95}{\ensuremath{M|_{F_{2i+2}}/L_{2j+1}\equiv\emptyset}}\\
\scalebox{0.95}{\ensuremath{U_{i+1,j+1}}} & =\scalebox{0.95}{\ensuremath{M|_{F_{2i+2}\cap L_{2j+2}}/\overline{F_{2i+3}\cup L_{2j+3}}}}
\end{array}\right\} .
\]
 Therefore, let $\scalebox{0.95}{\ensuremath{U_{\mathfrak{i},\mathfrak{j}}}}$
denote the direct sum of $\scalebox{0.95}{\ensuremath{U_{i,j}}}$
for all $\scalebox{0.95}{\ensuremath{i\in\mathfrak{i}}}$ and $\scalebox{0.95}{\ensuremath{j\in\mathfrak{j}}}$,
then 
\[
\scalebox{0.95}{\ensuremath{U_{\mathfrak{i},\mathfrak{j}}}}=\scalebox{0.95}{\ensuremath{U_{i,j}\oplus U_{i+1,j+1}}}\in\scalebox{0.95}{\ensuremath{\mathcal{U}}}.
\]
We now check each case for $\scalebox{0.95}{\ensuremath{\left(\mathfrak{i}',\mathfrak{j}'\right)}}$.
Suppose case (1) for $\scalebox{0.95}{\ensuremath{\left(\mathfrak{i}',\mathfrak{j}'\right)}}$:
$\scalebox{0.95}{\ensuremath{\left|\mathfrak{i}'\right|=2}}$ and
$\scalebox{0.95}{\ensuremath{\left|\mathfrak{j}'\right|=2}}$. Then,
$\scalebox{0.95}{\ensuremath{\max\left(\mathfrak{i}\right)\le\min\left(\mathfrak{i}'\right)}}$
and $\scalebox{0.95}{\ensuremath{\max\left(\mathfrak{j}\right)\le\min\left(\mathfrak{j}'\right)}}$;
hence, $\scalebox{0.95}{\ensuremath{U_{\mathfrak{i},\mathfrak{j}}\oplus U_{\mathfrak{i}',\mathfrak{j}'}\in\mathcal{U}}}$.
Suppose case (2) for $\scalebox{0.95}{\ensuremath{\left(\mathfrak{i}',\mathfrak{j}'\right)}}$,
then $\scalebox{0.95}{\ensuremath{\mathfrak{i}=\left\{ i'\right\} }}$
and $\scalebox{0.95}{\ensuremath{U_{\mathfrak{i}',\mathfrak{j}'}}}=\scalebox{0.95}{\ensuremath{\bigoplus_{l\in\mathfrak{j}'}U_{i',l}}}\in\scalebox{0.95}{\ensuremath{\mathcal{U}}}.$
Also, $\scalebox{0.95}{\ensuremath{\mathfrak{i}\cap\mathfrak{i}'=\mathfrak{j}\cap\mathfrak{j}'=\emptyset}}$
with $\scalebox{0.95}{\ensuremath{\max\left(\mathfrak{i}\right)<\min\left(\mathfrak{i}'\right)}}$
and $\scalebox{0.95}{\ensuremath{\max\left(\mathfrak{j}\right)<\min\left(\mathfrak{j}'\right)}}$;
hence, $U_{\mathfrak{i},\mathfrak{j}}\oplus U_{\mathfrak{i}',\mathfrak{j}'}\in\mathcal{U}$.
Case (3) for $\scalebox{0.95}{\ensuremath{\left(\mathfrak{i}',\mathfrak{j}'\right)}}$
is symmetric, and case (4) for $\scalebox{0.95}{\ensuremath{\left(\mathfrak{i}',\mathfrak{j}'\right)}}$
is similar. The other three cases for $\scalebox{0.95}{\ensuremath{\left(\mathfrak{i},\mathfrak{j}\right)}}$
are similar to case (1) above,  and we  conclude that $\scalebox{0.95}{\ensuremath{X\circledast Y}}$
exists.\smallskip{}

We will now show that $\scalebox{0.95}{\ensuremath{\left(\mathcal{U},\oleq,\circledast\right)}}$
is a meet-semilattice. It is obvious that $\circledast$ is commutative
and idempotent. The key is to establish the associativity of $\circledast$.
To do so, we introduce another expression without any empty summands:
\[
\scalebox{0.95}{\ensuremath{Z=M|_{T_{0}}/T_{1}\oplus M|_{T_{2}}/T_{3}\oplus\cdots\oplus M|_{T_{2n}}/T_{2n+1}\in\mathcal{U}}}
\]
where $\scalebox{0.95}{\ensuremath{\left(T_{0},T_{1},\dots,T_{2n+1}\right)}}$
is a decreasing sequence of flats. From the previous argument, we
know that $\scalebox{0.95}{\ensuremath{\left(X\circledast Y\right)\circledast Z}}$
is the direct sum of all the terms 
\[
\scalebox{0.95}{\ensuremath{M|_{\left(F_{2i}\cap L_{2j}\right)\cap T_{2k}}/\overline{\overline{F_{2i+1}\cup L_{2j+1}}\cup T_{2k+1}}=M|_{F_{2i}\cap L_{2j}\cap T_{2k}}/\overline{F_{2i+1}\cup L_{2j+1}\cup T_{2k+1}}}}
\]
 with $\scalebox{0.95}{\ensuremath{i}}$, $\scalebox{0.95}{\ensuremath{j}}$,
and $\scalebox{0.95}{\ensuremath{k}}$ satisfying the conditions $\scalebox{0.95}{\ensuremath{F_{2i}\cap L_{2j}\supseteq T_{2k+1}}}$,
$\scalebox{0.95}{\ensuremath{T_{2k}\supseteq\overline{F_{2i+1}\cup L_{2j+1}}}}$,
and \eqref{eq:link-cond}, which is equivalent to satisfying: 
\[
\left\{ \begin{array}{c}
\scalebox{0.95}{\ensuremath{F_{2i}\supseteq L_{2j+1}}}\text{ and }\scalebox{0.95}{\ensuremath{L_{2j}\supseteq F_{2i+1}}}\\
\scalebox{0.95}{\ensuremath{F_{2i}\supseteq T_{2k+1}}}\text{ and }\scalebox{0.95}{\ensuremath{T_{2k}\supseteq F_{2i+1}}}\\
\scalebox{0.95}{\ensuremath{L_{2j}\supseteq T_{2k+1}}}\text{ and }\scalebox{0.95}{\ensuremath{T_{2k}\supseteq L_{2j+1}}}
\end{array}\right\} .
\]
The symmetry of these formulas implies: 
\[
\scalebox{0.95}{\ensuremath{\left(X\circledast Y\right)\circledast Z=X\circledast\left(Y\circledast Z\right)=Y\circledast\left(X\circledast Z\right)}}.
\]
 Thus, the associativity of $\circledast$ is proved. It follows that
$\scalebox{0.95}{\ensuremath{\mathcal{U}}}$ is a finite meet-semilattice
with the greatest member $\scalebox{0.95}{\ensuremath{M}}$, and hence
it is a lattice.
\begin{rem}
\label{rem:relation-bw-glbs}
\begin{enumerate}
\item $\circledast$ is \emph{not} distributive over direct sum, unlike
$\odot$.
\item Take any $\scalebox{0.95}{\ensuremath{X,Y\in\mathcal{U}}}$, then
$\scalebox{0.95}{\ensuremath{X\circledast Y\osubeq X\odot Y}}$. Also,
$\scalebox{0.95}{\ensuremath{X\circledast X^{\perp}\equiv\emptyset}}$.
\end{enumerate}
\end{rem}

\begin{prop}
\label{prop:meet-int}For $\scalebox{0.95}{\ensuremath{X,Y\in\mathcal{U}}}$
with $\scalebox{0.95}{\ensuremath{r(X)=r(Y)=r(M)}}$, the following
are equivalent.
\begin{enumerate}
\item $\scalebox{0.95}{\ensuremath{r\left(X\circledast Y\right)=r\left(M\right)}}$
\item $\scalebox{0.95}{\ensuremath{E\left(X\circledast Y\right)=E\left(M\right)}}$
\item $\scalebox{0.95}{\ensuremath{\mathrm{Fl}\left(X\circledast Y\right)=\mathrm{Fl}\left(X\right)\cup\mathrm{Fl}\left(Y\right)}}$
\end{enumerate}
\end{prop}

\begin{proof}
Just observe that all these three are equivalent to the statement
that for any flats $\scalebox{0.95}{\ensuremath{F\in\mathrm{Fl}\left(X\right)}}$
and $\scalebox{0.95}{\ensuremath{L\in\mathrm{Fl}\left(Y\right)}}$,
either $F\subseteq L$ or $F\supseteq L$.
\end{proof}
\begin{cor}
\label{cor:TwoMeets}Let $\scalebox{0.95}{\ensuremath{X,Y\in\mathcal{U}\left(M\right)}}$
be of full rank. Then, $\scalebox{0.95}{\ensuremath{r\left(X\circledast Y\right)=r\left(M\right)}}$
if and only if $\scalebox{0.95}{\ensuremath{X\circledast Y\equiv X\odot Y}}$.
\end{cor}

\begin{example}
\label{exa:meet}There is one and only one connected matroid on $\scalebox{0.95}{\ensuremath{\left[5\right]}}$
of rank $\scalebox{0.95}{\ensuremath{3}}$ with rank-$\scalebox{0.95}{\ensuremath{2}}$
flats $\scalebox{0.95}{\ensuremath{\left\{ 1,2,3\right\} }}$ and
$\scalebox{0.95}{\ensuremath{\left\{ 3,4,5\right\} }}$, say $\scalebox{0.95}{\ensuremath{M}}$,
cf. Lemma \ref{lem:degen-rank1-flat}. To illustrate, we compute some
examples:\smallskip{}

\noindent %
\begin{tabular}{l}
\hline 
\noalign{\vskip\doublerulesep}
\hspace{6.05em}$\scalebox{0.9}{\ensuremath{M|_{\left\{ 1,2,3\right\} }\circledast M|_{\left\{ 3,4,5\right\} }=M|_{\left\{ 3\right\} }=M|_{\left\{ 1,2,3\right\} }\odot M|_{\left\{ 3,4,5\right\} }}}$,\tabularnewline[\doublerulesep]
\noalign{\vskip\doublerulesep}
\hspace{4.95em}$\scalebox{0.9}{\ensuremath{M/\left\{ 1,2,3\right\} \circledast M/\left\{ 3,4,5\right\} =\emptyset=M/\left\{ 1,2,3\right\} \odot M/\left\{ 3,4,5\right\} }}$.\tabularnewline[\doublerulesep]
\hline 
\noalign{\vskip\doublerulesep}
\hspace{3em}$\scalebox{0.9}{\ensuremath{M|_{\left\{ 1,2,3\right\} }\circledast M/\left\{ 3,4,5\right\} =\emptyset\neq M/\left\{ 3,4,5\right\} |_{\left\{ 1,2\right\} }=M|_{\left\{ 1,2,3\right\} }\odot M/\left\{ 3,4,5\right\} }}$,\tabularnewline[\doublerulesep]
\noalign{\vskip\doublerulesep}
\hspace{3em}$\scalebox{0.9}{\ensuremath{M/\left\{ 1,2,3\right\} \circledast M|_{\left\{ 3,4,5\right\} }=\emptyset\neq M/\left\{ 1,2,3\right\} |_{\left\{ 4,5\right\} }=M/\left\{ 1,2,3\right\} \odot M|_{\left\{ 3,4,5\right\} }}}$.\tabularnewline[\doublerulesep]
\hline 
\noalign{\vskip\doublerulesep}
$\scalebox{0.88}{\ensuremath{\left(M/\left\{ 1,2,3\right\} \oplus M|_{\left\{ 1,2,3\right\} }\right)\circledast\left(M/\left\{ 3,4,5\right\} \oplus M|_{\left\{ 3,4,5\right\} }\right)=M|_{\left\{ 3\right\} }}}$,\tabularnewline[\doublerulesep]
\noalign{\vskip\doublerulesep}
$\scalebox{0.88}{\ensuremath{\left(M/\left\{ 1,2,3\right\} \oplus M|_{\left\{ 1,2,3\right\} }\right)\odot\left(M/\left\{ 3,4,5\right\} \oplus M|_{\left\{ 3,4,5\right\} }\right)=M/\left\{ 1,2,3\right\} \oplus M/\left\{ 3,4,5\right\} \oplus M|_{\left\{ 3\right\} }}}$.\tabularnewline[\doublerulesep]
\hline 
\noalign{\vskip\doublerulesep}
\end{tabular}\medskip{}
\end{example}

\subsection{\label{subsec:join-U(M)}The join of $\mathcal{U}\left(M\right)$}

We now discuss the join operation ``$\ovee$'' of the lattice $\scalebox{0.95}{\ensuremath{\mathcal{U}}}$.
Consider nonempty proper expressions $\scalebox{0.95}{\ensuremath{X,Y,Z\in\mathcal{U}}}$
and write them as before. For $\scalebox{0.95}{\ensuremath{i\in\left[m\right]\cup\left\{ 0\right\} }}$,
$\scalebox{0.95}{\ensuremath{j\in\left[l\right]\cup\left\{ 0\right\} }}$,
and $\scalebox{0.95}{\ensuremath{k\in\left[n\right]\cup\left\{ 0\right\} }}$,
let 
\[
\scalebox{0.95}{\ensuremath{X_{i}=M|_{F_{2i}}/F_{2i+1}}},\scalebox{0.95}{\ensuremath{Y_{j}=M|_{L_{2j}}/L_{2j+1}}},\text{ and }\scalebox{0.95}{\ensuremath{Z_{k}=M|_{T_{2k}}/T_{2k+1}}}.
\]
 Suppose $\scalebox{0.95}{\ensuremath{X_{i}\oleq Z_{k}}}$ and $\scalebox{0.95}{\ensuremath{Y_{j}\oleq Z_{k}}}$,
then $\scalebox{0.95}{\ensuremath{T_{2k}\supseteq\overline{F_{2i}\cup L_{2j}}}}$,
$\scalebox{0.95}{\ensuremath{F_{2i+1}\cap L_{2j+1}\supseteq T_{2k+1}}}$,
and $\scalebox{0.95}{\ensuremath{T_{2k+2}\supseteq\overline{F_{2i+2}\cup L_{2j+2}}}}$.
In this case: 
\begin{equation}
\scalebox{0.95}{\ensuremath{F_{2i+1}\supseteq L_{2j+2}}}\text{ and \scalebox{0.95}{\ensuremath{L_{2j+1}\supseteq F_{2i+2}}}}.\label{eq:link-cond-1}
\end{equation}
 Let $\scalebox{0.95}{\ensuremath{U_{i,j}:=M|_{\overline{F_{2i}\cup L_{2j}}}/(F_{2i+1}\cap L_{2j+1})}}$,
then $\scalebox{0.95}{\ensuremath{X_{i}\oleq U_{i,j}}}$, $\scalebox{0.95}{\ensuremath{Y_{j}\oleq U_{i,j}}}$,
and $\scalebox{0.95}{\ensuremath{U_{i,j}}}\oleq\scalebox{0.95}{\ensuremath{Z_{k}}}$.\smallskip{}

Suppose that all four pairs of $\scalebox{0.95}{\ensuremath{\left\{ i,i'\right\} \times\left\{ j,j'\right\} \subseteq\left(\left[m\right]\cup\left\{ 0\right\} \right)\times\left(\left[l\right]\cup\left\{ 0\right\} \right)}}$
with $\scalebox{0.95}{\ensuremath{i<i'}}$ and $\scalebox{0.95}{\ensuremath{j<j'}}$
satisfy \eqref{eq:link-cond-1}, then 
\[
\scalebox{0.95}{\ensuremath{F_{2i+2}\supseteq F_{2i'+1}\supseteq L_{2j+2}\supseteq L_{2j'+1}\supseteq F_{2i+2}}}
\]
 and $\scalebox{0.95}{\ensuremath{F_{2i+2}=F_{2i+3}=\cdots=F_{2i'+1}=L_{2j+2}=L_{2j+3}=\cdots=L_{2j'+1}}}$.
This contradicts the assumption that $\scalebox{0.95}{\ensuremath{X}}$
and $\scalebox{0.95}{\ensuremath{Y}}$ have no empty summands. So,
let $\scalebox{0.95}{\ensuremath{\mathfrak{i}\subseteq\left[m\right]\cup\left\{ 0\right\} }}$
and $\scalebox{0.95}{\ensuremath{\mathfrak{j}\subseteq\left[l\right]\cup\left\{ 0\right\} }}$
be maximal subsets such that any $\scalebox{0.95}{\ensuremath{i\in\mathfrak{i}}}$
and any $\scalebox{0.95}{\ensuremath{j\in\mathfrak{j}}}$ satisfy
\eqref{eq:link-cond-1}. Then, $\scalebox{0.95}{\ensuremath{\mathfrak{i}}}$
and $\scalebox{0.95}{\ensuremath{\mathfrak{j}}}$ are collections
of consecutive numbers, and there are precisely three cases for the
pair $\scalebox{0.95}{\ensuremath{\left(\mathfrak{i},\mathfrak{j}\right)}}$:
\begin{enumerate}
\item $\scalebox{0.95}{\ensuremath{\left|\mathfrak{i}\right|=1}}$ and $\scalebox{0.95}{\ensuremath{\left|\mathfrak{j}\right|=2}}$.
\item $\scalebox{0.95}{\ensuremath{\left|\mathfrak{i}\right|=2}}$ and $\scalebox{0.95}{\ensuremath{\left|\mathfrak{j}\right|=1}}$.
\item $\scalebox{0.95}{\ensuremath{\left|\mathfrak{i}\right|=1}}$ and $\scalebox{0.95}{\ensuremath{\left|\mathfrak{j}\right|=1}}$.
\end{enumerate}
Let $\scalebox{0.95}{\ensuremath{\left(\mathfrak{i}',\mathfrak{j}'\right)}}$
be another pair of maximal subsets of $\scalebox{0.95}{\ensuremath{\left[m\right]\cup\left\{ 0\right\} }}$
and $\scalebox{0.95}{\ensuremath{\left[l\right]\cup\left\{ 0\right\} }}$,
respectively, such that any $\scalebox{0.95}{\ensuremath{i\in\mathfrak{i}'}}$
and any $\scalebox{0.95}{\ensuremath{j\in\mathfrak{j}'}}$ satisfy
\eqref{eq:link-cond-1}. Then, we have $\scalebox{0.95}{\ensuremath{\mathfrak{i}\cap\mathfrak{i}'=\mathfrak{j}\cap\mathfrak{j}'=\emptyset}}$,
and $\scalebox{0.95}{\ensuremath{\min\left(\mathfrak{i}\right)<\min\left(\mathfrak{i}'\right)}}$
if and only if $\scalebox{0.95}{\ensuremath{\min\left(\mathfrak{j}\right)<\min\left(\mathfrak{j}'\right)}}$.
Thus, the collection of pairs of maximal subsets of our interest is
ordered by the same linear order as \eqref{eq:partial-order}. Therefore,
the direct sum of all $\scalebox{0.95}{\ensuremath{U_{i,j}}}$ with
$\scalebox{0.95}{\ensuremath{i}}$ and $\scalebox{0.95}{\ensuremath{j}}$
satisfying \eqref{eq:link-cond-1} is $\scalebox{0.95}{\ensuremath{X\ovee Y}}$.
\begin{example}
\label{exa:join}Let $\scalebox{0.95}{\ensuremath{M}}$ be the connected
matroid of Example \ref{exa:meet}. Then:\smallskip{}
\noindent \begin{center}
\begin{tabular}{c}
\hline 
\noalign{\vskip\doublerulesep}
$\scalebox{0.95}{\ensuremath{M|_{\left\{ 1,2,3\right\} }\ovee M|_{\left\{ 3,4,5\right\} }=M|_{\left\{ 1,2,3\right\} }\ovee M/\left\{ 3,4,5\right\} =M/\left\{ 1,2,3\right\} \ovee M|_{\left\{ 3,4,5\right\} }=M}}$,\tabularnewline[\doublerulesep]
\noalign{\vskip\doublerulesep}
$\scalebox{0.95}{\ensuremath{M/\left\{ 1,2,3\right\} \ovee M/\left\{ 3,4,5\right\} =M/\left\{ 3\right\} }}$.\tabularnewline[\doublerulesep]
\hline 
\noalign{\vskip\doublerulesep}
$\scalebox{0.95}{\ensuremath{M|_{\left\{ 1\right\} }\ovee M/\left\{ 2\right\} =M/\left\{ 1\right\} \ovee M|_{\left\{ 2\right\} }=M/\left\{ 1\right\} \ovee M/\left\{ 2\right\} =M}}$,\tabularnewline[\doublerulesep]
\noalign{\vskip\doublerulesep}
$\scalebox{0.95}{\ensuremath{M|_{\left\{ 1\right\} }\ovee M|_{\left\{ 2\right\} }=M|_{\left\{ 1,2,3\right\} }}}$.\tabularnewline[\doublerulesep]
\hline 
\noalign{\vskip\doublerulesep}
\end{tabular}
\par\end{center}
\end{example}

\subsection{\label{subsec:Lattice-W(M)}A lattice $\mathcal{W}\left(M\right)\subset\mathcal{U}\left(M\right)$}

Consider a subcollection $\scalebox{0.95}{\ensuremath{\mathcal{W}}}$
of $\scalebox{0.95}{\ensuremath{\mathcal{U}}}$:
\[
\scalebox{0.95}{\ensuremath{\mathcal{W}=\left\{ X\in\mathcal{U}:M/\left|\mathrm{Fl}\left(X\right)\right|\text{ is a (nonempty) summand of }X\right\} \cup\left\{ \emptyset,M\right\} }}.
\]
This is a finite meet-semilattice and is a lattice for which the join
of $\scalebox{0.95}{\ensuremath{X}}$ and $\scalebox{0.95}{\ensuremath{Y}}$
is $\scalebox{0.95}{\ensuremath{X\ovee Y}}$ and the meet of them
is 
\[
\scalebox{0.95}{\ensuremath{X\owedge Y:=\begin{cases}
X\circledast Y & \text{if }\overline{\left|\mathrm{Fl}\left(X\right)\right|\cup\left|\mathrm{Fl}\left(Y\right)\right|}\neq E\left(M\right),\\
\emptyset & \text{otherwise}.
\end{cases}}}
\]
  Thus, $\scalebox{0.95}{\ensuremath{\left(\mathcal{W},\oleq,\owedge,\ovee\right)}}$
is a lattice.
\begin{example}
Let $\scalebox{0.95}{\ensuremath{M}}$ be the connected matroid of
Example \ref{exa:meet}. Then:\smallskip{}
\noindent \begin{center}
\begin{tabular}{l}
\hline 
\noalign{\vskip\doublerulesep}
\hspace{1.55em}$\scalebox{0.9}{\ensuremath{M|_{\left\{ 1,2,3\right\} }\owedge M|_{\left\{ 3,4,5\right\} }=M|_{\left\{ 3\right\} }=M|_{\left\{ 1,2,3\right\} }\circledast M|_{\left\{ 3,4,5\right\} }}}$,\tabularnewline[\doublerulesep]
\noalign{\vskip\doublerulesep}
\hspace{0.45em}$\scalebox{0.9}{\ensuremath{M/\left\{ 1,2,3\right\} \owedge M/\left\{ 3,4,5\right\} =\emptyset=M/\left\{ 1,2,3\right\} \circledast M/\left\{ 3,4,5\right\} }}$,\tabularnewline[\doublerulesep]
\noalign{\vskip\doublerulesep}
\hspace{1.5em}$\scalebox{0.9}{\ensuremath{M|_{\left\{ 1,2,3\right\} }\owedge M/\left\{ 3,4,5\right\} =\emptyset=M|_{\left\{ 1,2,3\right\} }\circledast M/\left\{ 3,4,5\right\} }}$,\tabularnewline[\doublerulesep]
\noalign{\vskip\doublerulesep}
\hspace{1.5em}$\scalebox{0.9}{\ensuremath{M/\left\{ 1,2,3\right\} \owedge M|_{\left\{ 3,4,5\right\} }=\emptyset=M/\left\{ 1,2,3\right\} \circledast M|_{\left\{ 3,4,5\right\} }}}$.\tabularnewline[\doublerulesep]
\hline 
\noalign{\vskip\doublerulesep}
$\scalebox{0.88}{\ensuremath{\left(M/\left\{ 1,2,3\right\} \oplus M|_{\left\{ 1,2,3\right\} }\right)\owedge\left(M/\left\{ 3,4,5\right\} \oplus M|_{\left\{ 3,4,5\right\} }\right)=\emptyset}}$.\tabularnewline[\doublerulesep]
\hline 
\end{tabular}\medskip{}
\par\end{center}
\end{example}

\subsection{\label{subsec:Mat-lat}Matroidal lattice}

Fix a \emph{loopless} matroid $\scalebox{0.95}{\ensuremath{M}}$ of
rank $k$. Denote
\[
\scalebox{0.95}{\ensuremath{\phi\left(\mathcal{W}\right)=\left\{ \phi\left(X\right):X\in\mathcal{W}\right\} }}.
\]
Its rank-$k$ members are loopless face matroids of $\scalebox{0.95}{\ensuremath{M}}$.
Conversely, all loopless face matroids of $\scalebox{0.95}{\ensuremath{M}}$
are members of $\scalebox{0.95}{\ensuremath{\phi\left(\mathcal{W}\right)}}$
by Proposition \ref{prop:flaces-flags}. So, let $\scalebox{0.95}{\ensuremath{\mathcal{F}=\mathcal{F}\left(M\right)}}$
be the collection of all loopless face matroids of $\scalebox{0.95}{\ensuremath{M}}$,
then 
\[
\scalebox{0.95}{\ensuremath{\mathcal{F}=\left\{ \phi\bigl(M(\mathrm{Fl}(X))\bigr):X\ensuremath{\in\mathcal{W}}\right\} \cup\left\{ \emptyset,M\right\} \subset\phi\left(\mathcal{W}\right)}}.
\]
 This is a lattice of rank $\scalebox{0.95}{\ensuremath{k-\kappa\left(M\right)+1}}$,
cf. Subsection \ref{subsec:Face-lattice}. Moreover, it is a coatomistic
lattice whose coatoms are $\scalebox{0.95}{\ensuremath{M\left(F\right)}}$
for all non-degenerate flats $\scalebox{0.95}{\ensuremath{F}}$. For
$\scalebox{0.95}{\ensuremath{\phi\left(X\right),\phi\left(Y\right)\in\mathcal{F}}}$
with $\scalebox{0.95}{\ensuremath{X,Y\in\mathcal{W}}}$, the meet
of $\scalebox{0.95}{\ensuremath{\phi\left(X\right)}}$ and $\scalebox{0.95}{\ensuremath{\phi\left(Y\right)}}$
is:
\[
\scalebox{0.95}{\ensuremath{\phi\left(X\right)\logof\phi\left(Y\right):=\begin{cases}
\phi\left(X\odot Y\right)=\phi\left(X\right)\cap\phi\left(Y\right) & \text{if }\phi\left(X\odot Y\right)\in\mathcal{F},\\
\emptyset & \text{otherwise}.
\end{cases}}}
\]
 This is well-defined by Theorem \ref{thm:face-intersection} and
Proposition \ref{prop:flaces-flags}. In Subsection \ref{subsec:MMP},
we show that $\scalebox{0.95}{\ensuremath{\mathcal{F}}}$ can be obtained
from the subspace lattice $\scalebox{0.95}{\ensuremath{\mathcal{S}=\mathcal{S}\left(M\right)}}$
by a sequence of lattice operations. As a first step in preparation,
we define \textquotedblleft intermediate\textquotedblright{} lattices
that can arise in between:
\begin{defn}
A \textbf{matroidal lattice} on $\scalebox{0.95}{\ensuremath{M}}$
is the pair of $\scalebox{0.95}{\ensuremath{M}}$ and a meet-semilattice
$\scalebox{0.95}{\ensuremath{\left(\mathcal{A},\preccurlyeq,\curlywedge\right)}}$
with $\scalebox{0.95}{\ensuremath{\left\{ \emptyset,M\right\} \subseteq\mathcal{A}}}$
(and hence a lattice) whose members are expressions of $\scalebox{0.95}{\ensuremath{\mathcal{W}}}$
or rank-$k$ matroids of $\scalebox{0.95}{\ensuremath{\phi\left(\mathcal{W}\right)}}$,
satisfying that for $\scalebox{0.95}{\ensuremath{X,Y\in\mathcal{W}}}$:
\begin{enumerate}[label=(ML\arabic*),topsep=5pt]
\item \label{enu:ML1} $\scalebox{0.95}{\ensuremath{X,\phi\left(X\right)\in\mathcal{A}}}$
implies $\scalebox{0.95}{\ensuremath{\phi\left(X\right)\prec X}}$
or $\scalebox{0.95}{\ensuremath{X\curlywedge\phi\left(X\right)=\emptyset}}$.
\item \label{enu:ML2}$\scalebox{0.95}{\ensuremath{X,Y\in\mathcal{A}}}$
implies $\scalebox{0.95}{\ensuremath{X\curlywedge Y=\begin{cases}
X\owedge Y,\text{ or}\\
\phi\left(X\owedge Y\right)\text{ with }r\left(X\owedge Y\right)=k,\text{ or}\\
\emptyset.
\end{cases}}}$ 
\item \label{enu:ML3} $\scalebox{0.95}{\ensuremath{X,\phi\left(Y\right)\in\mathcal{A}}}$
implies $\scalebox{0.95}{\ensuremath{X\curlywedge\phi\left(Y\right)=\begin{cases}
\phi\left(X\owedge Y\right)\text{ with }r\left(X\owedge Y\right)=k,\text{ or}\\
\emptyset.
\end{cases}}}$
\item \label{enu:ML4} $\scalebox{0.95}{\ensuremath{\phi\left(X\right),\phi\left(Y\right)\in\mathcal{A}}}$
implies $\scalebox{0.95}{\ensuremath{\phi\left(X\right)\curlywedge\phi\left(Y\right)=\phi\left(X\right)\logof\phi\left(Y\right)}}$.
\end{enumerate}
We simply write $\scalebox{0.95}{\ensuremath{\mathcal{A}}}$ for $\scalebox{0.95}{\ensuremath{\left(M,\left(\mathcal{A},\preccurlyeq,\curlywedge\right)\right)}}$
and call $\scalebox{0.95}{\ensuremath{M}}$ the \textbf{underlying
matroid} of $\scalebox{0.95}{\ensuremath{\mathcal{A}}}$.
\end{defn}

\begin{notation}
Henceforth, given a matroidal lattice $\scalebox{0.95}{\ensuremath{\mathcal{A}}}$,
``$\preccurlyeq$'' and ``$\curlywedge$'' will denote its partial
order and meet, respectively, unless otherwise noted.
\end{notation}

Note that \ref{enu:ML2} and \ref{enu:ML3} are well-defined by Corollary
\ref{cor:TwoMeets} and Theorem \ref{thm:face-intersection}. Moreover:
\begin{itemize}
\item In \ref{enu:ML2}, if $\scalebox{0.95}{\ensuremath{X\curlywedge Y\neq\emptyset}}$,
then $\scalebox{0.95}{\ensuremath{Y\preccurlyeq X\Longleftrightarrow Y\oleq X}}$.
\item In \ref{enu:ML3}, suppose $\scalebox{0.95}{\ensuremath{X\curlywedge\phi\left(Y\right)\neq\emptyset}}$.
Then, $\scalebox{0.95}{\ensuremath{\phi\left(Y\right)\prec X}}$ implies
$\scalebox{0.95}{\ensuremath{\phi\left(Y\right)\subseteq\phi\left(X\right)}}$.
However, $\scalebox{0.95}{\ensuremath{X\not\preccurlyeq\phi\left(Y\right)}}$
because otherwise $\scalebox{0.95}{\ensuremath{X=X\curlywedge\phi\left(Y\right)\in\phi\left(\mathcal{W}\right)}}$,
and hence, $\scalebox{0.95}{\ensuremath{X\in\mathcal{W}\cap\phi\left(\mathcal{W}\right)=\left\{ \emptyset\right\} }}$
and $\scalebox{0.95}{\ensuremath{X=\emptyset}}$, a contradiction.
Also, $\scalebox{0.95}{\ensuremath{r\left(X\owedge Y\right)=k}}$
and $\scalebox{0.95}{\ensuremath{X\curlywedge\phi\left(Y\right)=\phi\left(X\owedge Y\right)=\phi\left(X\odot Y\right)=\phi\left(X\right)\cap\phi\left(Y\right)}}.$
\item In \ref{enu:ML4}, $\scalebox{0.95}{\ensuremath{\phi\left(Y\right)\preccurlyeq\phi\left(X\right)\Longleftrightarrow\phi\left(Y\right)\subseteq\phi\left(X\right)}}$.
\end{itemize}
Thus, for any $\scalebox{0.95}{\ensuremath{X,Y\in\mathcal{A}}}$,
$\scalebox{0.95}{\ensuremath{Y\prec X}}$ implies $\scalebox{0.95}{\ensuremath{\phi\left(Y\right)\subseteq\phi\left(X\right)}}$.

\subsection{\label{subsec:Matr-arrt}Matroidal arrangement}

Let $\scalebox{0.95}{\ensuremath{\mathcal{A}}}$ be a coatomistic
matroidal lattice on $\scalebox{0.95}{\ensuremath{M}}$. Denote by
$\scalebox{0.95}{\ensuremath{\mathcal{A}^{\lessdot M}}}$ the set
of coatoms of $\scalebox{0.95}{\ensuremath{\mathcal{A}}}$.
\begin{defn}
The \textbf{matroidal arrangement} of a coatomistic matroidal lattice
$\scalebox{0.95}{\ensuremath{\mathcal{A}}}$ on $\scalebox{0.95}{\ensuremath{M}}$
is defined as the triple of $\scalebox{0.95}{\ensuremath{M}}$, $\scalebox{0.95}{\ensuremath{\mathcal{A}}}$,
and $\scalebox{0.95}{\ensuremath{\mathcal{A}^{\lessdot M}}}$. We
frequently denote it simply by the pair of $\scalebox{0.95}{\ensuremath{M}}$
and $\scalebox{0.95}{\ensuremath{\mathcal{A}^{\lessdot M}}}$, or
just by $\scalebox{0.95}{\ensuremath{\mathcal{A}^{\lessdot M}}}$.
We also call it a $\scalebox{0.95}{\ensuremath{\left(k,E\left(M\right)\right)}}$-arrangement.
We call $\scalebox{0.95}{\ensuremath{\mathcal{A}}}$ the \textbf{intersection
lattice} of $\scalebox{0.95}{\ensuremath{\mathcal{A}^{\lessdot M}}}$
and say that $\scalebox{0.95}{\ensuremath{\mathcal{A}}}$ is \textbf{generated}
by $\scalebox{0.95}{\ensuremath{\mathcal{A}^{\lessdot M}}}$.
\end{defn}

\begin{defn}
A \textbf{label map} for a matroidal arrangement $\scalebox{0.95}{\ensuremath{\mathcal{A}^{\lessdot M}}}$
is a surjective map $\scalebox{0.95}{\ensuremath{\lambda:\Lambda\rightarrow\mathcal{A}^{\lessdot M}}}$
on a finite set $\scalebox{0.95}{\ensuremath{\Lambda}}$ where elements
of $\scalebox{0.95}{\ensuremath{\Lambda}}$ are called \textbf{labels}.
We call the function $\scalebox{0.95}{\ensuremath{\mathcal{A}^{\lessdot M}\rightarrow\mathbb{Z}_{\ge0}}}$
defined by $\scalebox{0.95}{\ensuremath{\mathbf{c}\mapsto\left|\lambda^{-1}(\mathbf{c})\right|}}$
the \textbf{multiplicity function}, and $\scalebox{0.95}{\ensuremath{\left|\lambda^{-1}(\mathbf{c})\right|}}$
the \textbf{multiplicity} of the coatom $\mathbf{c}$.
\end{defn}

\begin{defn}
A \textbf{labelled matroidal arrangement} on $\scalebox{0.95}{\ensuremath{M}}$
is a matroidal arrangement $\scalebox{0.95}{\ensuremath{\mathcal{A}^{\lessdot M}}}$
with a label map. Two labelled matroidal arrangements are \textbf{isomorphic}
if a multiplicity-preserving lattice isomorphism exists between their
intersection lattices.
\end{defn}

\begin{defn}
The \textbf{product} of two labelled arrangements $\scalebox{0.95}{\ensuremath{\mathcal{A}_{1}^{\lessdot M_{1}}}}$
and $\scalebox{0.95}{\ensuremath{\mathcal{A}_{2}^{\lessdot M_{2}}}}$
with label maps $\scalebox{0.95}{\ensuremath{\lambda_{1}}}$ on $\scalebox{0.95}{\ensuremath{\Lambda_{1}}}$
and $\scalebox{0.95}{\ensuremath{\lambda_{2}}}$ on $\scalebox{0.95}{\ensuremath{\Lambda_{2}}}$,
respectively, is 
\[
\scalebox{0.95}{\ensuremath{\left(\left(M_{1},M_{2}\right),\bigl(\mathcal{A}_{1}^{\lessdot M_{1}}\times\left\{ M_{2}\right\} \bigr)\cup\bigl(\left\{ M_{1}\right\} \times\mathcal{A}_{2}^{\lessdot M_{2}}\bigr)\right).}}
\]
 Its label map $\scalebox{0.95}{\ensuremath{\left(\lambda_{1},\lambda_{2}\right)}}$
on $\scalebox{0.95}{\ensuremath{\Lambda_{1}\oplus\Lambda_{2}}}$ is
defined by $\left(\lambda_{1},\lambda_{2}\right)|_{\Lambda_{1}}=\lambda_{1}$
and $\left(\lambda_{1},\lambda_{2}\right)|_{\Lambda_{2}}=\lambda_{2}$.
Here, $\scalebox{0.95}{\ensuremath{X_{1}\times\emptyset=\emptyset\times X_{2}=\emptyset}}$,
and $\scalebox{0.95}{\ensuremath{\left(M_{1},M_{2}\right)}}$ equals
$\scalebox{0.95}{\ensuremath{M_{1}\oplus M_{2}}}$ as a matroid.
\end{defn}

\begin{rem}
By removing the label conditions from the above definitions, one can
obtain the corresponding definitions for unlabelled arrangements.
\end{rem}

\begin{example}
\label{exa:lattices}Let $\scalebox{0.95}{\ensuremath{M}}$ be a loopless
matroid of rank $k\ge2$.
\begin{enumerate}
\item Let $\scalebox{0.95}{\ensuremath{\mathcal{T}=\mathcal{T}\left(M\right)}}$
be the collection of the expressions $\scalebox{0.95}{\ensuremath{M/F\in\mathcal{W}}}$
for all flats $\scalebox{0.95}{\ensuremath{F}}$ of $\scalebox{0.95}{\ensuremath{M}}$.
The forgetful map $\phi$ restricted to $\scalebox{0.95}{\ensuremath{\mathcal{T}}}$
is an isomorphism between $\scalebox{0.95}{\ensuremath{\mathcal{T}}}$
and $\scalebox{0.95}{\ensuremath{\mathcal{S}=\mathcal{S}\left(M\right)}}$.
So, $\scalebox{0.95}{\ensuremath{\mathcal{T}}}$ is a coatomistic
matroidal lattice on $\scalebox{0.95}{\ensuremath{M}}$ with $\scalebox{0.95}{\ensuremath{\mathcal{T}^{\lessdot M}\simeq\mathcal{S}^{\lessdot M}=\mathrm{HA}_{M}}}$,
and $\scalebox{0.95}{\ensuremath{\mathcal{T}^{\lessdot M}}}$ is a
labelled matroidal arrangement.
\item Let $\scalebox{0.95}{\ensuremath{\mathcal{F}=\mathcal{F}\left(M\right)}}$
be the coatomistic lattice of all loopless face matroids of $\scalebox{0.95}{\ensuremath{M}}$,
then its coatoms $\scalebox{0.95}{\ensuremath{M\left(F\right)}}$
are labelled $\scalebox{0.95}{\ensuremath{s_{F}}}$ for all non-degenerate
flats $\scalebox{0.95}{\ensuremath{F}}$.  Thus, $\scalebox{0.95}{\ensuremath{\mathcal{F}^{\lessdot M}}}$
is a labelled matroidal arrangement.
\end{enumerate}
\end{example}

\begin{defn}
\label{def:Puzzle-piece}If $\scalebox{0.95}{\ensuremath{\mathcal{F}=\mathcal{F}(M)}}$
is the nonempty coatomistic matroidal lattice of all loopless face
matroids of a loopless matroid $\scalebox{0.95}{\ensuremath{M}}$,
the matroidal arrangement $\scalebox{0.95}{\ensuremath{\mathcal{F}^{\lessdot M}}}$
is said to be the \textbf{puzzle-piece} of $\scalebox{0.95}{\ensuremath{M}}$
and denoted by $\scalebox{0.95}{\ensuremath{\mathrm{PZ}_{M}}}$.
\end{defn}

\begin{rem}
\label{rem:diff of isos}There are two isomorphic puzzle-pieces whose
base polytopes are not affinely isomorphic: Let $\scalebox{0.95}{\ensuremath{M}}$
be the matroid on $\left[5\right]$ of Example \ref{exa:meet}, and
$\scalebox{0.95}{\ensuremath{N:=U_{3}^{2}\oplus U_{3}^{2}}}$. Then,
both $\scalebox{0.95}{\ensuremath{\mathrm{PZ}_{M}}}$ and $\scalebox{0.95}{\ensuremath{\mathrm{PZ}_{N}}}$
are isomorphic to $\scalebox{0.95}{\ensuremath{\mathrm{HA}_{U_{3}^{2}}\times\mathrm{HA}_{U_{3}^{2}}}}$,
but $\scalebox{0.95}{\ensuremath{\mathrm{BP}_{M}}}$ and $\scalebox{0.95}{\ensuremath{\mathrm{BP}_{N}}}$
are not affinely isomorphic because the number of bases of $\scalebox{0.95}{\ensuremath{M}}$
is $8$ while that of $\scalebox{0.95}{\ensuremath{N}}$ is $9$.
See also Remark \ref{rem:why-pp}.
\end{rem}

\subsection{The dimension of a matroid}

Two functions $r$ and $\kappa$ are additive on direct sums of loopless
matroids, and $r-\kappa$ is a nonnegative dimension-like function.
\begin{defn}
The \textbf{dimension} of a nonempty matroid $\scalebox{0.95}{\ensuremath{M}}$
is defined as: 
\[
\scalebox{0.95}{\ensuremath{\dim M:=\left(r-\kappa\right)\left(M\backslash\bar{\emptyset}\right)}}.
\]
 Then, $\scalebox{0.95}{\ensuremath{0\le\dim M\le r\left(M\right)-1}}$.
For the empty matroid, we define $\scalebox{0.95}{\ensuremath{\dim\emptyset:=-1}}$.
\end{defn}

\begin{defn}
\label{def:dimPZ}The \textbf{dimension} of a puzzle-piece $\scalebox{0.95}{\ensuremath{\mathrm{PZ}_{M}}}$
is defined as: 
\[
\scalebox{0.95}{\ensuremath{\dim\mathrm{PZ}_{M}:=\dim M}}.
\]
 A puzzle-piece of dimension $0$ or $1$ is called a \textbf{point-piece}
or a \textbf{line-piece}, respectively.
\end{defn}

\begin{notation}
For $d\ge1$, denote by $\scalebox{0.95}{\ensuremath{\left\lfloor M\right\rfloor _{d}}}$
the direct sum of all connected components of $\scalebox{0.95}{\ensuremath{M}}$
with dimension $\scalebox{0.95}{\ensuremath{\ge d}}$. When the subscript
$d$ is $1$, we omit it and write $\scalebox{0.95}{\ensuremath{\left\lfloor M\right\rfloor }}$
for $\scalebox{0.95}{\ensuremath{\left\lfloor M\right\rfloor _{1}}}$.
We also write $\scalebox{0.95}{\ensuremath{\left\lfloor M\right\rfloor _{0}=M\backslash\bar{\emptyset}}}$
and $\scalebox{0.95}{\ensuremath{\left\lfloor M\right\rfloor _{-1}=M}}$.
\end{notation}

\begin{rem}
\label{rem:why-pp} $\scalebox{0.95}{\ensuremath{\mathrm{PZ}_{M}}}$
is less dimensional than $\scalebox{0.95}{\ensuremath{\mathrm{BP}_{M}}}$,
indicating that when only loopless parts matter, passing to puzzle-pieces
is a good option. For example, let $\scalebox{0.95}{\ensuremath{M}}$
be the matroid of Example \ref{exa:meet}. See Figure \ref{fig:why-pp}
for $\scalebox{0.95}{\ensuremath{\mathrm{HA}_{M}}}$ and $\scalebox{0.95}{\ensuremath{\mathrm{PZ}_{M}}}$.
We see that $\scalebox{0.95}{\ensuremath{\mathrm{PZ}_{M}}}$ has exactly
$6$ line-pieces and $9$ point-pieces, which tells that $\scalebox{0.95}{\ensuremath{\mathrm{BP}_{M}}}$
has $6$ loopless facets and $9$ loopless codimension-$2$ faces,
and no more loopless nonempty proper faces. Also, only two of those
$15$ loopless faces are relevant: $\scalebox{0.95}{\ensuremath{\mathrm{BP}_{M\left(\left\{ 1,2,3\right\} \right)}}}$
and $\scalebox{0.95}{\ensuremath{\mathrm{BP}_{M\left(\left\{ 3,4,5\right\} \right)}}}$.
\begin{figure}[H]
\noindent \begin{center}
\def\sizea{0.22}
\def\size{0.25}
\def\ratioa{0.901}
\def\ratior{0.851}
\def\ratiow{0.761}
\def\ratioe{0.881}


\par\end{center}\vspace{-6pt}

\caption{\label{fig:why-pp}A line arrangement and its puzzle-piece.}
\end{figure}
\end{rem}

\subsection{\label{subsec:expression-dim}The dimension of a matroidal expression}

Let $\scalebox{0.95}{\ensuremath{X=\bigoplus_{i=0}^{m}X_{i}\in\mathcal{W}}}$
be a matroidal expression with no empty summands, then $\scalebox{0.95}{\ensuremath{r\left(X_{i}\right)\ge1}}$.
We define $\scalebox{0.95}{\ensuremath{\mathrm{edim}\left.X\right.}}$
as: 
\[
\scalebox{0.95}{\ensuremath{\mathrm{edim}\left.X\right.:=\sum_{i=0}^{m}\left(r\left(X_{i}\right)-1\right)}}.
\]
Since $\scalebox{0.95}{\ensuremath{r\left(X_{i}\right)-1\ge\dim X_{i}}}$,
we have: 
\[
\scalebox{0.95}{\ensuremath{\mathrm{edim}\left.X\right.\ge\dim X}}.
\]
 For $\scalebox{0.95}{\ensuremath{X\in\phi\left(\mathcal{W}\right)}}$,
we define 
\[
\scalebox{0.95}{\ensuremath{\mathrm{edim}\left.X\right.:=\dim X}}.
\]
 We call $\scalebox{0.95}{\ensuremath{\mathrm{edim}\left.X\right.}}$
the \textbf{expression dimension} of $\scalebox{0.95}{\ensuremath{X}}$.
For a subspace $\scalebox{0.95}{\ensuremath{M/F}}$ of a loopless
matroid $\scalebox{0.95}{\ensuremath{M}}$, $\scalebox{0.95}{\ensuremath{\mathrm{edim}\left.M/F\right.}}$
equals its subspace dimension $\scalebox{0.95}{\ensuremath{\mathrm{sdim}\left.M/F\right.}}$,
cf. Subsection \ref{subsec:HA-1}.
\begin{defn}
\label{def:stability}Let $\scalebox{0.95}{\ensuremath{X}}$ be a
nonempty member of $\scalebox{0.95}{\ensuremath{\mathcal{W}\cup\phi\left(\mathcal{W}\right)}}$.
We say that  $\scalebox{0.95}{\ensuremath{X}}$ is \textbf{stable}
if $\scalebox{0.95}{\ensuremath{\mathrm{edim}\left.X\right.=\dim X}}$,
and \textbf{unstable} if $\scalebox{0.95}{\ensuremath{\mathrm{edim}\left.X\right.>\dim X}}$.
\end{defn}

Every nonempty $\scalebox{0.95}{\ensuremath{X\in\phi\left(\mathcal{W}\right)}}$
is stable by definition.

\subsection{\label{subsec:matoidal-blowup}Blowup operation}

Let $\scalebox{0.95}{\ensuremath{\mathcal{A}}}$ be a matroidal lattice
on a loopless matroid $\scalebox{0.95}{\ensuremath{M}}$ of rank $k$.
Fix a nonempty proper member $\scalebox{0.95}{\ensuremath{X\in\mathcal{A}\cap\mathcal{W}}}$
with $\scalebox{0.95}{\ensuremath{\epsilon\left(X\right)\notin\mathcal{A}}}$.
Denote 
\[
\scalebox{0.95}{\ensuremath{\mathcal{A}_{X}:=\bigcup_{Z\in I_{X}}\mathcal{A}|_{Z}}}
\]
 where $\scalebox{0.95}{\ensuremath{I_{X}:=\left\{ Z\in\mathcal{A}:X\preccurlyeq Z,\epsilon\left(X\right)=\epsilon\left(Z\right)\right\} }}$
and $\scalebox{0.95}{\ensuremath{\mathcal{A}|_{Z}:=\left\{ A\in\mathcal{A}:A\preccurlyeq Z\right\} }}$.
We define a binary operation ``$\scalebox{0.95}{\ensuremath{\ocap}}$'':
Let $\scalebox{0.95}{\ensuremath{Y,Z\in\mathcal{A}-\mathcal{A}_{X}}}$.
\begin{enumerate}[label=(BU\arabic*),topsep=5pt]
\item \label{enu:BU1}$\scalebox{0.95}{\ensuremath{Y\ocap Z:=\begin{cases}
Y\curlywedge Z & \text{if }Y\curlywedge Z\in\mathcal{A}-\mathcal{A}_{X},\\
\emptyset & \text{otherwise.}
\end{cases}}}$
\item \label{enu:BU2}$\scalebox{0.95}{\ensuremath{Y\ocap\epsilon\left(X\right):=\begin{cases}
Y\owedge\epsilon\left(X\right) & \text{if }Y\in\mathcal{W}\text{ with }X\curlywedge Y\neq\emptyset,\\
\emptyset & \text{otherwise.}
\end{cases}}}$
\item \label{enu:BU3}$\scalebox{0.95}{\ensuremath{Y\ocap\left(Z\ocap\epsilon\left(X\right)\right):=\begin{cases}
\left(Y\curlywedge Z\right)\ocap\epsilon\left(X\right) & \text{if }Y\curlywedge Z\in\mathcal{A}-\mathcal{A}_{X},\\
\emptyset & \text{otherwise.}
\end{cases}}}$
\item \label{enu:BU4}$\scalebox{0.95}{\ensuremath{\left(Y\ocap\epsilon\left(X\right)\right)\ocap\left(Z\ocap\epsilon\left(X\right)\right):=\begin{cases}
\left(Y\curlywedge Z\right)\ocap\epsilon\left(X\right) & \text{if }Y\curlywedge Z\in\mathcal{A}-\mathcal{A}_{X},\\
\emptyset & \text{otherwise.}
\end{cases}}}$
\end{enumerate}
Denote $\scalebox{0.95}{\ensuremath{\mathcal{A}^{\epsilon\left(X\right)}:=\epsilon\left(X\right)\ocap\left(\mathcal{A}-\mathcal{A}_{X}\right)}}$.
Then, $\scalebox{0.95}{\ensuremath{\mathcal{A}^{\epsilon\left(X\right)}}}$
contains $\scalebox{0.95}{\ensuremath{\epsilon\left(X\right)=M\ocap\epsilon\left(X\right)}}$
because $\scalebox{0.95}{\ensuremath{M\in\mathcal{A}-\mathcal{A}_{X}}}$.
We assume that $\scalebox{0.95}{\ensuremath{\mathcal{A}^{\epsilon\left(X\right)}}}$
contains $\scalebox{0.95}{\ensuremath{\emptyset}}$ by adding it if
not. Then, $\scalebox{0.95}{\ensuremath{\mathcal{A}^{\epsilon\left(X\right)}}}$
is a finite meet-semilattice with meet $\scalebox{0.95}{\ensuremath{\ocap}}$
and is a lattice. Further, denote 
\[
\scalebox{0.95}{\ensuremath{\mathrm{Bl}_{X}\mathcal{A}:=\left(\mathcal{A}-\mathcal{A}_{X}\right)\cup\mathcal{A}^{\epsilon\left(X\right)}}}.
\]
 Then, $\scalebox{0.95}{\ensuremath{\mathrm{Bl}_{X}\mathcal{A}}}$
with $\scalebox{0.95}{\ensuremath{\emptyset,M\in\mathrm{Bl}_{X}\mathcal{A}}}$
is a finite meet-semilattice with meet $\scalebox{0.95}{\ensuremath{\ocap}}$
and is a lattice. Here, it is a routine to check the associativity
of $\scalebox{0.95}{\ensuremath{\ocap}}$. 
\begin{defn}
Let $\scalebox{0.95}{\ensuremath{\mathcal{A}}}$ be a matroidal lattice
on a loopless matroid $\scalebox{0.95}{\ensuremath{M}}$. For a nonempty
proper member $\scalebox{0.95}{\ensuremath{X\in\mathcal{A}\cap\mathcal{W}}}$
with $\scalebox{0.95}{\ensuremath{\epsilon\left(X\right)\notin\mathcal{A}}}$,
the lattice $\scalebox{0.95}{\ensuremath{\mathrm{Bl}_{X}\mathcal{A}}}$
is said to be the \textbf{blowup of $\scalebox{0.95}{\ensuremath{\mathcal{A}}}$
along} $\scalebox{0.95}{\ensuremath{X}}$. \textbf{Blowing up} is
taking a blowup, and \textbf{blowing down} is the inverse operation.
If $\scalebox{0.95}{\ensuremath{\mathrm{Bl}_{X}\mathcal{A}}}$ is
a coatomistic lattice, we say that the matroidal arrangement $\scalebox{0.95}{\ensuremath{\left(\mathrm{Bl}_{X}\mathcal{A}\right)^{\lessdot M}}}$
is the \textbf{blowup of $\scalebox{0.95}{\ensuremath{\mathcal{A}^{\lessdot M}}}$
along $\scalebox{0.95}{\ensuremath{X}}$}.
\end{defn}

The property \ref{enu:BU1} indicates that blowing up is a local operation,
and \ref{enu:BU2} does not change the meet structure ``too much'':
if $\scalebox{0.95}{\ensuremath{\epsilon\left(X\right)\notin\mathcal{A}}}$
and $\scalebox{0.95}{\ensuremath{Y\in\left(\mathcal{A}-\mathcal{A}_{X}\right)\cap\phi\left(\mathcal{W}\right)}}$,
then $\scalebox{0.95}{\ensuremath{r\left(X\right)<k}}$ and $\scalebox{0.95}{\ensuremath{X\curlywedge Y=\emptyset}}$
by \ref{enu:ML3}. Also, if $\scalebox{0.95}{\ensuremath{\mathcal{A}}}$
is coatomistic, $\scalebox{0.95}{\ensuremath{\mathrm{Bl}_{X}\mathcal{A}}}$
is not necessarily coatomistic. Note that the coatoms of $\scalebox{0.95}{\ensuremath{\mathrm{Bl}_{X}\mathcal{A}}}$
are the coatoms of $\scalebox{0.95}{\ensuremath{\mathcal{A}}}$ or
the coatoms of $\scalebox{0.95}{\ensuremath{\mathcal{A}}}$ union
$\scalebox{0.95}{\ensuremath{\left\{ \epsilon\left(X\right)\right\} }}$.
\smallskip{}

Suppose that both $\scalebox{0.95}{\ensuremath{\mathcal{A}}}$ and
$\scalebox{0.95}{\ensuremath{\mathrm{Bl}_{X}\mathcal{A}}}$ are coatomistic,
and let $\scalebox{0.95}{\ensuremath{\lambda:\Lambda\rightarrow\mathcal{A}^{\lessdot M}}}$
be a label map of $\scalebox{0.95}{\ensuremath{\mathcal{A}^{\lessdot M}}}$.
Due to \ref{enu:ML2}, one has $\scalebox{0.95}{\ensuremath{\left(\mathrm{Bl}_{X}\mathcal{A}\right)^{\lessdot M}=\mathcal{A}^{\lessdot M}\cup\left\{ \epsilon\left(X\right)\right\} }}$,
which is a labelled arrangement with a label map $\tilde{\lambda}$,
as described below:
\begin{itemize}
\item Introduce a new letter $s_{X}$ for $\scalebox{0.95}{\ensuremath{\epsilon\left(X\right)}}$
and define $\scalebox{0.95}{\ensuremath{\tilde{\lambda}\left(s_{X}\right):=\epsilon\left(X\right)}}$.
\item For any $\scalebox{0.95}{\ensuremath{\mathbf{c}\in\mathcal{A}^{\lessdot M}-\left\{ X\right\} }}$,
define $\scalebox{0.95}{\ensuremath{\tilde{\lambda}|_{\lambda^{-1}\left(\mathbf{c}\right)}:=\lambda|_{\lambda^{-1}\left(\mathbf{c}\right)}}}$.
\item $\scalebox{0.95}{\ensuremath{\tilde{\lambda}:\left\{ s_{X}\right\} \cup\bigl(\bigcup_{\mathbf{c}\in\mathcal{A}^{\lessdot M}-\left\{ X\right\} }\lambda^{-1}\left(\mathbf{c}\right)\bigr)\rightarrow\left(\mathrm{Bl}_{X}\mathcal{A}\right)^{\lessdot M}}}$
is a label map.
\end{itemize}

\subsection{\label{subsec:Collapsing-operation}Collapsing operation}

Let $\scalebox{0.95}{\ensuremath{\mathcal{A}}}$ be a matroidal lattice
on a loopless matroid $\scalebox{0.95}{\ensuremath{M}}$ of rank $k$.
A proper member $\scalebox{0.95}{\ensuremath{X\in\mathcal{A}\cap\mathcal{W}}}$
is said to be \textbf{collapsible} if $\scalebox{0.95}{\ensuremath{r\left(Y\right)=k}}$
for all $\scalebox{0.95}{\ensuremath{Y\in\mathcal{A}\cap\mathcal{W}}}$
with $\scalebox{0.95}{\ensuremath{X\curlywedge Y\neq\emptyset}}$,
including $\scalebox{0.95}{\ensuremath{X}}$ itself. Here, $\scalebox{0.95}{\ensuremath{Y=\epsilon\left(Y\right)}}$.
If $\scalebox{0.95}{\ensuremath{X\in\mathcal{A}\cap\mathcal{W}}}$
is not collapsible, we can make it collapsible by recursively blowing
up all $\scalebox{0.95}{\ensuremath{Y\in\mathcal{A}\cap\mathcal{W}}}$
of rank $\scalebox{0.95}{\ensuremath{r\left(Y\right)<k}}$ with $\scalebox{0.95}{\ensuremath{X\curlywedge Y\neq\emptyset}}$.
We call this process the \textbf{collapsibilization} of $\scalebox{0.95}{\ensuremath{X}}$.\vspace{2pt}

Let $\scalebox{0.95}{\ensuremath{X\in\mathcal{A}\cap\mathcal{W}}}$
be collapsible. For $\scalebox{0.95}{\ensuremath{Y,Z\in\mathcal{A}-\mathcal{A}|_{X}}}$,
we define a binary operation ``$\scalebox{0.95}{\ensuremath{\ocap}}$''
as follows:
\begin{enumerate}[label=(CL\arabic*),topsep=5pt]
\item \label{enu:CL1}$\scalebox{0.95}{\ensuremath{Y\ocap Z:=\begin{cases}
Y\curlywedge Z & \text{if }Y\curlywedge Z\in\mathcal{A}-\mathcal{A}|_{X},\\
\emptyset & \text{otherwise.}
\end{cases}}}$
\item \label{enu:CL2}$\scalebox{0.95}{\ensuremath{Y\ocap\phi\left(X\right):=\begin{cases}
\phi\left(Y\right)\logof\phi\left(X\right) & \text{if }X\curlywedge Y\neq\emptyset,\\
\emptyset & \text{otherwise.}
\end{cases}}}$
\item \label{enu:CL3}$\scalebox{0.95}{\ensuremath{Y\ocap\left(Z\ocap\phi\left(X\right)\right):=\begin{cases}
\phi\left(Y\right)\logof\phi\left(Z\right)\logof\phi\left(X\right) & \text{if }X\curlywedge Y\neq\emptyset\neq X\curlywedge Z,\\
\emptyset & \text{otherwise.}
\end{cases}}}$
\item \label{enu:CL4}$\scalebox{0.95}{\ensuremath{\left(Y\ocap\phi\left(X\right)\right)\ocap\left(Z\ocap\phi\left(X\right)\right):=\begin{cases}
\phi\left(Y\right)\logof\phi\left(Z\right)\logof\phi\left(X\right) & \text{if }X\curlywedge Y\neq\emptyset\neq X\curlywedge Z,\\
\emptyset & \text{otherwise.}
\end{cases}}}$
\end{enumerate}
Denote $\scalebox{0.95}{\ensuremath{\phi\left(\mathcal{A}\right)^{X}:=\phi\left(X\right)\ocap\left(\mathcal{A}-\mathcal{A}|_{X}\right)}}$.
This contains $\scalebox{0.95}{\ensuremath{\phi\left(X\right)}}$,
the largest member. We assume that $\scalebox{0.95}{\ensuremath{\phi\left(\mathcal{A}\right)^{X}}}$
contains $\scalebox{0.95}{\ensuremath{\emptyset}}$ by adding it if
not. Then, $\scalebox{0.95}{\ensuremath{\phi\left(\mathcal{A}\right)^{X}}}$
is a finite meet-semilattice with meet $\scalebox{0.95}{\ensuremath{\ocap}}$
and is a lattice. Further, denote 
\[
\scalebox{0.95}{\ensuremath{\mathrm{Kl}_{X}\mathcal{A}:=\left(\mathcal{A}-\mathcal{A}|_{X}\right)\cup\phi\left(\mathcal{A}\right)^{X}}}.
\]
 Then, $\scalebox{0.95}{\ensuremath{\mathrm{Kl}_{X}\mathcal{A}}}$
with $\scalebox{0.95}{\ensuremath{\emptyset,M\in\mathrm{Kl}_{X}\mathcal{A}}}$
is a finite meet-semilattice with meet $\scalebox{0.95}{\ensuremath{\ocap}}$
and is a lattice. Here, it is straightforward to check the associativity
of $\scalebox{0.95}{\ensuremath{\ocap}}$.
\begin{defn}
Let $\scalebox{0.95}{\ensuremath{M}}$ be a loopless matroid and $\scalebox{0.95}{\ensuremath{\mathcal{A}}}$
a matroidal lattice on it. For a collapsible member $\scalebox{0.95}{\ensuremath{X}}$
of $\scalebox{0.95}{\ensuremath{\mathcal{A}}}$, we refer to the lattice
$\scalebox{0.95}{\ensuremath{\mathrm{Kl}_{X}\mathcal{A}}}$ or taking
it as \textbf{collapsing} $\scalebox{0.95}{\ensuremath{X}}$ \textbf{in}
$\scalebox{0.95}{\ensuremath{\mathcal{A}}}$ or \textbf{collapsing}
$\scalebox{0.95}{\ensuremath{\mathcal{A}}}$ \textbf{over} $\scalebox{0.95}{\ensuremath{X}}$.
If $\scalebox{0.95}{\ensuremath{\mathrm{Kl}_{X}\mathcal{A}}}$ is
coatomistic, we refer to $\scalebox{0.95}{\ensuremath{\left(\mathrm{Kl}_{X}\mathcal{A}\right)^{\lessdot M}}}$
or obtaining it as \textbf{collapsing} $\scalebox{0.95}{\ensuremath{X}}$
\textbf{in} $\scalebox{0.95}{\ensuremath{\mathcal{A}^{\lessdot M}}}$
or \textbf{collapsing} $\scalebox{0.95}{\ensuremath{\mathcal{A}^{\lessdot M}}}$
\textbf{over} $\scalebox{0.95}{\ensuremath{X}}$.
\end{defn}

The collapsing operation is a local operation by \ref{enu:CL1}, in
the same way as the blowup operation. However, unlike the blowup operation,
the collapsing operation produces a matroidal lattice with fewer members.
Also, if $\scalebox{0.95}{\ensuremath{\mathcal{A}}}$ is coatomistic,
$\scalebox{0.95}{\ensuremath{\mathrm{Kl}_{X}\mathcal{A}}}$ is always
coatomistic and $\scalebox{0.95}{\ensuremath{\left(\mathrm{Kl}_{X}\mathcal{A}\right)^{\lessdot M}}}$
is a matroidal arrangement. Moreover, for a collapsible $\scalebox{0.95}{\ensuremath{X\in\mathcal{A}\cap\mathcal{W}}}$,
if $\scalebox{0.95}{\ensuremath{\mathrm{codim}\left.\phi\left(X\right)\right.=1}}$,
then $\scalebox{0.95}{\ensuremath{X}}$ is a coatom of $\scalebox{0.95}{\ensuremath{\mathcal{A}}}$
due to \ref{enu:ML2}. So, $\scalebox{0.95}{\ensuremath{\phi\left(X\right)\in\left(\mathrm{Kl}_{X}\mathcal{A}\right)^{\lessdot M}}}$
implies $\scalebox{0.95}{\ensuremath{X\in\mathcal{A}^{\lessdot M}}}$.\vspace{2pt}

Let $\scalebox{0.95}{\ensuremath{\mathcal{A}^{\lessdot M}}}$ be a
labelled arrangement with a label map $\scalebox{0.95}{\ensuremath{\lambda:\Lambda\rightarrow\mathcal{A}^{\lessdot M}}}$.
We show that $\scalebox{0.95}{\ensuremath{\left(\mathrm{Kl}_{X}\mathcal{A}\right)^{\lessdot M}}}$
is also a labelled arrangement. Let $\scalebox{0.95}{\ensuremath{\lambda^{-1}\left(X\right)=\left\{ s_{x}\right\} }}$.
\begin{itemize}
\item If $\scalebox{0.95}{\ensuremath{\phi\left(X\right)\in\left(\mathrm{Kl}_{X}\mathcal{A}\right)^{\lessdot M}}}$,
then $\scalebox{0.95}{\ensuremath{\left(\mathrm{Kl}_{X}\mathcal{A}\right)^{\lessdot M}=\left(\mathcal{A}^{\lessdot M}-\left\{ X\right\} \right)\cup\left\{ \phi\left(X\right)\right\} }}$.
Define $\scalebox{0.95}{\ensuremath{\tilde{\lambda}|_{\Lambda-\left\{ s_{x}\right\} }:=\lambda|_{\Lambda-\left\{ s_{x}\right\} }}}$
and $\scalebox{0.95}{\ensuremath{\tilde{\lambda}\left(s_{x}\right):=\phi\left(X\right)}}$,
then $\scalebox{0.95}{\ensuremath{\tilde{\lambda}}}$ is a label map.
\item If $\scalebox{0.95}{\ensuremath{\phi\left(X\right)\notin\left(\mathrm{Kl}_{X}\mathcal{A}\right)^{\lessdot M}}}$,
then $\scalebox{0.95}{\ensuremath{\mathrm{codim}\left.\phi\left(X\right)\right.>1}}$,
and $\left(\mathrm{Kl}_{X}\mathcal{A}\right)^{\lessdot M}=\mathcal{A}^{\lessdot M}-\left\{ X\right\} $.
Let $\scalebox{0.95}{\ensuremath{\tilde{\lambda}:=\lambda|_{\Lambda-\left\{ s_{X}\right\} }}}$.
Then, $\scalebox{0.95}{\ensuremath{\tilde{\lambda}}}$ is a label
map.
\end{itemize}
In the meanwhile, one can make an unstable member stable by collapsibilizing
it and collapsing the collapsibilized.
\begin{example}
Let $\scalebox{0.95}{\ensuremath{M}}$ be the matroid of Example \ref{exa:meet}.
\begin{enumerate}
\item Blow up the line arrangement $\scalebox{0.95}{\ensuremath{\mathrm{HA}_{M}}}$
at $\scalebox{0.95}{\ensuremath{M/\left\{ 1,2,3\right\} }}$, $\scalebox{0.95}{\ensuremath{M/\left\{ 3,4,5\right\} }}$,
and $\scalebox{0.95}{\ensuremath{M/\left\{ i\right\} }}$ for all
$\scalebox{0.95}{\ensuremath{i\in\left[5\right]}}$, see Figure \ref{fig:blowup},\footnote{The direction of the arrow is ``reversed'' in the sense that there
is a surjection sending atoms of $\scalebox{0.93}{\ensuremath{\mathrm{Bl}_{X}\mathcal{A}}}$
to atoms of $\scalebox{0.93}{\ensuremath{\mathcal{A}}}$. This coincides
with the geometric convention.} where we use the same labels for $\scalebox{0.95}{\ensuremath{M\left(\left\{ i\right\} \right)}}$.
\begin{figure}[H]
\noindent \begin{center}
\def\sizea{0.22}
\def\size{0.25}
\def\ratioa{0.901}
\def\ratior{0.851}
\def\ratiow{0.761}
\def\ratioe{0.881}

\begin{tikzpicture}[font=\scriptsize]

\matrix[column sep=0.6cm, row sep=0.2cm]{

\begin{scope}[line cap=round,rotate=0,scale=\sizea,xshift=0cm,yshift=0cm,rounded corners]
 \path (90:4)--(210:4)--(330:4)--cycle;
 \path (0,0) coordinate (O);
 \foreach \x [count=\xi] in {B0,C0,A0}{
      \path (90+120*\xi:4) coordinate (\x);}
 \draw [blue,thick] (A0)++(60:1)node[right=-2pt,black]{${5}$}--++(240:8.928);
 \draw [red,thick] (A0)++(120:1)node[left=-2pt,black]{${1}$}--++(-60:8.928);
 \draw [blue,thick] (B0)++(-150:1)--++(30:8)node[right=-2pt,black]{${4}$};
 \draw [red,thick] (C0)++(-30:1)--++(150:8)node[left=-2pt,black]{${2}$};
 \draw [green,thick] (B0)++(180:1)--node[below=-2pt,black]{${3}$}++(0:8.928);
\fill[gray] (B0) circle (8pt) (C0) circle (8pt);

\end{scope}

&

 \draw [->]     (1,0) -- (-1,0);
 \path (0,0.4) node[]{Blowups};

&

\begin{scope}[line cap=round,rotate=0,scale=\sizea,xshift=0cm,yshift=0cm,rounded corners]
 \path (90:4)--(210:4)--(330:4)--cycle;
 \path (0,0) coordinate (O);
 \foreach \x [count=\xi] in {B0,C0,A0}{
      \path (90+120*\xi:4) coordinate (\x);}
 \draw [blue,thick] (A0)++(60:1)node[right=-2pt,black]{${5}$} .. controls (150:2) .. (-4.464,-1.5)
  (A0)++(300:2.309)++(30:1.2)node[right=-2pt,black]{${4}$} .. controls (0,0) .. (210:4.616);
 \draw [red,thick] (A0)++(120:1)node[left=-2pt,black]{${1}$} .. controls (30:2) .. (4.464,-1.5)
  (A0)++(240:2.309)++(150:1.2)node[left=-2pt,black]{${2}$} .. controls (0,0) .. (330:4.616);

\draw [green,thick] (-3.531,-3.116) ..node[midway,below=-2pt,black]
    {$3$} controls (-1.732,-2) and (1.732,-2) .. (3.531,-3.116);
 
\draw [gray,very thick,densely dashed]
(-3.564,0.598)node[left=-3pt,black]{${s_{345}}$} .. controls (210:2.5) .. (-1.464,-3.732)
(3.564,0.598)node[right=-2pt,black]{${s_{123}}$} .. controls (330:2.5) .. (1.464,-3.732);

\end{scope}

\\
};
\end{tikzpicture}
\par\end{center}\vspace{-0.15cm}
\caption{\label{fig:blowup}An example of blowup.}
\end{figure}
\item Collapse $\scalebox{0.95}{\ensuremath{M\left(\left\{ 1,2,3\right\} \right)}}$
and $\scalebox{0.95}{\ensuremath{M\left(\left\{ 3,4,5\right\} \right)}}$,
and then $\scalebox{0.95}{\ensuremath{M\left(\left\{ 3\right\} \right)}}$,
see Figure \ref{fig:collapse}.\footnote{Technically, $\scalebox{0.95}{\ensuremath{M\left(F\right)}}$ and
$\scalebox{0.95}{\ensuremath{\phi\left(M\left(F\right)\right)}}$
are indistinguishable in the figure.} The order of application is taken so for illustrative purpose.
\begin{figure}[H]
\noindent \begin{center}
\def\sizea{0.22}
\def\size{0.25}
\def\ratioa{0.901}
\def\ratior{0.851}
\def\ratiow{0.761}
\def\ratioe{0.881}


\par\end{center}\vspace{-0.15cm}
\caption{\label{fig:collapse}An example of collapsing.}
\end{figure}
\end{enumerate}
\end{example}

\subsection{\label{subsec:MMP}Minimal model}

A matroidal lattice $\scalebox{0.95}{\ensuremath{\mathcal{A}}}$ on
a loopless matroid $\scalebox{0.95}{\ensuremath{M}}$ is said to be
\textbf{collapsible} if \emph{all} of its nonempty members have rank
$k$. If $\scalebox{0.95}{\ensuremath{\mathcal{A}}}$ is collapsible,
we can collapse everything and obtain $\scalebox{0.95}{\ensuremath{\phi\left(\mathcal{A}\right)=\left\{ \phi\left(Y\right):Y\in\mathcal{A}\right\} }}$,
a matroidal lattice with meet $\logof$. We call this the \textbf{collapsing}
of $\scalebox{0.95}{\ensuremath{\mathcal{A}}}$.

A matroidal lattice $\scalebox{0.95}{\ensuremath{\mathcal{A}}}$ can
be made collapsible by recursively collapsibilizing the members. Thus,
$\scalebox{0.95}{\ensuremath{\mathcal{A}}}$ can be converted into
a matroidal sublattice of $\scalebox{0.95}{\ensuremath{\mathcal{F}\left(M\right)}}$
by recursive blowups and collapsings, where $\scalebox{0.95}{\ensuremath{\mathcal{F}\left(M\right)}}$
is the intersection lattice of the puzzle-piece $\scalebox{0.95}{\ensuremath{\mathrm{PZ}_{M}}}$.
The sublattice is \emph{uniquely} obtained by construction. We call
it \emph{the} \textbf{minimal model} of $\scalebox{0.95}{\ensuremath{\mathcal{A}}}$.
If $\scalebox{0.95}{\ensuremath{\mathcal{A}}}$ is coatomistic, its
minimal model is also coatomistic, and the collection of coatoms of
the minimal model is said to be the \textbf{minimal model} of $\scalebox{0.95}{\ensuremath{\mathcal{A}^{\lessdot M}}}$.
We refer to the process of obtaining the minimal model as the \textbf{minimal
model program} or \textbf{MMP} for short.

Moreover, by construction, the minimal model of the subspace lattice
$\scalebox{0.95}{\ensuremath{\mathcal{S}\left(M\right)}}$ of $\scalebox{0.95}{\ensuremath{M}}$
is $\scalebox{0.95}{\ensuremath{\mathcal{F}\left(M\right)}}$, and
the minimal model of the abstract hyperplane arrangement $\scalebox{0.95}{\ensuremath{\mathrm{HA}_{M}}}$
is the puzzle-piece $\scalebox{0.95}{\ensuremath{\mathrm{PZ}_{M}}}$.
See also Subsections \ref{subsec:minimal-model} and \ref{subsec:Grassmann}.

\subsection{\label{subsec:w-arrangement}Weighted arrangement and stability condition}

For a coatomistic lattice $\scalebox{0.95}{\ensuremath{\mathcal{A}}}$,
a \textbf{weighted arrangement} is a triple $\scalebox{0.95}{\ensuremath{\left(\mathcal{A}^{\lessdot M},\lambda,\mathbf{w}\right)}}$
with a label map $\scalebox{0.95}{\ensuremath{\lambda:\Lambda\rightarrow\mathcal{A}^{\lessdot M}}}$
whose labels are all weighted. If $w_{s}=0$ for some $\scalebox{0.95}{\ensuremath{s\in\Lambda}}$,
we regard it as the following: 
\[
\scalebox{0.95}{\ensuremath{\Bigl(\lambda\left(\Lambda-\left\{ s\right\} \right),\lambda|_{\Lambda-\left\{ s\right\} },\left(w_{l}\right)_{l\in\Lambda-\left\{ s\right\} }\Bigr)}}.
\]
I.e., the labelled coatom $\scalebox{0.95}{\ensuremath{\left(s,\lambda\left(s\right)\right)}}$
disappears. We usually require $w_{l}>0$ for all $\scalebox{0.95}{\ensuremath{l\in\Lambda}}$.
A usual labelled matroidal arrangement is regarded as a weighted arrangement
whose labels all have weights $1$.

We consider a particular class of weighted arrangements as follows.
Let $\scalebox{0.95}{\ensuremath{M}}$ be a loopless matroid on $\scalebox{0.95}{\ensuremath{S}}$
and $\scalebox{0.95}{\ensuremath{\mathrm{HA}_{M}}}$ a hyperplane
arrangement weighted by a weight vector $\scalebox{0.95}{\ensuremath{\mathbf{w}=\left(w_{s}\right)_{s\in S}\in\mathbb{R}^{S}}}$.
Let $\scalebox{0.95}{\ensuremath{\mathcal{A}}}$ be a coatomistic
lattice on $\scalebox{0.95}{\ensuremath{M}}$ with $\scalebox{0.95}{\ensuremath{\mathcal{A}^{\lessdot M}}}$
being obtained from $\scalebox{0.95}{\ensuremath{\mathrm{HA}_{M}}}$
by a sequence of blowups and collapsings, for which every label not
in $\scalebox{0.95}{\ensuremath{S}}$ is assigned $1$ for its weight.
We call $\scalebox{0.95}{\ensuremath{\mathcal{A}^{\lessdot M}}}$
a \textbf{$\mathbf{w}$-arrangement}. 

For a weight vector $\scalebox{0.95}{\ensuremath{\mathbf{w}=\left(w_{s}\right)_{s\in S}\in\mathbb{R}^{S}}}$,
the $\mathbf{w}$\textbf{-hypersimplex} is defined as: 
\[
\scalebox{0.95}{\ensuremath{\Delta_{\mathbf{w}}=\Delta_{\mathbf{w}}\bigl(k,S\bigr):=\Delta_{S}^{k}\cap\left(\bigcap_{s\in S}\left\{ x_{s}\le w_{s}\right\} \right)}}.
\]
 Henceforth, we assume $\scalebox{0.95}{\ensuremath{\dim\Delta_{\mathbf{w}}=\dim\Delta}}$,
which happens if and only if $\scalebox{0.95}{\ensuremath{\mathbf{w}\left(S\right)>k}}$
and $\scalebox{0.95}{\ensuremath{w_{s}>0}}$ for all $\scalebox{0.95}{\ensuremath{s\in S}}$.
\begin{defn}
\label{def:w-stability}Let $\scalebox{0.95}{\ensuremath{M}}$ be
a rank-$k$ loopless matroid on $\scalebox{0.95}{\ensuremath{S}}$.
For a vector $\mathbf{w}=\left(w_{s}\right)_{s\in S}\in\left[0,1\right]^{S}$,
we say that a nonempty member $\scalebox{0.95}{\ensuremath{X\in\mathcal{W}\cup\phi\left(\mathcal{W}\right)}}$
is \textbf{$\mathbf{w}$-stable} if 
\[
\scalebox{0.95}{\ensuremath{\mathrm{edim}\left.X\right.+\mathrm{codim}_{\Delta_{\mathbf{w}|_{E\left(X\right)}}}\bigl(\mathrm{BP}_{X}\cap\Delta_{\mathbf{w}|_{E\left(X\right)}}\bigr)\le k-1}}
\]
and \textbf{$\mathbf{w}$-unstable} otherwise. Here, $\scalebox{0.95}{\ensuremath{E\left(X\right)\subseteq S}}$,
and $\scalebox{0.95}{\ensuremath{\mathbf{w}|_{E\left(X\right)}\in\left[0,1\right]^{E\left(X\right)}}}$
denotes the vector with $\scalebox{0.95}{\ensuremath{\mathbf{w}|_{E\left(X\right)}\left(s\right)=w_{s}}}$
for $\scalebox{0.95}{\ensuremath{s\in E\left(X\right)}}$.
\end{defn}

The number $\scalebox{0.95}{\ensuremath{k-1-\mathrm{edim}\left(X\right)}}$
is the \emph{expression codimension} of $\scalebox{0.95}{\ensuremath{X}}$,
and therefore, $\scalebox{0.95}{\ensuremath{X}}$ is $\mathbf{w}$-stable
if and only if the expression codimension of $\scalebox{0.95}{\ensuremath{X}}$
is \emph{not} less than the codimension of $\scalebox{0.95}{\ensuremath{\mathrm{BP}_{X}}}$
in $\scalebox{0.95}{\ensuremath{\Delta_{\mathbf{w}|_{E\left(X\right)}}}}$.
 Thus,  Definition \ref{def:w-stability} generalizes Definition \ref{def:stability}.

\section{\label{sec:Hyperplanes}Abstract Hyperplane Arrangement and Grassmann
Geometry}

In this section, we study the abstract hyperplane arrangements in
further depth. In particular, we discover that the \emph{degeneration
of abstract hyperplanes} is equivalent to cutting off corners of a
base polytope to obtain a smaller base polytope. We also construct
realizable line arrangements, whose puzzle-pieces will be used as
building blocks in Section \ref{sec:Tiling Extension}. All matroidal
arrangements are assumed to be labelled.

\subsection{\label{subsec:gen-pos}Abstract hyperplanes in general position}

Recall that if $\scalebox{0.95}{\ensuremath{M}}$ is a loopless matroid,
the abstract hyperplane arrangement $\scalebox{0.95}{\ensuremath{\mathcal{S}^{\lessdot M}=\mathrm{HA}_{M}}}$
has a natural label map $\scalebox{0.95}{\ensuremath{s\mapsto\ensuremath{M/\overline{\left\{ s\right\} }}}}$
for $\scalebox{0.95}{\ensuremath{s\in E\left(M\right)}}$. Because
$\scalebox{0.95}{\ensuremath{\mathcal{S}}}$ is coatomistic, every
subspace $\scalebox{0.95}{\ensuremath{M/F}}$ is an intersection of
hyperplanes: $\scalebox{0.95}{\ensuremath{M/F=\mathlarger{\mathlarger{\mathlarger{\varowedge}}}_{s\in F}\left.M/\overline{\left\{ s\right\} }\right.}}.$
\begin{defn}
\label{def:general position}Let $\scalebox{0.95}{\ensuremath{M}}$
be a rank-$k$ matroid. For any nonempty $\scalebox{0.95}{\ensuremath{J\subseteq E(M)}}$,
we say (labelled) hyperplanes $\scalebox{0.95}{\ensuremath{M/\overline{\{j\}}}}$
for $\scalebox{0.95}{\ensuremath{j\in J}}$ are \textbf{in general
position} if 
\[
\scalebox{0.95}{\ensuremath{M|_{J}=U_{J}^{\min\{k,\left|J\right|\}}}}.
\]
\end{defn}

\begin{lem}
\label{lem:general position}Let $\scalebox{0.95}{\ensuremath{M}}$
be a rank-$k$ loopless matroid. If $\scalebox{0.95}{\ensuremath{\mathrm{HA}_{M}}}$
has $k+1$ hyperplanes in general position, then $\scalebox{0.95}{\ensuremath{M}}$
is connected.
\end{lem}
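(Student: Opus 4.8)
The plan is to prove the contrapositive: assuming $M$ is separable, I would contradict the presence of $k+1$ hyperplanes in general position. Unwinding Definition~\ref{def:general position}, the hypothesis supplies a subset $J\subseteq E(M)$ with $|J|=k+1$ and $\phi(M|_{J})=U_{J}^{\min\{k,\,k+1\}}=U_{J}^{k}$, i.e.\ $M|_{J}$ is isomorphic to the uniform matroid $U_{k+1}^{k}$. (If the hypothesis actually furnishes more than $k+1$ such hyperplanes, I would trim down to a $(k+1)$-subset, since a restriction of a uniform matroid that still has at least $k$ elements is again uniform of rank $k$.) The case $k=0$ is vacuous, as then $E(M)=\emptyset$, so I assume $k\ge1$.

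Next I would record two elementary facts. First, $U_{n}^{j}$ is inseparable whenever $0<j<n$: for every nonempty proper subset $A$ one checks $c_{M}(A)=r(A)+r^{\ast}(A)-|A|\ge 1>0$ by a short case split on $|A|$ versus $j$ and $n-|A|$, so $A$ is not a separator; in particular $U_{k+1}^{k}$ is inseparable. Second, if $N=N_{1}\oplus N_{2}$ with $E(N_{1})$ and $E(N_{2})$ both nonempty, then $E(N_{1})$ is a proper separator of $N$, so no inseparable matroid can be written as such a direct sum.

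Then comes the main step. Suppose $M=M_{1}\oplus M_{2}$ with $S_{1}:=E(M_{1})$ and $S_{2}:=E(M_{2})$ both nonempty. Restriction commutes with $\oplus$ (immediate from the descriptions of independent sets), so $M|_{J}=M_{1}|_{J\cap S_{1}}\oplus M_{2}|_{J\cap S_{2}}$. Since $M|_{J}\cong U_{k+1}^{k}$ is inseparable, one of $J\cap S_{1}$, $J\cap S_{2}$ must be empty; say $J\cap S_{2}=\emptyset$, so $J\subseteq S_{1}$. Then $k=r(M|_{J})=r(M_{1}|_{J})\le r(M_{1})=r(M)-r(M_{2})=k-r(M_{2})$. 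But $M_{2}=M|_{S_{2}}$ is loopless (a restriction of the loopless $M$) and $S_{2}\neq\emptyset$, hence $r(M_{2})\ge1$, giving $k\le k-1$, a contradiction. Therefore $M$ is inseparable.

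The argument is short, and I do not expect a genuine obstacle; the only work is bookkeeping — matching ``$k+1$ hyperplanes in general position'' to the combinatorial statement $M|_{J}\cong U_{k+1}^{k}$, and verifying that restriction commutes with $\oplus$ and preserves looplessness. The single conceptual input that carries the proof is that a full-rank uniform restriction $U_{k+1}^{k}$ is connected and therefore cannot be split off as a proper direct summand of $M$.
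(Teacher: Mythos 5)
Your proof is correct and follows essentially the same route as the paper: the paper takes a proper separator $T$, notes $J\subseteq T$ by the inseparability of $U_{k+1}^{k}$, and derives $k=r(T)+r(T^{c})\ge r(J)+r(T^{c})=k+r(T^{c})$, forcing $r(T^{c})=0$ and then $T^{c}=\emptyset$ by looplessness — which is exactly your $k\le r(M_{1})=k-r(M_{2})$ step in the language of separators rather than direct summands. The extra bookkeeping you supply (the $k=0$ case, the inseparability of uniform matroids, restriction commuting with $\oplus$) is all sound.
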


\begin{proof}
Assume $\scalebox{0.95}{\ensuremath{M/\overline{\left\{ j\right\} }}}$
with $\scalebox{0.95}{\ensuremath{j\in J}}$ are $k+1$ hyperplanes
in general position. If $\scalebox{0.95}{\ensuremath{M}}$ is disconnected,
there is a nontrivial separator $\scalebox{0.95}{\ensuremath{T}}$
with $\scalebox{0.95}{\ensuremath{M|_{J}=M|_{J\cap T}\oplus M|_{J\cap T^{c}}}}$,
and hence $\scalebox{0.95}{\ensuremath{J\cap T=\emptyset}}$ or $\scalebox{0.95}{\ensuremath{J\cap T^{c}=\emptyset}}$
by the connectedness of $\scalebox{0.95}{\ensuremath{M|_{J}\simeq U_{k+1}^{k}}}$.
Without loss of generality, assume $\scalebox{0.95}{\ensuremath{J\cap T^{c}=\emptyset}}$,
i.e., $\scalebox{0.95}{\ensuremath{T\supseteq J}}$, then 
\[
\scalebox{0.95}{\ensuremath{k=r\left(T\right)+r\left(T^{c}\right)\ge r\left(J\right)+r\left(T^{c}\right)=k+r\left(T^{c}\right)}}.
\]
 Therefore, $\scalebox{0.95}{\ensuremath{r\left(T^{c}\right)=0}}$,
and $\scalebox{0.95}{\ensuremath{T^{c}=\emptyset}}$ since $\scalebox{0.95}{\ensuremath{M}}$
is loopless. This contradicts that $\scalebox{0.95}{\ensuremath{T}}$
is a nontrivial separator of $\scalebox{0.95}{\ensuremath{M}}$. Thus,
$\scalebox{0.95}{\ensuremath{M}}$ is connected.
\end{proof}
\begin{rem}
\label{rem:general position}When $\scalebox{0.95}{\ensuremath{k=2,3}}$,
the converse of Lemma~\ref{lem:general position} is true. However,
when $\scalebox{0.95}{\ensuremath{k\ge4}}$, connectedness does not
promise existence of $\scalebox{0.95}{\ensuremath{k+1}}$ hyperplanes
in general position: Consider the graphic matroid $\scalebox{0.95}{\ensuremath{M[G]}}$
of graph $\scalebox{0.95}{\ensuremath{G}}$ given in Figure~\ref{fig:gen-pos-counter-ex},
which is connected since $\scalebox{0.95}{\ensuremath{G}}$ is $\scalebox{0.95}{\ensuremath{2}}$-connected.
All of its circuits have size $\scalebox{0.95}{\ensuremath{4}}$ while
$\scalebox{0.95}{\ensuremath{U_{5}^{4}}}$ has a unique circuit $\scalebox{0.95}{\ensuremath{\left[5\right]}}$
of size $\scalebox{0.95}{\ensuremath{5}}$. So, $\scalebox{0.95}{\ensuremath{M[G]}}$
has no submatroid isomorphic to $\scalebox{0.95}{\ensuremath{U_{5}^{4}}}$.
One can also compute with the given matrix. Here, over any field $\Bbbk$,
the matrix with $\scalebox{0.95}{\ensuremath{1=1_{\Bbbk}}}$ represents
the same matroid $\scalebox{0.95}{\ensuremath{M[G]}}$. We call such
a matrix a \textbf{regular  realization}.
\begin{figure}[H]
\noindent \centering{}\noindent \begin{center}
\def\size{0.9}


\par\end{center}\vspace{-0.4cm}
\par\end{centering}
\noindent \centering{}\caption{\label{fig:HA-2-dim}A classification of line arrangements.}
\end{figure}

\subsection{Realizable hyperplane arrangement}

For any subset $\scalebox{0.95}{\ensuremath{A\subseteq E(M)}}$, we
have $\scalebox{0.95}{\ensuremath{r\left(A\right)={\rm scodim}_{M}\bigl(\mathlarger{\mathlarger{\mathlarger{\varowedge}}}_{s\in A}\left.M/\overline{\left\{ s\right\} }\right.\bigr)}}$.
This is equivalent to \eqref{eq:realizable} if and only if $\scalebox{0.95}{\ensuremath{M}}$
is realizable.
\begin{example}
\label{exa:rank2-realizable}
\begin{enumerate}
\item The uniform matroid $\scalebox{0.95}{\ensuremath{U_{n}^{k}}}$ is
realizable over a large enough field since there are $n$ hyperplanes
of $\scalebox{0.95}{\ensuremath{\mathbb{P}^{k-1}}}$ in general position.
\item \label{enu:realizable}Let $\scalebox{0.95}{\ensuremath{M}}$ be a
loopless rank-$2$ matroid on $\scalebox{0.95}{\ensuremath{S}}$ and
$\scalebox{0.95}{\ensuremath{A_{1}\cup\cdots\cup A_{m}}}$ be the
partition of $\scalebox{0.95}{\ensuremath{S}}$ into the rank-$1$
flats of $\scalebox{0.95}{\ensuremath{M}}$. Consider a line $\scalebox{0.95}{\ensuremath{\mathbb{P}^{1}}}$
over a sufficiently large field, e.g.~an infinite field. For each
$\scalebox{0.95}{\ensuremath{j\in\left[m\right]}}$, pick a point
$\scalebox{0.95}{\ensuremath{P_{j}}}$ on $\scalebox{0.95}{\ensuremath{\mathbb{P}^{1}}}$
avoiding duplication. Every $\scalebox{0.95}{\ensuremath{i\in S}}$
is contained in one and only one $\scalebox{0.95}{\ensuremath{A_{j}}}$
for $\scalebox{0.95}{\ensuremath{j=j\left(i\right)}}$. Then, $\scalebox{0.95}{\ensuremath{\left\{ P_{j}:j\in\left[m\right]\right\} }}$
with label map $\scalebox{0.95}{\ensuremath{\lambda:i\mapsto P_{j\left(i\right)}}}$
has matroid structure $\scalebox{0.95}{\ensuremath{M}}$ and is a
realization of $\scalebox{0.95}{\ensuremath{\mathrm{HA}_{M}}}$. Thus,
rank-$2$ matroids are realizable.
\end{enumerate}
\end{example}

\subsection{\label{subsec:Degeneration}Degeneration of abstract hyperplanes\protect\footnote{This is a continuation of the \emph{degeneration of matroids} discussed
in \cite[Section 5.2]{GGMS}.}}

Suppose $\scalebox{0.95}{\ensuremath{M}}$ is a matroid on $\scalebox{0.95}{\ensuremath{S}}$
of rank $\scalebox{0.95}{\ensuremath{k\ge2}}$. Let $\scalebox{0.95}{\ensuremath{F_{1},\dots,F_{m}}}$
be nonempty proper subsets of $\scalebox{0.95}{\ensuremath{S}}$ and
$j_{1},\dots,j_{m}$ be integers in $\left[k-1\right]$ with $\scalebox{0.95}{\ensuremath{j_{i}<r\left(F_{i}\right)}}$.
Denote
\[
\scalebox{0.95}{\ensuremath{{\displaystyle \delta_{F_{1},\dots,F_{m}}^{j_{1},\dots,j_{m}}\left(\mathrm{BP}_{M}\right)=\mathrm{BP}_{M}\cap\bigcap_{i\in\left[m\right]}\left\{ x\left(F_{i}\right)\le j_{i}\right\} }}}.
\]
If $\scalebox{0.95}{\ensuremath{\delta_{F_{1},\dots,F_{m}}^{j_{1},\dots,j_{m}}\left(\mathrm{BP}_{M}\right)}}$
is a base polytope, by abuse of notation, we denote by $\scalebox{0.95}{\ensuremath{\delta_{F_{1},\dots,F_{m}}^{j_{1},\dots,j_{m}}\left(M\right)}}$
its matroid and by $\scalebox{0.95}{\ensuremath{\delta_{F_{1},\dots,F_{m}}^{j_{1},\dots,j_{m}}\left(\mathrm{HA}_{M}\right)}}$
the corresponding hyperplane arrangement, and call the operation $\scalebox{0.95}{\ensuremath{\delta_{F_{1},\dots,F_{m}}^{j_{1},\dots,j_{m}}}}$
a \textbf{degeneration }(of hyperplanes) for $\scalebox{0.95}{\ensuremath{\mathrm{HA}_{M}}}$
or for $\scalebox{0.95}{\ensuremath{M}}$.

This definition is well-defined because a set of describing inequalities
\eqref{eq:def-ineq-flat} of an independence polytope $\scalebox{0.95}{\ensuremath{\mathrm{IP}_{M}}}$,
written in terms of flats, tells us that hyperplane degeneration of
$\scalebox{0.95}{\ensuremath{M}}$ is equivalent to cutting off corners
of the base polytope $\scalebox{0.95}{\ensuremath{\mathrm{BP}_{M}}}$
to obtain a smaller base polytope. For any permutation $\scalebox{0.95}{\ensuremath{\pi\in\mathfrak{S}_{m}}}$,
we have
\[
\scalebox{0.95}{\ensuremath{\delta_{F_{1},\dots,F_{m}}^{j_{1},\dots,j_{m}}=\delta_{F_{\pi\left(m\right)}}^{j_{\pi\left(m\right)}}\circ\cdots\circ\delta_{F_{\pi\left(1\right)}}^{j_{\pi\left(1\right)}}}}.
\]
 Here, some $\scalebox{0.95}{\ensuremath{\delta_{F_{i}}^{j_{i}}}}$
are possibly not degenerations even if $\scalebox{0.95}{\ensuremath{\delta_{F_{1},\dots,F_{m}}^{j_{1},\dots,j_{m}}}}$
is a degeneration. But, they clearly commute, and degenerations commute.\smallskip{}

Some degenerations do not happen at the same time:
\begin{example}
Consider the hypersimplex $\scalebox{0.95}{\ensuremath{\Delta_{4}^{2}}}$
and let $\scalebox{0.95}{\ensuremath{F_{1}=\left\{ 1,2\right\} }}$,
$\scalebox{0.95}{\ensuremath{F_{2}=\left\{ 1,3\right\} }}$ and $j_{1}=j_{2}=1$,
then $\scalebox{0.95}{\ensuremath{\Delta_{4}^{2}\cap\left(\bigcap_{i\in\left[2\right]}\left\{ x(F_{i})\le1\right\} \right)}}$
is not a base polytope.
\end{example}

\begin{defn}
For two polytopes $\scalebox{0.95}{\ensuremath{P}}$ and $\scalebox{0.95}{\ensuremath{P'}}$,
we say they are \textbf{face-fitting} or one of them is \textbf{face-fitting
to }the other if $\scalebox{0.95}{\ensuremath{P\cap P'}}$ is empty
or a common face of them.
\end{defn}

\begin{lem}
\label{lem:Cut-w-1}Given a rank-$k$ matroid $\scalebox{0.95}{\ensuremath{M=\left(S,r\right)}}$,
if $\scalebox{0.95}{\ensuremath{F\subset S}}$ has a positive rank,
then both $\scalebox{0.95}{\ensuremath{\delta_{F}^{1}}}$ and $\scalebox{0.95}{\ensuremath{\delta_{S-F}^{k-1}}}$
are degenerations for $\scalebox{0.95}{\ensuremath{M}}$, and $\scalebox{0.95}{\ensuremath{\{\delta_{F}^{1}(\mathrm{BP}_{M}),\delta_{S-F}^{k-1}(\mathrm{BP}_{M})\}}}$
is a matroid subdivision of $\scalebox{0.95}{\ensuremath{\mathrm{BP}_{M}}}$. 
\end{lem}

\begin{proof}
By assumption, $\scalebox{0.95}{\ensuremath{1\le r\left(F\right)\le\left|F\right|}}$,
and we  may assume $\scalebox{0.95}{\ensuremath{k\ge1}}$. Define
a map $f$ on $\scalebox{0.95}{\ensuremath{S}}$ by $\scalebox{0.95}{\ensuremath{f|_{S-F}\left(s\right)=s}}$
and $\scalebox{0.95}{\ensuremath{f|_{F}\left(s\right)=F}}$. Then,
$\scalebox{0.95}{\ensuremath{\delta_{F}^{1}\left(M\right)=f^{\ast}f_{\ast}\left(M\right)}}$
is a matroid, and $\scalebox{0.95}{\ensuremath{\delta_{F}^{1}}}$
is a degeneration. Now, $\scalebox{0.95}{\ensuremath{\delta_{F}^{1}(\mathrm{BP}_{M})}}$
and $\scalebox{0.95}{\ensuremath{\delta_{S-F}^{k-1}(\mathrm{BP}_{M})}}$
are face-fitting and their union is $\scalebox{0.95}{\ensuremath{\mathrm{BP}_{M}}}$.
Because every edge of $\scalebox{0.95}{\ensuremath{\delta_{S-F}^{k-1}(\mathrm{BP}_{M})}}$
is an edge of either $\scalebox{0.95}{\ensuremath{\mathrm{BP}_{M}}}$
or the common facet $\scalebox{0.95}{\ensuremath{\delta_{F}^{1}(\mathrm{BP}_{M})\cap\delta_{S-F}^{k-1}(\mathrm{BP}_{M})}}$,
which is a base polytope, $\scalebox{0.95}{\ensuremath{\delta_{S-F}^{k-1}(\mathrm{BP}_{M})}}$
satisfies the edge length property and is a base polytope. Thus, $\scalebox{0.95}{\ensuremath{\delta_{S-F}^{k-1}}}$
is also a degeneration.
\end{proof}
\begin{cor}
Let $\scalebox{0.95}{\ensuremath{M}}$ be a matroid on $\scalebox{0.95}{\ensuremath{S}}$
with subsets $\scalebox{0.95}{\ensuremath{S_{1},\dots,S_{l}}}$. Cutting
$\scalebox{0.95}{\ensuremath{\mathrm{BP}_{M}}}$ with hyperplanes
$\scalebox{0.95}{\ensuremath{\left\{ x\left(S_{j}\right)=1\right\} }}$
for $\scalebox{0.95}{\ensuremath{j\in\left[l\right]}}$ results in
a matroid subdivision.
\end{cor}

Some degenerations are described by the \emph{free product} of matroids.
\begin{lem}
\label{lem:Cutting-1}For a uniform matroid $\scalebox{0.95}{\ensuremath{U_{S}^{k}}}$
with $\scalebox{0.95}{\ensuremath{2\le k<\left|S\right|}}$, let $\scalebox{0.95}{\ensuremath{F\subset S}}$
be a subset of size $\scalebox{0.95}{\ensuremath{\ge2}}$ and $\rho$
be a positive integer with $\scalebox{0.95}{\ensuremath{k-\left|S\right|+\left|F\right|<\rho<\min\left\{ k,\left|F\right|\right\} }}$.
Then, $\scalebox{0.95}{\ensuremath{\delta_{F}^{\rho}}}$ is a degeneration
for $\scalebox{0.95}{\ensuremath{U_{S}^{k}}}$ with $\scalebox{0.95}{\ensuremath{\delta_{F}^{\rho}\left(U_{S}^{k}\right)=U_{F}^{\rho}\boxempty U_{S-F}^{k-\rho}}}.$
This is a rank-$k$ connected matroid on $\scalebox{0.95}{\ensuremath{S}}$,
and $\scalebox{0.95}{\ensuremath{F}}$ is its unique non-degenerate
flat of size $\scalebox{0.95}{\ensuremath{\ge2}}$, whose rank is
$\scalebox{0.95}{\ensuremath{\rho}}$.
\end{lem}

\begin{proof}
Since the bases of $\scalebox{0.95}{\ensuremath{U_{F}^{\rho}\boxempty U_{S-F}^{k-\rho}}}$
are the bases of $\scalebox{0.95}{\ensuremath{U_{S}^{k}}}$ with $\scalebox{0.95}{\ensuremath{x\left(F\right)\le\rho}}$,
one has $\scalebox{0.95}{\ensuremath{\delta_{F}^{\rho}\left(U_{S}^{k}\right)=U_{F}^{\rho}\boxempty U_{S-F}^{k-\rho}}}$,
which is a connected matroid since $\scalebox{0.95}{\ensuremath{\delta_{F}^{\rho}\left(\Delta_{S}^{k}\right)}}$
is full-dimensional. Two uniform matroids $\scalebox{0.95}{\ensuremath{U_{F}^{\rho}=\left.\delta_{F}^{\rho}\left(U_{S}^{k}\right)\right|_{F}}}$
and $\scalebox{0.95}{\ensuremath{U_{S-F}^{k-\rho}=\delta_{F}^{\rho}\left(U_{S}^{k}\right)/F}}$
are connected since $\scalebox{0.95}{\ensuremath{0<\rho<\left|F\right|}}$
and $\scalebox{0.95}{\ensuremath{0<k-\rho<\left|S-F\right|}}$. So,
$\scalebox{0.95}{\ensuremath{F}}$ is the unique non-degenerate flat
of $\scalebox{0.95}{\ensuremath{\delta_{F}^{\rho}\left(U_{S}^{k}\right)}}$
of size $\scalebox{0.95}{\ensuremath{\ge2}}$, whose rank is $\scalebox{0.95}{\ensuremath{\rho}}$.
\end{proof}
\begin{prop}
\label{prop:Cutting-2}For a uniform matroid $\scalebox{0.95}{\ensuremath{U_{S}^{k}}}$
with $\scalebox{0.95}{\ensuremath{3\le k<\left|S\right|}}$, let $\scalebox{0.95}{\ensuremath{F}}$
and $\scalebox{0.95}{\ensuremath{L}}$ be subsets of $\scalebox{0.95}{\ensuremath{S}}$
with $\scalebox{0.95}{\ensuremath{F\cup L=S}}$, $\scalebox{0.95}{\ensuremath{F\cap L\neq\emptyset}}$,
$\scalebox{0.95}{\ensuremath{F-L\neq\emptyset}}$, and $\scalebox{0.95}{\ensuremath{L-F\neq\emptyset}}$.
Let $\rho_{F}$ and $\rho_{L}$ be positive integers with $\scalebox{0.95}{\ensuremath{k-\left|S\right|+\left|X\right|<\rho_{X}<\min\left\{ k,\left|X\right|\right\} }}$
for $\scalebox{0.95}{\ensuremath{X=F,L}}$ and $\scalebox{0.95}{\ensuremath{\rho_{F}+\rho_{L}<k+\left|F\cap L\right|}}$.
Then, $\scalebox{0.95}{\ensuremath{\delta_{F,L}^{\rho_{F},\rho_{L}}}}$
is a degeneration for $\scalebox{0.95}{\ensuremath{U_{S}^{k}}}$ with
\[
\scalebox{0.95}{\ensuremath{\delta_{F,L}^{\rho_{F},\rho_{L}}\bigl(U_{S}^{k}\bigr)=U_{F\cap L}^{\rho_{F}+\rho_{L}-k}\boxempty\bigl(U_{L-F}^{k-\rho_{F}}\oplus U_{F-L}^{k-\rho_{L}}\bigr)}}.
\]
 This is a connected matroid on $\scalebox{0.95}{\ensuremath{S}}$
of rank $k$, and $\scalebox{0.95}{\ensuremath{F}}$ and $\scalebox{0.95}{\ensuremath{L}}$
are non-degenerate flats of rank $\scalebox{0.95}{\ensuremath{\rho_{F}}}$
and $\scalebox{0.95}{\ensuremath{\rho_{L}}}$, respectively. However,
$\scalebox{0.95}{\ensuremath{F\cap L}}$ is a degenerate flat of rank
$\scalebox{0.95}{\ensuremath{\rho_{F}+\rho_{L}-k}}$ with $\scalebox{0.95}{\ensuremath{\kappa\bigl(\delta_{F,L}^{\rho_{F},\rho_{L}}\bigl(U_{S}^{k}\bigr)\left(F\cap L\right)\bigr)=3}}$.
In particular, $\delta_{F,L}^{\rho_{F},\rho_{L}}\bigl(U_{S}^{k}\bigr)=\delta_{F\cap L,F,L}^{\rho_{F}+\rho_{L}-k,\rho_{F},\rho_{L}}\bigl(U_{S}^{k}\bigr)$.
\end{prop}

\begin{proof}
The bases of $\scalebox{0.95}{\ensuremath{U_{F\cap L}^{\rho_{F}+\rho_{L}-k}\boxempty\bigl(U_{L-F}^{k-\rho_{F}}\oplus U_{F-L}^{k-\rho_{L}}\bigr)}}$
are bases of $\scalebox{0.95}{\ensuremath{U_{S}^{k}}}$ with $\scalebox{0.95}{\ensuremath{x\left(L\right)\le\rho_{L}}}$
and $\scalebox{0.95}{\ensuremath{x\left(F\right)\le\rho_{F}}}$. Conversely,
let $\scalebox{0.95}{\ensuremath{B}}$ be a base of $\scalebox{0.95}{\ensuremath{U_{S}^{k}}}$
with $\scalebox{0.95}{\ensuremath{x\left(L\right)\le\rho_{L}}}$ and
$\scalebox{0.95}{\ensuremath{x\left(F\right)\le\rho_{F}}}$. Then,
$\scalebox{0.95}{\ensuremath{\left|B\cap L\right|=1^{B}\left(L\right)\le\rho_{L}}}$
and $\scalebox{0.95}{\ensuremath{\left|B\cap F\right|=1^{B}\left(F\right)\le\rho_{F}}}$,
and 
\[
\scalebox{0.95}{\ensuremath{\left|B\cap\left(F\cap L\right)\right|}}=\scalebox{0.95}{\ensuremath{\left|B\cap F\right|+\left|B\cap L\right|-\left|B\cap\left(F\cup L\right)\right|}}\le\scalebox{0.95}{\ensuremath{\rho_{F}+\rho_{L}-k}}.
\]
 So, $\scalebox{0.95}{\ensuremath{\left|B-\left(F\cap L\right)\right|\ge2k-\rho_{F}-\rho_{L}}}$,
and $\scalebox{0.95}{\ensuremath{\left|B\cap\left(S-X\right)\right|\ge k-\rho_{X}}}$
for $\scalebox{0.95}{\ensuremath{X=F,L}}$. Therefore, $\scalebox{0.95}{\ensuremath{B-\left(F\cap L\right)}}$
is a spanning set of $\scalebox{0.95}{\ensuremath{U_{L-F}^{k-\rho_{F}}\oplus U_{F-L}^{k-\rho_{L}}}}$,
and $\scalebox{0.95}{\ensuremath{B}}$ is a base of the free product
$\scalebox{0.95}{\ensuremath{U_{F\cap L}^{\rho_{F}+\rho_{L}-k}\boxempty\bigl(U_{L-F}^{k-\rho_{F}}\oplus U_{F-L}^{k-\rho_{L}}\bigr)}}$.
Thus, $\scalebox{0.95}{\ensuremath{\delta_{F,L}^{\rho_{F},\rho_{L}}}}$
is a degeneration for $\scalebox{0.95}{\ensuremath{U_{S}^{k}}}$ with
$\scalebox{0.95}{\ensuremath{\delta_{F,L}^{\rho_{F},\rho_{L}}(U_{S}^{k})=U_{F\cap L}^{\rho_{F}+\rho_{L}-k}\boxempty\bigl(U_{L-F}^{k-\rho_{F}}\oplus U_{F-L}^{k-\rho_{L}}\bigr)}}$.

Choose $\scalebox{0.95}{\ensuremath{k-\rho_{F}+1}}$, $\scalebox{0.95}{\ensuremath{k-\rho_{L}+1}}$,
and $\scalebox{0.95}{\ensuremath{\rho_{F}+\rho_{L}-k-1}}$ elements
from $\scalebox{0.95}{\ensuremath{L-F}}$, $\scalebox{0.95}{\ensuremath{F-L}}$,
and $\scalebox{0.95}{\ensuremath{F\cap L}}$, respectively, and let
$\scalebox{0.95}{\ensuremath{A}}$ be the set of these $\scalebox{0.95}{\ensuremath{k+1}}$
elements. Then, $\scalebox{0.95}{\ensuremath{\delta_{F,L}^{\rho_{F},\rho_{L}}\bigl(U_{S}^{k}\bigr)|_{A}\simeq U_{k+1}^{k}}}$,
and $\scalebox{0.95}{\ensuremath{\delta_{F,L}^{\rho_{F},\rho_{L}}\bigl(U_{S}^{k}\bigr)}}$
is connected by Lemma \ref{lem:general position}.

Now, $\scalebox{0.95}{\ensuremath{\delta_{F,L}^{\rho_{F},\rho_{L}}\bigl(\Delta_{S}^{k}\bigr)=\delta_{F}^{\rho_{F}}\bigl(\Delta_{S}^{k}\bigr)\cap\delta_{L}^{\rho_{L}}\bigl(\Delta_{S}^{k}\bigr)}}$,
which is full-dimensional. For $\scalebox{0.95}{\ensuremath{X=F,L}}$,
none of $\scalebox{0.95}{\ensuremath{\delta_{X}^{\rho_{X}}(\Delta_{S}^{k})}}$
contains the other, which makes $\scalebox{0.95}{\ensuremath{\delta_{F,L}^{\rho_{F},\rho_{L}}(\Delta_{S}^{k})\left(X\right)}}$
distinct facets of $\scalebox{0.95}{\ensuremath{\delta_{F,L}^{\rho_{F},\rho_{L}}(\Delta_{S}^{k})}}$
and $\scalebox{0.95}{\ensuremath{X}}$ non-degenerate flats of $\scalebox{0.95}{\ensuremath{\delta_{F,L}^{\rho_{F},\rho_{L}}(U_{S}^{k})}}$
with ranks $\scalebox{0.95}{\ensuremath{\rho_{X}}}$. Moreover, we
have $\scalebox{0.95}{\ensuremath{\delta_{F,L}^{\rho_{F},\rho_{L}}(U_{S}^{k})|_{F\cap L}=U_{F\cap L}^{\rho_{F}+\rho_{L}-k}}}$
and $\scalebox{0.95}{\ensuremath{\delta_{F,L}^{\rho_{F},\rho_{L}}(U_{S}^{k})/(F\cup L)=U_{L-F}^{k-\rho_{F}}\oplus U_{F-L}^{k-\rho_{L}}}}$
where the uniform matroids $\scalebox{0.95}{\ensuremath{U_{\left(F\cup L\right)-X}^{k-\rho_{X}}}}$
are connected since $\scalebox{0.95}{\ensuremath{0<k-\rho_{X}<\left|\left(F\cup L\right)-X\right|}}$.
So, $\scalebox{0.95}{\ensuremath{\kappa\bigl(\delta_{F,L}^{\rho_{F},\rho_{L}}(U_{S}^{k})\left(F\cap L\right)\bigr)=3}}$,
and $\scalebox{0.95}{\ensuremath{F\cap L}}$ is a degenerate flat
of $\scalebox{0.95}{\ensuremath{\delta_{F,L}^{\rho_{F},\rho_{L}}(U_{S}^{k})}}$
with rank $\scalebox{0.95}{\ensuremath{\rho_{F}+\rho_{L}-k}}$. This
implies $\scalebox{0.95}{\ensuremath{\delta_{F,L}^{\rho_{F},\rho_{L}}\bigl(U_{S}^{k}\bigr)=\delta_{F\cap L,F,L}^{\rho_{F}+\rho_{L}-k,\rho_{F},\rho_{L}}\bigl(U_{S}^{k}\bigr)}}$,
cf. Lemma \ref{lem:Ineq improved}.
\end{proof}
\begin{example}
\label{exa:degenerations}Let $\scalebox{0.95}{\ensuremath{M=U_{S}^{3}}}$
with $\scalebox{0.95}{\ensuremath{\left|S\right|\ge5}}$. Suppose
that $\scalebox{0.95}{\ensuremath{F}}$ and $\scalebox{0.95}{\ensuremath{L}}$
are subsets of $\scalebox{0.95}{\ensuremath{S}}$ with $\scalebox{0.95}{\ensuremath{F\cup L=S}}$,
$\scalebox{0.95}{\ensuremath{F\cap L\neq\emptyset}}$, $\scalebox{0.95}{\ensuremath{\left|F-L\right|\ge2}}$,
and $\scalebox{0.95}{\ensuremath{\left|L-F\right|\ge2}}$.
\begin{enumerate}
\item \label{enu:degen-1}Because $\scalebox{0.95}{\ensuremath{3-\left|S\right|+\left|F\right|<2<\min\left\{ 3,\left|F\right|\right\} }}$,
the operation $\scalebox{0.95}{\ensuremath{\delta_{F}^{2}}}$ is a
degeneration for $\scalebox{0.95}{\ensuremath{M}}$, and $\scalebox{0.95}{\ensuremath{\delta_{F}^{2}\left(M\right)}}$
is a connected matroid, cf. Lemma \ref{lem:Cutting-1}. See Figure
\ref{fig:degenerations-1}.
\item \label{enu:degen-2}$\scalebox{0.95}{\ensuremath{\delta_{F\cap L,F,L}^{1,2,2}}}$
and $\scalebox{0.95}{\ensuremath{\delta_{F,L}^{2,2}}}$ are the same
degeneration and $\scalebox{0.95}{\ensuremath{\delta_{F\cap L,F,L}^{1,2,2}\left(M\right)=\delta_{F,L}^{2,2}\left(M\right)}}$
is a connected matroid, cf. Proposition \ref{prop:Cutting-2}. See
Figure \ref{fig:degenerations-2}. 
\end{enumerate}
\end{example}

\begin{rem}
\label{rem:realizable}In Lemmas \ref{lem:Cutting-1} and \ref{prop:Cutting-2},
the constructed matroids are realizable over a sufficiently large
field because uniform matroids are realizable over such a field and
so is their free product, cf. \cite[Proposition 4.13]{Crapo1}.
\end{rem}

\begin{figure}[H]
\noindent \centering{}\noindent \begin{center}
\def\sizea{0.22}
\def\size{0.25}
\def\ratioa{0.901}
\def\ratior{0.851}
\def\ratiow{0.761}
\def\ratioe{0.881}


\par\end{center}\caption{\label{fig:degenerations-2}Degeneration of Lines II}
\end{figure}

\subsection{\label{subsec:Construct-LA}Construction of realizable line arrangements}

Let $\scalebox{0.95}{\ensuremath{M}}$ and $\scalebox{0.95}{\ensuremath{N}}$
be two loopless matroids on $\scalebox{0.95}{\ensuremath{S}}$ of
rank $3$ and dimension $1$, then $\scalebox{0.95}{\ensuremath{\left\lfloor M\right\rfloor }}$
and $\scalebox{0.95}{\ensuremath{\left\lfloor N\right\rfloor }}$
are connected matroids of rank $2$ and dimension $1$. Suppose $\scalebox{0.95}{\ensuremath{E\left(\left\lfloor M\right\rfloor \right)\cup E\left(\left\lfloor N\right\rfloor \right)=S}}$
and let $\scalebox{0.95}{\ensuremath{F_{0},F_{1},\dots,F_{m}}}$ and
$\scalebox{0.95}{\ensuremath{L_{0},L_{1},\dots,L_{n}}}$ be the rank-$1$
flats of $\scalebox{0.95}{\ensuremath{\left\lfloor M\right\rfloor }}$
and $\scalebox{0.95}{\ensuremath{\left\lfloor N\right\rfloor }}$,
respectively. Then, $\scalebox{0.95}{\ensuremath{m,n\ge2}}$, and
we  have $\scalebox{0.95}{\ensuremath{E\left(\left\lfloor M\right\rfloor \right)=\bigcup_{i=0}^{m}F_{i}}}$
and $\scalebox{0.95}{\ensuremath{E\left(\left\lfloor N\right\rfloor \right)=\bigcup_{j=0}^{n}L_{j}}}$,
which are \emph{disjoint} unions.
\begin{enumerate}[label=(LA\arabic*),topsep=0.5em]
\item \label{enu:(LA1)}Suppose $\scalebox{0.95}{\ensuremath{E\left(\left\lfloor M\right\rfloor \right)=S-L_{0}}}$
and $\scalebox{0.95}{\ensuremath{E\left(\left\lfloor N\right\rfloor \right)=S-F_{0}}}$.
Figure \ref{fig:LA-1} depicts how to obtain a rank-$3$ loopless
matroid on $\scalebox{0.95}{\ensuremath{S}}$ which we denote by $\scalebox{0.95}{\ensuremath{\mathrm{MA}_{M,N}^{1,1}}}$.
Because it has $4$ lines in general position, it is connected by
Lemma \ref{lem:general position}. We describe below its nonempty
proper flats.
\begin{enumerate}
\item Its rank-$1$ flats are $\scalebox{0.95}{\ensuremath{F_{0},L_{0}}}$
and all nonempty intersections of $\scalebox{0.95}{\ensuremath{F_{i}}}$
and $\scalebox{0.95}{\ensuremath{L_{j}}}$ for $\scalebox{0.95}{\ensuremath{i\in\left[m\right]}}$
and $\scalebox{0.95}{\ensuremath{j\in\left[n\right]}}$.
\item Its rank-$2$ flats are $\scalebox{0.95}{\ensuremath{F_{0}\cup L_{0}}}$,
$\scalebox{0.95}{\ensuremath{F_{i}}}$ for $\scalebox{0.95}{\ensuremath{i\in\left[m\right]}}$,
$\scalebox{0.95}{\ensuremath{L_{j}}}$ for $\scalebox{0.95}{\ensuremath{j\in\left[n\right]}}$,
and all unions of two rank-$1$ flats that are not contained in any
of the aforementioned rank-$2$ flats.
\end{enumerate}
The rank-$1$ flats $\scalebox{0.95}{\ensuremath{F_{0}}}$ and $\scalebox{0.95}{\ensuremath{L_{0}}}$
are non-degenerate with $\scalebox{0.95}{\ensuremath{\left\lfloor M\right\rfloor =\mathrm{MA}_{M,N}^{1,1}/L_{0}}}$
and $\scalebox{0.95}{\ensuremath{\left\lfloor N\right\rfloor =\mathrm{MA}_{M,N}^{1,1}/F_{0}}}$,
while $\scalebox{0.95}{\ensuremath{M=\mathrm{MA}_{M,N}^{1,1}(L_{0})}}$
and $\scalebox{0.95}{\ensuremath{N=\mathrm{MA}_{M,N}^{1,1}(F_{0})}}$.
\begin{figure}[H]
\noindent \centering{}\noindent \begin{center}
\def\sizea{0.22}
\def\size{0.25}
\def\ratioa{0.901}
\def\ratior{0.851}
\def\ratiow{0.761}
\def\ratioe{0.881}


\par\end{center}\caption{\label{fig:LA-1}Construction of Line Arrangements I}
\end{figure}

\end{enumerate}
Now, let $\scalebox{0.95}{\ensuremath{f,g}}$ be the simplification
maps for $\scalebox{0.95}{\ensuremath{\left\lfloor M\right\rfloor }}$
and $\scalebox{0.95}{\ensuremath{\left\lfloor N\right\rfloor }}$,
respectively. Define a map $\scalebox{0.95}{\ensuremath{h}}$ on $\scalebox{0.95}{\ensuremath{S}}$
such that $\scalebox{0.95}{\ensuremath{h|_{S-E\left(\left\lfloor N\right\rfloor \right)}=f|_{S-E\left(\left\lfloor N\right\rfloor \right)}}}$
and $\scalebox{0.95}{\ensuremath{h|_{E\left(\left\lfloor N\right\rfloor \right)}=g}}$.
For the following constructions, see Figures \ref{fig:LA-2}, \ref{fig:LA-3},
and \ref{fig:LA-4} in due order.
\begin{enumerate}[label=(LA\arabic*),start=2,topsep=0.5em,itemsep=0.5em]
\item \label{enu:(LA2)}Suppose $\scalebox{0.95}{\ensuremath{E\left(\left\lfloor M\right\rfloor \right)\cap E\left(\left\lfloor N\right\rfloor \right)=F_{0}=L_{0}}}$.
Then, the following is a rank-$3$ connected matroid on $\scalebox{0.95}{\ensuremath{S}}$:
\[
\scalebox{0.95}{\ensuremath{\mathrm{MA}_{M,N}^{2,2}:=h^{\ast}\bigl(\delta_{h\left(E\left(\left\lfloor M\right\rfloor \right)\right),h\left(E\left(\left\lfloor N\right\rfloor \right)\right)}^{2,2}\bigl(U_{h(S)}^{3}\bigr)\bigr)=\delta_{E\left(\left\lfloor M\right\rfloor \right),E\left(\left\lfloor N\right\rfloor \right)}^{2,2}\bigl(h^{\ast}\bigl(U_{h(S)}^{3}\bigr)\bigr).}}
\]

\begin{enumerate}
\item Its rank-$1$ flats are $\scalebox{0.95}{\ensuremath{F_{0}=L_{0}}}$,
$\scalebox{0.95}{\ensuremath{F_{i}}}$ for $\scalebox{0.95}{\ensuremath{i\in\left[m\right]}}$,
and $\scalebox{0.95}{\ensuremath{L_{j}}}$ for $\scalebox{0.95}{\ensuremath{j\in\left[n\right]}}$.
\item Its rank-$2$ flats are $\scalebox{0.95}{\ensuremath{E\left(\left\lfloor M\right\rfloor \right)}}$,
$\scalebox{0.95}{\ensuremath{E\left(\left\lfloor N\right\rfloor \right)}}$,
and $\scalebox{0.95}{\ensuremath{F_{i}\cup L_{j}}}$ for $\scalebox{0.95}{\ensuremath{i\in\left[m\right]}}$
and $\scalebox{0.95}{\ensuremath{j\in\left[n\right]}}$.
\end{enumerate}
The flats $\scalebox{0.95}{\ensuremath{E\left(\left\lfloor M\right\rfloor \right)}}$
and $\scalebox{0.95}{\ensuremath{E\left(\left\lfloor N\right\rfloor \right)}}$
are non-degenerate with $\scalebox{0.95}{\ensuremath{\left\lfloor M\right\rfloor =\mathrm{MA}_{M,N}^{2,2}|_{E\left(\left\lfloor M\right\rfloor \right)}}}$
and $\scalebox{0.95}{\ensuremath{\left\lfloor N\right\rfloor =\mathrm{MA}_{M,N}^{2,2}|_{E\left(\left\lfloor N\right\rfloor \right)}}}$.
\begin{figure}[H]
\noindent \centering{}\noindent \begin{center}
\def\sizea{0.22}
\def\size{0.25}
\def\ratioa{0.901}
\def\ratior{0.851}
\def\ratiow{0.761}
\def\ratioe{0.881}


\par\end{center}\caption{\label{fig:LA-2}Construction of Line Arrangements II}
\end{figure}

\item \label{enu:(LA3)}Suppose $\scalebox{0.95}{\ensuremath{E\left(\left\lfloor M\right\rfloor \right)=S-L_{0}}}$
and $\scalebox{0.95}{\ensuremath{F_{0}=\bigcup_{j\in\left[n\right]}L_{j}}}$
with $\scalebox{0.95}{\ensuremath{E\left(\left\lfloor N\right\rfloor \right)=F_{0}\cup L_{0}}}$.
Then, the following is a rank-$3$ connected matroid on $\scalebox{0.95}{\ensuremath{S}}$:
\[
\scalebox{0.95}{\ensuremath{\mathrm{MA}_{M,N}^{1,2}=\mathrm{MA}_{N,M}^{2,1}:=h^{\ast}\bigl(\delta_{h\left(E\left(\left\lfloor N\right\rfloor \right)\right)}^{2}\bigl(U_{h\left(S\right)}^{3}\bigr)\bigr)=\delta_{E\left(\left\lfloor N\right\rfloor \right)}^{2}\bigl(h^{\ast}\bigl(U_{h\left(S\right)}^{3}\bigr)\bigr).}}
\]

\begin{enumerate}
\item Its rank-$1$ flats are $\scalebox{0.95}{\ensuremath{L_{0}}}$, $\scalebox{0.95}{\ensuremath{F_{i}}}$
for $\scalebox{0.95}{\ensuremath{i\in\left[m\right]}}$, and $\scalebox{0.95}{\ensuremath{L_{j}}}$
for $\scalebox{0.95}{\ensuremath{j\in\left[n\right]}}$.
\item Its rank-$2$ flats are $\scalebox{0.95}{\ensuremath{F_{0}\cup L_{0}}}$,
$\scalebox{0.95}{\ensuremath{F_{i}\cup L_{0}}}$ for $\scalebox{0.95}{\ensuremath{i\in\left[m\right]}}$,
$\scalebox{0.95}{\ensuremath{F_{i}\cup L_{j}}}$ for $\scalebox{0.95}{\ensuremath{i\in\left[m\right]}}$
and $\scalebox{0.95}{\ensuremath{j\in\left[n\right]}}$, and $\scalebox{0.95}{\ensuremath{F_{i}\cup F_{l}}}$
for $\scalebox{0.95}{\ensuremath{i,l\in\left[m\right]}}$ with $\scalebox{0.95}{\ensuremath{i\neq l}}$.
\end{enumerate}
The flats $\scalebox{0.95}{\ensuremath{L_{0}}}$ and $\scalebox{0.95}{\ensuremath{E\left(\left\lfloor N\right\rfloor \right)}}$
are non-degenerate with $\scalebox{0.95}{\ensuremath{\left\lfloor M\right\rfloor =\mathrm{MA}_{M,N}^{1,2}/L_{0}}}$
and $\scalebox{0.95}{\ensuremath{\left\lfloor N\right\rfloor =\mathrm{MA}_{M,N}^{1,2}|_{E\left(\left\lfloor N\right\rfloor \right)}}}$.
\begin{figure}[H]
\noindent \centering{}\noindent \begin{center}
\def\sizea{0.22}
\def\size{0.25}
\def\ratioa{0.901}
\def\ratior{0.851}
\def\ratiow{0.761}
\def\ratioe{0.881}


\par\end{center}\caption{\label{fig:LA-3}Construction of Line Arrangements III}
\end{figure}

\item \label{enu:(LA4)}Suppose $\scalebox{0.95}{\ensuremath{E\left(\left\lfloor M\right\rfloor \right)=S-L_{0}}}$
and $\scalebox{0.95}{\ensuremath{F_{0}=\bigcup_{j\in\left[n\right]}L_{j}}}$
with $\scalebox{0.95}{\ensuremath{E\left(\left\lfloor N\right\rfloor \right)=F_{0}\cup L_{0}}}$.
Then, the following is a rank-$3$ connected matroid on $\scalebox{0.95}{\ensuremath{S}}$:
\[
\scalebox{0.95}{\ensuremath{h^{\ast}\bigl(\delta_{h\left(L_{n}\cup\left(E\left(\left\lfloor M\right\rfloor \right)-F_{0}\right)\right),h\left(E\left(\left\lfloor N\right\rfloor \right)\right)}^{2,2}\bigl(U_{h\left(S\right)}^{3}\bigr)\bigr)=\delta_{L_{n}\cup\left(E\left(\left\lfloor M\right\rfloor \right)-F_{0}\right)}^{2}\bigl(\mathrm{MA}_{M,N}^{1,2}\bigr).}}
\]

\begin{enumerate}
\item Its rank-$1$ flats are $\scalebox{0.95}{\ensuremath{L_{0}}}$, $\scalebox{0.95}{\ensuremath{F_{i}}}$
for $\scalebox{0.95}{\ensuremath{i\in\left[m\right]}}$, and $\scalebox{0.95}{\ensuremath{L_{j}}}$
for $\scalebox{0.95}{\ensuremath{j\in\left[n\right]}}$.
\item Its rank-$2$ flats are $\scalebox{0.95}{\ensuremath{F_{0}\cup L_{0}}}$,
$\scalebox{0.95}{\ensuremath{L_{n}\cup(E\left(\left\lfloor M\right\rfloor \right)-F_{0})}}$,
and $\scalebox{0.95}{\ensuremath{F_{i}\cup L_{j}}}$ for $i\in\left[m\right]$
and $j=0,1,\dots,n-1$.
\begin{figure}[H]
\noindent \centering{}\noindent \begin{center}
\def\sizea{0.22}
\def\size{0.25}
\def\ratioa{0.901}
\def\ratior{0.851}
\def\ratiow{0.761}
\def\ratioe{0.881}


\par\end{center}\caption{\label{fig:LA-4}Construction of Line Arrangements IV}
\end{figure}
\end{enumerate}
\end{enumerate}
\begin{rem}
\label{rem:realizable-1}The above matroids are realizable over a
sufficiently large field. The loopless matroid $\scalebox{0.95}{\ensuremath{M\cap N}}$
has dimension $0$, and $\scalebox{0.95}{\ensuremath{\mathrm{PZ}_{M\cap N}}}$
is a point-piece.
\end{rem}

\subsection{A characteristic property of abstract line arrangements}

The line arrangements produced in \ref{enu:(LA2)} and \ref{enu:(LA4)}
can also be achieved by applying Proposition \ref{prop:Cutting-2}
and then using an appropriate pullback. These line arrangements have
rank-$\scalebox{0.95}{\ensuremath{1}}$ degenerate flats. The following
lemma indicates that any line arrangement can possess at most one
rank-$\scalebox{0.95}{\ensuremath{1}}$ degenerate flat, which is
a characteristic feature of line arrangements.
\begin{lem}
\label{lem:degen-rank1-flat}Let $\scalebox{0.95}{\ensuremath{M}}$
be a rank-$\scalebox{0.95}{\ensuremath{3}}$ connected matroid on
$\scalebox{0.95}{\ensuremath{S}}$. Then, $\scalebox{0.95}{\ensuremath{M}}$
has two proper flats $\scalebox{0.95}{\ensuremath{F}}$ and $\scalebox{0.95}{\ensuremath{L}}$
with $\scalebox{0.95}{\ensuremath{F\cap L\neq\emptyset}}$ and $\scalebox{0.95}{\ensuremath{F\cup L=S}}$
if and only if $\scalebox{0.95}{\ensuremath{M}}$ has a degenerate
flat $\scalebox{0.95}{\ensuremath{T}}$ of rank $\scalebox{0.95}{\ensuremath{1}}$.
In this case, $\scalebox{0.95}{\ensuremath{F}}$ and $\scalebox{0.95}{\ensuremath{L}}$
are the only two non-degenerate flats of rank $\scalebox{0.95}{\ensuremath{2}}$,
and $\scalebox{0.95}{\ensuremath{T=F\cap L}}$ is the unique degenerate
flat of rank $\scalebox{0.95}{\ensuremath{1}}$. 
\end{lem}

\begin{proof}
To prove the only if direction, suppose that $\scalebox{0.95}{\ensuremath{M}}$
has two proper flats $\scalebox{0.95}{\ensuremath{F}}$ and $\scalebox{0.95}{\ensuremath{L}}$
with $\scalebox{0.95}{\ensuremath{F\cap L\neq\emptyset}}$ and $\scalebox{0.95}{\ensuremath{F\cup L=S}}$.
Then, $\scalebox{0.95}{\ensuremath{F}}$ and $\scalebox{0.95}{\ensuremath{L}}$
are rank-$2$ flats. If $\scalebox{0.95}{\ensuremath{F}}$ is disconnected,
then $\scalebox{0.95}{\ensuremath{F=\left(F\cap L\right)\cup T}}$
for a rank-$1$ flat $\scalebox{0.95}{\ensuremath{T}}$ with $\scalebox{0.95}{\ensuremath{S=T\cup L}}$.
So, $\scalebox{0.95}{\ensuremath{M}}$ is disconnected, a contradiction.
 Thus, $\scalebox{0.95}{\ensuremath{F}}$ is connected and so is $\scalebox{0.95}{\ensuremath{L}}$
by the same manner. Then, since $\scalebox{0.95}{\ensuremath{M/F}}$
and $\scalebox{0.95}{\ensuremath{M/L}}$ are loopless rank-$1$ matroids,
they are connected and therefore $\scalebox{0.95}{\ensuremath{F}}$
and $\scalebox{0.95}{\ensuremath{L}}$ are non-degenerate. Further,
if $\scalebox{0.95}{\ensuremath{A=\left(A\cap F\right)\cup\left(A\cap L\right)}}$
is a rank-$2$ flat that is different from $\scalebox{0.95}{\ensuremath{F}}$
and $\scalebox{0.95}{\ensuremath{L}}$, then it is disconnected. Therefore,
$\scalebox{0.95}{\ensuremath{F}}$ and $\scalebox{0.95}{\ensuremath{L}}$
are the only two non-degenerate flats of rank $2$. Then, $\scalebox{0.95}{\ensuremath{M/\left(F\cap L\right)=M/F\oplus M/L}}$
by Lemma \ref{lem:Ridges of BP}, and $\scalebox{0.95}{\ensuremath{F\cap L}}$
is a rank-$1$ degenerate flat.

To prove the if direction, let $\scalebox{0.95}{\ensuremath{T}}$
be a rank-$1$ degenerate flat, then there are rank-$2$ flats $\scalebox{0.95}{\ensuremath{F}}$
and $\scalebox{0.95}{\ensuremath{L}}$ with $\scalebox{0.95}{\ensuremath{M/T=\left(M|_{F}/T\right)\oplus\left(M|_{L}/T\right)}}$
so that $\scalebox{0.95}{\ensuremath{T=F\cap L}}$ and $\scalebox{0.95}{\ensuremath{S=F\cup L}}$.
The above argument tells that $\scalebox{0.95}{\ensuremath{T}}$ is
the unique degenerate flat of rank $\scalebox{0.95}{\ensuremath{1}}$.
\end{proof}
\begin{example}
The matroid of Example \ref{exa:meet} has the unique rank-$\scalebox{0.95}{\ensuremath{1}}$
degenerate flat which is $\scalebox{0.95}{\ensuremath{\left\{ 3\right\} }}$.
\end{example}

\begin{rem}
\label{rem:degenerate-flat}When $\scalebox{0.95}{\ensuremath{k\ge4}}$,
however, the uniqueness of rank-$\scalebox{0.95}{\ensuremath{1}}$
degenerate flat fails. For example, consider the rank-$\scalebox{0.95}{\ensuremath{4}}$
graphic matroid $\scalebox{0.95}{\ensuremath{M[G]}}$ shown in Figure~\ref{fig:Counter-to-degen lem}.
Since the graph $\scalebox{0.95}{\ensuremath{G}}$ is $\scalebox{0.95}{\ensuremath{2}}$-connected,
$\scalebox{0.95}{\ensuremath{M[G]}}$ is connected. Now, contracting
either edge $e_{6}$ or $e_{7}$ produces a loopless graph $\scalebox{0.95}{\ensuremath{G'}}$
that is only $\scalebox{0.95}{\ensuremath{1}}$-connected, and $\scalebox{0.95}{\ensuremath{M[G']}}$
is disconnected. So, $\scalebox{0.95}{\ensuremath{\left\{ 6\right\} }}$
and $\scalebox{0.95}{\ensuremath{\left\{ 7\right\} }}$ are two rank-$\scalebox{0.95}{\ensuremath{1}}$
degenerate flats of $\scalebox{0.95}{\ensuremath{M[G]}}$.\vspace{-0.2cm}

\begin{figure}[H]
\noindent \centering{}\noindent \begin{center}
\def\size{0.75}

\begin{tikzpicture}[font=\footnotesize]

\begin{scope}[line cap=round,rotate=0,scale=\size,xshift=0cm,yshift=0cm,>=stealth]
 \path (0:0) coordinate (O); \fill (O) circle (2.5pt);
 \foreach \x [count=\xi] in {A0,A1,A2,A3}{
      \path (-60+60*\xi:2) coordinate (\x);
      \fill (\x) circle (2.3pt);}
 \draw [thick] (A1)--node[midway,left=-2pt]{6}(O)--node[midway,below=-1pt]{4}(A0)
                    --node[midway,right=-2pt]{3}(A1)--node[midway,above=-2pt]{1}(A2)
                    --node[midway,left=-2pt]{2}(A3)--node[midway,below=-1pt]{5}(O)
                    --node[midway,right=-2pt]{7}(A2);
\end{scope}

\path (6,0.6)node[]{$\begin{pmatrix*}[r] &  &  & 1 & 1 & 1 & 1\\-1 & -1 &  &  &  &  & 1\\1 &  & -1 &  &  & 1\\ & 1 &  &  & 1\\ &  & 1 & 1\end{pmatrix*}_{5\times7}$};

\end{tikzpicture}
\par\end{center}\caption{\label{fig:Counter-to-degen lem}A rank-4 connected graphic matroid
with two distinct degenerate rank-1 flats and its regular realization.}
\end{figure}
\end{rem}

\subsection{\label{subsec:minimal-model}Minimal model program for line arrangements}

Suppose that $\scalebox{0.95}{\ensuremath{M}}$ is a loopless matroid
of rank $\scalebox{0.95}{\ensuremath{3}}$. As indicated in Subsection
\ref{subsec:MMP}, we can obtain the puzzle-piece $\scalebox{0.95}{\ensuremath{\mathrm{PZ}_{M}}}$
from the line arrangement $\scalebox{0.95}{\ensuremath{\mathrm{HA}_{M}}}$
applying the MMP. It is worth noting that we do not have to blow up
everything if we only want a lattice isomorphic to the intersection
lattice $\scalebox{0.95}{\ensuremath{\mathcal{F}\left(M\right)}}$
of $\scalebox{0.95}{\ensuremath{\mathrm{PZ}_{M}}}$: If an unstable
line exists, which is unique according to Lemma \ref{lem:degen-rank1-flat},
it suffices to make it collapsible and collapse the collapsible line.
If every line is stable, we collapsibilize the points $\scalebox{0.95}{\ensuremath{M/F}}$
for connected rank-$\scalebox{0.95}{\ensuremath{2}}$ flats $\scalebox{0.95}{\ensuremath{F}}$
and collapse them.
\begin{example}
\label{exa:minimal-models}Let $\scalebox{0.95}{\ensuremath{M}}$
be a rank-$\scalebox{0.95}{\ensuremath{3}}$ loopless matroid on $\scalebox{0.95}{\ensuremath{S}}$.
\begin{enumerate}
\item For a partition $\scalebox{0.95}{\ensuremath{S=\bigcup_{i\in\left[3\right]}S_{i}}}$,
let $\scalebox{0.95}{\ensuremath{M=U_{S_{1}}^{1}\oplus U_{S_{2}}^{1}\oplus U_{S_{3}}^{1}}}$.
Then, $\scalebox{0.95}{\ensuremath{\dim M=0}}$, and $\scalebox{0.95}{\ensuremath{\mathrm{HA}_{M}}}$
has only three line loci: $\scalebox{0.95}{\ensuremath{M/S_{1}=U_{S_{2}}^{1}\oplus U_{S_{3}}^{1}}}$,
$\scalebox{0.95}{\ensuremath{M/S_{2}=U_{S_{1}}^{1}\oplus U_{S_{3}}^{1}}}$,
and $\scalebox{0.95}{\ensuremath{M/S_{3}=U_{S_{1}}^{1}\oplus U_{S_{2}}^{1}}}$;
see Figure \ref{fig:HA-2-dim}. The puzzle-piece $\scalebox{0.95}{\ensuremath{\mathrm{PZ}_{M}}}$
is a point-piece, see Figure \ref{fig:mmp-1}. 
\begin{figure}[H]
\noindent \centering{}\noindent \begin{center}
\def\sizea{0.22}
\def\size{0.25}
\def\ratioa{0.901}
\def\ratior{0.851}
\def\ratiow{0.761}
\def\ratioe{0.881}

\begin{tikzpicture}[blowup line/.style={line width=2pt,gray},font=\scriptsize]

\begin{scope}[line cap=round,rotate=0,scale=\sizea,xshift=0cm,yshift=0cm,rounded corners]
 \foreach \x [count=\xi] in {B0,C0,A0}{
      \path (90+120*\xi:4) coordinate (\x);}
 \draw [blue,thick] (A0)++(60:1)--++(240:8.928);
 \draw [red,thick]  (A0)++(120:1)--++(-60:8.928);
 \draw [green,thick](B0)++(180:1)--++(0:8.928);

\path (0,6.5) node[]{$\mathrm{HA}_{U_{S_{1}}^{1}\oplus U_{S_{2}}^{1}\oplus U_{S_{3}}^{1}}$};

\end{scope}

\begin{scope}[xshift=2.5cm,yshift=0cm]
 \draw [->]     (1,0) -- (-1,0);
 \path (0,0.4) node[]{Blowups};
\end{scope}

\begin{scope}[line cap=round,rotate=0,xshift=5cm,yshift=0cm,scale=\sizea,rounded corners]
 \foreach \x [count=\xi] in {B0,C0,A0}{
      \path (90+120*\xi:4) coordinate (\x);}
 \draw [blue,thick] (A0)++(60:1)--++(240:8.928);
 \draw [red,thick]  (A0)++(120:1)--++(-60:8.928);
 \draw [green,thick](B0)++(180:1)--++(0:8.928);
\end{scope}

\begin{scope}[xshift=7.5cm,yshift=0cm]
 \draw [->]     (-1,0) -- (1,0);
 \path (0,0.4) node[]{Collapsing};
\end{scope}

\begin{scope}[line cap=round,rotate=0,xshift=10cm,yshift=0cm,scale=\sizea,rounded corners]

\fill (0,0) circle(8pt);

\path (0,6.5) node[]{$\mathrm{PZ}_{U_{S_{1}}^{1}\oplus U_{S_{2}}^{1}\oplus U_{S_{3}}^{1}}$};
\end{scope}

\begin{scope}[xshift=0cm,yshift=0cm]
 \draw [->]     (1.4,1.3) .. node[above]{MMP} controls (5,1.6) .. (8.6,1.3);
\end{scope}

\end{tikzpicture}
\par\end{center}\caption{\label{fig:mmp-1}A minimal model example I.}
\end{figure}
\item For a partition $\scalebox{0.95}{\ensuremath{S=S_{1}\cup S_{2}}}$
with $\scalebox{0.95}{\ensuremath{\left|S_{2}\right|\ge3}}$, let
$\scalebox{0.95}{\ensuremath{M=U_{S_{1}}^{1}\oplus U_{S_{2}}^{2}}}$.
Then, $\scalebox{0.95}{\ensuremath{\dim M=1}}$, and $\scalebox{0.95}{\ensuremath{\mathrm{PZ}_{M}}}$
is a line-piece, see Figure \ref{fig:mmp-2}.
\begin{figure}[H]
\noindent \centering{}\noindent \begin{center}
\def\sizea{0.22}
\def\size{0.25}
\def\ratioa{0.901}
\def\ratior{0.851}
\def\ratiow{0.761}
\def\ratioe{0.881}


\par\end{center}\vspace{-0.2cm}
\par\end{centering}
\noindent \centering{}\caption{\label{fig:mmp-3}A minimal model example III.}
\end{figure}
\item \label{enu:P1timesP1}Let $\scalebox{0.95}{\ensuremath{M=\delta_{F,L}^{2,2}\left(U_{S}^{3}\right)}}$
be the matroid of Example \ref{exa:degenerations}\eqref{enu:degen-2}.
The line $\scalebox{0.95}{\ensuremath{M/(F\cap L)}}$ has only two
intersection points and is unstable; so is the expression $\scalebox{0.95}{\ensuremath{M\left(F\cap L\right)}}$,
while the point-piece $\scalebox{0.95}{\ensuremath{\phi\left(M\left(F\cap L\right)\right)}}$
is stable, see Figure \ref{fig:mmp-4}. 
\begin{figure}[H]
\noindent \begin{centering}
\noindent \begin{center}
\def\sizea{0.22}
\def\size{0.25}
\def\ratioa{0.901}
\def\ratior{0.851}
\def\ratiow{0.761}
\def\ratioe{0.881}


\par\end{center}\vspace{-0.2cm}
\par\end{centering}
\noindent \centering{}\caption{\label{fig:mmp-4}A minimal model example IV.}
\end{figure}
\end{enumerate}
\end{example}

\subsection{\label{subsec:Grassmann}Grassmann geometry}

Fix an algebraically closed field of arbitrary characteristic. For
the Grassmannian $\scalebox{0.95}{\ensuremath{\mathrm{G}\left(k,n\right)}}$,
the natural action of the algebraic torus $\scalebox{0.95}{\ensuremath{H=\mathbb{G}_{m}^{n}}}$
on $\scalebox{0.95}{\ensuremath{\mathbb{P}^{n-1}}}$ extends to a
torus action on it. The weights of the Chow form of $\scalebox{0.95}{\ensuremath{X\in\mathrm{G}\left(k,n\right)}}$
are identified with the indicator vectors of the $k$-element subsets
$\scalebox{0.95}{\ensuremath{B}}$ of $\left[n\right]$ with \emph{nonzero}
Plücker coordinate $\scalebox{0.95}{\ensuremath{p_{B}\left(X\right)}}$.
These $k$-element subsets form the base collection of a matroid,
say $\scalebox{0.95}{\ensuremath{M}}$. If $\scalebox{0.95}{\ensuremath{X}}$
is not contained in the coordinate hyperplanes of $\scalebox{0.95}{\ensuremath{\mathbb{P}^{n-1}}}$,
then the matroid $\scalebox{0.95}{\ensuremath{M}}$ is loopless. Moreover,
it induces $n$ hyperplanes in $\scalebox{0.95}{\ensuremath{\mathbb{P}^{k-1}}}$
determined by $n$ linear equations $\scalebox{0.95}{\ensuremath{\left\langle \mathbf{v}_{i},\underline{\hspace{0.7em}}\,\right\rangle =0}}$
for $\scalebox{0.95}{\ensuremath{i\in\left[n\right]}}$ where 
\[
X=\left(\begin{array}{c|c|c}
 & \\
\mathbf{v}_{1} & \cdots & \mathbf{v}_{n}\\
 & \\
\end{array}\right)_{k\times n}
\]
 which are identified with the $n$ intersections of those $n$ coordinate
hyperplanes of $\scalebox{0.95}{\ensuremath{\mathbb{P}^{n-1}}}$ and
$\scalebox{0.95}{\ensuremath{X\subset\mathbb{P}^{n-1}}}$. Let $\scalebox{0.95}{\ensuremath{\mathcal{H}}}$
denote this hyperplane arrangement in $\scalebox{0.95}{\ensuremath{\mathbb{P}^{k-1}}}$,
then it is a realization (over the given field) of the abstract hyperplane
arrangement $\scalebox{0.95}{\ensuremath{\mathrm{HA}_{M}}}$. Here,
we know that $\scalebox{0.95}{\ensuremath{M}}$ is connected if $\scalebox{0.95}{\ensuremath{\mathcal{H}}}$
has $\scalebox{0.95}{\ensuremath{k+1}}$ hyperplanes in general position,
but not the other way around, cf. Lemma \ref{lem:general position}
and Remark \ref{rem:general position}. Further:
\begin{prop}[{Gelfand-Serganova, \cite[Theorems 1.1 and 2.2]{GS87}}]
\label{prop:GS-thm-2} Let $\scalebox{0.95}{\ensuremath{H\cdot\left[X\right]}}$
be the orbit of $\scalebox{0.95}{\ensuremath{\left[X\right]\in\mathrm{G}\left(k,n\right)}}$
under the action of $\scalebox{0.95}{\ensuremath{H}}$. Then, there
is a one-to-one correspondence from the collection of the  orbits
$\scalebox{0.95}{\ensuremath{H\cdot\left[Y\right]}}$ in $\scalebox{0.95}{\ensuremath{\overline{H\cdot\left[X\right]}}}$
with $\scalebox{0.95}{\ensuremath{Y}}$ not contained in the coordinate
hyperplanes of $\scalebox{0.95}{\ensuremath{\mathbb{P}^{n-1}}}$ to
the intersection lattice of the puzzle-piece $\scalebox{0.95}{\ensuremath{\mathrm{PZ}_{M}}}$.
This correspondence preserves the codimension.
\end{prop}

Recall that $\scalebox{0.95}{\ensuremath{\mathrm{MA}_{\mathrm{P}}}}$
for a base polytope $\scalebox{0.95}{\ensuremath{\mathrm{P}}}$ denotes
its matroid. One can regard $\scalebox{0.95}{\ensuremath{\mathrm{MA}}}$
as a map defined on the collection of all base polytopes and $\scalebox{0.95}{\ensuremath{\mathrm{PZ}}}$
as a map taking the puzzle-piece. Then, $\scalebox{0.95}{\ensuremath{\mathrm{PZ}\circ\mathrm{MA}\circ\mu}}$
is the above correspondence which maps orbits to puzzle-pieces, where
$\mu$ is the \emph{moment mapping}: 
\[
\scalebox{0.95}{\ensuremath{{\displaystyle \mu\left(Y\right)=\frac{\sum_{B\in\binom{\left[n\right]}{k}}\left|p_{B}\left(Y\right)\right|^{2}\cdot1^{B}}{\sum_{B\in\binom{\left[n\right]}{k}}\left|p_{B}\left(Y\right)\right|^{2}}}}}.
\]
Here, $\scalebox{0.95}{\ensuremath{\binom{\left[n\right]}{k}}}$ denotes
the collection of all $\scalebox{0.95}{\ensuremath{k}}$-element subsets
of $\scalebox{0.95}{\ensuremath{\left[n\right]}}$.\vspace{2pt}

For moduli theorists, we remark that $\scalebox{0.95}{\ensuremath{\mathcal{H}}}$
has the \emph{log canonical model}, which is a realization of $\scalebox{0.95}{\ensuremath{\mathrm{PZ}_{M}}}$,
a normal projective toric variety. We will describe a specific method
for obtaining $\scalebox{0.95}{\ensuremath{\mathrm{PZ}_{M}}}$ from
$\scalebox{0.95}{\ensuremath{\mathrm{HA}_{M}}}$. Using this method,
every intermediate matroidal lattice that arises in between is coatomistic,
and one can obtain the log canonical model by applying corresponding
algebro-geometric operations followed by taking the coatoms:
\begin{enumerate}
\item \label{enu:degen}Collapsibilize all degenerate hyperplane expressions,
that is, $\scalebox{0.95}{\ensuremath{M/F}}$ for all degenerate rank-1
flats $\scalebox{0.95}{\ensuremath{F}}$, and collapse them. The lattice
obtained after the process is coatomistic; no unstable member is a
coatom.
\item \label{enu:nondeg}Collapsibilize the  subspace expressions $\scalebox{0.95}{\ensuremath{M/F}}$
for all non-degenerate flats $\scalebox{0.95}{\ensuremath{F}}$ and
collapse them. Then, the coatoms of the resulting lattice $\scalebox{0.95}{\ensuremath{\mathcal{A}}}$
are precisely $\scalebox{0.95}{\ensuremath{\phi\left(M\left(F\right)\right)}}$
for all non-degenerate flats $\scalebox{0.95}{\ensuremath{F}}$; hence,
$\scalebox{0.95}{\ensuremath{\mathcal{A}=\mathcal{F}\left(M\right)}}$.
\end{enumerate}
\begin{rem}[]
\begin{enumerate}
\item Lemma \ref{lem:degen-rank1-flat} and Example \ref{exa:minimal-models}\eqref{enu:P1timesP1}
show that the log canonical model of a line arrangement is either
$\scalebox{0.95}{\ensuremath{\mathbb{P}^{1}\times\mathbb{P}^{1}}}$
or $\scalebox{0.95}{\ensuremath{\mathrm{Bl}_{\mathrm{pts}}\mathbb{P}^{2}}}$.
\item Our stability notion in Definitions \ref{def:stability} and \ref{def:w-stability}
generalizes the GIT stability of Keel and Tevelev, \cite[Definition 1.25]{Chow}.
\end{enumerate}
\end{rem}

\section{\label{sec:Matroid Tilings}Matroid Tiling}

We now consider extending \emph{matroid tilings}, which are generalized
matroid subdivisions. One of the major challenges of the tiling extension
is that it is a global process and can have a huge computational complexity.
We introduce a fundamental theorem that addresses the problem: it
tells us that if we extend a matroid tiling in a \textquotedblleft locally
convex\textquotedblright{} manner, the extension is a matroid subdivision,
while the process is a sequence of local operations. We investigate
gluing of base polytopes and provide practical tools for the extension.
We also study weighted tilings.

\subsection{\label{subsec:Tiling}Tiling and semitiling}

Let $\scalebox{0.95}{\ensuremath{\Sigma}}$ be a finite collection
of convex polytopes contained in the $\scalebox{0.95}{\ensuremath{\left(k,S\right)}}$-hypersimplex
$\scalebox{0.95}{\ensuremath{\Delta_{S}^{k}}}$. The \textbf{support}
of $\scalebox{0.95}{\ensuremath{\Sigma}}$ is the union of its members:
\[
\scalebox{0.95}{\ensuremath{\left|\Sigma\right|:=\bigcup_{P\in\Sigma}P}}.
\]
 The \textbf{dimension} of $\scalebox{0.95}{\ensuremath{\Sigma}}$
is defined as the dimension of its support:
\[
\scalebox{0.95}{\ensuremath{\dim\Sigma:=\dim\left|\Sigma\right|}}.
\]
 We say that $\scalebox{0.95}{\ensuremath{\Sigma}}$ is \textbf{equidimensional}
if all of its members have the same dimension and \textbf{full-dimensional}
if $\scalebox{0.95}{\ensuremath{\dim\Sigma=\dim\Delta=\left|S\right|-1}}$.
Henceforth we assume that $\scalebox{0.95}{\ensuremath{\Sigma}}$
is equidimensional and full-dimensional unless otherwise noted.

A nonempty face of a member of $\scalebox{0.95}{\ensuremath{\Sigma}}$
is said to be a \textbf{cell} of $\scalebox{0.95}{\ensuremath{\Sigma}}$.
The empty set $\emptyset$ is regarded as a cell of $\scalebox{0.95}{\ensuremath{\Sigma}}$
with dimension $\scalebox{0.95}{\ensuremath{\dim\emptyset:=-1}}$.
If $\scalebox{0.95}{\ensuremath{Q}}$ is a nonempty common face of
all members of $\scalebox{0.95}{\ensuremath{\Sigma}}$, then we say
that $\scalebox{0.95}{\ensuremath{Q}}$ is a \textbf{common cell}
of $\scalebox{0.95}{\ensuremath{\Sigma}}$. The \textbf{codimension}
of a nonempty cell $\scalebox{0.95}{\ensuremath{Q}}$ of $\scalebox{0.95}{\ensuremath{\Sigma}}$
is:
\[
\scalebox{0.95}{\ensuremath{\mathrm{codim}_{\Sigma}\left.Q\right.:=\dim\Sigma-\dim Q}}.
\]
The codimension of the empty cell $\emptyset$ is $\scalebox{0.95}{\ensuremath{\mathrm{codim}_{\Sigma}\left.\emptyset\right.:=\dim\Sigma+1}}$.
\begin{note}
When mentioning cells, we identify $\scalebox{0.95}{\ensuremath{\Sigma}}$
with the collection of all of its cells.
\end{note}

We say that $\scalebox{0.95}{\ensuremath{\Sigma}}$ is \textbf{connected
in codimension $c$} if, for any two polytopes $\scalebox{0.95}{\ensuremath{P,P'\in\Sigma}}$,
there is a sequence of distinct polytopes $\scalebox{0.95}{\ensuremath{P_{1},\dots,P_{l}\in\Sigma}}$
for some $\scalebox{0.95}{\ensuremath{l\ge1}}$ with $\scalebox{0.95}{\ensuremath{P_{0}=P}}$
and $\scalebox{0.95}{\ensuremath{P_{l}=P'}}$ such that $\scalebox{0.95}{\ensuremath{P_{i-1}\cap P_{i}}}$
for each $\scalebox{0.95}{\ensuremath{i\in\left[l\right]}}$ is a
common face of $\scalebox{0.95}{\ensuremath{P_{i-1}}}$ and $\scalebox{0.95}{\ensuremath{P_{i}}}$
with $\scalebox{0.95}{\ensuremath{1\le\mathrm{codim}\left(P_{i-1}\cap P_{i}\right)\le c}}$.
\begin{defn}
A $\scalebox{0.95}{\ensuremath{\left(k,S\right)}}$\textbf{-tiling}
or simply a \textbf{tiling} is a face-fitting collection of convex
polytopes in $\scalebox{0.95}{\ensuremath{\Delta_{S}^{k}}}$ that
is connected in codimension $1$. For a tiling $\Sigma$, we say that
it is a \textbf{subdivision} of $\scalebox{0.95}{\ensuremath{\left|\Sigma\right|}}$.
\end{defn}

By definition, a single convex polytope is a tiling. The collection
of all cells of a tiling is a polyhedral complex. We call a tiling
with convex support a \textbf{convex tiling}. Some authors mean by
a convex subdivision a polyhedral subdivision induced by a convex
or concave function, which we call \textbf{coherent}. We call a tiling
with a common cell $\scalebox{0.95}{\ensuremath{Q}}$ of codimension
$\ge1$ a \textbf{tiling at $\scalebox{0.95}{\ensuremath{Q}}$}.

Let $\scalebox{0.95}{\ensuremath{\Sigma}}$ be a tiling. A codimension-$1$
cell of $\scalebox{0.95}{\ensuremath{\Sigma}}$ is said to be a \textbf{facet}
of $\scalebox{0.95}{\ensuremath{\Sigma}}$ if it is \emph{not} a common
face of two members of $\scalebox{0.95}{\ensuremath{\Sigma}}$. A
facet of $\scalebox{0.95}{\ensuremath{\Sigma}}$ is either a facet
of $\scalebox{0.95}{\ensuremath{\left|\Sigma\right|}}$ or contained
in a facet of $\scalebox{0.95}{\ensuremath{\left|\Sigma\right|}}$.
The \textbf{boundary }$\scalebox{0.95}{\ensuremath{\partial\Sigma}}$
of $\scalebox{0.95}{\ensuremath{\Sigma}}$ is the collection of all
facets of $\scalebox{0.95}{\ensuremath{\Sigma}}$. We call a face
of a facet of $\scalebox{0.95}{\ensuremath{\Sigma}}$ a \textbf{boundary
cell}.

Let $\scalebox{0.95}{\ensuremath{\Sigma_{0}}}$ be a subcollection
of $\scalebox{0.95}{\ensuremath{\Sigma}}$ with a common cell $\scalebox{0.95}{\ensuremath{Q}}$
of codimension $\scalebox{0.95}{\ensuremath{\ge1}}$. Then, $\scalebox{0.95}{\ensuremath{\Sigma_{0}}}$
is a tiling at $\scalebox{0.95}{\ensuremath{Q}}$,  and we  call $\scalebox{0.95}{\ensuremath{\Sigma_{0}}}$
a \textbf{subtiling at $\scalebox{0.95}{\ensuremath{Q}}$}. We say
that $\scalebox{0.95}{\ensuremath{\Sigma_{0}}}$ is \textbf{maximal
at $\scalebox{0.95}{\ensuremath{Q}}$} if there is no larger subtiling
of $\scalebox{0.95}{\ensuremath{\Sigma}}$ at $\scalebox{0.95}{\ensuremath{Q}}$.
An (inclusionwise) maximal subtiling of $\scalebox{0.95}{\ensuremath{\Sigma}}$
at $\scalebox{0.95}{\ensuremath{Q}}$ is unique and is denoted by
$\scalebox{0.95}{\ensuremath{\Sigma_{Q}}}$.
\begin{notation}
Let $\scalebox{0.95}{\ensuremath{R}}$ and $\scalebox{0.95}{\ensuremath{Q}}$
be polyhedra. We write $\scalebox{0.95}{\ensuremath{Q<R}}$ if $\scalebox{0.95}{\ensuremath{Q}}$
is a proper face of $\scalebox{0.95}{\ensuremath{R}}$ and $\scalebox{0.95}{\ensuremath{Q\lessdot R}}$
if $\scalebox{0.95}{\ensuremath{Q<R}}$ and $\scalebox{0.95}{\ensuremath{\dim R=\dim Q+1}}$.
\end{notation}

For a tiling $\scalebox{0.95}{\ensuremath{\Sigma}}$ with a codimension-$\scalebox{0.95}{\ensuremath{2}}$
cell $\scalebox{0.95}{\ensuremath{Q}}$, let $\scalebox{0.95}{\ensuremath{R\nsubseteq\partial\Delta}}$
be a facet of $\scalebox{0.95}{\ensuremath{\Sigma}}$ and $\scalebox{0.95}{\ensuremath{\Sigma_{Q}}}$
with $\scalebox{0.95}{\ensuremath{Q\lessdot R}}$, then $\scalebox{0.95}{\ensuremath{R\lessdot P}}$
for some $\scalebox{0.95}{\ensuremath{P\in\Sigma_{Q}}}$. Suppose
that $\scalebox{0.95}{\ensuremath{P'}}$ is a full-dimensional polytope
with $\scalebox{0.95}{\ensuremath{R=P\cap P'}}$ and $\scalebox{0.95}{\ensuremath{\Sigma_{Q}\cup\left\{ P'\right\} }}$
being a tiling. If $\scalebox{0.95}{\ensuremath{\Sigma_{Q'}\cup\left\{ P'\right\} }}$
for any $\scalebox{0.95}{\ensuremath{\Sigma_{Q'}}}$ with $\scalebox{0.95}{\ensuremath{Q'\lessdot R}}$
is a tiling, we call $\scalebox{0.95}{\ensuremath{\Sigma'=\Sigma\cup\left\{ P'\right\} }}$
or obtaining it a \textbf{single-member extension} of $\scalebox{0.95}{\ensuremath{\Sigma}}$,
a tiling or not. Further, a maximal subtiling of $\scalebox{0.95}{\ensuremath{\Sigma'}}$
at another codimension-$\scalebox{0.95}{\ensuremath{2}}$ cell $\scalebox{0.95}{\ensuremath{Q'}}$
can also be defined so we can perform single-member extension recursively.
We call $\scalebox{0.95}{\ensuremath{Q'}}$ the \textbf{center} of
the maximal subtiling of $\scalebox{0.95}{\ensuremath{\Sigma'}}$.
Note that a single-member extension is a local operation.
\begin{defn}
A $\scalebox{0.95}{\ensuremath{\left(k,S\right)}}$\textbf{-semitiling
}for $\scalebox{0.95}{\ensuremath{k\ge2}}$ or simply a \textbf{semitiling}
is a collection of polytopes that is obtained from a $\scalebox{0.95}{\ensuremath{\left(k,S\right)}}$-tiling
by recursive single-member extensions.
\end{defn}

A tiling is a semitiling, and a semitiling is connected in codimension
$\scalebox{0.95}{\ensuremath{1}}$.

A maximal subtiling of semitiling $\scalebox{0.95}{\ensuremath{\Sigma}}$
at codimension-$\scalebox{0.95}{\ensuremath{2}}$ cell $\scalebox{0.95}{\ensuremath{Q}}$
is defined, but we do not know whether it exists uniquely and do not
use the notation $\scalebox{0.95}{\ensuremath{\Sigma_{Q}}}$.

For two semitilings $\scalebox{0.95}{\ensuremath{\Sigma}}$ and $\scalebox{0.95}{\ensuremath{\Sigma'}}$
with $\scalebox{0.95}{\ensuremath{\Sigma\subseteq\Sigma'}}$, we call
$\scalebox{0.95}{\ensuremath{\Sigma'}}$ an \textbf{extension} of
$\scalebox{0.95}{\ensuremath{\Sigma}}$.
\begin{defn}
A \textbf{matroid semitiling} is a semitiling whose members are base
polytopes. A matroid tiling whose support is a base polytope is said
to be a\textbf{ matroid subdivision} of the base polytope.
\end{defn}

The matroid semitiling is a well-defined notion due to Proposition
\ref{prop:base-poly-cpx}. Henceforth we assume that a semitiling
is a matroid semitiling.

\subsection{Quotient figure}

Let $\scalebox{0.95}{\ensuremath{P=\mathrm{BP}_{M}}}$ be a full-dimensional
$\scalebox{0.95}{\ensuremath{\left(k,S\right)}}$-polytope with $\scalebox{0.95}{\ensuremath{k\ge2}}$.
Then, $\scalebox{0.95}{\ensuremath{\mathrm{MA}_{P}=M}}$ is a connected
matroid. If $\scalebox{0.95}{\ensuremath{Q}}$ is a codimension-$\scalebox{0.95}{\ensuremath{\left(c-1\right)}}$
face with $\scalebox{0.95}{\ensuremath{2\le c\le k}}$, then $\scalebox{0.95}{\ensuremath{\kappa\left(\mathrm{MA}_{Q}\right)=c}}$.
Let $\scalebox{0.95}{\ensuremath{M_{i}}}$, $\scalebox{0.95}{\ensuremath{i\in\left[c\right]}}$,
be the connected components of $\scalebox{0.95}{\ensuremath{\mathrm{MA}_{Q}}}$
with ground sets $\scalebox{0.95}{\ensuremath{S_{i}=E\left(M_{i}\right)}}$.
Then, $\scalebox{0.95}{\ensuremath{r\left(M_{i}\right)=k_{i}}}$ for
$\scalebox{0.95}{\ensuremath{0\le k_{i}\le k}}$ with $\scalebox{0.95}{\ensuremath{\sum_{i\in\left[c\right]}k_{i}=k}}$,
and $\scalebox{0.95}{\ensuremath{\mathrm{MA}_{Q}=\bigoplus_{i\in\left[c\right]}M_{i}}}$.
Let $\scalebox{0.95}{\ensuremath{\pi_{Q}:\mathbb{R}^{S}\rightarrow\mathbb{R}^{c}}}$
be a linear map defined by
\[
\scalebox{0.95}{\ensuremath{{\displaystyle \sum_{s\in S}x_{s}\mathbf{e}_{s}\mapsto\sum_{i\in\left[c\right]}\left(\sum_{s\in S_{i}}x_{s}\right)\mathbf{e}_{i}}}}
\]
 where $\scalebox{0.95}{\ensuremath{\mathbf{e}_{s}\in\mathbb{R}^{S}}}$
and $\scalebox{0.95}{\ensuremath{\mathbf{e}_{i}\in\mathbb{R}^{c}}}$
are standard basis vectors. Then:
\begin{itemize}
\item The hyperplane $\scalebox{0.95}{\ensuremath{\left\{ x\left(S_{i}\right)=k_{i}\right\} }}$
of $\scalebox{0.95}{\ensuremath{\mathbb{R}^{S}}}$ maps to the hyperplane
$\scalebox{0.95}{\ensuremath{\left\{ y_{i}=k_{i}\right\} }}$ of $\scalebox{0.95}{\ensuremath{\mathbb{R}^{c}}}$.
\item The hyperplane $\scalebox{0.95}{\ensuremath{\left\{ x\left(S\right)=k\right\} }}$
maps to $\scalebox{0.95}{\ensuremath{\left\{ y\left(\left[c\right]\right)=k\right\} }}$.
\item $\scalebox{0.95}{\ensuremath{Q}}$ maps to a point: $\scalebox{0.95}{\ensuremath{\pi_{Q}\left(Q\right)=\left\{ \left(k_{1},\dots,k_{c}\right)\right\} }}$.
\item If $\scalebox{0.95}{\ensuremath{\left|S_{i}\right|\ge k}}$ for all
$\scalebox{0.95}{\ensuremath{i\in\left[c\right]}}$, then $\scalebox{0.95}{\ensuremath{\pi_{Q}\left(\Delta_{S}^{k}\right)}}$
is a regular $\scalebox{0.95}{\ensuremath{\left(c-1\right)}}$-simplex.
\end{itemize}
For a point $\scalebox{0.95}{\ensuremath{x\in Q}}$, take an open
ball $\scalebox{0.95}{\ensuremath{B_{\epsilon}\left(x\right)}}$ centered
at $\scalebox{0.95}{\ensuremath{x}}$ with radius $\scalebox{0.95}{\ensuremath{0<\epsilon\ll1}}$
in $\scalebox{0.95}{\ensuremath{\Delta}}$. The image of this ball
under $\scalebox{0.95}{\ensuremath{\pi_{Q}}}$, which does not depend
on the choice of $\scalebox{0.95}{\ensuremath{x\in Q}}$, is referred
to as the \textbf{quotient figure of $\scalebox{0.95}{\ensuremath{P}}$
at $\scalebox{0.95}{\ensuremath{Q}}$}.

For a tiling $\scalebox{0.95}{\ensuremath{\Sigma}}$ at a cell $\scalebox{0.95}{\ensuremath{Q}}$,
the \textbf{quotient figure of $\scalebox{0.95}{\ensuremath{\Sigma}}$
at }$\scalebox{0.95}{\ensuremath{Q}}$ is defined as:
\[
\scalebox{0.95}{\ensuremath{\left\{ \text{the quotient figure of }P\text{ at }Q:P\in\Sigma\right\} }}.
\]
We use the same radius for the quotient figure of each $\scalebox{0.95}{\ensuremath{P\in\Sigma}}$
and refer to the union of these figures as the quotient figure of
$\scalebox{0.95}{\ensuremath{\Sigma}}$, for example, when discussing
convexity. We say that a tiling $\scalebox{0.95}{\ensuremath{\Sigma}}$
at $\scalebox{0.95}{\ensuremath{Q}}$ is \textbf{locally convex at
}$\scalebox{0.95}{\ensuremath{Q}}$ if the quotient figure of $\scalebox{0.95}{\ensuremath{\Sigma}}$
at $\scalebox{0.95}{\ensuremath{Q}}$ is convex. Then, $\scalebox{0.95}{\ensuremath{\Sigma}}$
becomes automatically locally convex at any codimension-$1$ cell.

\subsection{Extension of semitiling}

Let $\scalebox{0.95}{\ensuremath{\Sigma}}$ be a semitiling of rank
$\scalebox{0.95}{\ensuremath{k\ge3}}$. We say that $\scalebox{0.95}{\ensuremath{\Sigma}}$
is \textbf{locally convex }if it is locally convex at any codimension-$\scalebox{0.95}{\ensuremath{2}}$
cell. It is generally difficult to determine whether a semitiling
is a tiling. However, the following theorem together with Lemma \ref{lem:locally-convex}
provides a specific extension method to guarantee that an extension
obtained in this way is a matroid tiling, thereby making the warning
of Mnëv's universality theorem obsolete for the extension problem.
\begin{thm}
\label{thm:locally-convex}A locally convex semitiling is a convex
tiling.
\end{thm}

\begin{proof}
Let $\scalebox{0.95}{\ensuremath{\Sigma}}$ be a locally convex semitiling.
To show that $\scalebox{0.95}{\ensuremath{\left|\Sigma\right|}}$
is convex, consider two distinct points $\scalebox{0.95}{\ensuremath{x}}$
and $\scalebox{0.95}{\ensuremath{y}}$ in $\scalebox{0.95}{\ensuremath{\left|\Sigma\right|}}$.
If there exists a polytope in $\scalebox{0.95}{\ensuremath{\Sigma}}$
that contains both $\scalebox{0.95}{\ensuremath{x}}$ and $\scalebox{0.95}{\ensuremath{y}}$,
then it also contains the line segment $\scalebox{0.95}{\ensuremath{\overline{xy}}}$,
and so does $\scalebox{0.95}{\ensuremath{\left|\Sigma\right|}}$.
Suppose that no polytope in $\scalebox{0.95}{\ensuremath{\Sigma}}$
contains both $\scalebox{0.95}{\ensuremath{x}}$ and $\scalebox{0.95}{\ensuremath{y}}$.
Choose a path $\scalebox{0.95}{\ensuremath{\alpha:\left[0,1\right]\rightarrow\left|\Sigma\right|}}$
from $\scalebox{0.95}{\ensuremath{\alpha\left(0\right)=x}}$ to $\scalebox{0.95}{\ensuremath{\alpha\left(1\right)=y}}$,
of the smallest length. Then, every point $\scalebox{0.95}{\ensuremath{\alpha\left(t\right)}}$
has a relatively open, convex neighborhood that is contained in the
support of a subtiling of $\scalebox{0.95}{\ensuremath{\Sigma}}$
at a codimension-2 cell. We claim that $\scalebox{0.95}{\ensuremath{\alpha}}$
is a line segment. Suppose not. Then, there exists a $\scalebox{0.95}{\ensuremath{t\in\left(0,1\right)}}$
such that, for any relatively open, convex neighborhood $\scalebox{0.95}{\ensuremath{N_{t}}}$
at $\scalebox{0.95}{\ensuremath{\alpha\left(t\right)}}$, the arc
$\scalebox{0.95}{\ensuremath{\left(N_{t}\cup\partial N_{t}\right)\cap\alpha\left(\left[0,1\right]\right)}}$
is not a line segment. We can choose $\scalebox{0.95}{\ensuremath{N_{t}}}$
to be small enough so that $\scalebox{0.95}{\ensuremath{\partial N_{t}\cap\alpha\left(\left[0,1\right]\right)}}$
is a set with two points, say $\scalebox{0.95}{\ensuremath{x'}}$
and $\scalebox{0.95}{\ensuremath{y'}}$. The line segment $\scalebox{0.95}{\ensuremath{\overline{x'y'}}}$
is contained in $\scalebox{0.95}{\ensuremath{N_{t}\cup\partial N_{t}}}$,
which contradicts the length minimality of $\alpha$. Hence, $\alpha$
must be a line segment, and this proves the convexity of $\scalebox{0.95}{\ensuremath{\left|\Sigma\right|}}$.

To show that $\scalebox{0.95}{\ensuremath{\Sigma}}$ is a tiling,
suppose that $\scalebox{0.95}{\ensuremath{P}}$ and $\scalebox{0.95}{\ensuremath{P'}}$
are two elements of $\scalebox{0.95}{\ensuremath{\Sigma}}$ that are
not face-fitting. Then, by definition, they have a nonempty intersection,
and for any point $\scalebox{0.95}{\ensuremath{x}}$ in this intersection,
there exists a nontrivial closed path $\scalebox{0.95}{\ensuremath{\alpha:\left[0,1\right]\rightarrow\left|\Sigma\right|}}$
with $\scalebox{0.95}{\ensuremath{\alpha\left(0\right)=\alpha\left(1\right)=x}}$,
of the shortest length. This is a contradiction, as $\scalebox{0.95}{\ensuremath{\left|\Sigma\right|}}$
is convex, and such a path must be a point. Hence, $\scalebox{0.95}{\ensuremath{P}}$
and $\scalebox{0.95}{\ensuremath{P'}}$ must be face-fitting. This
proves that $\scalebox{0.95}{\ensuremath{\Sigma}}$ is a tiling.
\end{proof}
Therefore, a semitiling that has a convex extension is itself a tiling.
Moreover, to determine the convexity of a semitiling, it suffices
to verify the local convexity of the semitiling only at relevant boundary
cells. This is because the quotient figure of a semitiling at an irrelevant
codimension-$\scalebox{0.95}{\ensuremath{2}}$ cell is already convex.
This observation immediately leads to the following lemma.
\begin{lem}
\label{lem:loc-conv-relev}A semitiling without any relevant boundary
codimension-$\scalebox{0.95}{\ensuremath{2}}$ cell is convex.
\end{lem}

Combining Theorem \ref{thm:locally-convex} and the following lemma
produces Corollary \ref{cor:locally-convex}.
\begin{lem}
\label{lem:locally-convex}A convex tiling is a matroid subdivision.
More specifically, if $\scalebox{0.95}{\ensuremath{\Sigma}}$ is a
convex tiling, then $\scalebox{0.95}{\ensuremath{M=\bigcup_{P\in\Sigma}\mathrm{MA}_{P}}}$
is a matroid with $\scalebox{0.95}{\ensuremath{\left|\Sigma\right|=\mathrm{BP}_{M}}}$.
\end{lem}

\begin{proof}
Let $\scalebox{0.95}{\ensuremath{\Sigma=\left\{ \mathrm{BP}_{M_{1}},\dots,\mathrm{BP}_{M_{l}}\right\} }}$.
Because no vertex of a 0/1-polytope is a convex combination of the
other vertices, no edge of the 0/1-polytope $\scalebox{0.95}{\ensuremath{\left|\Sigma\right|=\mathrm{P}_{\bigcup_{i\in\left[l\right]}M_{i}}}}$
is a nontrivial union of edges of $\scalebox{0.95}{\ensuremath{\mathrm{BP}_{M_{1}},\dots,\mathrm{BP}_{M_{l}}}}$.
So, every edge of $\scalebox{0.95}{\ensuremath{\left|\Sigma\right|}}$
is an edge of some $\scalebox{0.95}{\ensuremath{\mathrm{BP}_{M_{i}}}}$,
and $\scalebox{0.95}{\ensuremath{\left|\Sigma\right|}}$ has the edge
length property.  Thus, $\scalebox{0.95}{\ensuremath{\bigcup_{i\in\left[l\right]}M_{i}}}$
is a matroid with $\scalebox{0.95}{\ensuremath{\left|\Sigma\right|}}$
being its base polytope, and $\scalebox{0.95}{\ensuremath{\Sigma}}$
is a matroid subdivision.
\end{proof}
\begin{cor}
\label{cor:locally-convex}A semitiling is convex if and only if it
is a matroid subdivision.
\end{cor}

\begin{defn}
A\textbf{ }semitiling $\scalebox{0.95}{\ensuremath{\Sigma}}$ is said
to be \textbf{complete} if $\scalebox{0.95}{\ensuremath{\bigcup\left(\partial\Sigma\right)\subseteq\partial\Delta}}$.
\end{defn}

\begin{example}
A complete semitiling is a matroid subdivision of $\scalebox{0.95}{\ensuremath{\Delta}}$.
\end{example}

Now, we examine how base polytopes are glued together. It suffices
to check for two face-fitting base polytopes in $\scalebox{0.95}{\ensuremath{\Delta_{S}^{k}}}$,
say $\scalebox{0.95}{\ensuremath{\mathrm{BP}_{M}}}$ and $\scalebox{0.95}{\ensuremath{\mathrm{BP}_{N}}}$.
Formula \eqref{eq:defining-ineq} implies that there is a hyperplane
of the form $\scalebox{0.95}{\ensuremath{\left\{ x(A)=a\right\} }}$
for a subset $\scalebox{0.95}{\ensuremath{A\subset S}}$ and an integer
$\scalebox{0.95}{\ensuremath{a\in\left[k\right]}}$ such that $\scalebox{0.95}{\ensuremath{\mathrm{BP}_{M}\subseteq\left\{ x(A)\le a\right\} }}$
and $\scalebox{0.95}{\ensuremath{\mathrm{BP}_{N}\subseteq\left\{ x(S-A)\le k-a\right\} }}$,
i.e., the hyperplane separates $\scalebox{0.95}{\ensuremath{\mathrm{BP}_{M}}}$
and $\scalebox{0.95}{\ensuremath{\mathrm{BP}_{N}}}$. Assuming $\scalebox{0.95}{\ensuremath{\mathrm{codim}\left(\mathrm{BP}_{M}\cap\mathrm{BP}_{N}\right)=1}}$,
$\scalebox{0.95}{\ensuremath{\mathrm{BP}_{M}\cap\mathrm{BP}_{N}}}$
is their common facet, and there are non-degenerate flats $\scalebox{0.95}{\ensuremath{F}}$
and $\scalebox{0.95}{\ensuremath{L}}$ of $\scalebox{0.95}{\ensuremath{M}}$
and $\scalebox{0.95}{\ensuremath{N}}$, respectively, with $\scalebox{0.95}{\ensuremath{\mathrm{BP}_{M}\cap\mathrm{BP}_{N}=\mathrm{BP}_{M(F)}=\mathrm{BP}_{N(L)}}}$,
cf. Lemma \ref{lem:Faces of BP}. In the above formulas, we can choose
$\scalebox{0.95}{\ensuremath{A=F}}$ and $\scalebox{0.95}{\ensuremath{S-A=L}}$
with $\scalebox{0.95}{\ensuremath{a=r_{M}\left(F\right)}}$ and $\scalebox{0.95}{\ensuremath{k-a=r_{N}\left(L\right)}}$,
respectively (these choices are unique). In particular 
\begin{equation}
\begin{cases}
\scalebox{0.95}{\ensuremath{M|_{F}=N/L}},\\
\scalebox{0.95}{\ensuremath{M/F=N|_{L}}}.
\end{cases}\label{eq:face-fitting}
\end{equation}
 Conversely, suppose that \eqref{eq:face-fitting} holds for subsets
$\scalebox{0.95}{\ensuremath{F,L\subseteq S}}$, then
\[
\scalebox{0.95}{\ensuremath{\mathrm{BP}_{M}\cap\mathrm{BP}_{N}=\mathrm{BP}_{M(F)}=\mathrm{BP}_{N(L)}}}
\]
 which is a common face of $\scalebox{0.95}{\ensuremath{\mathrm{BP}_{M}}}$
and $\scalebox{0.95}{\ensuremath{\mathrm{BP}_{N}}}$. These two polytopes
are face-fitting through it. If one of the two subsets  $\scalebox{0.95}{\ensuremath{F}}$
and $\scalebox{0.95}{\ensuremath{L}}$ is non-degenerate, then so
is the other, and $\scalebox{0.95}{\ensuremath{\mathrm{codim}\left(\mathrm{BP}_{M}\cap\mathrm{BP}_{N}\right)=1}}$.

\subsection{\label{subsec:Rank-2-tilings}Semitilings of rank two }

Every $\scalebox{0.95}{\ensuremath{\left(2,S\right)}}$-semitiling
$\scalebox{0.95}{\ensuremath{\Sigma}}$ is a convex tiling by Lemma
\ref{lem:loc-conv-relev} since all its codimension-$\scalebox{0.95}{\ensuremath{2}}$
cells are irrelevant. If it is not complete, we can extend it to a
complete tiling as follows. Let $\scalebox{0.95}{\ensuremath{R}}$
be a relevant facet of it. Then, $\scalebox{0.95}{\ensuremath{R\lessdot\mathrm{BP}_{M}\in\Sigma}}$
for a rank-$\scalebox{0.95}{\ensuremath{2}}$ connected matroid $\scalebox{0.95}{\ensuremath{M}}$.
Moreover, $\scalebox{0.95}{\ensuremath{R=\mathrm{BP}_{M(L)}}}$ for
a rank-$\scalebox{0.95}{\ensuremath{1}}$ flat $\scalebox{0.95}{\ensuremath{L}}$
of $\scalebox{0.95}{\ensuremath{M}}$ with size $\scalebox{0.95}{\ensuremath{\ge2}}$
(a rank-$\scalebox{0.95}{\ensuremath{1}}$ flat of a rank-$\scalebox{0.95}{\ensuremath{2}}$
connected matroid is non-degenerate). Let $\scalebox{0.95}{\ensuremath{F=S-L}}$.
The number of rank-$\scalebox{0.95}{\ensuremath{1}}$ flats of $\scalebox{0.95}{\ensuremath{M|_{F}}}$
is at least $\scalebox{0.95}{\ensuremath{2}}$, and $\scalebox{0.95}{\ensuremath{\left|F\right|\ge2}}$.
Therefore, by Lemma \ref{lem:Cutting-1}, $\scalebox{0.95}{\ensuremath{N=\delta_{F}^{1}(U_{S}^{2})}}$
is a rank-$\scalebox{0.95}{\ensuremath{2}}$ connected matroid with
$\scalebox{0.95}{\ensuremath{M|_{L}=N/F}}$ and $\scalebox{0.95}{\ensuremath{M/L=N|_{F}}}$.
Hence, $\scalebox{0.95}{\ensuremath{\mathrm{BP}_{M}}}$ and $\scalebox{0.95}{\ensuremath{\mathrm{BP}_{N}}}$
are face-fitting through $\scalebox{0.95}{\ensuremath{\mathrm{BP}_{M(L)}=\mathrm{BP}_{N\left(F\right)}}}$,
and $\scalebox{0.95}{\ensuremath{\Sigma\cup\bigl\{\mathrm{BP}_{N}\bigr\}}}$
is a convex tiling again by Lemma \ref{lem:loc-conv-relev}. We can
repeat this process until we get a complete extension of $\scalebox{0.95}{\ensuremath{\Sigma}}$.

\subsection{Semitilings of rank at least three}

For a full-dimensional $\scalebox{0.95}{\ensuremath{\left(k,S\right)}}$-polytope
$\scalebox{0.95}{\ensuremath{P=\mathrm{BP}_{M}}}$ with $\scalebox{0.95}{\ensuremath{k\ge3}}$,
let $\scalebox{0.95}{\ensuremath{Q}}$ be a codimension-$\scalebox{0.95}{\ensuremath{2}}$
face. Then, $\scalebox{0.95}{\ensuremath{M}}$ is a connected matroid,
and $\scalebox{0.95}{\ensuremath{\kappa\left(\mathrm{MA}_{Q}\right)=3}}$.
Let $\scalebox{0.95}{\ensuremath{M_{i}}}$, $\scalebox{0.95}{\ensuremath{i\in\left[3\right]}}$,
be the connected components of $\scalebox{0.95}{\ensuremath{\mathrm{MA}_{Q}}}$
with ground sets $\scalebox{0.95}{\ensuremath{S_{i}=E(M_{i})}}$.
Then, $\scalebox{0.95}{\ensuremath{\mathrm{MA}_{Q}}}$ is written
as:
\[
\scalebox{0.95}{\ensuremath{\mathrm{MA}_{Q}=M_{1}\oplus M_{2}\oplus M_{3}}}.
\]
 There are precisely two facets of $\scalebox{0.95}{\ensuremath{P}}$
containing $\scalebox{0.95}{\ensuremath{Q}}$, say $\scalebox{0.95}{\ensuremath{R}}$
and $\scalebox{0.95}{\ensuremath{R'}}$. Then, $\scalebox{0.95}{\ensuremath{R=\mathrm{BP}_{M\left(F\right)}}}$
and $\scalebox{0.95}{\ensuremath{R'=\mathrm{BP}_{M\left(L\right)}}}$
for non-degenerate subsets $\scalebox{0.95}{\ensuremath{F}}$ and
$\scalebox{0.95}{\ensuremath{L}}$ of $\scalebox{0.95}{\ensuremath{M}}$,
and 
\[
\scalebox{0.95}{\ensuremath{Q=\mathrm{BP}_{M\left(F\right)\cap M\left(L\right)}=\mathrm{BP}_{M\left(F\right)\left(L\right)}=\mathrm{BP}_{M\left(L\right)\left(F\right)}}}.
\]
 The matroid $\scalebox{0.95}{\ensuremath{\mathrm{MA}_{Q}}}$ has
at most two loops, and there are three cases as follows.
\begin{description}
\item [{$\scalebox{0.95}{\ensuremath{\left|\overline{\emptyset}_{\mathrm{MA}_{Q}}\right|=0}}$}] The
non-degenerate subsets $\scalebox{0.95}{\ensuremath{F}}$ and $\scalebox{0.95}{\ensuremath{L}}$
are flats of $\scalebox{0.95}{\ensuremath{M}}$. By Lemma \ref{lem:Ridges of BP},
the number of $\scalebox{0.95}{\ensuremath{S_{i}}}$ contained in
$\scalebox{0.95}{\ensuremath{X}}$ for $\scalebox{0.95}{\ensuremath{X\in\left\{ F,L\right\} }}$
is $1$ or $2$.
\item [{$\scalebox{0.95}{\ensuremath{\left|\overline{\emptyset}_{\mathrm{MA}_{Q}}\right|=1}}$}] Let
$\scalebox{0.95}{\ensuremath{\overline{\emptyset}_{\mathrm{MA}_{Q}}=\left\{ e\right\} }}$,
then $\scalebox{0.95}{\ensuremath{\overline{\emptyset}_{\mathrm{MA}_{Q}}\in\left\{ S_{1},S_{2},S_{3}\right\} }}$,
and one of $\scalebox{0.95}{\ensuremath{F}}$ and $\scalebox{0.95}{\ensuremath{L}}$
does not contain $\scalebox{0.95}{\ensuremath{e}}$, say $\scalebox{0.95}{\ensuremath{e\notin F}}$.
Then, $\scalebox{0.95}{\ensuremath{F=S-\left\{ e\right\} }}$ since
$\scalebox{0.95}{\ensuremath{F}}$ is non-degenerate. Here, $\scalebox{0.95}{\ensuremath{F}}$
is a non-flat, but $\scalebox{0.95}{\ensuremath{L}}$ is a flat with
$\scalebox{0.95}{\ensuremath{L\neq\left\{ e\right\} }}$. The number
of $\scalebox{0.95}{\ensuremath{S_{i}}}$ contained in $\scalebox{0.95}{\ensuremath{X}}$
for $\scalebox{0.95}{\ensuremath{X\in\left\{ F,L\right\} }}$ is $1$
or $2$.
\item [{$\scalebox{0.95}{\ensuremath{\left|\overline{\emptyset}_{\mathrm{MA}_{Q}}\right|=2}}$}] Let
$\scalebox{0.95}{\ensuremath{\overline{\emptyset}_{\mathrm{MA}_{Q}}=\left\{ d,e\right\} }}$,
then $\scalebox{0.95}{\ensuremath{\left\{ \left\{ d\right\} ,\left\{ e\right\} ,S-\left\{ d,e\right\} \right\} =\left\{ S_{1},S_{2},S_{3}\right\} }}$.
Similarly as above, $\scalebox{0.95}{\ensuremath{\left\{ S-\left\{ d\right\} ,S-\left\{ e\right\} \right\} =\left\{ F,L\right\} }}$.
Here, both $\scalebox{0.95}{\ensuremath{F}}$ and $\scalebox{0.95}{\ensuremath{L}}$
are non-flats. The number of $\scalebox{0.95}{\ensuremath{S_{i}}}$
contained in $\scalebox{0.95}{\ensuremath{X}}$ for $\scalebox{0.95}{\ensuremath{X\in\left\{ F,L\right\} }}$
is $2$.
\end{description}
In any case, the number of $\scalebox{0.95}{\ensuremath{S_{i}}}$
contained in $\scalebox{0.95}{\ensuremath{F}}$ is $1$ or $2$. We
denote this number by $\scalebox{0.95}{\ensuremath{\mathrm{typ}_{P,Q}\left(R\right)}}$
and call the \textbf{type of $\scalebox{0.95}{\ensuremath{R}}$ in
$\scalebox{0.95}{\ensuremath{P}}$ at $\scalebox{0.95}{\ensuremath{Q}}$},
or the \textbf{type of $\scalebox{0.95}{\ensuremath{Q\lessdot R\lessdot P}}$}.
\begin{note}
\label{note:type}From now on, even though the type of $\scalebox{0.95}{\ensuremath{Q\lessdot R\lessdot P}}$
can be defined at any $\scalebox{0.95}{\ensuremath{Q}}$, we will
only consider the type at a \emph{loopless} $\scalebox{0.95}{\ensuremath{Q}}$.
When drawing quotient figures of $\scalebox{0.95}{\ensuremath{P}}$
at $\scalebox{0.95}{\ensuremath{Q}}$, which are two-dimensional,
we use a \emph{solid line} to represent $\scalebox{0.95}{\ensuremath{R}}$
with $\scalebox{0.95}{\ensuremath{\mathrm{typ}_{P,Q}\left(R\right)=1}}$
and a \emph{dashed line} to represent $\scalebox{0.95}{\ensuremath{R}}$
with $\scalebox{0.95}{\ensuremath{\mathrm{typ}_{P,Q}\left(R\right)=2}}$.
Note that for $\scalebox{0.95}{\ensuremath{k=3}}$, we have $\scalebox{0.95}{\ensuremath{\mathrm{typ}_{P,Q}\left(R\right)=r_{M}\left(F\right)}}$
and $\scalebox{0.95}{\ensuremath{\mathrm{typ}_{P,Q}\left(R'\right)=r_{N}\left(L\right)}}$,
which are independent of $\scalebox{0.95}{\ensuremath{Q}}$. However,
this is not the case for $\scalebox{0.95}{\ensuremath{k>3}}$.
\end{note}

The (loopless) codimension-$2$ face $\scalebox{0.95}{\ensuremath{Q}}$
is written as follows: 
\[
\scalebox{0.95}{\ensuremath{Q=P\cap\left\{ x\left(S_{1}\right)=k_{1}\right\} \cap\left\{ x\left(S_{2}\right)=k_{2}\right\} \cap\left\{ x\left(S_{3}\right)=k_{3}\right\} }}
\]
for a partition $\scalebox{0.95}{\ensuremath{k=\sum_{i\in\left[3\right]}k_{i}}}$.
See Figure \ref{fig:flake-Grid} for $\scalebox{0.95}{\ensuremath{\pi_{Q}\left(\Delta_{S}^{k}\right)}}$
and $\scalebox{0.95}{\ensuremath{\pi_{Q}\left(\left\{ x\left(S_{i}\right)=k_{i}\right\} \right)}}$
near the point $\scalebox{0.95}{\ensuremath{\pi_{Q}\left(Q\right)}}$.
By Lemma \ref{lem:Ridges of BP} again, without loss of generality:\smallskip{}

\noindent \begin{center}
\begin{tabular}{c||c|c|c|c}
\hline 
\scalebox{0.9}{$F$} & \scalebox{0.9}{$S_1$} & \scalebox{0.9}{$S_1\cup S_3$} & \scalebox{0.9}{$S_1$} & \scalebox{0.9}{$S_1\cup S_3$}\tabularnewline
\hline 
\scalebox{0.9}{$L$} & \scalebox{0.9}{$S_2$} & \scalebox{0.9}{$S_1\cup S_2$} & \scalebox{0.9}{$S_1\cup S_2$} & \scalebox{0.9}{$S_1$}\tabularnewline
\hline 
\end{tabular}
\par\end{center}

\begin{figure}[H]
\noindent \centering{}\noindent \begin{center}
\def\size{0.75}
\def\gapa{0.1}
\def\gapb{0.15}

\begin{tikzpicture}

\begin{scope}[line cap=round,rotate=0,scale=\size,xshift=0cm,yshift=0cm,>=stealth,
  decoration={markings,mark=at position 1 with {\arrow[line width=1pt]{stealth};}},font=\footnotesize]
 \foreach \x [count=\xi] in {green,blue,red}{
      \draw [\x] (0,0)++(300-120*\xi:2.3)--++(120-120*\xi:4.6);}
 \path (0:2.3) node[right=-2pt]{$\pi_Q\left(\left\{ x(S_1)=k_1\right\}\right) $};
 \path (120:2.3) node[above left=-4pt]{$\pi_Q\left(\left\{ x(S_2)=k_2\right\}\right) $};
 \path (240:2.3) node[below left=-4pt]{$\pi_Q\left(\left\{ x(S_3)=k_3\right\}\right) $};

 \draw [postaction={decorate},gray,font=\scriptsize] (-2,0)--++(0,0.7) node[above=-2pt ]{$\pi_Q\left(1^{S_2\cup S_3}\right)$};
 \draw [postaction={decorate},font=\scriptsize] (-2,0)--++(0,-0.7) node[below=-2pt ]{$\pi_Q\left(1^{S_1}\right)$};

 \draw [postaction={decorate},font=\scriptsize] (300:2)--++(30:0.7) node[right=-2pt ]{$\pi_Q\left(1^{S_2}\right)$};
 \draw [postaction={decorate},gray,font=\scriptsize] (300:2)--++(30:-0.7) node[below=-2pt ]{$\pi_Q\left(1^{S_1\cup S_3}\right)$};

 \draw [postaction={decorate},font=\scriptsize] (60:2)--++(150:0.7) node[above=-2pt ]{$\pi_Q\left(1^{S_3}\right)$};
 \draw [postaction={decorate},gray,font=\scriptsize] (60:2)--++(150:-0.7) node[right=-2pt ]{$\pi_Q\left(1^{S_1\cup S_2}\right)$};

\fill [black,opacity=1](0,0) circle (2.7pt);
 \draw [ultra thin] (0,0)--++(-30:0.7) node[below right=-6pt ]{$\pi_Q\left(Q\right)$};

\end{scope}

\end{tikzpicture}
\par\end{center}\caption{\label{fig:flake-Grid}$\pi_{Q}(\Delta_{S}^{k})$ and $\pi_{Q}(\{x(S_{i})=k_{i}\})$
near the point $\pi_{Q}(Q)$.}
\end{figure}

\noindent Using the formula \eqref{eq:def-ineq-flat} and Figure \ref{fig:flake-Grid},
we obtain four corresponding quotient figures of $\scalebox{0.95}{\ensuremath{P}}$
at $\scalebox{0.95}{\ensuremath{Q}}$; see Figure \ref{fig:local-figure}
where ``$\scalebox{0.95}{\ensuremath{\pi_{Q}}}$'' is dropped everywhere
for simplicity.
\begin{figure}[H]
\noindent \centering{}\noindent \begin{center}
\def\size{0.55}
\def\gapa{0.1}
\def\gapb{0.15}


\par\end{center}\caption{\label{fig:local-figure}Quotient Figures I}
\end{figure}
So, it is natural to define the \textbf{angle} \textbf{of} $\scalebox{0.95}{\ensuremath{P}}$
\textbf{at} $\scalebox{0.95}{\ensuremath{Q}}$ as follows: 
\[
\scalebox{0.95}{\ensuremath{\mathrm{ang}_{Q}\left(P\right):=\begin{cases}
1 & \mbox{if }\mathrm{typ}_{P,Q}\left(R\right)=\mathrm{typ}_{P,Q}\left(R'\right),\\
2 & \mbox{otherwise.}
\end{cases}}}
\]

Let $\scalebox{0.95}{\ensuremath{P_{1}=\mathrm{BP}_{M_{1}}}}$ and
$\scalebox{0.95}{\ensuremath{P_{2}=\mathrm{BP}_{M_{2}}}}$ be full-dimensional
$\scalebox{0.95}{\ensuremath{\left(k,S\right)}}$-polytopes that are
face-fitting through $\scalebox{0.95}{\ensuremath{R=P_{1}\cap P_{2}=P_{1}(F)}}$
for a subset $\scalebox{0.95}{\ensuremath{F\subset S}}$. In this
case, $\scalebox{0.95}{\ensuremath{R=P_{2}\left(S-F\right)}}$ and
$\scalebox{0.95}{\ensuremath{\mathrm{typ}_{P_{1},Q}\left(R\right)+\mathrm{typ}_{P_{2},Q}\left(R\right)=3}}.$
Without loss of generality, assume $\scalebox{0.95}{\ensuremath{\mathrm{typ}_{P_{1},Q}\left(R\right)=1}}$,
so $\scalebox{0.95}{\ensuremath{\mathrm{typ}_{P_{2},Q}\left(R\right)=2}}$.
Let $\scalebox{0.95}{\ensuremath{R'}}$ and $\scalebox{0.95}{\ensuremath{R''}}$
be other facets of $\scalebox{0.95}{\ensuremath{P_{1}}}$ and $\scalebox{0.95}{\ensuremath{P_{2}}}$,
respectively, containing $\scalebox{0.95}{\ensuremath{Q}}$. We write
$\scalebox{0.95}{\ensuremath{R'=P_{1}\left(L\right)}}$ and $\scalebox{0.95}{\ensuremath{R''=P_{2}\left(X\right)}}$
for some $\scalebox{0.95}{\ensuremath{L,X\subset S}}$. Then, one
of the following four cases happens:\medskip{}

\noindent \begin{center}

\par\end{center}\caption{\label{fig:face-fitting}Quotient Figures II}
\end{figure}

Fix $\scalebox{0.95}{\ensuremath{k\ge3}}$. Let $\scalebox{0.95}{\ensuremath{\Sigma}}$
be a $\scalebox{0.95}{\ensuremath{\left(k,S\right)}}$-tiling at a
loopless codimension-$\scalebox{0.95}{\ensuremath{2}}$ cell $\scalebox{0.95}{\ensuremath{Q}}$.
We define the \textbf{angle of $\scalebox{0.95}{\ensuremath{\Sigma}}$
at $\scalebox{0.95}{\ensuremath{Q}}$} as: 
\[
\scalebox{0.95}{\ensuremath{{\displaystyle \mathrm{ang}_{Q}\left(\Sigma\right)=\sum_{P\in\Sigma}\mathrm{ang}_{Q}\left(P\right)}}}.
\]
 Theorem \ref{thm:Uniform-finiteness} below tells us that the number
of full-dimensional base polytopes face-fitting at $\scalebox{0.95}{\ensuremath{Q}}$
is at most $\scalebox{0.95}{\ensuremath{3}}$ with $\scalebox{0.95}{\ensuremath{{\displaystyle \mathrm{ang}_{Q}\left(\Sigma\right)\le3}}}$
if $\scalebox{0.95}{\ensuremath{Q}}$ is irrelevant and at most $\scalebox{0.95}{\ensuremath{6}}$
with $\scalebox{0.95}{\ensuremath{{\displaystyle \mathrm{ang}_{Q}\left(\Sigma\right)\le6}}}$
if $\scalebox{0.95}{\ensuremath{Q}}$ is relevant. We define the\textbf{
deficiency} \textbf{of $\scalebox{0.95}{\ensuremath{\Sigma}}$ at
$\scalebox{0.95}{\ensuremath{Q}}$} as: 
\[
\scalebox{0.95}{\ensuremath{\mathrm{def}_{Q}\left(\Sigma\right):=\begin{cases}
3-\mathrm{ang}_{Q}\left(\Sigma\right) & \text{if }Q\text{ is irrelevant},\\
6-\mathrm{ang}_{Q}\left(\Sigma\right) & \text{if }Q\text{ is relevant}.
\end{cases}}}
\]

\begin{thm}[Uniform finiteness of matroid tiling]
\label{thm:Uniform-finiteness} Let $\scalebox{0.95}{\ensuremath{\Sigma}}$
be a tiling of rank $\scalebox{0.95}{\ensuremath{\ge3}}$ at a loopless
codimension-$\scalebox{0.95}{\ensuremath{2}}$ cell $\scalebox{0.95}{\ensuremath{Q}}$.
The quotient figure of $\scalebox{0.95}{\ensuremath{\Sigma}}$ at
$\scalebox{0.95}{\ensuremath{Q}}$, up to symmetry, can be classified
as a subcollection of one of the  quotient figures shown either in
Figure \ref{fig:Classfy tiling'} or in Figure \ref{fig:Classfy tiling},
depending on whether $\scalebox{0.95}{\ensuremath{Q}}$ is irrelevant
or relevant, respectively.
\begin{figure}[H]
\noindent \centering{}\noindent \begin{center}
\def\size{0.5}
\def\gapa{0.1}
\def\gapb{0.15}


\par\end{center}\caption{\label{fig:Classfy tiling}Quotient Figures IV}
\end{figure}
\end{thm}

\begin{proof}
Use Figures \ref{fig:flake-Grid}, \ref{fig:local-figure}, and \ref{fig:face-fitting}.
\end{proof}
\begin{rem}
For $\scalebox{0.95}{\ensuremath{\Sigma}}$ and $\scalebox{0.95}{\ensuremath{Q}}$
as in Theorem \ref{thm:Uniform-finiteness}, if $\scalebox{0.95}{\ensuremath{\Sigma}}$
is a tiling at $\scalebox{0.95}{\ensuremath{Q}}$, then $\scalebox{0.95}{\ensuremath{\mathrm{ang}_{Q}\left(\Sigma\right)\le3}}$
if and only if $\scalebox{0.95}{\ensuremath{\Sigma}}$ is locally
convex at $\scalebox{0.95}{\ensuremath{Q}}$, which can be easily
read off from the quotient figure of $\scalebox{0.95}{\ensuremath{\Sigma}}$
at $\scalebox{0.95}{\ensuremath{Q}}$. Moreover, $\scalebox{0.95}{\ensuremath{\Sigma}}$
is complete if and only if every subtiling at any $\scalebox{0.95}{\ensuremath{Q}}$
has deficiency $\scalebox{0.95}{\ensuremath{0}}$ at $\scalebox{0.95}{\ensuremath{Q}}$.
\end{rem}

\subsection{\label{subsec:w-tiling}Weighted tiling}
\begin{defn}
\label{def:w-tiling}For a $\scalebox{0.95}{\ensuremath{\left(k,S\right)}}$-tiling
$\scalebox{0.95}{\ensuremath{\Sigma}}$ and a weight vector $\scalebox{0.95}{\ensuremath{\mathbf{w}=\left(w_{s}\right)_{s\in S}\in\mathbb{R}^{S}}}$,
we call $\scalebox{0.95}{\ensuremath{\Sigma}}$ a\textbf{ tiling weighted
by $\mathbf{w}$} or simply a \textbf{$\mathbf{w}$-tiling} if $\scalebox{0.95}{\ensuremath{\Delta_{\mathbf{w}}\subseteq\left|\Sigma\right|}}$
and $\scalebox{0.95}{\ensuremath{P\cap\mathrm{int}\left(\Delta_{\mathbf{w}}\right)\neq\emptyset}}$
for all $\scalebox{0.95}{\ensuremath{P\in\Sigma}}$. Here, $\scalebox{0.95}{\ensuremath{\mathrm{int}\left(\Delta_{\mathbf{w}}\right)=\Delta_{\mathbf{w}}-\partial\Delta_{\mathbf{w}}}}$.
\end{defn}

A weighted tiling need not be convex.

Let $\scalebox{0.95}{\ensuremath{\Sigma}}$ be a $\scalebox{0.95}{\ensuremath{\left(k,S\right)}}$-tiling
with $\scalebox{0.95}{\ensuremath{k\ge3}}$, weighted by $\mathbf{w}$.
For a relevant cell $\scalebox{0.95}{\ensuremath{Q}}$ of codimension
$\scalebox{0.95}{\ensuremath{2}}$, there exist unique partitions
$\scalebox{0.95}{\ensuremath{\bigcup_{i\in\left[3\right]}S_{i}}}$
and $\scalebox{0.95}{\ensuremath{\sum_{i\in\left[3\right]}k_{i}}}$
of $\scalebox{0.95}{\ensuremath{S}}$ and $k$, respectively, with
\[
\scalebox{0.95}{\ensuremath{Q\subset\bigcap_{i\in\left[3\right]}\left\{ x\left(S_{i}\right)=k_{i}\right\} }}.
\]
 If $\scalebox{0.95}{\ensuremath{\mathbf{w}\left(S_{i}\right)\le k_{i}}}$
for some $\scalebox{0.95}{\ensuremath{i\in\left[3\right]}}$, then
using Figure \ref{fig:flake-Grid} and Theorem \ref{thm:Uniform-finiteness},
we deduce $\scalebox{0.95}{\ensuremath{\Delta_{\mathbf{w}}\cap\left\{ x\left(S_{i}\right)>k_{i}\right\} =\emptyset}}$
and $\scalebox{0.95}{\ensuremath{\mathrm{def}_{Q}\left(\Sigma\right)>0}}$.
If $\scalebox{0.95}{\ensuremath{\mathbf{w}\left(S-S_{i}\right)\le k-k_{i}}}$
for some $\scalebox{0.95}{\ensuremath{i\in\left[3\right]}}$, then
$\scalebox{0.95}{\ensuremath{\Delta_{\mathbf{w}}\cap\left\{ x\left(S_{i}\right)<k_{i}\right\} =\emptyset}}$
and $\scalebox{0.95}{\ensuremath{\mathrm{def}_{Q}\left(\Sigma\right)>0}}$.
The contrapositive statements of these two provide information about
the weight vector $\mathbf{w}$:
\begin{itemize}
\item If $\scalebox{0.95}{\ensuremath{\Delta_{\mathbf{w}}\cap\left\{ x\left(S_{i}\right)>k_{i}\right\} \neq\emptyset}}$,
then $\scalebox{0.95}{\ensuremath{\mathbf{w}\left(S_{i}\right)>k_{i}}}$.
\item If $\scalebox{0.95}{\ensuremath{\Delta_{\mathbf{w}}\cap\left\{ x\left(S_{i}\right)<k_{i}\right\} \neq\emptyset}}$,
then $\scalebox{0.95}{\ensuremath{\mathbf{w}\left(S-S_{i}\right)>k-k_{i}}}$.
\item If $\scalebox{0.95}{\ensuremath{\mathrm{def}_{Q}\left(\Sigma\right)=0}}$,
then $\scalebox{0.95}{\ensuremath{\mathbf{w}\left(S_{i}\right)>k_{i}}}$
for all $\scalebox{0.95}{\ensuremath{i\in\left[3\right]}}$.
\item If $\scalebox{0.95}{\ensuremath{\mathrm{def}_{Q}\left(\Sigma\right)=0}}$,
then $\scalebox{0.95}{\ensuremath{\mathbf{w}\left(S-S_{i}\right)>k-k_{i}}}$
for all $\scalebox{0.95}{\ensuremath{i\in\left[3\right]}}$.
\end{itemize}
The third statement implies the fourth one, but not the other way
around. The converse of the third one holds when $\scalebox{0.95}{\ensuremath{k=3}}$:
If $\scalebox{0.95}{\ensuremath{k=3}}$, then $\scalebox{0.95}{\ensuremath{k_{i}=1}}$
and $\scalebox{0.95}{\ensuremath{\left|S_{i}\right|\ge2}}$ for all
$\scalebox{0.95}{\ensuremath{i\in\left[3\right]}}$ because $\scalebox{0.95}{\ensuremath{Q}}$
is relevant, and $\scalebox{0.95}{\ensuremath{Q=\bigcap_{i\in\left[3\right]}\left\{ x\left(S_{i}\right)=1\right\} }}$.
Suppose $\scalebox{0.95}{\ensuremath{\mathbf{w}\left(S_{i}\right)>1}}$
for all $\scalebox{0.95}{\ensuremath{i\in\left[3\right]}}$, then
there is a point $\scalebox{0.95}{\ensuremath{\mathbf{v}=\left(v_{s}\right)_{s\in S}\in Q}}$
with $\scalebox{0.95}{\ensuremath{\mathbf{v}\left(S_{i}\right)=1}}$
and $\scalebox{0.95}{\ensuremath{0<v_{s}<w_{s}}}$ for all $\scalebox{0.95}{\ensuremath{s\in S}}$.
So, $\scalebox{0.95}{\ensuremath{\mathbf{v}\in Q\cap\mathrm{int}\left(\Delta_{\mathbf{w}}\right)}}$
and $\scalebox{0.95}{\ensuremath{\mathrm{def}_{Q}\left(\Sigma_{Q}\right)=0}}$.\vspace{2pt}

In the next proposition, we restrict ourselves to the rank-$\scalebox{0.95}{\ensuremath{3}}$
case and investigate the boundary shape of a weighted tiling.
\begin{prop}
\label{prop:rank3-weighted}For a $\scalebox{0.95}{\ensuremath{\left(3,S\right)}}$-tiling
$\scalebox{0.95}{\ensuremath{\Sigma}}$ weighted by $\mathbf{w}$,
assume that $\scalebox{0.95}{\ensuremath{Q_{0}}}$ is a relevant codimension-$\scalebox{0.95}{\ensuremath{2}}$
cell at which $\scalebox{0.95}{\ensuremath{\Sigma_{Q_{0}}}}$ is not
locally convex. Then, there is a partition $\scalebox{0.95}{\ensuremath{S=\bigcup_{i\in\left[3\right]}S_{i}}}$
with $\scalebox{0.95}{\ensuremath{Q_{0}=\bigcap_{i\in\left[3\right]}\left\{ x\left(S_{i}\right)=1\right\} }}$
where $\scalebox{0.95}{\ensuremath{\left|S_{i}\right|\ge2}}$ for
all $\scalebox{0.95}{\ensuremath{i\in\left[3\right]}}$ and $\scalebox{0.95}{\ensuremath{\mathbf{w}\left(S_{j}\right)\le1}}$
for some $\scalebox{0.95}{\ensuremath{j\in\left[3\right]}}$, say
$\scalebox{0.95}{\ensuremath{j=1}}$.
\begin{enumerate}
\item \label{enu:non-convex}The quotient figure of $\scalebox{0.95}{\ensuremath{\Sigma_{Q_{0}}}}$
at $\scalebox{0.95}{\ensuremath{Q_{0}}}$ is one of the five figures
shown in Figure \ref{fig:non-convex-1} with $\scalebox{0.95}{\ensuremath{\mathbf{w}\left(S_{i}\right)>1}}$
for $\scalebox{0.95}{\ensuremath{i=2,3}}$.
\item \label{enu:parallel}In \eqref{enu:non-convex}, there always exists
a type-$\scalebox{0.95}{\ensuremath{2}}$ facet containing $\scalebox{0.95}{\ensuremath{Q_{0}}}$,
say $\scalebox{0.95}{\ensuremath{R_{0}}}$. Let $\scalebox{0.95}{\ensuremath{P_{0}}}$
be a member of $\scalebox{0.95}{\ensuremath{\Sigma}}$ that contains
$\scalebox{0.95}{\ensuremath{R_{0}}}$, then $\scalebox{0.95}{\ensuremath{Q_{0}\lessdot R_{0}\lessdot P_{0}\in\Sigma}}$
and $\scalebox{0.95}{\ensuremath{\mathrm{ang}_{Q_{0}}(P_{0})=2}}$.
Let $\scalebox{0.95}{\ensuremath{Q\lessdot R_{0}}}$ be another codimension-$\scalebox{0.95}{\ensuremath{2}}$
cell, then the number of those $\scalebox{0.95}{\ensuremath{Q}}$
with $\scalebox{0.95}{\ensuremath{\mathrm{ang}_{Q}\left(P_{0}\right)=1}}$
and the number of those $\scalebox{0.95}{\ensuremath{Q}}$ with $\scalebox{0.95}{\ensuremath{\mathrm{ang}_{Q}\left(\Sigma_{Q}\right)=3}}$
are both at most $\scalebox{0.95}{\ensuremath{1}}$. The quotient
figure of $\scalebox{0.95}{\ensuremath{\Sigma_{Q}}}$ at $\scalebox{0.95}{\ensuremath{Q}}$
is one of the figures shown either in Figure \ref{fig:non-convex-2}
or in Figure \ref{fig:non-convex-3}, depending on whether $\scalebox{0.95}{\ensuremath{\mathrm{ang}_{Q}\left(\Sigma_{Q}\right)=3}}$
or $\scalebox{0.95}{\ensuremath{\mathrm{ang}_{Q}\left(\Sigma_{Q}\right)\le2}}$,
respectively. 
\begin{figure}[H]
\noindent \centering{}\noindent \begin{center}
\def\size{0.5}
\def\gapa{0.1}
\def\gapb{0.15}


\par\end{center}\caption{\label{fig:non-convex-3}Non-convex rank-3 weighted tilings, III.}
\end{figure}
\end{enumerate}
\end{prop}

\begin{proof}
Since $\scalebox{0.95}{\ensuremath{\Sigma_{Q_{0}}}}$ at $\scalebox{0.95}{\ensuremath{Q_{0}}}$
is non-convex, $\scalebox{0.95}{\ensuremath{\mathrm{ang}_{Q_{0}}(\Sigma_{Q_{0}})=4,5}}$.
Since $\scalebox{0.95}{\ensuremath{\mathbf{w}\left(S_{1}\right)\le1}}$,
one has $\scalebox{0.95}{\ensuremath{\mathbf{w}\left(S_{2}\right)>1}}$
and $\scalebox{0.95}{\ensuremath{\mathbf{w}\left(S_{3}\right)>1}}$.
Moreover, $\scalebox{0.95}{\ensuremath{\Delta_{\mathbf{w}}\cap\left\{ x\left(S_{1}\right)>1\right\} =\emptyset}}$.
Using Figure \ref{fig:Classfy tiling}, we can obtain the five figures
in Figure \ref{fig:non-convex-1} for the quotient figure of $\scalebox{0.95}{\ensuremath{\Sigma_{Q_{0}}}}$
at $\scalebox{0.95}{\ensuremath{Q_{0}}}$, which proves \eqref{enu:non-convex}.
Next, let $\scalebox{0.95}{\ensuremath{R_{0}=P_{0}\left(F\right)}}$.
Then, we have either $\scalebox{0.95}{\ensuremath{F=S_{1}\cup S_{2}}}$
or $\scalebox{0.95}{\ensuremath{F=S_{1}\cup S_{3}}}$. These two cases
are symmetric, so we can assume the former case. In this case, $\scalebox{0.95}{\ensuremath{F}}$
is a rank-$\scalebox{0.95}{\ensuremath{2}}$ non-degenerate flat of
$\scalebox{0.95}{\ensuremath{\mathrm{MA}_{P_{0}}}}$. By Lemma \ref{lem:Ridges of BP},
$\scalebox{0.95}{\ensuremath{S_{2}}}$ is a rank-$\scalebox{0.95}{\ensuremath{1}}$
non-degenerate flat of $\scalebox{0.95}{\ensuremath{\mathrm{MA}_{P_{0}}}}$,
and $\scalebox{0.95}{\ensuremath{P_{0}\left(S_{2}\right)}}$ is a
facet of $\scalebox{0.95}{\ensuremath{P_{0}}}$ with $\scalebox{0.95}{\ensuremath{Q_{0}=P_{0}\left(F\right)\left(S_{2}\right)=P_{0}\left(S_{2}\right)\left(F\right)}}$.

To prove \eqref{enu:parallel}, consider a codimension-$\scalebox{0.95}{\ensuremath{2}}$
cell $\scalebox{0.95}{\ensuremath{Q\lessdot R_{0}}}$ that is not
$\scalebox{0.95}{\ensuremath{Q_{0}}}$. Then, we can write $\scalebox{0.95}{\ensuremath{Q=R_{0}\left(A\right)}}$
for a rank-$\scalebox{0.95}{\ensuremath{1}}$ non-degenerate flat
$\scalebox{0.95}{\ensuremath{A}}$ of $\scalebox{0.95}{\ensuremath{\mathrm{MA}_{R_{0}}}}$.
Since $\scalebox{0.95}{\ensuremath{\mathrm{MA}_{R_{0}}}}$ is loopless,
we have $\scalebox{0.95}{\ensuremath{A\cap S_{2}=\emptyset}}$ and
$\scalebox{0.95}{\ensuremath{A\subsetneq S_{1}}}$. On the other hand,
we also have $\scalebox{0.95}{\ensuremath{Q=R_{0}\left(L\right)}}$
for a non-degenerate flat $\scalebox{0.95}{\ensuremath{L}}$ of $\scalebox{0.95}{\ensuremath{\mathrm{MA}_{P_{0}}}}$.
Again, by Lemma \ref{lem:Ridges of BP}, we have either $\scalebox{0.95}{\ensuremath{L=A}}$
or $\scalebox{0.95}{\ensuremath{L=A\cup S_{3}}}$ with rank $\scalebox{0.95}{\ensuremath{1}}$
or $\scalebox{0.95}{\ensuremath{2}}$, respectively. 
\begin{itemize}
\item If $\scalebox{0.95}{\ensuremath{L=A}}$, then $\scalebox{0.95}{\ensuremath{\Delta_{\mathbf{w}}\cap\left\{ x\left(A\right)>1\right\} =\emptyset}}$
and $\scalebox{0.95}{\ensuremath{\mathrm{ang}_{Q}(\Sigma_{Q})=\mathrm{ang}_{Q}(P_{0})=2}}$.
This leads to the first and last figures in Figure \ref{fig:non-convex-3}.
\item If $\scalebox{0.95}{\ensuremath{L=A\cup S_{3}}}$, then $\scalebox{0.95}{\ensuremath{A}}$
is a rank-$\scalebox{0.95}{\ensuremath{1}}$ degenerate flat of $\scalebox{0.95}{\ensuremath{\mathrm{MA}_{P_{0}}}}$,
and $\scalebox{0.95}{\ensuremath{Q}}$ is the unique cell with $\scalebox{0.95}{\ensuremath{\mathrm{ang}_{Q}(P_{0})=1}}$
(Lemma \ref{lem:degen-rank1-flat}). We obtain two figures in Figure
\ref{fig:non-convex-2} if $\scalebox{0.95}{\ensuremath{\mathrm{ang}_{Q}(\Sigma_{Q})=3}}$,
and four in the middle of Figure \ref{fig:non-convex-3} if $\scalebox{0.95}{\ensuremath{\mathrm{ang}_{Q}(\Sigma_{Q})\le2}}$. 
\end{itemize}
This completes the proof.
\end{proof}

\section{\label{sec:Puzzle}Puzzle and Compatible Arrangements}

This section presents the concept of a semipuzzle, a matroidal equivalent
of a semitiling. By introducing coordinate charts, we can view a semipuzzle
as a topological manifold. We also define compatible arrangements
as ``ready-to-glue'' arrangements, where their associated puzzle-pieces
form a semipuzzle.

\subsection{\label{subsec:Puzzle}Puzzle and semipuzzle}
\begin{defn}
A $\scalebox{0.95}{\ensuremath{\left(k,S\right)}}$\textbf{-semipuzzle}
$\scalebox{0.95}{\ensuremath{\Psi}}$ is the collection of puzzle-pieces
whose base polytopes form a $\scalebox{0.95}{\ensuremath{\left(k,S\right)}}$-semitiling
$\scalebox{0.95}{\ensuremath{\Sigma}}$. We call $\scalebox{0.95}{\ensuremath{\Sigma}}$
the \textbf{associated semitiling} of $\scalebox{0.95}{\ensuremath{\Psi}}$,
and $\scalebox{0.95}{\ensuremath{\Psi}}$ the \textbf{associated semipuzzle}
of $\scalebox{0.95}{\ensuremath{\Sigma}}$. A $\scalebox{0.95}{\ensuremath{\left(k,S\right)}}$\textbf{-puzzle}
is a $\scalebox{0.95}{\ensuremath{\left(k,S\right)}}$-semipuzzle
whose associated semitiling is a tiling. We can omit ``$\scalebox{0.95}{\ensuremath{\left(k,S\right)}}$-''
for simplicity.
\end{defn}

We use the same terminology for semipuzzles as we do for semitilings.
However, we only concern ourselves with loopless objects when dealing
with semipuzzles, and the terminology for semipuzzles reflects this
restriction. For instance, the \textbf{boundary} of a semipuzzle is
defined as the collection of corresponding puzzle-pieces of the \emph{loopless}
facets of its associated semitiling. From this point on, we will assume
that semipuzzles are both equidimensional and full-dimensional, unless
otherwise noted. This is the same assumption we make for semitilings.

Because base polytopes glue through faces that are not contained in
the boundary of the hypersimplex, extending a tiling can be transformed
to extending a puzzle. This results in a dimension drop by $\scalebox{0.95}{\ensuremath{n-k}}$,
where $\scalebox{0.95}{\ensuremath{n}}$ is the size of the ground
set and $\scalebox{0.95}{\ensuremath{k}}$ is the rank. See also Remark
\ref{rem:why-pp} for additional benefits of switching to puzzles.

\subsection{\label{subsec:Coord-charts}Coordinate charts for semipuzzle}

Consider a hypersimplex $\scalebox{0.95}{\ensuremath{\Delta_{S}^{k}}}$.
A partition of $\scalebox{0.95}{\ensuremath{S}}$ into $k$ (nonempty)
subsets is said to be a \textbf{$k$-partition} of $\scalebox{0.95}{\ensuremath{S}}$.
For any two $k$-partitions of $\scalebox{0.95}{\ensuremath{S}}$,
say $\scalebox{0.95}{\ensuremath{\bigcup_{i\in\left[k\right]}A_{i}}}$
and $\scalebox{0.95}{\ensuremath{\bigcup_{i\in\left[k\right]}B_{i}}}$,
the \textbf{distance} between two point-pieces $\scalebox{0.95}{\ensuremath{\mathrm{PZ}_{\bigoplus_{i\in\left[k\right]}U_{A_{i}}^{1}}}}$
and $\scalebox{0.95}{\ensuremath{\mathrm{PZ}_{\bigoplus_{i\in\left[k\right]}U_{B_{i}}^{1}}}}$
is defined as 
\[
\scalebox{0.95}{\ensuremath{{\displaystyle \min_{\sigma\in\mathfrak{S}_{k}}\biggl\{\sum_{i\in\left[k\right]}d\left(A_{i},B_{\sigma(i)}\right)\biggr\}}}}
\]
 where $d$ is the metric on $\scalebox{0.95}{\ensuremath{2^{S}}}$,
given in \eqref{eq:metric on U(n,n)}. So, if $\scalebox{0.95}{\ensuremath{\Psi}}$
is a $\scalebox{0.95}{\ensuremath{\left(k,S\right)}}$-semipuzzle,
the collection of point-pieces of $\Psi$ is a metric space. The convex
hull of $k$ points $\scalebox{0.95}{\ensuremath{\mathbbm1+\left(\left|S\right|-k\right)\cdot\mathbf{e}_{i}\in\mathbb{R}^{k}}}$
for all $\scalebox{0.95}{\ensuremath{i\in\left[k\right]}}$ is a $\scalebox{0.95}{\ensuremath{\left(k-1\right)}}$-simplex
whose intersection with $\scalebox{0.95}{\ensuremath{\mathbb{Z}^{k}}}$
works as a\textbf{ coordinate chart} for $\Psi$ with barycentric
coordinates. For example, $\scalebox{0.95}{\ensuremath{\left(\left|A_{1}\right|,\dots,\left|A_{k}\right|\right)}}$
is the coordinate of the point-piece $\scalebox{0.95}{\ensuremath{\mathrm{PZ}_{\bigoplus_{i\in\left[k\right]}U_{A_{i}}^{1}}}}$.

When $\scalebox{0.95}{\ensuremath{k=3}}$, quotient figures at codimension-$\scalebox{0.95}{\ensuremath{2}}$
cells and puzzle-pieces are both $\scalebox{0.95}{\ensuremath{2}}$-dimensional.
So, we incorporate a quotient figure with a puzzle-piece figure. We
may add or drop some information as doing that; see Figure \ref{fig:coordinate}.
\begin{figure}[H]
\noindent \centering{}\noindent \begin{center}
\def\size{0.45}
\def\gapa{0.1}
\def\gapb{0.15}

\begin{tikzpicture}[font=\scriptsize]

\matrix[column sep=2.7cm, row sep=0cm]{

\begin{scope}[line cap=round,scale=\size,xshift=0cm,yshift=0cm,>=stealth]
 \foreach \x [count=\xi] in {0,1,2}{
   \path (0,3.464)--++(240:2*\x) coordinate (A\x);
   \foreach \y in {1,...,\xi}{
      \path (A\x)--++(0:2*\y-2) coordinate (B\y);
      \draw [ultra thin] (B\y)--++(240:2)--++(0:2)--cycle;
  }}

 \fill [black,opacity=1](0,0) circle (2.8pt);
 \foreach \x/\y [count=\xi] in {6/2,3/3.464}{
   \foreach \z in {1,...,\x}{
     \fill [black,opacity=1](\z*360/\x+90*\xi-90:\y) circle (2.8pt);
  }}
\end{scope}

&

\begin{scope}[line cap=round,scale=\size,xshift=0cm,yshift=0cm,>=stealth]
 \foreach \x [count=\xi] in {0,1,2}{
   \path (0,3.464)--++(240:2*\x) coordinate (A\x);
   \foreach \y in {1,...,\xi}{
      \path (A\x)--++(0:2*\y-2) coordinate (B\y);
      \draw [ultra thin] (B\y)--++(240:2)--++(0:2)--cycle;
  }}

 \fill [black,opacity=1](0,0) circle (2.8pt);
 \foreach \x/\y [count=\xi] in {6/2,3/3.464}{
   \foreach \z in {1,...,\x}{
     \fill [black,opacity=1](\z*360/\x+90*\xi-90:\y) circle (2.8pt);
  }}

 \path (60:\gapa*1.155) coordinate (O);
 \foreach \x [count=\xi] in {A1,A2,A3}{
      \path (-60+60*\xi:2) coordinate (\x);}
 \path (A1)++(150:2*\gapa) coordinate (A1);
 \path (A2)++(240:1.155*\gapa) coordinate (A2);
 \path (A3)++(330:2*\gapa) coordinate (A3);
 \path ($(O)!.5!(A1)$) coordinate (B1);
 \path ($(O)!.5!(A3)$) coordinate (B3);

 \fill [gray,opacity=0.3] (O)--(A1)--(A2)--(A3)--cycle;
 \draw [very thick,densely dashed] (A3)--(O)(A3)--(A2);
 \draw [red,very thick] (A1)--(A2);
 \draw [green,very thick] (O)--(A1);

\end{scope}

\\
};
\end{tikzpicture}
\par\end{center}\caption{\label{fig:coordinate}A coordinate chart for $(3,S)$-puzzle with
$|S|=6$.}
\end{figure}
 We note that if the number of lines (rank-$3$ hyperplanes are lines)
is large, more than one coordinate chart may be necessary, and it
may be impossible to draw them all on one sheet of paper. Also, for
technical reasons, the drawn angle between two lines may not be a
multiple of $\scalebox{0.95}{\ensuremath{\frac{\pi}{3}}}$. These
factors necessitate variations; see Figure \ref{fig:MaxIrr (3,6)}
for instance. For simplicity, we will not include coordinate charts
in our drawings.

\subsection{\label{subsec:Compt-arr}Compatible arrangements with gluing and
merging}

Let $\scalebox{0.95}{\ensuremath{M_{1},\dots,M_{m}}}$ be rank-$k$
connected matroids on $\scalebox{0.95}{\ensuremath{S}}$. Let $\scalebox{0.95}{\ensuremath{\Psi=\bigl\{\mathcal{A}_{i}^{\lessdot M_{i}}:i\in\left[m\right]\bigr\}}}$
be a collection of $\scalebox{0.95}{\ensuremath{\mathbbm1}}$-arrangements,
i.e., each arrangement $\scalebox{0.95}{\ensuremath{\mathcal{A}_{i}^{\lessdot M_{i}}}}$
is obtained from $\scalebox{0.95}{\ensuremath{\mathrm{HA}_{M_{i}}}}$
by a sequence of blowups and collapsings, cf. Subsection \ref{subsec:w-arrangement}.
We say that those arrangements are \textbf{compatible} if the following
hold:
\begin{enumerate}[label=(GA\arabic*)]
\item \label{enu:ass-tiling} $\scalebox{0.95}{\ensuremath{\Sigma=\left\{ \mathrm{BP}_{M_{i}}:i\in\left[m\right]\right\} }}$
is a semitiling called the \textbf{associated semitiling}.
\item \label{enu:common-face}For any tiling $\scalebox{0.95}{\ensuremath{\left\{ \mathrm{BP}_{M_{i}},\mathrm{BP}_{M_{j}}\right\} \subseteq\Sigma}}$
with $\scalebox{0.95}{\ensuremath{i\neq j}}$ ($\scalebox{0.95}{\ensuremath{\mathrm{codim}(\mathrm{BP}_{M_{i}}\cap\mathrm{BP}_{M_{j}})=1}}$),
there exist $\scalebox{0.95}{\ensuremath{X\in\mathcal{A}_{i}^{\lessdot M_{i}}}}$
and $\scalebox{0.95}{\ensuremath{Y\in\mathcal{A}_{j}^{\lessdot M_{j}}}}$
with $\scalebox{0.95}{\ensuremath{\phi\left(X\right)=\phi\left(Y\right)=M_{i}\cap M_{j}}}$
and a lattice isomorphism $\scalebox{0.95}{\ensuremath{\psi_{ij}:\mathcal{A}_{i}|_{X}\rightarrow\mathcal{A}_{j}|_{Y}}}$.
\end{enumerate}
Note that uniqueness is not included in the definition and different
collections of compatible arrangements can have the same associated
semitiling.
\begin{example}
A puzzle is a compatible collection of matroidal arrangements.
\end{example}

Let $\scalebox{0.95}{\ensuremath{\Psi=\bigl\{\mathcal{A}_{i}^{\lessdot M_{i}}:i\in\left[m\right]\bigr\}}}$
be a compatible collection of arrangements with label maps $\scalebox{0.95}{\ensuremath{\lambda_{i}:\Lambda_{i}\rightarrow\mathcal{A}_{i}^{\lessdot M}}}$.
We \textbf{glue} the coatomistic lattices $\scalebox{0.95}{\ensuremath{\mathcal{A}_{i}}}$
by identifying $\scalebox{0.95}{\ensuremath{\mathcal{A}_{i}|_{X}}}$
with $\scalebox{0.95}{\ensuremath{\mathcal{A}_{j}|_{Y}}}$ via the
lattice isomorphism $\scalebox{0.95}{\ensuremath{\psi_{ij}:\mathcal{A}_{i}|_{X}\rightarrow\mathcal{A}_{j}|_{Y}}}$
for any $\scalebox{0.95}{\ensuremath{i}}$ and $\scalebox{0.95}{\ensuremath{j}}$
with \ref{enu:common-face}, and denote it by $\scalebox{0.95}{\ensuremath{\bigl(\bigcup_{i\in\left[m\right]}M_{i},\mathlarger{\mathlarger{\#}}_{i\in\left[m\right]}\mathcal{A}_{i}\bigr)}}$
or simply by $\scalebox{0.95}{\ensuremath{\mathlarger{\mathlarger{\#}}_{i\in\left[m\right]}\mathcal{A}_{i}}}$.
Define a map $\scalebox{0.95}{\ensuremath{\tilde{\lambda}}}$ on $\scalebox{0.95}{\ensuremath{\bigcup_{i\in\left[m\right]}\Lambda_{i}}}$
as follows: for all $\scalebox{0.95}{\ensuremath{i,j\in\left[m\right]}}$
\begin{enumerate}
\item identify $\scalebox{0.95}{\ensuremath{\lambda_{i}\left(s\right)}}$
with $\scalebox{0.95}{\ensuremath{\lambda_{j}\left(s\right)}}$ for
$\scalebox{0.95}{\ensuremath{s\in\Lambda_{i}\cap\Lambda_{j}}}$, and 
\item identify $\scalebox{0.95}{\ensuremath{\lambda_{i}\left(t\right)}}$
with $\scalebox{0.95}{\ensuremath{\lambda_{j}\left(u\right)}}$ for
$\scalebox{0.95}{\ensuremath{\psi_{ij}\left(\lambda_{i}\left(t\right)\right)=\lambda_{j}\left(u\right)}}$. 
\end{enumerate}
We define the \textbf{gluing} of the arrangements of $\scalebox{0.95}{\ensuremath{\Psi}}$
with label map $\scalebox{0.95}{\ensuremath{\tilde{\lambda}}}$ as
follows: 
\[
\scalebox{0.95}{\ensuremath{\mathrm{GA}_{\Psi}:=\bigl\{\tilde{\lambda}\left(s\right):s\in\bigcup_{i\in\left[m\right]}\Lambda_{i}\bigr\}}}.
\]
Taking $\scalebox{0.95}{\ensuremath{\mathrm{GA}_{\Psi}}}$ is also
said to be \textbf{gluing} the arrangements of $\scalebox{0.95}{\ensuremath{\Psi}}$.
Let $\scalebox{0.95}{\ensuremath{\Psi_{\mathrm{int}}}}$ be the collection
of all those identified coatoms in \ref{enu:common-face}. Define:
\[
\scalebox{0.95}{\ensuremath{{\displaystyle \left|\Psi\right|=\left|\mathrm{GA}_{\Psi}\right|:=\mathrm{GA}_{\Psi}-\Psi_{\mathrm{int}}}}}.
\]
 The map $\scalebox{0.95}{\ensuremath{\tilde{\lambda}}}$ naturally
induces a label map for $\scalebox{0.95}{\ensuremath{\left|\Psi\right|}}$.
We call $\scalebox{0.95}{\ensuremath{\left|\Psi\right|}}$ the \textbf{support}
of $\scalebox{0.95}{\ensuremath{\Psi}}$ or of $\scalebox{0.95}{\ensuremath{\mathrm{GA}_{\Psi}}}$.
Taking $\scalebox{0.95}{\ensuremath{\left|\Psi\right|}}$ is said
to be \textbf{merging} (the arrangements of) $\scalebox{0.95}{\ensuremath{\Psi}}$
or $\scalebox{0.95}{\ensuremath{\mathrm{GA}_{\Psi}}}$. Note that
the arrangement $\scalebox{0.95}{\ensuremath{\left|\Psi\right|}}$
need not be matroidal. However, if the semitiling $\scalebox{0.95}{\ensuremath{\Sigma=\left\{ \mathrm{BP}_{M_{i}}:i\in\left[m\right]\right\} }}$
is convex, then $\scalebox{0.95}{\ensuremath{M=\bigcup_{i\in\left[m\right]}M_{i}}}$
is a matroid by Lemma \ref{lem:locally-convex}, and $\scalebox{0.95}{\ensuremath{\left|\Psi\right|}}$
is matroidal.
\begin{example}
If $\scalebox{0.95}{\ensuremath{\Psi}}$ is a convex puzzle, then
$\scalebox{0.95}{\ensuremath{\left|\Psi\right|=\mathrm{PZ}_{M}}}$.
\end{example}

Let $\scalebox{0.95}{\ensuremath{\Sigma=\left\{ \mathrm{BP}_{M_{i}}:i\in\left[m\right]\right\} }}$
be a convex tiling in $\scalebox{0.95}{\ensuremath{\Delta_{S}^{k}}}$,
then $\scalebox{0.95}{\ensuremath{\left|\Sigma\right|=\mathrm{BP}_{M}}}$
for a matroid $\scalebox{0.95}{\ensuremath{M}}$. For any tiling $\scalebox{0.95}{\ensuremath{\bigl\{\mathrm{BP}_{M_{i}},\mathrm{BP}_{M_{j}}\bigr\}\subseteq\Sigma}}$
with $\scalebox{0.95}{\ensuremath{i\neq j}}$, there are non-degenerate
flats $\scalebox{0.95}{\ensuremath{F}}$ and $\scalebox{0.95}{\ensuremath{L}}$
of $\scalebox{0.95}{\ensuremath{M_{i}}}$ and $\scalebox{0.95}{\ensuremath{M_{j}}}$,
respectively, with: 
\[
\scalebox{0.95}{\ensuremath{M_{i}\cap M_{j}=\phi\left(M_{i}(F)\right)=\phi\left(M_{j}(L)\right)}}.
\]
 For each $\scalebox{0.95}{\ensuremath{i\in\left[m\right]}}$, blow
up all those $\scalebox{0.95}{\ensuremath{M_{i}/F\in\mathcal{S}\left(M_{i}\right)}}$
and let $\scalebox{0.95}{\ensuremath{\mathcal{A}_{i}}}$ be the resulting
lattice which is matroidal. Let $\scalebox{0.95}{\ensuremath{\Phi=\bigl\{\mathcal{A}_{i}^{\lessdot M_{i}}:i\in\left[m\right]\bigr\}}}$,
then $\scalebox{0.95}{\ensuremath{\Phi}}$ is a compatible collection
of matroidal arrangements. Define 
\[
\scalebox{0.95}{\ensuremath{\mathrm{HA}_{\Sigma}:=\mathrm{GA}_{\Phi}}}.
\]
 Then, $\scalebox{0.95}{\ensuremath{\left|\mathrm{HA}_{\Sigma}\right|}}$
is an arrangement isomorphic to $\scalebox{0.95}{\ensuremath{\mathrm{HA}_{M}}}$
with $\scalebox{0.95}{\ensuremath{M=\bigcup_{i\in\left[m\right]}M_{i}}}$.
With this, we will abuse terminology and refer to $\scalebox{0.95}{\ensuremath{\mathrm{HA}_{\Sigma}}}$
and $\scalebox{0.95}{\ensuremath{\left|\mathrm{HA}_{\Sigma}\right|}}$
as the \textbf{gluing} and \textbf{merging} of the hyperplane arrangements
$\scalebox{0.95}{\ensuremath{\mathrm{HA}_{M_{i}}}}$, respectively.
We will also say that $\scalebox{0.95}{\ensuremath{\mathrm{HA}_{M_{i}}}}$
are \textbf{compatible}.
\begin{example}
\label{exa:GA for D(2,4)}Let $\scalebox{0.95}{\ensuremath{M}}$ and
$\scalebox{0.95}{\ensuremath{N}}$ be connected $\scalebox{0.95}{\ensuremath{\left(2,4\right)}}$-matroids
whose collections of rank-$1$ flats are $\scalebox{0.95}{\ensuremath{\left\{ 1,2,34\right\} }}$
and $\scalebox{0.95}{\ensuremath{\left\{ 12,3,4\right\} }}$, respectively,
where we write $\scalebox{0.95}{\ensuremath{34}}$ and $\scalebox{0.95}{\ensuremath{12}}$
for $\scalebox{0.95}{\ensuremath{\left\{ 3,4\right\} }}$ and $\scalebox{0.95}{\ensuremath{\left\{ 1,2\right\} }}$.\footnote{For $\scalebox{0.95}{\ensuremath{\left[n\right]}}$ with $\scalebox{0.95}{\ensuremath{n\le9}}$,
we denote a subset $\scalebox{0.95}{\ensuremath{\left\{ s_{1},\dots,s_{m}\right\} }}$
by $\scalebox{0.95}{\ensuremath{s_{1}\cdots s_{m}}}$ for convenience.} The tiling $\scalebox{0.95}{\ensuremath{\Sigma=\left\{ \mathrm{BP}_{M},\mathrm{BP}_{N}\right\} }}$
is a complete tiling which is, up to symmetry, the unique nontrivial
matroid subdivision of $\scalebox{0.95}{\ensuremath{\Delta_{4}^{2}}}$.
This is because there is, up to symmetry, only one partition of $\scalebox{0.95}{\ensuremath{\left[4\right]}}$
into subsets of size $\scalebox{0.95}{\ensuremath{\ge2}}$. See Figure
\ref{fig:Glue(2,4)}.
\begin{figure}[H]
\noindent \begin{centering}
\noindent \begin{center}
\def\sizea{0.65}


\par\end{center}\vspace{-0.25em}
\par\end{centering}
\noindent \centering{}\caption{\label{fig:Glue(2,4)}Gluing and merging of compatible $(2,4)$-arrangements.}
\end{figure}
\end{example}

Let $\scalebox{0.95}{\ensuremath{\Sigma}}$ be a semitiling and $\scalebox{0.95}{\ensuremath{\Psi}}$
its associated semipuzzle. The \textbf{dual graph} of $\scalebox{0.95}{\ensuremath{\Psi}}$
is defined as a graph that has a vertex corresponding to each polytope
of $\scalebox{0.95}{\ensuremath{\Sigma}}$ and an edge joining two
polytopes having a common facet.
\begin{rem}
\label{rem:Rank-2-tiling}For a rank-$2$ tiling $\scalebox{0.95}{\ensuremath{\Sigma=\left\{ \mathrm{BP}_{M},\mathrm{BP}_{N}\right\} }}$,
any two realizations of $\scalebox{0.95}{\ensuremath{\mathrm{HA}_{M}}}$
and $\scalebox{0.95}{\ensuremath{\mathrm{HA}_{N}}}$, respectively,
can be glued so that their gluing is also a realization of $\scalebox{0.95}{\ensuremath{\mathrm{HA}_{\Sigma}}}$,
cf. Example \ref{exa:rank2-realizable}\eqref{enu:realizable} and
Subsection \ref{subsec:Rank-2-tilings}. Because every rank-$2$ semitiling
is a coherent convex tiling, every rank-$2$ \emph{tropical linear
space} arises as a polyhedral complex for which the bounded part of
its $1$-skeleton is combinatorially the dual graph of a certain rank-$2$
puzzle, and each ray of the unbounded part is outward-pointing normal
to a facet of the puzzle. Here, the dual graph is a tree.
\end{rem}

\begin{example}
\label{exa:GA for D(3,5)}Consider the line arrangements ($\scalebox{0.95}{\ensuremath{k=3}}$)
$\scalebox{0.95}{\ensuremath{\mathrm{HA}_{M_{i}}}}$, $\scalebox{0.95}{\ensuremath{i\in\left[3\right]}}$,
shown in Figure \ref{fig:Glue(3,5)},\footnote{In the figure for $\scalebox{0.95}{\ensuremath{\mathrm{HA}_{M_{1}}}}$,
a double line is drawn to represent $\scalebox{0.95}{\ensuremath{M_{1}/\left\{ 1,2\right\} }}$.
For a rank-$3$ matroid $\scalebox{0.95}{\ensuremath{M}}$, we will
draw an $m$-fold line to represent a line $\scalebox{0.95}{\ensuremath{M/F}}$
or a line-piece $\scalebox{0.95}{\ensuremath{M\left(F\right)}}$ with
$\scalebox{0.95}{\ensuremath{\left|F\right|=m}}$, which makes sense
because parallel for lines in a projective plane means identical.} let $\scalebox{0.95}{\ensuremath{\Sigma=\left\{ \mathrm{BP}_{M_{i}}:i\in\left[3\right]\right\} }}$,
then $\scalebox{0.95}{\ensuremath{\Sigma}}$ is a complete tiling
with $\scalebox{0.95}{\ensuremath{\mathrm{MA}_{\left|\Sigma\right|}=U_{5}^{3}}}$.
In the meanwhile, all arrangements appearing in this example are realizable,
cf.~Remarks \ref{rem:realizable} and \ref{rem:realizable-1}. See
also Figure \ref{fig:mmp-4}. 
\end{example}

\begin{defn}
A semitiling is said to be \textbf{indecomposable} if no member can
be further subdivided into smaller ones to produce another semitiling.
\end{defn}

\begin{example}
\label{exa:GA for D(3,6)}Assume that $\scalebox{0.95}{\ensuremath{\Phi}}$
is a $\scalebox{0.95}{\ensuremath{\left(3,6\right)}}$-semipuzzle
with a relevant point-piece. There are exactly four $\scalebox{0.95}{\ensuremath{\left(3,6\right)}}$-puzzle-pieces
that contain the point-piece, up to symmetry, shown in Figure \ref{fig:Splitting-1}.
The first one has exactly two nontrivial subdivisions, as illustrated
in Figure \ref{fig:Splitting}. Applying Theorem \ref{thm:Uniform-finiteness},
we obtain exactly six (inclusionwise) maximal indecomposable semipuzzles
shown in Figure \ref{fig:MaxRel (3,6)},\footnote{In the bottom-center puzzle-piece of the puzzle at the $\scalebox{0.95}{\ensuremath{\left(2,2\right)}}$-position,
a $\scalebox{0.95}{\ensuremath{Y}}$-shaped figure is drawn to represent
a line-piece for a technical reason.} up to symmetry, which are complete puzzles; $\scalebox{0.95}{\ensuremath{\Phi}}$
is obtained from one of them by selecting puzzle-pieces from it and
merging two of them if necessary, as depicted in Figure \ref{fig:Splitting}
(subdividing is the inverse operation of merging). This implies that
every $\scalebox{0.95}{\ensuremath{\left(3,6\right)}}$-semipuzzle
with a relevant point-piece is a puzzle, and the number of relevant
point-pieces it contains is $1$.\vspace{-0.5cm}

\begin{figure}[H]
\noindent \begin{centering}
\noindent \begin{center}
\def\sizea{0.24}


\par\end{center}\vspace{-8pt}
\par\end{centering}
\noindent \centering{}\caption{\label{fig:Splitting}All nontrivial subdivisions of a $(3,6)$-puzzle-piece.}
\end{figure}

\end{example}

\begin{example}
\label{exa:GA for D(3,6)-1}There is exactly one maximal indecomposable
$\scalebox{0.95}{\ensuremath{\left(3,6\right)}}$-semipuzzle, up to
symmetry, that contains no relevant point-piece; see Figure \ref{fig:MaxIrr (3,6)}.
This is a complete puzzle.\vspace{-0.25em}

\begin{figure}[H]
\noindent \centering{}\noindent \begin{center}
\def\size{0.28}
\def\ratioa{0.901}
\def\ratior{0.851}
\def\ratioh{1.851}
\def\ratiow{0.761}
\def\ratioe{0.881}


\par\end{center}\caption{\label{fig:MaxIrr (3,6)}The indecomposable complete $(3,6)$-semipuzzle
that contains \emph{no} relevant point-piece, up to symmetry.}
\end{figure}

\end{example}

\begin{rem}[]
\label{rem:puzzle}
\begin{enumerate}
\item Examples \ref{exa:GA for D(3,6)} and \ref{exa:GA for D(3,6)-1} imply
that every $\scalebox{0.95}{\ensuremath{\left(3,6\right)}}$-semipuzzle
is a puzzle and a coherent puzzle that extends to a complete puzzle.
\item The dual graphs of seven puzzles of Examples \ref{exa:GA for D(3,6)}
and \ref{exa:GA for D(3,6)-1} are the seven types of generic tropical
planes in $\scalebox{0.95}{\ensuremath{\mathbb{TP}^{5}}}$ of \cite[Figure 1]{Tropical}.
Here, we note that our result is not computer-aided and simple.
\item Figures \ref{fig:Glue(2,4)}, \ref{fig:Glue(3,5)}, \ref{fig:MaxRel (3,6)},
and \ref{fig:MaxIrr (3,6)} visualize that some algebraic varieties
improve their singularities by getting glued together to pass their
singularities into better ones.
\end{enumerate}
\end{rem}

\subsection{\label{subsec:weighted-stable}Abstract weighted stable hyperplane
arrangements}

Consider rank-$k$ connected matroids $\scalebox{0.95}{\ensuremath{M_{1},\dots,M_{m}}}$
on $\scalebox{0.95}{\ensuremath{S}}$ and assume $\scalebox{0.95}{\ensuremath{\Psi=\bigl\{\mathcal{A}_{i}^{\lessdot M_{i}}:i\in\left[m\right]\bigr\}}}$
is a collection of compatible matroidal arrangements. For a weight
vector $\mathbf{w}$, if the associated semitiling of $\scalebox{0.95}{\ensuremath{\Psi}}$
is a $\mathbf{w}$-tiling and every member of $\scalebox{0.95}{\ensuremath{\bigcup_{i\in\left[m\right]}\mathcal{A}_{i}}}$
is $\mathbf{w}$-stable (cf. Definitions \ref{def:w-stability} and
\ref{def:w-tiling}), we refer to $\scalebox{0.95}{\ensuremath{\mathrm{GA}_{\Psi}}}$
as an \textbf{abstract stable hyperplane arrangement weighted by $\mathbf{w}$},
or simply an \textbf{abstract $\mathbf{w}$-stable hyperplane arrangement}.
When $\scalebox{0.95}{\ensuremath{\mathbf{w}=\mathbbm1}}$, we can
omit ``weighted by $\scalebox{0.95}{\ensuremath{\mathbbm1}}$''
or ``$\scalebox{0.95}{\ensuremath{\mathbbm1}}$-.''

\section{\label{sec:Tiling Extension}Extension of Tiling and Puzzle}

In Subsection \ref{subsec:Rank-2-tilings}, we achieved a complete
understanding of the extension of rank-$\scalebox{0.95}{\ensuremath{2}}$
matroid tilings. Nevertheless, extending tilings of higher rank is
generally a difficult task. In this section, we focus on the extension
of rank-$\scalebox{0.95}{\ensuremath{3}}$ tilings, which is already
considered interesting due to Mnëv's universality theorem. Since puzzles
become two-dimensional when $\scalebox{0.95}{\ensuremath{k=3}}$,
we can perform manual computations and enjoy the task of extending
matroid tilings as if they were fun jigsaw puzzles. However, not all
rank-$\scalebox{0.95}{\ensuremath{3}}$ tilings are extendable,  and
we  limit our discussion to a specific class of rank-$\scalebox{0.95}{\ensuremath{3}}$
tilings that we call \emph{admissible}.

Throughout this section, facets and codimension-$\scalebox{0.95}{\ensuremath{2}}$
cells will be considered loopless by default unless otherwise stated.
Additionally, given a semitiling $\scalebox{0.95}{\ensuremath{\Sigma}}$,
we assume the prior existence of a semitiling $\scalebox{0.95}{\ensuremath{\Sigma'\subseteq\Sigma}}$
with prescribed conditions. This semitiling $\scalebox{0.95}{\ensuremath{\Sigma'}}$
is a union of maximal subtilings of $\scalebox{0.95}{\ensuremath{\Sigma}}$
at codimension-$\scalebox{0.95}{\ensuremath{2}}$ cells (centers).
If all involved semitilings are tilings, then a maximal subtiling
at a center exists uniquely, and this assumption is unnecessary.

\subsection{Admissible tiling}

We define the concept of ``admissibility'' for rank-$\scalebox{0.95}{\ensuremath{3}}$
tilings by emulating weighted tilings. Defining ``admissibility''
for higher ranks is a problem that should be approached with caution
in the future.\vspace{2pt}

Let $\scalebox{0.95}{\ensuremath{\Sigma}}$ be a rank-$\scalebox{0.95}{\ensuremath{3}}$
semitiling with ground set $\scalebox{0.95}{\ensuremath{S}}$. The
all-one vector $\mathbbm1\in\mathbb{R}^{S}$ is normal to the hypersimplex
$\scalebox{0.95}{\ensuremath{\Delta_{S}^{3}}}$. We say that two facets
$\scalebox{0.95}{\ensuremath{R}}$ and $\scalebox{0.95}{\ensuremath{R'}}$
of $\scalebox{0.95}{\ensuremath{\Sigma}}$ are \textbf{parallel} if
their normal vectors are parallel modulo $\mathbbm1$. An ordered
collection $\scalebox{0.95}{\ensuremath{\left(R_{1},\dots,R_{m}\right)}}$
of parallel facets of $\scalebox{0.95}{\ensuremath{\Sigma}}$ with
$\scalebox{0.95}{\ensuremath{m\ge2}}$ is said to be a\textbf{ parallel
sequence} if the intersection of two adjacent cells is a codimension-$\scalebox{0.95}{\ensuremath{2}}$
cell (assuming that $\scalebox{0.95}{\ensuremath{R_{i}}}$ for $\scalebox{0.95}{\ensuremath{i\in\left[m\right]}}$
are facets of a sub-semitiling of $\scalebox{0.95}{\ensuremath{\Sigma}}$
that is a union of maximal subtilings of $\scalebox{0.95}{\ensuremath{\Sigma}}$
at those centers). A singleton $\scalebox{0.95}{\ensuremath{\left(R_{1}\right)}}$
is regarded as a parallel sequence.

A maximal subtiling of $\scalebox{0.95}{\ensuremath{\Sigma}}$ is
referred to as a \textbf{socket}. We call a socket \textbf{non-convex}
if its center is \emph{relevant} and it has a deficiency of $\scalebox{0.95}{\ensuremath{1}}$
or $\scalebox{0.95}{\ensuremath{2}}$. A non-convex socket provides
sufficient information for constructing a line arrangement of \ref{enu:(LA1)},
\ref{enu:(LA2)}, \ref{enu:(LA3)}, or \ref{enu:(LA4)}. Note that
if an arrangement of \ref{enu:(LA1)} or \ref{enu:(LA2)} is constructed
from a socket, it is unique. Also, two arrangements of \ref{enu:(LA3)}
and \ref{enu:(LA4)}, respectively, can be obtained from the same
socket.

A \textbf{filler} is a puzzle-piece or a   base polytope whose matroid
is the same as that of the line arrangement constructed from a socket,
non-convex or not. In particular, a filler is said to be of \textbf{kind-}\textbf{\emph{i}}
if its associated line arrangement is constructed from $\left(\text{LA}i\right)$.
Given a socket, a filler is constructed so that the socket together
with the filler becomes a tiling with less deficiency at the center.
This process is repeated until the deficiency becomes $\scalebox{0.95}{\ensuremath{0}}$,
which we call \textbf{socket-filling}.

For a nontrivial extension $\scalebox{0.95}{\ensuremath{\Sigma\cup\left\{ P\right\} }}$
of $\scalebox{0.95}{\ensuremath{\Sigma}}$, if $\scalebox{0.95}{\ensuremath{R}}$
is a facet of $\scalebox{0.95}{\ensuremath{\Sigma}}$, but not a facet
of the extension, we say that $\scalebox{0.95}{\ensuremath{R}}$ is
\textbf{saturated}. If the center of a socket in $\scalebox{0.95}{\ensuremath{\Sigma}}$
with nonzero deficiency becomes a $\scalebox{0.95}{\ensuremath{0}}$-deficiency
center, we say that the socket is \textbf{saturated} and the center
is \textbf{complete}. Multiple sockets can be filled simultaneously.
Note that we fill sockets to get an extension of a semitiling.
\begin{defn}
\label{def:regular semitiling} A rank-$\scalebox{0.95}{\ensuremath{3}}$
semitiling is called \textbf{admissible} if 
\begin{enumerate}
\item \label{enu:regular-cond1}there is no non-convex socket of deficiency
$\scalebox{0.95}{\ensuremath{1}}$, and 
\item \label{enu:regular-cond2}for any parallel collection of type-$\scalebox{0.95}{\ensuremath{2}}$
facets connected in codimension~$\scalebox{0.95}{\ensuremath{2}}$,
the number of non-convex sockets of deficiency $\scalebox{0.95}{\ensuremath{2}}$
with centers contained in their support is at most $\scalebox{0.95}{\ensuremath{1}}$.
\end{enumerate}
\end{defn}

Because a weighted tiling can have a non-convex socket of deficiency
$\scalebox{0.95}{\ensuremath{1}}$, not every weighted tiling is an
admissible tiling. However:
\begin{lem}
\label{lem:wt<adm}Every rank-$\scalebox{0.95}{\ensuremath{3}}$ weighted
tiling extends to an admissible semitiling.
\end{lem}

\begin{proof}
Let $\scalebox{0.95}{\ensuremath{\Sigma}}$ be a weighted $\scalebox{0.95}{\ensuremath{\left(3,S\right)}}$-tiling.
A non-convex socket of deficiency $\scalebox{0.95}{\ensuremath{1}}$
is, up to symmetry, one of the four figures shown in Figure \ref{fig:Socket-filling},
cf. Figures \ref{fig:non-convex-1}, \ref{fig:non-convex-2}, and
\ref{fig:non-convex-3}. We focus on the ``leg'' shaded blue.
\begin{figure}[H]
\begin{spacing}{0}
\noindent \begin{centering}
\noindent \begin{center}
\def\size{0.5}
\def\gapa{0.1}
\def\gapb{0.15}


\par\end{center}\vspace{-16pt}
\par\end{centering}
\noindent \centering{}\caption{\label{fig:Socket-filling}Extending a weighted tiling to an admissible
semitiling.}
\end{spacing}
\end{figure}
\vspace{6pt}

Given a non-convex socket of deficiency $\scalebox{0.95}{\ensuremath{1}}$,
let $\scalebox{0.95}{\ensuremath{R_{0}}}$ be a type-$\scalebox{0.95}{\ensuremath{2}}$
facet containing the center and $\scalebox{0.95}{\ensuremath{P_{0}\in\Sigma}}$
the member containing $\scalebox{0.95}{\ensuremath{R_{0}}}$. The
socket determines a (unique) kind-$\scalebox{0.95}{\ensuremath{1}}$
filler, say $\scalebox{0.95}{\ensuremath{P}}$,  and we  fill the
socket with this filler. By Lemma \ref{lem:degen-rank1-flat}, we
know that the number of $\scalebox{0.95}{\ensuremath{Q\lessdot R_{0}}}$
with $\scalebox{0.95}{\ensuremath{\mathrm{ang}_{Q}(P_{0})=1}}$ is
at most $\scalebox{0.95}{\ensuremath{1}}$. According to Proposition
\ref{prop:rank3-weighted}, the number of $\scalebox{0.95}{\ensuremath{Q\lessdot R_{0}}}$
with $\scalebox{0.95}{\ensuremath{\mathrm{ang}_{Q}(\Sigma)=3}}$ is
also at most $\scalebox{0.95}{\ensuremath{1}}$. If the number is
$\scalebox{0.95}{\ensuremath{1}}$, we have either $\scalebox{0.95}{\ensuremath{\mathrm{ang}_{Q}(P)=1}}$
or $\scalebox{0.95}{\ensuremath{\mathrm{ang}_{Q}(P)=2}}$ with $\scalebox{0.95}{\ensuremath{\mathrm{ang}_{Q}(\Sigma\cup\left\{ P\right\} )=4}}$
or $\scalebox{0.95}{\ensuremath{\mathrm{ang}_{Q}(\Sigma\cup\left\{ P\right\} )=5}}$,
respectively. In the latter case, $\scalebox{0.95}{\ensuremath{Q}}$
is the center of the new socket with deficiency $\scalebox{0.95}{\ensuremath{1}}$,
 and we  repeat the previously described socket-filling. In the former
case, we move on to other socket with deficiency $\scalebox{0.95}{\ensuremath{1}}$.
Note that each single-member extension produces at most two sockets
with deficiency $\scalebox{0.95}{\ensuremath{1}}$. This process terminates
due to the finiteness of a semitiling. The resulting semitiling has
no non-convex sockets of deficiency $\scalebox{0.95}{\ensuremath{1}}$
and satisfies Definition \ref{def:regular semitiling}\eqref{enu:regular-cond2},
which makes it admissible.
\end{proof}
Let $\scalebox{0.95}{\ensuremath{\Sigma}}$ be a rank-$\scalebox{0.95}{\ensuremath{3}}$
admissible semitiling on $\scalebox{0.95}{\ensuremath{S}}$ with a
relevant facet $\scalebox{0.95}{\ensuremath{R_{1}}}$. Then, $\scalebox{0.95}{\ensuremath{\left\lfloor \mathrm{MA}_{R_{1}}\right\rfloor }}$
is a rank-$\scalebox{0.95}{\ensuremath{2}}$ connected matroid. Let
$f_{1}$ be the simplification map for $\scalebox{0.95}{\ensuremath{\left\lfloor \mathrm{MA}_{R_{1}}\right\rfloor }}$.
Let $\scalebox{0.95}{\ensuremath{L_{0}:=S-E\left(\left\lfloor \mathrm{MA}_{R_{1}}\right\rfloor \right)}}$
and $\scalebox{0.95}{\ensuremath{R_{1}\lessdot P_{1}\in\Sigma}}$.
We construct a filler $\scalebox{0.95}{\ensuremath{\mathrm{BP}_{N}}}$
that is face-fitting to $\scalebox{0.95}{\ensuremath{P_{1}}}$ as
follows.
\begin{enumerate}[label=(LA\arabic*),start=5]
\item \label{enu:(LA5)}If $\scalebox{0.95}{\ensuremath{R_{1}}}$ is a
type-$\scalebox{0.95}{\ensuremath{2}}$ facet, let $\scalebox{0.95}{\ensuremath{f_{0}:L_{0}\rightarrow\left\{ L_{0}\right\} }}$
be a constant map. Let $f:=f_{0}\oplus f_{1}$ which is a map on $\scalebox{0.95}{\ensuremath{S}}$.
Let $\scalebox{0.95}{\ensuremath{N:=f^{\ast}\left(f_{\ast}\left(U_{S}^{3}\right)\right)}}$. 
\item \label{enu:(LA6)}If $\scalebox{0.95}{\ensuremath{R_{1}}}$ is a type-$\scalebox{0.95}{\ensuremath{1}}$
facet, let $\scalebox{0.95}{\ensuremath{f_{0}}}$ be the identity
map on $\scalebox{0.95}{\ensuremath{L_{0}}}$. Let $f:=f_{0}\oplus f_{1}$
and $\scalebox{0.95}{\ensuremath{N:=f^{\ast}\left(f_{\ast}\left(U_{S}^{3}\right)\right)}}$.
\end{enumerate}
In either case, $\scalebox{0.95}{\ensuremath{N}}$ is a rank-$\scalebox{0.95}{\ensuremath{3}}$
connected matroid on $\scalebox{0.95}{\ensuremath{S}}$, and $\scalebox{0.95}{\ensuremath{\mathrm{BP}_{N}}}$
is face-fitting to $\scalebox{0.95}{\ensuremath{P_{1}}}$. Moreover,
$\scalebox{0.95}{\ensuremath{N}}$ is a free product of matroids and
is realizable because any matroid of rank $\scalebox{0.95}{\ensuremath{1}}$
or $\scalebox{0.95}{\ensuremath{2}}$ is realizable, cf. Example \ref{exa:rank2-realizable}
and Remark \ref{rem:realizable}. Again, a filler is said to be of
\textbf{kind-}\textbf{\emph{i}} if it is constructed from $\left(\text{LA}i\right)$.

\subsection{Extension of admissible tiling without any forked facets}

The number of relevant codimension-$\scalebox{0.95}{\ensuremath{2}}$
cells is a crucial factor. For example, when this number is $\scalebox{0.95}{\ensuremath{0}}$,
the given semitiling is a convex tiling (Lemma \ref{lem:loc-conv-relev}).
Let $\scalebox{0.95}{\ensuremath{\Sigma}}$ be a rank-$\scalebox{0.95}{\ensuremath{3}}$
semitiling with a codimension-$\scalebox{0.95}{\ensuremath{1}}$ cell
$\scalebox{0.95}{\ensuremath{R}}$. We say that $\scalebox{0.95}{\ensuremath{R}}$
is \textbf{forked} if it has at least three relevant codimension-$\scalebox{0.95}{\ensuremath{2}}$
cells, and \textbf{planar} if it does not.

Note that for an admissible semitiling without any forked facets,
every collection of parallel facets connected in codimension $\scalebox{0.95}{\ensuremath{2}}$
arises as a parallel sequence. A parallel sequence $\scalebox{0.95}{\ensuremath{\left(R_{1},\dots,R_{m}\right)}}$
is said to be \textbf{maximal at }$\scalebox{0.95}{\ensuremath{Q\lessdot R_{m}}}$
if, for the socket centered at $\scalebox{0.95}{\ensuremath{Q}}$,
the angle is different from $\scalebox{0.95}{\ensuremath{3}}$.

Algorithm~\ref{alg:extend_tilings} below consists of three steps
that produce a complete extension of an admissible semitiling with
no forked facets. In the first two steps, the algorithm finds an extension
that does not contain any type-$\scalebox{0.95}{\ensuremath{2}}$
(relevant) facets. In the final step, it finds an extension without
any type-$\scalebox{0.95}{\ensuremath{2}}$ or type-$\scalebox{0.95}{\ensuremath{1}}$
relevant facets, which is therefore a complete tiling.
\begin{lyxalgorithm}
\textup{\label{alg:extend_tilings}Let $\scalebox{0.95}{\ensuremath{\Sigma}}$
be an admissible semitiling without any forked facets.}
\end{lyxalgorithm}

\begin{enumerate}[label=\textbf{Step\thinspace\arabic*}]
\item \label{enu:Alg Step 1}If $\scalebox{0.95}{\ensuremath{\Sigma}}$
has no non-convex sockets (of deficiency $\scalebox{0.95}{\ensuremath{2}}$),
then go to \ref{enu:Alg Step 2}. Otherwise, for a non-convex socket,
let $\scalebox{0.95}{\ensuremath{Q_{1}}}$ be its center and $\scalebox{0.95}{\ensuremath{Q_{m}}}$
be the center of a socket connected to $\scalebox{0.95}{\ensuremath{Q_{1}}}$
by a parallel sequence $\scalebox{0.95}{\ensuremath{\left(R_{1},\dots,R_{m-1}\right)}}$
of type-$\scalebox{0.95}{\ensuremath{1}}$ facets with $\scalebox{0.95}{\ensuremath{Q_{1}\lessdot R_{1}}}$
and $\scalebox{0.95}{\ensuremath{Q_{m}\lessdot R_{m-1}}}$ that is
maximal at $\scalebox{0.95}{\ensuremath{Q_{m}}}$; see Figure \ref{fig:Step1-1-1}.
Recursively fill the sockets with kind-$\scalebox{0.95}{\ensuremath{4}}$
fillers as illustrated in Figure \ref{fig:Step1-2}. All facets of
those fillers are planar. Also, any irrelevant center $\scalebox{0.95}{\ensuremath{Q}}$
shared by the fillers has deficiency $\scalebox{0.95}{\ensuremath{0}}$.
The deficiency of the new socket at $\scalebox{0.95}{\ensuremath{Q_{m}}}$
is $\scalebox{0.95}{\ensuremath{1}}$ or $\scalebox{0.95}{\ensuremath{\ge3}}$. 
\begin{enumerate}
\item In the latter case, the extension is admissible. Repeat \ref{enu:Alg Step 1}.
\item If the former case, recursively fill the sockets of deficiency $\scalebox{0.95}{\ensuremath{1}}$
with kind-$\scalebox{0.95}{\ensuremath{1}}$ fillers as in Lemma \ref{lem:wt<adm}
until an admissible extension is obtained. Repeat \ref{enu:Alg Step 1}.
\begin{figure}[H]
\begin{spacing}{0}
\noindent \centering{}\noindent \begin{center}
\def\size{0.5}
\def\rat{0.85}


\par\end{center}\caption{\label{fig:Step1-2}Step 1-2}
\end{spacing}
\end{figure}
\vspace{6pt}
\end{enumerate}
\item \label{enu:Alg Step 2}The given semitiling $\scalebox{0.95}{\ensuremath{\Sigma}}$
has no non-convex sockets and is locally convex. So, it is a convex
tiling. If it has no type-$\scalebox{0.95}{\ensuremath{2}}$ facets
as well, go to \ref{enu:Alg Step 3}. Otherwise, let $\scalebox{0.95}{\ensuremath{\left(R_{m-1},\dots,R_{1}\right)}}$
be a parallel sequence of type-$\scalebox{0.95}{\ensuremath{2}}$
facets that is maximal at $\scalebox{0.95}{\ensuremath{Q_{1}}}$;
see Figure \ref{fig:Step2-1}. Construct a kind-$\scalebox{0.95}{\ensuremath{5}}$
filler $\scalebox{0.95}{\ensuremath{P}}$ from $\scalebox{0.95}{\ensuremath{R_{1}}}$
and add it to $\scalebox{0.95}{\ensuremath{\Sigma}}$, then $\scalebox{0.95}{\ensuremath{\Sigma\cup\left\{ P\right\} }}$
is admissible; see Figure \ref{fig:Step2-2}. No forked facets are
created. Go to \ref{enu:Alg Step 1}. 
\begin{figure}[H]
\noindent \centering{}\noindent \begin{center}
\def\size{0.5}
\def\rat{0.85}


\par\end{center}\caption{\label{fig:Step2-2}Step 2-2}
\end{figure}
\item \label{enu:Alg Step 3}The given semitiling $\scalebox{0.95}{\ensuremath{\Sigma}}$
is a convex tiling without any type-$\scalebox{0.95}{\ensuremath{2}}$
facets. If it has no type-$\scalebox{0.95}{\ensuremath{1}}$ relevant
facets, then it is complete and \emph{terminate} the algorithm. Otherwise,
let $\scalebox{0.95}{\ensuremath{\left(R_{m-1},\dots,R_{1}\right)}}$
be a parallel sequence of type-$\scalebox{0.95}{\ensuremath{1}}$
relevant facets that is maximal at $\scalebox{0.95}{\ensuremath{Q_{1}}}$
with $\scalebox{0.95}{\ensuremath{\left(R_{1},\dots,R_{m-1}\right)}}$
maximal at $\scalebox{0.95}{\ensuremath{Q_{m}}}$; see Figure \ref{fig:Step3-1}.
\begin{enumerate}[topsep=3pt]
\item Construct a kind-$\scalebox{0.95}{\ensuremath{6}}$ filler $\scalebox{0.95}{\ensuremath{P}}$
from $\scalebox{0.95}{\ensuremath{R_{1}}}$ and add it to $\scalebox{0.95}{\ensuremath{\Sigma}}$.
No forked facets are created.
\item If $\scalebox{0.95}{\ensuremath{m=2}}$, then $\scalebox{0.95}{\ensuremath{\Sigma\cup\left\{ P\right\} }}$
is a convex tiling without any type-$\scalebox{0.95}{\ensuremath{2}}$
facets. If $\scalebox{0.95}{\ensuremath{m\ge3}}$, recursively fill
the sockets with kind-$\scalebox{0.95}{\ensuremath{2}}$ fillers until
a convex extension without any type-$\scalebox{0.95}{\ensuremath{2}}$
facets is obtained. In either case, all facets of the extension are
planar; see Figure \ref{fig:Step3-2}. Repeat \ref{enu:Alg Step 3}.
\begin{figure}[H]
\begin{spacing}{0}
\noindent \centering{}\noindent \begin{center}
\def\size{0.5}
\def\rat{0.85}


\par\end{center}\caption{\label{fig:Step3-2}Step 3-2}
\end{figure}
\end{enumerate}
\end{enumerate}
Thus, we have the following theorem.
\begin{thm}
\label{thm:unforked}An admissible semitiling without any forked facets
is a tiling.
\end{thm}

\begin{proof}
By Algorithm \ref{alg:extend_tilings}, every admissible semitiling
without any forked facets can be extended to a complete tiling, which
makes it a tiling.
\end{proof}

\subsection{Complete extension of admissible tilings}

We demonstrate that every admissible $\scalebox{0.95}{\ensuremath{\left(3,n\right)}}$-tiling
with $\scalebox{0.95}{\ensuremath{n=9}}$ extends to a complete tiling.
The method used for $\scalebox{0.95}{\ensuremath{n=9}}$ is applicable
to all smaller values of $n$. We perform the extension process manually,
using only pen and paper. To avoid redundancy, we will not reproduce
information that is already presented pictorially.

\smallskip{}

Let $\scalebox{0.95}{\ensuremath{\Sigma}}$ be an admissible $\scalebox{0.95}{\ensuremath{\left(3,9\right)}}$-tiling
and $\scalebox{0.95}{\ensuremath{\Psi}}$ its associated puzzle. If
$\scalebox{0.95}{\ensuremath{\Sigma}}$ does not have any forked facets,
we can extend it to a complete tiling using Algorithm \ref{alg:extend_tilings}.
Otherwise, suppose that $\scalebox{0.95}{\ensuremath{\Sigma}}$ has
a forked facet $\scalebox{0.95}{\ensuremath{\mathrm{BP}_{M\left(F\right)}}}$
for $\scalebox{0.95}{\ensuremath{\mathrm{BP}_{M}\in\Sigma}}$ and
a non-degenerate flat $\scalebox{0.95}{\ensuremath{F}}$ of $\scalebox{0.95}{\ensuremath{M}}$.
By Lemma \ref{lem:general position}, there are exactly six cases
for $\scalebox{0.95}{\ensuremath{M/F}}$ as presented in Figure \ref{fig:forked-facet}
where in the first three cases, $\scalebox{0.95}{\ensuremath{F=L_{1}\cup\cdots\cup L_{m}}}$
with $\scalebox{0.95}{\ensuremath{m=3,4}}$ is a rank-$\scalebox{0.95}{\ensuremath{2}}$
flat of $\scalebox{0.95}{\ensuremath{M}}$. In the last three cases,
$\scalebox{0.95}{\ensuremath{F}}$ is a rank $\scalebox{0.95}{\ensuremath{1}}$
flat of $\scalebox{0.95}{\ensuremath{M}}$ with $\scalebox{0.95}{\ensuremath{\left[9\right]=F\cup L_{1}\cup\cdots\cup L_{m}}}$.
Below, we assume that the first case of Figure \ref{fig:forked-facet}
occurs and examine the shape of the semipuzzle $\scalebox{0.95}{\ensuremath{\Psi}}$.
\begin{figure}[H]
\noindent \centering{}\noindent \begin{center}
\def\size{0.5}
\def\rat{0.85}


\par\end{center}\caption{\label{fig:forked-facet}$M/F$ with forked facet $\mathrm{BP}_{M(F)}$.}
\end{figure}

Because the forked facet $\scalebox{0.95}{\ensuremath{\mathrm{BP}_{M\left(F\right)}}}$
is a type-$\scalebox{0.95}{\ensuremath{2}}$ facet, if it contains
a non-convex socket, due to the admissibility of $\scalebox{0.95}{\ensuremath{\Sigma}}$,
all other sockets with centers on the facet are convex. In this case,
let $\scalebox{0.95}{\ensuremath{\mathrm{BP}_{M\left(F\right)\left(L_{1}\right)}}}$
be the center of the non-convex socket (the other cases are similar).
See Figure \ref{fig:type2branch} for the puzzle-piece $\scalebox{0.95}{\ensuremath{\mathrm{PZ}_{M}}}$
in $\scalebox{0.95}{\ensuremath{\Psi}}$, where the third and fourth
are symmetric, while the first has only two nontrivial subdivisions
in Figure \ref{fig:type2branch-2}, up to symmetry. Then, Figures
\ref{fig:type2branch-3}, \ref{fig:type2branch-4}, \ref{fig:type2branch-5},
and \ref{fig:type2branch-6} describe what $\scalebox{0.95}{\ensuremath{\Psi}}$
looks like locally and hence globally, up to merging and symmetry.
\begin{figure}[H]
\noindent \centering{}\noindent \begin{center}
\def\size{0.5}
\def\rat{0.85}


\par\end{center}\caption{\label{fig:type2branch-6}An admissible puzzle $\Psi$, Part 3.}
\end{figure}

Now, we will show how to fill the non-convex sockets. If $\scalebox{0.95}{\ensuremath{\Psi}}$
has no forked facet with more than one non-convex socket, we can apply
Algorithm \ref{alg:extend_tilings} as if it had no forked facet.
Therefore, suppose that $\scalebox{0.95}{\ensuremath{\Psi}}$ has
such a forked facet and assume that the facet has a maximum number
of non-convex sockets. This number is three, and there are precisely
eight cases for such a facet, up to symmetry, as shown in Figures
\ref{fig:forked-3-sockets-1} and \ref{fig:forked-3-sockets-2}. Figure
 \ref{fig:forked-3-sockets-3} illustrates the extension process for
Figure \ref{fig:forked-3-sockets-1}. We saturate the four facets
in Figure \ref{fig:forked-3-sockets-2} with the puzzle-pieces of
the four line arrangements in Figure \ref{fig:forked-3-sockets-4},
respectively, then all three non-convex sockets at each facet are
saturated at the same time. Combining this with Algorithm \ref{alg:extend_tilings},
we can extend $\scalebox{0.95}{\ensuremath{\Psi}}$ to an admissible
semipuzzle and to a complete puzzle. Thus, we conclude: 
\begin{thm}
\label{thm:complete-ext}Any admissible $\scalebox{0.95}{\ensuremath{\left(3,n\right)}}$-semitiling
with $\scalebox{0.95}{\ensuremath{n\le9}}$ can be extended to a complete
tiling.
\end{thm}

\begin{figure}[H]
\noindent \begin{centering}
\noindent \begin{center}
\def\size{0.48}
\def\rat{0.85}


\par\end{center}\vspace{-0.5cm}
\par\end{centering}
\noindent \centering{}\caption{\label{fig:forked-3-sockets-4}Saturating forked facets, II.}
\end{figure}

We prove that the bound $\scalebox{0.95}{\ensuremath{n=9}}$ of Theorem
\ref{thm:complete-ext} is sharp.
\begin{thm}
\label{thm:Counterexample}A weighted admissible $\scalebox{0.95}{\ensuremath{\left(3,10\right)}}$-tiling
with no complete extension exists.
\end{thm}

\begin{proof}
Write $\scalebox{0.95}{\ensuremath{\left[10\right]=\left\{ 1,\dots,9,0\right\} }}$.
Consider the admissible puzzle $\scalebox{0.95}{\ensuremath{\Psi}}$
shown in Figure \ref{fig:counterex-puzzle} and let $\scalebox{0.95}{\ensuremath{\Sigma}}$
be its associated tiling. Observe that $\scalebox{0.95}{\ensuremath{\mathrm{BP}_{M_{0}(7890)}}}$
is a relevant type-$\scalebox{0.95}{\ensuremath{1}}$ facet of $\scalebox{0.95}{\ensuremath{\Sigma}}$
with three non-convex sockets. If $\scalebox{0.95}{\ensuremath{\Sigma}}$
extends to a complete tiling, this facet must be saturated. Suppose
that $\scalebox{0.95}{\ensuremath{\Sigma\cup\left\{ \mathrm{BP}_{M}\right\} }}$
is a tiling with the facet saturated, then $\scalebox{0.95}{\ensuremath{\mathrm{BP}_{M_{0}(7890)}=\mathrm{BP}_{M(123456)}}}$
for a rank-$\scalebox{0.95}{\ensuremath{2}}$ non-degenerate flat
$\scalebox{0.95}{\ensuremath{\left[6\right]=123456}}$ of $\scalebox{0.95}{\ensuremath{M}}$.
Moreover, $\scalebox{0.95}{\ensuremath{12}}$, $\scalebox{0.95}{\ensuremath{34}}$,
and $\scalebox{0.95}{\ensuremath{56}}$ are rank-$\scalebox{0.95}{\ensuremath{1}}$
flats of $\scalebox{0.95}{\ensuremath{M}}$, at most one of which
is degenerate by Lemma \ref{lem:degen-rank1-flat}. So, there are
exactly four candidates in Figure \ref{fig:the4candidates} for the
collection $\scalebox{0.95}{\ensuremath{\left\{ M/12,M/34,M/56,M/\left[6\right]\right\} }}$.
However, all of those four contain two distinct lines passing through
two distinct points, as shown in Figure \ref{fig:the4candidates-1}.
This is a contradiction, cf.~Example \ref{exa:2-Lines}. Hence, $\scalebox{0.95}{\ensuremath{\Sigma}}$
has no complete extension.

We now show $\scalebox{0.95}{\ensuremath{\Sigma}}$ is a tiling weighted
by a vector $\scalebox{0.95}{\ensuremath{\mathbf{w}=\left(1,1,1,1,1,1,\frac{1}{4},\frac{1}{4},\frac{1}{4},\frac{1}{4}\right)}}$.
For a point $\scalebox{0.95}{\ensuremath{\mathbf{v}=\left(\upsilon_{1},\dots,\upsilon_{9},\upsilon_{0}\right)\in\Delta_{10}^{3}}}$,
we write $\scalebox{0.95}{\ensuremath{\mathbf{v}_{A}=\mathbf{v}\left(A\right)}}$
and $\scalebox{0.95}{\ensuremath{x_{A}=x\left(A\right)}}$ for $\scalebox{0.95}{\ensuremath{A\subseteq\left[10\right]}}$.
Take $\scalebox{0.95}{\ensuremath{\mathbf{v}=\left(\frac{1}{3},\frac{1}{3},\frac{1}{3},\frac{1}{3},\frac{1}{3},\frac{1}{3},\frac{1}{4},\frac{1}{4},\frac{1}{4},\frac{1}{4}\right)\in{\rm BP}_{M_{0}}}}$.
We can decrease entries $\upsilon_{7},\upsilon_{8},\upsilon_{9},\upsilon_{0}$
by a sufficiently small number $\epsilon>0$ and increase $\upsilon_{1},\dots,\upsilon_{6}$
by $\frac{4\epsilon}{6}$ so that the new point is still contained
in both $\scalebox{0.95}{\ensuremath{{\rm BP}_{M_{0}}}}$ and $\scalebox{0.95}{\ensuremath{\mathrm{int}\left(\Delta_{\mathbf{w}}\right)}}$,
and hence, $\scalebox{0.95}{\ensuremath{{\rm BP}_{M_{0}}\cap\mathrm{int}\left(\Delta_{\mathbf{w}}\right)\neq\emptyset}}$.
Similarly, $\scalebox{0.95}{\ensuremath{\mathrm{BP}_{M_{i}}\cap\mathrm{int}\left(\Delta_{\mathbf{w}}\right)\neq\emptyset}}$
for $\scalebox{0.95}{\ensuremath{i=1,2,3}}$. Suppose $\scalebox{0.95}{\ensuremath{\Delta_{\mathbf{w}}-\left|\Sigma\right|\neq\emptyset}}$.
Then, there exists a point $\mathbf{v}\in\Delta_{\mathbf{w}}$ that
violates at least one of the describing inequalities of $\scalebox{0.95}{\ensuremath{{\rm BP}_{M_{i}}}}$
for each $\scalebox{0.95}{\ensuremath{i=0,1,2,3}}$, listed in Figure
\ref{fig:counterex-puzzle}. Since $\scalebox{0.95}{\ensuremath{\mathbf{v}_{7890}\le\mathbf{w}_{7890}=1}}$,
the point $\mathbf{v}$ must violate all three inequalities $\scalebox{0.95}{\ensuremath{x_{3456}\le1}}$,
$\scalebox{0.95}{\ensuremath{x_{1256}\le1}}$, and $\scalebox{0.95}{\ensuremath{x_{1234}\le1}}$
of $\scalebox{0.95}{\ensuremath{{\rm BP}_{M_{1}}}}$, $\scalebox{0.95}{\ensuremath{{\rm BP}_{M_{2}}}}$,
and $\scalebox{0.95}{\ensuremath{{\rm BP}_{M_{3}}}}$, respectively.
Also, $\mathbf{v}$ must violate at least one of the three inequalities
of $\scalebox{0.95}{\ensuremath{\mathrm{BP}_{M_{0}}}}$ apart from
$\scalebox{0.95}{\ensuremath{x_{7890}\le1}}$. But whichever is violated,
one reaches a contradiction:
\noindent \begin{center}

\par\end{center}\vspace{-0.4cm}
\par\end{centering}
\noindent \centering{}\caption{\label{fig:the4candidates-1}Non-matroidal arrangements.}
\end{figure}

\subsection{\label{subsec:Realizable tiling}Realizable complete extension of
admissible puzzle}

In this subsection and the next, we assume that $\Bbbk$ is an algebraically
closed field of arbitrary characteristic. We say that a puzzle $\scalebox{0.95}{\ensuremath{\Psi}}$
is \textbf{realizable} over $\Bbbk$ if there exists an algebraic
variety over $\Bbbk$ that can be identified as a realization of $\scalebox{0.95}{\ensuremath{\mathrm{GA}_{\Psi}}}$,
which we call a \textbf{realization} of $\scalebox{0.95}{\ensuremath{\Psi}}$.
Even if $\scalebox{0.95}{\ensuremath{\Psi}}$ can be extended to a
complete puzzle, it is not guaranteed that a realization of $\scalebox{0.95}{\ensuremath{\mathrm{GA}_{\Psi}}}$
can be extended to a realization of the complete puzzle. This is because
the lattice isomorphism $\psi_{ij}$ in \ref{enu:common-face} might
not be realizable, meaning that there might be no corresponding gluing
automorphism.

However, note that any two realizations (over $\Bbbk$) of a point
arrangement $\scalebox{0.95}{\ensuremath{\mathrm{HA}_{U_{3}^{2}}}}$
are isomorphic. This property also holds for $\scalebox{0.95}{\ensuremath{\mathrm{HA}_{N}}}$,
where $\scalebox{0.95}{\ensuremath{N}}$ is any rank-$\scalebox{0.95}{\ensuremath{2}}$
matroid with precisely three rank-$\scalebox{0.95}{\ensuremath{1}}$
flats. So, if $\scalebox{0.95}{\ensuremath{\Phi=\left\{ \mathrm{PZ}_{M_{1}},\mathrm{PZ}_{M_{2}}\right\} }}$
is a rank-$\scalebox{0.95}{\ensuremath{3}}$ puzzle with $\scalebox{0.95}{\ensuremath{M_{1}\cap M_{2}\simeq N}}$,
then any realizations of $\scalebox{0.95}{\ensuremath{\mathrm{PZ}_{M_{1}}}}$
and $\scalebox{0.95}{\ensuremath{\mathrm{PZ}_{M_{2}}}}$ can be glued
into a realization of $\scalebox{0.95}{\ensuremath{\Phi}}$. Moreover,
the fillers we construct in \ref{enu:(LA1)} through \ref{enu:(LA6)},
which are all realizable over $\Bbbk$ because $\Bbbk$ is an infinite
field, will not have forked facets. Hence, if $\scalebox{0.95}{\ensuremath{\Psi}}$
has no forked facets, then according to Algorithm \ref{alg:extend_tilings},
any realization of it can be extended to a realization of a complete
extension of it. Further:
\begin{thm}
\label{thm:realizable-complete}For any admissible $\scalebox{0.95}{\ensuremath{\left(3,n\right)}}$-semipuzzle
with $\scalebox{0.95}{\ensuremath{n\le9}}$, any realization of it
can be extended to a realization of a complete puzzle. The bound $\scalebox{0.95}{\ensuremath{n=9}}$
is sharp.
\end{thm}

\begin{proof}
Let $\scalebox{0.95}{\ensuremath{\Psi}}$ be an admissible $\scalebox{0.95}{\ensuremath{\left(3,n\right)}}$-semipuzzle
with $\scalebox{0.95}{\ensuremath{n\le9}}$. For $\scalebox{0.95}{\ensuremath{n\le7}}$,
$\scalebox{0.95}{\ensuremath{\Psi}}$ has no forked facet. For $\scalebox{0.95}{\ensuremath{n=8}}$,
$\scalebox{0.95}{\ensuremath{\Psi}}$ has at most one forked facet
with a maximum number of non-convex sockets in Figure \ref{fig:forked-facet-38},
up to symmetry, which can be saturated as illustrated. Since every
relevant line-piece of the filler has precisely three point-pieces,
any realization of $\scalebox{0.95}{\ensuremath{\Psi}}$ can be extended
to a realization of a complete puzzle.

For $\scalebox{0.95}{\ensuremath{n=9}}$, we already know all possible
forked facets with a maximum number of non-convex sockets, up to symmetry:
Figures \ref{fig:forked-3-sockets-1} and \ref{fig:forked-3-sockets-2},
while Figures \ref{fig:forked-3-sockets-3} and \ref{fig:forked-3-sockets-4}
describe the fillers that fit in. The fillers in Figure \ref{fig:forked-3-sockets-3}
have no relevant facets with more than three point-pieces, causing
no issues. However, the fillers in Figure \ref{fig:forked-3-sockets-4}
do have such facets. We explain the realizable complete extension
for the first case in Figure \ref{fig:forked-3-sockets-2}, and the
remaining cases can be handled similarly.

Assume that the first case in Figure \ref{fig:forked-3-sockets-2}
occurs. Let $\scalebox{0.95}{\ensuremath{\gamma_{1}}}$, $\scalebox{0.95}{\ensuremath{\gamma_{2}}}$,
and $\scalebox{0.95}{\ensuremath{\gamma_{3}}}$ be realizations in
$\scalebox{0.95}{\ensuremath{\mathbb{P}^{2}}}$ of the type-$\scalebox{0.95}{\ensuremath{2}}$
facets of the three non-convex sockets, respectively, as shown in
Figure \ref{fig:forked-socket-39} (in which the puzzle figure is
presented up to merging). If none of them are isomorphic, there is
a realization of the filler in Figure \ref{fig:forked-realizable-1}
for which the realizations of the three type-$\scalebox{0.95}{1}$
relevant facets are isomorphic to them, respectively. If two of them
are isomorphic, say $\scalebox{0.95}{\ensuremath{\gamma_{1}}}$ and
$\scalebox{0.95}{\ensuremath{\gamma_{2}}}$, then there are fillers
with realizations that fit into the given realization of $\scalebox{0.95}{\ensuremath{\mathrm{GA}_{\Psi}}}$,
as shown in Figure \ref{fig:forked-realizable-2}. In either case,
the realizations glue to form a realization of a larger admissible
semipuzzle, and no relevant facets with more than three point-pieces
are produced. In this way, using Algorithm \ref{alg:extend_tilings},
we can extend any realization of $\scalebox{0.95}{\ensuremath{\Psi}}$
to a realization of a complete puzzle. Because Theorem \ref{thm:Counterexample}
provides a counterexample when $\scalebox{0.95}{\ensuremath{n=10}}$,
the bound $\scalebox{0.95}{\ensuremath{n=9}}$ is tight.
\end{proof}
\vspace{-0.3cm}

\begin{figure}[H]
\noindent \begin{centering}
\noindent \begin{center}
\def\size{0.5}
\def\sizea{0.22}
\def\rat{0.85}


\par\end{center}\vspace{-0.5cm}
\par\end{centering}
\noindent \centering{}\caption{\label{fig:forked-realizable-2}Realizable extension of an admissible
$(3,9)$-puzzle, II.}
\end{figure}

\subsection{\label{subsec:Reduction-morphism}Surjectivity of reduction morphism}

Given a weight vector $\scalebox{0.95}{\ensuremath{\mathbf{w}\in\left[0,1\right]^{n}}}$,
we refer to a realization of an abstract $\mathbf{w}$-stable $\scalebox{0.95}{\ensuremath{\left(k,n\right)}}$-hyperplane
arrangement as a \textbf{$\mathbf{w}$-stable $\scalebox{0.95}{\ensuremath{\left(k,n\right)}}$-hyperplane
arrangement} (without the term ``abstract''). The moduli space of
these arrangements is denoted by $\scalebox{0.95}{\ensuremath{\overline{\mathrm{M}}_{\mathbf{w}}(k,n)}}$.
For any two weight vectors $\mathbf{w}$ and $\mathbf{v}$ with $\scalebox{0.95}{\ensuremath{\mathbf{w}>\mathbf{v}}}$,
there exists a natural morphism $\scalebox{0.95}{\ensuremath{\rho_{\mathbf{w},\mathbf{v}}:\overline{\mathrm{M}}_{\mathbf{w}}(k,n)\rightarrow\overline{\mathrm{M}}_{\mathbf{v}}(k,n)}}$,
called the \emph{reduction morphism}, \cite{Ale08}. We fix $\scalebox{0.95}{\ensuremath{k=3}}$
and consider weighted stable line arrangements. We will discuss the
surjectivity of the reduction morphism below.
\begin{thm}
\label{thm:Surjectivity}For $\scalebox{0.95}{\ensuremath{n\le9}}$
and any two weight vectors $\scalebox{0.95}{\ensuremath{\mathbf{w}}}$
and $\scalebox{0.95}{\ensuremath{\mathbf{v}}}$ in $\scalebox{0.95}{\ensuremath{\left[0,1\right]^{n}}}$
with $\scalebox{0.95}{\ensuremath{\mathbf{w}>\mathbf{v}}}$, the reduction
morphism $\scalebox{0.95}{\ensuremath{\rho_{\mathbf{w},\mathbf{v}}:\overline{\mathrm{M}}_{\mathbf{w}}(3,n)\rightarrow\overline{\mathrm{M}}_{\mathbf{v}}(3,n)}}$
between the moduli spaces of weighted stable $n$-line arrangements
is surjective. The bound $\scalebox{0.95}{\ensuremath{n=9}}$ is sharp.
\end{thm}

\begin{proof}
Any geometric fiber $\scalebox{0.95}{\ensuremath{X}}$ over a geometric
point $\scalebox{0.95}{\ensuremath{\left[X\right]}}$ of $\scalebox{0.95}{\ensuremath{\overline{\mathrm{M}}_{\mathbf{v}}(3,n)}}$
is obtained from a $\mathbf{v}$-stable line arrangement that is a
realization of a weighted puzzle, by contracting curves and/or blowing
up points in each component. Here, at most one blowup is needed due
to Lemma \ref{lem:degen-rank1-flat}. Lemma \ref{lem:wt<adm} implies
the existence of an extension $\scalebox{0.95}{\ensuremath{X'}}$
of $\scalebox{0.95}{\ensuremath{X}}$, trivial or not, that is a realization
of an admissible semipuzzle, i.e., $\scalebox{0.95}{\ensuremath{X'}}$
can be obtained from $\scalebox{0.95}{\ensuremath{X}}$ by blowing
up points and/or contracting curves followed by gluing with an appropriate
variety. Note that if $\scalebox{0.95}{\ensuremath{X'\neq X}}$, then
$\scalebox{0.95}{\ensuremath{X'}}$ is not a $\mathbf{v}$-stable
line arrangement. Theorem \ref{thm:realizable-complete} ensures that
there exists a $\scalebox{0.95}{\ensuremath{\mathbbm1}}$-stable line
arrangement $\scalebox{0.95}{\ensuremath{Z}}$ that extends $\scalebox{0.95}{\ensuremath{X'}}$,
with $\scalebox{0.95}{\ensuremath{\left[X\right]=\rho_{\mathbbm{1},\mathbf{v}}\left(\left[Z\right]\right)}}$.
Let $\scalebox{0.95}{\ensuremath{Y}}$ be a $\mathbf{w}$-stable line
arrangement with $\scalebox{0.95}{\ensuremath{\left[Y\right]=\rho_{\mathbbm{1},\mathbf{w}}\left(\left[Z\right]\right)}}$.
Now, the following commutative diagram holds:\vspace{-0.5cm}

\noindent \noindent \begin{center}
\[
\xymatrix{ & \overline{\mathrm{M}}_{\mathbbm{1}}(3,n)\ar[dl]_{\rho_{\mathbbm{1},\mathbf{w}}}\ar[dr]^{\rho_{\mathbbm{1},\mathbf{v}}}\\
\overline{\mathrm{M}}_{\mathbf{w}}(3,n)\ar[rr]^{\rho_{\mathbf{w},\mathbf{v}}} &  & \overline{\mathrm{M}}_{\mathbf{v}}(3,n)
}
\]
\par\end{center}

\noindent This tells us that: 
\[
\scalebox{0.95}{\ensuremath{\rho_{\mathbf{w},\mathbf{v}}\left(\left[Y\right]\right)=\rho_{\mathbf{w},\mathbf{v}}\left(\rho_{\mathbbm{1},\mathbf{w}}\left(\left[Z\right]\right)\right)=\left(\rho_{\mathbf{w},\mathbf{v}}\circ\rho_{\mathbbm{1},\mathbf{w}}\right)\left(\left[Z\right]\right)=\rho_{\mathbbm{1},\mathbf{v}}\left(\left[Z\right]\right)=\left[X\right]}}.
\]
 Thus, we conclude that the reduction morphism $\scalebox{0.95}{\ensuremath{\rho_{\mathbf{w},\mathbf{v}}}}$
is surjective. Theorem \ref{thm:Counterexample} provides a counterexample,
and it is also shown that the bound $\scalebox{0.95}{\ensuremath{n=9}}$
is sharp.
\end{proof}


\begin{thebibliography}{References}
\bibitem[Ale08]{Ale08} V. Alexeev, \emph{Weighted grassmannians and
stable hyperplane arrangements}, arXiv: 0806.0881.

\bibitem[Ati82]{Atiyah} M.F. Atiyah, \emph{Convexity and commuting
hamiltonians}, Bull. Lond. Math. Soc. \textbf{14} (1982), 1\textendash 15.

\bibitem[BGW03]{Coxeter} A. V. Borovik, I. M. Gelfand and N. White,
\emph{Coxeter Matroids}, Progr. Math. 216, Boston, MA: Birkhäuser
Boston, 2003.

\bibitem[BVSWZ99]{Oriented} A. Björner, M. Las Vergnas, B. Sturmfels,
N. White, and G. M. Ziegler, \emph{Oriented Matroids}, second edition,
Encyclopedia Math. Appl. 46, Cambridge Univ. Press, Cambridge, 1999.

\bibitem[CS05a]{Crapo1} H. Crapo, and W. Schmitt, \emph{A unique
factorization theorem for matroids}. J. Combin. Theory Ser. A \textbf{112}
(2005), 222\textendash 249.

\bibitem[CS05b]{Crapo2} H. Crapo and W. Schmitt, \emph{The free product
of matroids}, European J. Combin. \textbf{26} (2005) 1060\textendash 1065.

\bibitem[DH98]{VGIT} I. V. Dolgachev and Y. Hu, \emph{Variation of
geometric invariant theory quotients}, Inst. Hautes Études Sci. Publ.
Math. (1998), no. 87, 5\textendash 51.

\bibitem[FK04]{CBlowup}E. Feichtner and D. Kozlov, \emph{Incidence
combinatorics of resolutions}, Selecta Math. (N.S.) \textbf{10} (2004),
37\textendash 60.

\bibitem[FS05]{Nested}E. Feichtner and B. Sturmfels, \emph{Matroid
polytopes, nested sets and Bergman fans}, Port. Math. (N.S.) \textbf{62}
(2005), 437\textendash 468.

\bibitem[Ful98]{Fulton}W. Fulton, \emph{Intersection Theory},\emph{
}second edition, Springer\textendash Verlag, New York, 1998.

\bibitem[GGMS87]{GGMS} I. M. Gelfand, R. M. Goresky, R. D. MacPherson,
and V. V. Serganova, \emph{Combinatorial geometries, convex polyhedra,
and Schubert cells}, Adv. Math. \textbf{63} (1987), no. 3, 301\textendash 316.

\bibitem[GKZ94]{GKZ94} I. M. Gelfand, M. M. Kapranov, and A. V. Zelevinsky,
\emph{Discriminants, Resultants, and Multidimensional Determinants},
Mathematics: Theory \& Applications, Birkhäuser Boston Inc., Boston,
MA, 1994.

\bibitem[GM82]{GM82} I. M. Gel'fand and R. D. MacPherson, \emph{Geometry
in Grassmannians and a generalization of the dilogarithm}, Adv. Math.
\textbf{44} (1982), 279\textendash 312.

\bibitem[GS82]{GS82} V. Guillemin and S. Sternberg, \emph{Convexity
properties of the moment mapping}, Invent. Math. \textbf{67} (1982),
491\textendash 513.

\bibitem[GS87]{GS87} I.M. Gelfand and V.V. Serganova, \emph{Combinatorial
geometries and the strata of a torus on homogeneous compact manifolds},
Uspekhi Mat. Nauk \textbf{42} (1987), no. 2(254), 107\textendash 134.

\bibitem[Has03]{Hassett} B. Hassett, \emph{Moduli spaces of weighted
pointed stable curves}, Adv. Math. \textbf{173} (2003), no. 2, 316\textendash 352.

\bibitem[HJJS09]{Tropical} S. Herrmann, A. Jensen, M. Joswig, and
B. Sturmfels, \emph{How to draw tropical planes}, Electron. J. Combin.
\textbf{16} (2009), no. 2, Research Paper R6.

\bibitem[HKT06]{HKT} P. Hacking, S. Keel, and J. Tevelev, \emph{Compactification
of the moduli space of hyperplane arrangements}, J. Algebraic Geom.
\textbf{15} (2006), no. 4, 657\textendash 680.

\bibitem[Kap93]{Kapranov} M. M. Kapranov, \emph{Chow quotients of
Grassmannians I}, I. M. Gelfand Seminar, Adv. Soviet Math., vol. \textbf{16},
Amer. Math. Soc., Providence, RI, 1993, 29\textendash 110.

\bibitem[Kol13]{Kollar} J. Kollár, \emph{Singularities of the Minimal
Model Program}, Cambridge Tracts in Math. 200, Cambridge Univ. Press,
Cambridge, 2013.

\bibitem[KT04]{Chow} S. Keel and J. Tevelev, \emph{Chow quotients
of Grassmannians II}, arXiv:math/0401159 (2004).

\bibitem[Laf03]{Laf03} L. Lafforgue, \emph{Chirurgie des grassmanniennes},
CRM Monongr. Ser., vol. \textbf{19}, Amer. Math. Soc., Providence,
RI, 2003.

\bibitem[Li09]{Wonderful} L. Li, \emph{Wonderful compactification
of an arrangement of subvarieties}, Michigan Math. J. \textbf{58}
(2009), 535\textendash 563.

\bibitem[MFK94]{GIT} D. Mumford, J. Fogarty, F. Kirwan, \emph{Geometric
Invariant Theory}, third edition, Ergeb. Math. Grenzgeb. (2), vol.
34, Springer-Verlag, Berlin, 1994.

\bibitem[OT92]{OT92} P. Orlik and H. Terao, \emph{Arrangements of
Hyperplanes}, Grundlehren Math. Wiss., vol. 300, Springer-Verlag,
Berlin, 1992. 

\bibitem[Oxl92]{Oxley} J. G. Oxley, \emph{Matroid Theory}, Oxford
Science Publications, The Clarendon Press, Oxford Univ. Press, New
York, 1992.

\bibitem[Sch03]{Schrijver} A. Schrijver, \emph{Combinatorial Optimization:
Polyhedra and Efficiency}, Springer-Verlag, Berlin, 2003.

\bibitem[Sta07]{StanleyHA} R. P. Stanley, \emph{An Introduction to
Hyperplane Arrangements, }Geometric Combinatorics, IAS/Park City Math.
Ser., vol. 13, Amer. Math. Soc., Providence, RI, 2007, 389\textendash 496.

\bibitem[Sta11]{StanleyEC} R. P. Stanley, \emph{Enumerative Combinatorics},
Volume 1, second edition, Cambridge Stud. Adv. Math. 49, Cambridge
Univ. Press, 2011.

\bibitem[Vak06]{Vakil} R. Vakil, \emph{Murphy's law in algebraic
geometry: badly-behaved deformation spaces}, Invent. Math. \textbf{164}
(2006), no. 3, 569\textendash 590.

\bibitem[Wel76]{Welsh} D. J. A. Welsh, \emph{Matroid Theory}, Academic
Press, London, 1976.

\bibitem[Zas87]{Zaslavsky} T. Zaslavsky, \emph{The Möbius function
and the characteristic polynomial}, Combinatorial Geometries, Encyclopedia
Math. Appl. \textbf{29}, Cambridge Univ. Press, Cambridge, 1987, 114\textendash 138.

\bibitem[Zie00]{01-Polytopes} G. M. Ziegler, \emph{Lectures on 0/1-polytopes},
Polytopes \textendash{} Combinatorics and Computation, DMV Semin.,
vol. 29, Birkhäuser, Basel, 2000, 1\textendash 41.

\end{thebibliography}
\end{document}